\documentclass[11pt,reqno]{amsart}
\pdfoutput=1
\usepackage[utf8]{inputenc}

\usepackage[dvips]{graphicx}
\usepackage{multicol}

\usepackage{amssymb, amsmath, amsthm, pstricks}
\usepackage{bbm} 
\usepackage[normalem]{ulem} 

\usepackage{stmaryrd} 

\usepackage{tikz-cd}

\makeatletter
\@namedef{subjclassname@2020}{\textup{2020} Mathematics Subject Classification}
\makeatother

\usepackage[hyphens]{url}

\usepackage{hyperref}

\usepackage{enumitem}

\newcommand{\ie}{{\it i.e.}}
\newcommand{\eg}{{\it e.g.}}

\newcommand{\term}{\emph} 

\DeclareMathOperator\Int{Int} 
\DeclareMathOperator\length{length}
\DeclareMathOperator\area{area}
\DeclareMathOperator\vol{vol}
\DeclareMathOperator\diam{diam}
\DeclareMathOperator\Prob{\mathbb P}

\DeclareMathOperator\E{\mathbb E}
\newcommand{\N}{{\mathbb N}}
\newcommand{\Z}{{\mathbb Z}}

\newcommand{\R}{{\mathbb R}}

\newcommand{\T}{{\mathbb T}}

\newcommand{\eps}{\varepsilon} 
\newcommand{\W}{\mathcal{W}} 

\newcommand{\HH}{\mathcal{P}} 

\newcommand{\MLS}{{\rm MLS}}
\newcommand{\card}{{\rm card}}
\newcommand{\Curr}{{\rm Curr}}
\newcommand{\Gr}{{\rm Gr}}
\newcommand{\sys}{{\rm sys}}
\newcommand{\mass}{{\rm mass}}

\newcommand{\mres}{\mathbin{\vrule height 1.6ex depth 0pt width
0.13ex\vrule height 0.13ex depth 0pt width 1.3ex}}

\long\def\forget#1\forgotten{}

\numberwithin{equation}{section}
\newtheorem{theorem}{Theorem}[section]
\newtheorem{proposition}[theorem]{Proposition}
\newtheorem{corollary}[theorem]{Corollary}
\newtheorem{lemma}[theorem]{Lemma}

\theoremstyle{definition}
\newtheorem{definition}[theorem]{Definition}
\newtheorem{example}[theorem]{Example}
\newtheorem{remark}[theorem]{Remark}

\newtheorem{question}[theorem]{Question}

\begin{document}
\title[Marked length spectrum and area of surfaces]{Marked length spectrum and area of surfaces with Anosov geodesic flow}

\author[M.~Cossarini]{Marcos Cossarini}
\author[S.~Sabourau]{St\'ephane Sabourau}

\thanks{Partially supported by the B\'ezout Labex (ANR-10-LABX-58) and the ANR project Min-Max (ANR-19-CE40-0014).}

\address{\parbox{\linewidth}{Univ Paris Est Creteil, CNRS, LAMA, F-94010 Creteil, France \\
Univ Gustave Eiffel, LAMA, F-77447 Marne-la-Vall\'ee, France}}

\email{marcos.cossarini@u-pec.fr}

\email{stephane.sabourau@u-pec.fr}

\subjclass[2020]
{Primary 53C23; Secondary 53C60, 53C65}

\keywords{Marked length spectrum, Anosov flow, geodesic current, intersection form, integral geometric formulas, wall system, metric discretization, discrete geometry}

\begin{abstract}
More than twenty years ago, Croke and Dairbekov showed that a negatively curved metric~$g_0$ on a closed surface has minimal area among all negatively curved Riemannian metrics~$g$ on the same surface whose marked length spectrum is greater or equal to that of~$g_0$.
We show that the curvature assumption on the second metric can be removed and extend this result to reversible Finsler metrics.
More precisely, we prove that a reversible Finsler metric~$F_0$ on a closed surface whose geodesic flow is Anosov has minimal area among all reversible Finsler metrics~$F$ on the same surface whose marked length spectrum is greater or equal to that of~$F_0$.
\end{abstract}

\maketitle


\section{Introduction}

Let $M$ be a closed manifold with a Finsler metric~$F$.
Unless specified otherwise, all Finsler metrics are assumed to be reversible.
The \emph{marked length spectrum} of~$(M,F)$ is the function $\MLS_F:\mathcal{C} \to [0,\infty)$ defined on the set~$\mathcal{C}$ of unoriented free homotopy classes of~$M$ as
\begin{equation} \label{eq:MLS}
\MLS_F(\alpha) = \inf_{c \in \alpha} \length_F(c)
\end{equation}
where the infimum is taken over all closed curves~$c$ representing the unoriented free homotopy class~$\alpha$.
For every nontrivial class~$\alpha$, this infimum is attained by a closed geodesic of least length representing~$\alpha$.

\medskip

The marked length spectrum rigidity conjecture, as stated by Burns and Katok~\cite{BK85}, asserts that a negatively curved Riemannian metric on a closed manifold is determined, up to isometry, by its marked length spectrum.
For closed surfaces, this conjecture was independently proved by Croke~\cite{Cro} and Otal~\cite{Otal}; see also~\cite{Fathi} and~\cite{CFF}.
Very recently, Guillarmou, Lefeuvre and Paternain~\cite{GLP} extended this result to Riemannian metrics on closed surfaces whose geodesic flow is Anosov.
In higher dimensions, Hamenst\"adt~\cite{Ham99} proved the conjecture for closed negatively curved locally symmetric spaces, relying on the volume entropy rigidity theorem of Besson, Courtois and Gallot~\cite{BCG}.
A local version of the conjecture was also recently proved by Guillarmou and Lefeuvre~\cite{GL}; see also~\cite{GKL19} and~\cite{CL}.
The stability of the marked length spectrum was further investigated in~\cite{butt1} and~\cite{butt2}.
Related rigidity questions arise in various contexts, including boundary rigidity, lens rigidity and spectral rigidity.
We will not discuss these problems here and refer to the monographs~\cite{PSU} and~\cite{GM} for accounts of the subject. 

\medskip

In this article, we consider metrics whose marked length spectra are not necessarily equal, but can be compared as follows.
Let~$F_1$ and~$F_2$ be two Finsler metrics on a closed surface~$M$.
The marked length spectrum of~$F_1$ is said to be less or equal to that of~$F_2$ if~$\MLS_{F_1}(\alpha) \leq \MLS_{F_2}(\alpha)$ for every unoriented free homotopy class~$\alpha$.
In this case, we write $\MLS_{F_1} \leq \MLS_{F_2}$. 

\medskip

More than twenty years ago, Croke and Dairbekov~\cite{CD} proved the following comparison theorem relating the marked length spectra and areas of negatively curved surfaces.
(See Theorem~\ref{theo:extension} for an extension of this result where the marked length spectra need only be compared on free homotopy classes represented by null-homologous closed curves.)

\begin{theorem}[\cite{CD}] \label{theo:CD}
Let $M$ be a closed surface with two negatively curved Riemannian metrics~$g_0$ and~$g$.
If $\MLS_{g_0} \leq \MLS_g$ then 
\[
\area(g_0) \leq \area(g).
\]
\end{theorem}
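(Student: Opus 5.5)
The plan is to follow the geodesic current approach of Otal and Bonahon. To each negatively curved metric~$g$ on~$M$ we associate its Liouville current~$L_g$. This is the $\pi_1(M)$-invariant measure on the space~$\mathcal G$ of unoriented geodesics of the universal cover, obtained by pushing the Liouville measure of the unit tangent bundle onto the space of geodesics. Negative curvature makes the Gromov boundaries of~$(\tilde M,g)$ and~$(\tilde M,g_0)$ canonically identified, so~$\mathcal G \cong (\partial\pi_1 M \times \partial \pi_1 M \setminus \Delta)/\Z_2$ does not depend on the metric. Hence both~$L_g$ and~$L_{g_0}$ live in the same space~$\Curr(M)$.

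The key facts I will use are Bonahon's intersection form~$i$ on~$\Curr(M)$, which is continuous and bilinear, together with the following identities for negatively curved~$g$:
\[
i(L_g,\delta_\alpha) = \MLS_g(\alpha), \qquad i(L_g,L_g) = \tfrac{\pi}{2}\,\area(g),
\]
where~$\delta_\alpha$ is the current of the closed geodesic in the class~$\alpha$, and the normalizing constant is whatever the chosen normalization of~$L_g$ gives. The first identity is the Crofton formula: the length of a closed geodesic equals the Liouville measure of the set of geodesics crossing it. The second follows by integrating the Crofton formula, or from Santaló's formula on the unit tangent bundle.

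\textbf{Step 1.} I pass from the hypothesis on closed curves to a statement about~$L_{g_0}$. The hypothesis gives $i(L_{g_0},\delta_\alpha) \le i(L_g,\delta_\alpha)$ for every class~$\alpha$, and by linearity the same holds for positive combinations of the~$\delta_\alpha$. Positive multiples of closed-geodesic currents are dense in~$\Curr(M)$; one can see this from density of closed orbits for the Anosov geodesic flow of~$g_0$, since the Liouville measure is a weak limit of normalized periodic orbit measures. By continuity of~$i$ we then get
\[
i(L_{g_0},\mu) \le i(L_g,\mu) \quad \text{for all } \mu\in\Curr(M).
\]

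\textbf{Step 2.} Taking~$\mu=L_{g_0}$ and then~$\mu=L_g$ yields
\[
\tfrac{\pi}{2}\area(g_0) = i(L_{g_0},L_{g_0}) \le i(L_g,L_{g_0}) = i(L_{g_0},L_g) \le i(L_g,L_g)=\tfrac{\pi}{2}\area(g).
\]
Here the symmetry of~$i$ was used in the middle equality, and Step~1 with~$\mu=L_g$ gives the second inequality. This is exactly the statement $\area(g_0)\le\area(g)$.

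\textbf{Main obstacle.} The delicate point is the analytic input. It consists of continuity of the intersection form on~$\Curr(M)$, which needs Bonahon's compactness argument in the weak-$*$ topology, and density of closed-geodesic currents. If continuity is problematic near the relevant currents, I would use the more concrete route of Croke--Dairbekov. That route works directly with the identification of the unit tangent bundles via the orbit equivalence of the two Anosov flows, and applies Jensen/Cauchy--Schwarz to the time-change function. One then shows that its integral against the Liouville measure of~$g_0$, which equals~$i(L_{g_0},L_g)$ up to constants, is at least~$\tfrac{\pi}{2}\area(g_0)$ by the length comparison on periodic orbits and Livšic-type equidistribution.
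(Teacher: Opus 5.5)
Your proposal is correct and is essentially the Croke--Dairbekov argument that the paper recalls in Section~\ref{sec:outline}. It identifies the spaces of geodesics of the two metrics (you via the common boundary at infinity, the paper via orbit equivalence of the geodesic flows), uses $i(\gamma,\lambda)=\length(\gamma)$ and $i(\lambda,\lambda)=\frac{\pi}{2}\area$, passes from closed geodesics to all currents by Sigmund's density theorem and continuity of $i$, and concludes with the same symmetric chain. The only point to tighten is your justification of density: approximating the Liouville measure of $g_0$ alone handles only $\mu=L_{g_0}$, so you need either Sigmund's density of periodic-orbit measures among all invariant measures or the same approximation for the Anosov flow of $g$ to cover $\mu=L_g$; the time-change fallback is then unnecessary.
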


We show that the curvature assumption on the competitor metric~$g$ can be removed.
More generally, we extend this comparison theorem to reversible Finsler metrics, replacing the negative curvature assumption on the reference metric by the assumption that its geodesic flow is Anosov.
In the Finsler setting, area is understood to be the Holmes--Thompson area.
More precisely, we prove the following theorem.

\begin{theorem} \label{theo:main}
Let $M$ be a closed surface endowed with a Finsler metric~$F_0$ whose geodesic flow is Anosov.
Let $F$ be any Finsler metric on~$M$.
If $\MLS_{F_0} \leq \MLS_F$ then 
\[
\area(F_0) \leq \area(F).
\]
\end{theorem}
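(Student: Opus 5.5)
The plan is to follow the Croke--Dairbekov strategy through geodesic currents, and to replace the curvature assumption on~$F$ by a discretization argument that works for an arbitrary Finsler metric. The geodesic flow of~$F_0$ is Anosov, so its geodesics in the universal cover are quasi-geodesics with the same endpoints at infinity as the corresponding hyperbolic geodesics. The Liouville measure of~$F_0$ therefore defines a geodesic current~$L_{F_0}$, and the Holmes--Thompson area satisfies $i(L_{F_0},L_{F_0}) = c\,\area(F_0)$ for a universal constant~$c$. This identity is a Crofton/Santaló formula: the Liouville current counts the intersections of $F_0$-geodesics with each other. In the same way, Crofton's formula gives $i(L_{F_0},\gamma)=c'\,\length_{F_0}(\gamma)=c'\MLS_{F_0}(\gamma)$ for every closed geodesic~$\gamma$. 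By density of weighted closed curves and continuity of the intersection form, this says that $L_{F_0}$ ``computes'' the marked length spectrum of~$F_0$. The main inequality will then come from comparing $\area(F)$ with $i(L_{F_0},\cdot)$ evaluated on an object built from~$F$.

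The key step is to establish, for an arbitrary Finsler metric~$F$ (no curvature assumption, hence no Liouville current of its own), an inequality of the form
\[
\area(F) \;\geq\; \frac{1}{c}\,\sup\Big\{ \frac{i(\mu,L_{F_0})^2}{i(\mu,\mu)}\Big\}\cdot(\text{normalization})
\]
or, more directly, $c\,\area(F)\geq \ell_F(L_{F_0})^2/\area(F_0)$. Here $\ell_F$ is the length functional on currents extending $\MLS_F$ by homogeneity and continuity. Granting this, the hypothesis $\MLS_{F_0}\le\MLS_F$ gives $\ell_F(L_{F_0})\ge \ell_{F_0}(L_{F_0}) = i(L_{F_0},L_{F_0})/c' = c''\area(F_0)$, and with the right constants we obtain $\area(F)\ge\area(F_0)$. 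The constants have to match exactly in the equality case $F=F_0$. To prove the key inequality without a Liouville current for~$F$, I would discretize~$F$. I would approximate~$(M,F)$ by a wall system, that is, a finite family of closed curves in general position on~$M$ whose crossing numbers approximate $F$-lengths up to a factor. This is known to be possible for Finsler surfaces, where the Holmes--Thompson area is approximated by (constant)$\times$(number of self-intersections)$/\varepsilon^2$ and lengths by (number of crossings)$\times\varepsilon$.

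Concretely, the wall system gives a current $W_\varepsilon$, a weighted sum of closed curves, with two properties. First, $\varepsilon\, i(W_\varepsilon,\gamma)\le (1+\delta)\length_F(\gamma)$ essentially; more precisely, for every closed curve, its $F$-length dominates its crossing number with the walls. Second, $\varepsilon^2 i(W_\varepsilon,W_\varepsilon)\approx c\,\area(F)$. Apply the first property to the closed curves approximating~$L_{F_0}$, using $F$-geodesics, which realize $\MLS_F$. This yields $\varepsilon\, i(W_\varepsilon,L_{F_0})\lesssim \ell_F(L_{F_0})$. That bound has the wrong direction, so I would instead use the wall system built from~$F$ the other way around. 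I would pick the $F_0$ side as the Liouville current and bound $i(W_\varepsilon, L_{F_0})$ from below by $\ell_{F_0}(W_\varepsilon)$, which is itself bounded by $\ell_F(W_\varepsilon)$ via the MLS hypothesis. The curves of $W_\varepsilon$ have controlled $F$-length, comparable to $\varepsilon\, i(W_\varepsilon,W_\varepsilon)$. The Cauchy--Schwarz-type inequality $i(\mu,L_{F_0})^2\le C\, i(\mu,\mu)\,i(L_{F_0},L_{F_0})$ would then close the argument. This inequality holds for the Liouville current of an Anosov surface, playing the role of a reverse Hodge-index inequality (Otal/Croke), and is proved via the Gauss--Bonnet/Bonahon light-cone property.

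The main obstacle is precisely this last step: getting a sharp inequality with the correct constant relating $\area(F)$ to intersection numbers, for a non-Anosov~$F$. I would try first to prove that the wall system curves can be taken to be $F$-geodesic and nearly length-minimizing in their classes. Then $\ell_F(W_\varepsilon)$ is genuinely computed by $\MLS_F$, and Crofton on~$F$ via the discretization gives the area exactly in the limit $\varepsilon\to0$.
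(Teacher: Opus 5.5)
There is a genuine gap, at exactly the step you flag, and adjusting constants will not fix it: the sharp Cauchy--Schwarz-type inequality you want to invoke is false.

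\textbf{The inequality fails in both directions.} As written, $i(\mu,\lambda_{F_0})^2\le C\,i(\mu,\mu)\,i(\lambda_{F_0},\lambda_{F_0})$ fails for every $C$ when $\mu$ is a simple closed geodesic, since then $i(\mu,\mu)=0<\length_{F_0}(\mu)=i(\mu,\lambda_{F_0})$.

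Moreover, your chain only gives an \emph{upper} bound, $i(\W_\varepsilon,\lambda_{F_0})=\ell_{F_0}(\W_\varepsilon)\le\ell_F(\W_\varepsilon)\approx\varepsilon\, i(\W_\varepsilon,\W_\varepsilon)$. To reach $\area(F_0)$ you would need a lower bound, namely the reverse inequality $i(\mu,\lambda_{F_0})^2\ge i(\mu,\mu)\,i(\lambda_{F_0},\lambda_{F_0})$ for $\mu\approx\varepsilon\W_\varepsilon$. That fails too, even for Liouville currents. Let $g_0$ be hyperbolic and $g_1=e^{2\phi}g_0$ with $\phi$ nonconstant and $C^2$-small, with Liouville currents $\lambda_0,\lambda_1$. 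Then $i(\gamma,\lambda_1)=\MLS_{g_1}(\gamma)\le\int_\gamma e^{\phi}\,ds_0$ for every closed $g_0$-geodesic. Testing against closed geodesics equidistributed for the Liouville measure of $g_0$ gives
$i(\lambda_0,\lambda_1)\le\frac{\pi}{2}\int_M e^{\phi}\,dA_0<\frac{\pi}{2}\sqrt{\area(g_0)\,\area(g_1)}=\sqrt{i(\lambda_0,\lambda_0)\,i(\lambda_1,\lambda_1)}$.

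Your ``key step'' $\ell_F(\lambda_{F_0})^2\le\frac{\pi^2}{4}\area(F)\area(F_0)$ is also false; the constant is forced by equality at $F=F_0$. Take two distinct nearby hyperbolic metrics of equal area: $Y\mapsto i(\lambda_X,\lambda_Y)$ has a strict local minimum at $Y=X$ (this is the positivity of Thurston's metric on Teichm\"uller space, computed by Wolpert). So the sharp Cauchy--Schwarz inequality between Liouville currents holds in neither direction, and no ``light-cone'' property can supply the missing lower bound.

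\textbf{What is missing.} You need the other half of the Croke--Dairbekov chain, which is precisely the bound you discarded as having ``the wrong direction.'' The hypothesis $\MLS_{F_0}\le\MLS_F$ must be used twice: once tested on an approximation of $\lambda_{F_0}$, and once on the walls of the discretization of $F$. Symmetry of $i$ then closes the argument without any Cauchy--Schwarz.

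For the first use you need a current $\mu$ attached to $F$ with $i(\gamma,\mu)\ge(1-\delta)\MLS_F(\gamma)$ for every minimizing $F_0$-geodesic $\gamma$. Replacing each wall $c_i$ by its $F_0$-geodesic representative does not give this. It only yields $i(\gamma,\bar\mu)=\sum_i w_i\, I(\alpha,[c_i])$, which can be strictly smaller than $\MLS_\W(\alpha)$.

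The paper resolves this with the de Graaf--Schrijver uncrossing (Theorem~\ref{theo:GS}):
\begin{itemize}
\item The wall graph is regrouped into curves $a_k$ with $\MLS_\W(\alpha)=\sum_k I(\alpha,\alpha_k)$.
\item Their minimal geodesic representatives form $\overline{\W}$, with $\MLS_{\overline{\W}}=\MLS_\W$ and $\area(\overline{\W})\le\area(\W)$.
\item On the $F_0$ side one uses a \emph{minimal} geodesic wall system $\W_0$ approximating $\lambda_{F_0}$. Minimality is exactly where the Anosov hypothesis and structural stability enter.
\end{itemize}
The chain is then $2\area(\W_0)=i(\W_0,\W_0)\le i(\W_0,\overline{\W})=i(\overline{\W},\W_0)\le i(\overline{\W},\overline{\W})\le 2\area(\W)$.

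This also requires $\MLS_\W\simeq_\varepsilon\MLS_F$ in \emph{both} directions, uniformly over all classes. The paper gets this from Irie/Liu--Liu applied to a perturbation of $F$, together with compactness of $\{\mu : i(\mu,\lambda)=1\}$. Your stated property, that the $F$-length of every closed curve dominates $\varepsilon$ times its crossing number, is false for curves zigzagging across a wall. In any case it is the one-sided bound that does not help.
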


In contrast with Theorem~\ref{theo:CD}, where equality between the areas of~$g_0$ and~$g$ implies that the metrics are isometric, such a rigidity statement fails for Finsler metrics.
Indeed, given any Finsler metric~$F_0$ on a closed surface~$M$, whether or not its geodesic flow is Anosov, one can construct by deformation an infinite-dimensional family of Finsler metrics, modulo isometries, with
the same marked length spectrum and the same area as~$F_0$; see Section~\ref{sec:deformation} for further details.

\medskip

The proof proceeds by approximating the Liouville geodesic currents of the Finsler metrics by wall systems. 
This provides discrete length structures whose marked length spectra and areas are arbitrarily close to those of the Finsler metrics. 
The desired area inequality then follows from a discrete version of the theorem.
 An outline of the proof is given in Section~\ref{sec:outline}.

\medskip

For completeness, we also state the analogue of our main result for the two-torus, which follows immediately from the work of Burago and Ivanov~\cite{BI} on the stable norm of Finsler tori.

\begin{theorem} \label{theo:torus}
Let $\T^2$ be the two-torus endowed with a Finsler metric~$F_0$ without conjugate points.
Let $F$ be any Finsler metric on~$\T^2$.
If $\MLS_{F_0} \leq \MLS_F$ then 
\[
\area(F_0) \leq \area(F).
\]
\end{theorem}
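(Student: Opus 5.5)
The plan is to pass from the marked length spectrum to the stable norm on $H_1(\T^2;\R)\simeq\R^2$, and then to use the two results of Burago and Ivanov~\cite{BI}. For a Finsler metric $F$ on $\T^2$, the fundamental group is $\Z^2$, and unoriented free homotopy classes correspond to pairs $\pm h$ with $h\in\Z^2$. The stable norm $\|\cdot\|_F$ on $\R^2$ is defined on integer classes by
\[
\|h\|_F=\lim_{k\to\infty}\frac{1}{k}\,\MLS_F(kh),
\]
extended by homogeneity to rational classes and by continuity to all of $\R^2$. By subadditivity this limit exists, and it satisfies $\|h\|_F\le \MLS_F(h)$. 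The hypothesis $\MLS_{F_0}\le\MLS_F$ applied to every multiple $kh$ therefore gives
\[
\|\cdot\|_{F_0}\le\|\cdot\|_F .
\]
As a consequence, the unit ball of the stable norm of $F$ is contained in that of $F_0$. Dually, the unit ball $B^*_{F_0}$ of the dual stable norm is contained in $B^*_F$, so that $\area(B^*_{F_0})\le\area(B^*_F)$ for the Lebesgue measure on $H^1(\T^2;\R)$ normalized by the integer lattice.

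The two results of~\cite{BI} to invoke are the following.

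\begin{enumerate}[label=(\roman*)]
\item For every Finsler metric $F$ on $\T^2$, the Holmes--Thompson area satisfies
\[
\area(F)\ \ge\ \area_{HT}(\R^2/\Z^2,\|\cdot\|_F),
\]
where the right-hand side is the Holmes--Thompson area of the flat torus with the constant norm given by the stable norm. Up to the normalizing constant, this equals $\area(B^*_F)/\pi$.
\item Equality holds in (i) when $F$ has no conjugate points. This is the Finsler version of Hopf's theorem. It rests on the fact that, without conjugate points, every geodesic of the universal cover is minimizing and stays at bounded distance from a straight line. The geodesic flow on the unit cotangent bundle then fibers over the space of asymptotic directions, and a calibration/averaging argument shows that the symplectic volume of the unit co-disk bundle equals that of the stable norm.
\end{enumerate}

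Combining these with the inclusion of dual balls gives
\[
\area(F_0)=c\,\area(B^*_{F_0})\le c\,\area(B^*_F)\le \area(F),
\]
where $c$ is the normalizing constant in the Holmes--Thompson area. This is the inequality claimed in Theorem~\ref{theo:torus}.

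The main obstacle is step (ii), the equality case for metrics without conjugate points in the Finsler setting. The plan is to cite it directly from~\cite{BI}, together with the reversibility assumption. The remaining checks are routine: the reversible case matches the unoriented homotopy classes used in the definition of $\MLS$, and the normalizations of the Holmes--Thompson area agree with the conventions of this paper.
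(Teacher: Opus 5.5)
Your proposal is correct and follows the paper's proof. The hypothesis gives $\|\cdot\|_{F_0}\le\|\cdot\|_F$ for the stable norms, hence an inequality between their Holmes--Thompson areas (you make this explicit through the inclusion of dual unit balls), and Burago--Ivanov's inequality $\area(\T^2,\|\cdot\|_F)\le\area(F)$, with equality when the metric has no conjugate points, completes the chain; the only cosmetic difference is that the paper identifies $\|h\|=\MLS(h)$ via Proposition~\ref{prop:classical}, whereas you apply the hypothesis to every multiple $kh$, which suffices.
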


The space of Finsler metrics without conjugate points on the two-torus is infinite-dimensional.
Indeed, any sufficiently small neighborhood of any point of a Finsler surface can be isometrically embedded into a Finsler two-torus without conjugate points; see~\cite{chen}.
Its homotopy type, with respect to the topology of uniform convergence, was recently determined in~\cite{NS}. 

\medskip

We plan to extend these results to compact surfaces with boundary in a forthcoming paper.

\medskip

This article is organized as follows. 
In Section~\ref{sec:outline}, we give an outline of the proof of the main theorem. 
In Section~\ref{sec:Anosov}, we recall the basic definitions and properties of Finsler metrics and Anosov geodesic flows. 
In Section~\ref{sec:current}, we review the theory of geodesic currents and the intersection form, and establish the length and area formulas for the Liouville geodesic current of Finsler metrics. 
In Section~\ref{sec:min}, we study minimal intersection numbers of closed curves on Finsler surfaces. 
In Section~\ref{sec:wall}, we introduce wall systems and the corresponding notions of discrete length, marked length spectrum, and area. 
In Section~\ref{sec:uncrossing}, we show that wall systems can be uncrossed and straightened into minimal geodesic wall systems while preserving their marked length spectrum and without increasing their area. 
This leads in Section~\ref{sec:discrete} to a discrete version of the main theorem.
In Section~\ref{sec:approximation_anosov}, we present approximation results for the Liouville geodesic current of Finsler metrics with Anosov geodesic flow, based on results of Sigmund and of Coles and Sharp.
In Section~\ref{sec:generic}, we establish a generic approximation result for arbitrary Finsler metrics using a theorem of Irie (more precisely, a refined version due to Liu and Liu).
In Section~\ref{sec:discretization}, we use these results to approximate the marked length spectrum and area of Finsler metrics by wall systems. 
We then prove the main theorem in Section~\ref{sec:proof}. 
In Section~\ref{sec:torus}, we discuss the corresponding result for the two-torus, and in Section~\ref{sec:deformation}, we construct Finsler deformations preserving both the marked length spectrum and the area. 
We conclude in Section~\ref{sec:open} with an open question. 

\medskip

\emph{AI disclosure:}
An AI tool was used during the editing stage to improve the presentation and identify several inaccuracies in the proofs. 
Interestingly, it suggested more direct alternative proofs for Propositions~\ref{prop:bonahon_length} and~\ref{prop:bonahon_area}.
We presented them for comparison.

\medskip

\emph{Acknowledgements:}
The authors would like to thank B\'ezout Labex (ANR-10-LABX-58), the ANR project Min-Max (ANR-19-CE40-0014) and Universit\'e Paris-Est Cr\'eteil for funding the first author's postdoctoral positions in 2019, 2023 and 2024--2025, during which the main theorem was proved.

\section{Outline of the proof of the main theorem} \label{sec:outline}

Without loss of generality, after passing to the orientable double cover if necessary, we assume that all surfaces considered in the remainder of the article are orientable.

\medskip

Before outlining the proof of our main theorem, let us go over Croke and Dairbekov's elegant argument for Theorem~\ref{theo:CD}.
Their proof relies on Bonahon's theory of geodesic currents, the properties of the intersection form on geodesic currents~\cite{Bon88}, and the density of positive multiples of closed geodesics in the space of geodesic currents~\cite{Sig}.
It can be summarized as follows.
By the orbit equivalence theorem for negatively curved metrics~\cite{Gro00} (or Theorem~\ref{theo:Anosov}.\eqref{Anosov3} for Finsler metrics whose geodesic flow is Anosov), the geodesic flows of~$g_0$ and~$g$ can be identified.
Using the intersection formula expressing the length of a closed geodesic~$\gamma$ in terms of its intersection with the Liouville geodesic current, the inequality
\[
\length_{g_0}(\gamma) \leq \length_g(\gamma)
\]
can be written as
\[
i(\gamma,\lambda_{g_0}) \leq i(\gamma,\lambda_{g})
\]
where $\lambda_{g_0}$ and~$\lambda_g$ are the Liouville geodesic currents of~$g_0$ and~$g$.
By Sigmund's density theorem for positive multiples of geodesic currents induced by closed geodesics~\cite{Sig}, together with the continuity of the intersection form, it follows that
\[
i(\mu,\lambda_{g_0}) \leq i(\mu,\lambda_{g})
\]
for every geodesic current~$\mu$.
Applying this inequality first with~$\mu=\lambda_{g_0}$ and then with~$\mu=\lambda_g$, and using the symmetry of the intersection form, we obtain
\[
i(\lambda_{g_0},\lambda_{g_0}) \leq i(\lambda_{g_0},\lambda_g) = i(\lambda_g,\lambda_{g_0}) \leq i(\lambda_g,\lambda_g).
\]
Finally, the self-intersection formula for the Liouville geodesic current expresses the area of a negatively curved surface in terms of~$i(\lambda_g,\lambda_g)$.
Therefore,
\[
\area(g_0)\leq\area(g),
\]
which proves Theorem~\ref{theo:CD}. 

\medskip

We can now give an outline of the proof of the main theorem.
The main obstacle to applying the previous argument to an arbitrary competitor metric~$F$ is that its geodesic flow need not be Anosov.
In particular, it is not necessarily orbit equivalent to the geodesic flow of~$F_0$, and we cannot use Sigmund's density theorem to approximate its Liouville geodesic current by closed geodesics.
Our strategy is to approximate Finsler metrics by discrete length structures induced by wall systems, following an approach initiated in~\cite{Cos} and further developed in~\cite{CS}.

Loosely speaking, a wall system on a surface is a finite collection of transverse closed curves with positive weights.
It determines a discrete length structure by counting intersections with the walls, with multiplicity and weights, and a discrete area by counting intersections and self-intersections between the walls.
A geodesic wall system can also be viewed as a geodesic current, namely as a positive linear combination of the geodesic currents induced by its walls.
The connection between the continuous and discrete settings is provided by the intersection form on geodesic currents.
For an arbitrary Finsler metric~$F$ with Liouville geodesic current~$\lambda$, we prove the formulas
\[
i(\gamma,\lambda) = \length_F(\gamma) \qquad \textrm{and} \qquad i(\lambda,\lambda)=\frac{\pi}{2} \area(F)
\]
initially established by Bonahon for hyperbolic metrics and later extended to metrics without conjugate points.
Similarly, if~$\W$ is a geodesic wall system and~$\gamma$ is a closed geodesic transverse to~$\W$, then
\[
i(\gamma,\W) = \length_\W(\gamma) \qquad \textrm{and} \qquad i(\W,\W) = 2 \area(\W).
\]
Moreover, if~$\W$ is a minimal geodesic wall system and~$\gamma$ is a closed geodesic of minimal length in its free homotopy class~$\alpha$, then
\[
\MLS_\W(\alpha) = i(\gamma,\W).
\]
Thus, the Liouville geodesic current of a Finsler metric and a geodesic wall system play analogous roles.

The first ingredient of the proof is a discrete comparison theorem.
Let~$\W_0$ and~$\W$ be two wall systems, with~$\W_0$ minimal.
If 
\[
\MLS_{\W_0} \leq \MLS_{\W}
\]
then
\[
\area(\W_0) \leq \area(\W).
\]
To prove this, we first uncross and straighten~$\W$ into a minimal geodesic wall system~$\overline{\W}$ without changing its discrete marked length spectrum or increasing its discrete area.
This step relies on a theorem of de Graaf and Schrijver in graph theory.
Both~$\W_0$ and~$\overline{\W}$ can then be viewed as geodesic currents.
The discrete marked length spectrum inequality translates into
\[
i(\gamma,\W_0)\leq i(\gamma,\overline{\W})
\]
for every closed geodesic~$\gamma$ minimizing in its free homotopy class.
Applying this inequality successively to the walls of~$\W_0$ and of~$\overline{\W}$, and using the symmetry of the intersection form as in Croke--Dairbekov's argument, gives
\[
2 \area(\W_0) = i(\W_0,\W_0) \leq i(\W_0,\overline{\W}) = i(\overline{\W},\W_0) \leq i(\overline{\W},\overline{\W}) = 2 \area(\overline{\W}).
\]
Therefore,
\begin{equation} \label{eq:aaa}
\area(\W_0)\leq\area(\overline{\W})\leq\area(\W).
\end{equation}

The second ingredient consists of approximating Finsler metrics by wall systems.
The general principle underlying these approximations is that an approximation of the Liouville geodesic current by positive linear combinations of geodesic currents induced by closed geodesics gives rise to approximations of the marked length spectrum and the area.
This follows from the length and area formulas expressing both the marked length spectrum and the area in terms of intersections with the Liouville geodesic current.
The compactness of the projectivized space of geodesic currents is used to make the approximation of the marked length spectrum uniform over all free homotopy classes.
At this point, we should stress that, for arbitrarily Finsler metrics, the intersection form need not be continuous everywhere, but it is still continuous if one of the geodesic currents is the Liouville geodesic current.
To apply this principle, we make use of a generic approximation theorem of Irie for Reeb flows (more precisely a refined version due to Liu and Liu).
It implies that every Finsler metric can be approximated arbitrarily closely by a reversible Finsler metric whose Liouville geodesic current can itself be approximated by positive linear combinations of geodesic currents induced by closed geodesics.
We apply this result to both~$F_0$ and~$F$.
Thus, after arbitrarily small perturbations to reversible Finsler metrics~$F'_0$ and~$F'$, we obtain $F'_0$- and $F'$-geodesic wall systems~$\W_0$ and~$\W$ such that
\[
\MLS_{\W_0} \simeq \MLS_{F_0} \qquad \textrm{and} \qquad \area(\W_0)  \simeq \frac{\pi}{4} \area(F_0)
\]
as well as
\[
\MLS_{\W} \simeq \MLS_{F} \qquad \textrm{and} \qquad \area(\W)  \simeq \frac{\pi}{4} \area(F).
\]
Now, since the geodesic flow of~$F_0$ is Anosov and the Anosov property is structurally stable, the perturbation~$F'_0$ may be chosen sufficiently small so that its geodesic flow is still Anosov.
In particular, every closed~$F'_0$-geodesic is length-minimizing in its free homotopy class, and hence the geodesic wall system~$\W_0$ is minimal.

Using these marked length spectrum and area approximations, together with the previous discrete comparison theorem, see~\eqref{eq:aaa}, we conclude the proof of the main theorem.

\forget
The second ingredient consists of approximating Finsler metrics by wall systems.
The general principle underlying these approximations is that an approximation of the Liouville geodesic current by positive linear combinations of geodesic currents induced by closed geodesics gives rise to approximations of the marked length spectrum and the area.
This follows from the length and area formulas expressing both the marked length spectrum and the area in terms of intersections with the Liouville geodesic current.
The compactness of the projectivized space of geodesic currents is used to make the approximation of the marked length spectrum uniform over all free homotopy classes.
At this point, we should stress that, for arbitrarily Finsler metrics, the intersection form need not be continuous everywhere, but it is still continuous if one of the geodesic currents is the Liouville geodesic current.
We apply this approximation principle to both Finsler metrics. 
If the geodesic flow of~$F_0$ is Anosov, Sigmund's density theorem allows us to approximate its Liouville geodesic current by positive multiples of geodesic currents induced by closed geodesics.
This yields a minimal geodesic wall system~$\W_0$ such that
\[
\MLS_{\W_0} \simeq \MLS_{F_0} \qquad \textrm{and} \qquad \area(\W_0)  \simeq \frac{\pi}{4} \area(F_0).
\]
For the arbitrary Finsler metric~$F$, we instead make use of a generic approximation theorem of Irie for Reeb flows (more precisely, a refined version due to Liu and Liu).
The resulting approximation theorem shows that, after an arbitrarily small perturbation of~$F$, its Liouville geodesic current can be approximated by positive linear combinations of geodesic currents induced by closed geodesics.
Applying the same principle yields an almost geodesic wall system~$\W$ such that
\[
\MLS_{\W} \simeq \MLS_F \qquad \textrm{and} \qquad \area(\W) \simeq \frac{\pi}{4} \area(F).
\]

Using these marked length spectrum and area approximations, together with the previous discrete comparison theorem, see~\eqref{eq:aaa}, we conclude the proof of the main theorem.

\forgotten

\section{Finsler metrics and Anosov geodesic flow} \label{sec:Anosov}

We recall basic definitions of Finsler geometry and dynamics; see~\cite{Pat}.

\begin{definition} \label{def:finsler}
A (reversible) \emph{Finsler metric} on a manifold~$M$ is a continuous function $F:TM \to [0,\infty)$ on the tangent bundle~$TM$ of~$M$ satisfying the following properties (here, $F_x:=F|_{T_x M}$ for short):
\begin{enumerate}
\item Smoothness: $F$ is smooth outside the zero section; \label{item:smooth}
\item Homogeneity: $F_x(tv) = |t| \, F_x(v)$ for every $v \in T_x M$ and $t \in \R$; \label{item:homogene}
\item Quadratic convexity: for every $x \in M$, the function~$F_x^2$ has positive definite second derivatives on $T_x M \setminus \{ 0\}$, that is, for every~\mbox{$p\in T_{x}M\setminus\{0\}$}, and $u,v \in T_x M$, the symmetric bilinear form
\[
g_p(u,v) = \frac{1}{2} \, \frac{\partial^2}{\partial s \partial t} F_x^2(p+tu+sv)|_{(s,t)=(0,0)}
\]
is positive definite. \label{item:convex}
\end{enumerate}
Unless specified otherwise, all Finsler metrics are assumed reversible.

The \emph{$F$-length} of a smooth curve~$\gamma:[a,b] \to M$ is defined by
\[
\length_F(\gamma) = \int_a^b F(\gamma'(t)) \, dt
\]
and the distance $d_F(p,q)$ between two points $p,q \in M$ is the infimum $F$-length of the curves joining~$p$ and~$q$.
\end{definition}

\forget
Consider the \emph{Legendre transform}
\begin{equation} \label{eq:Legendre}
\mathcal{L}:TM\rightarrow T^{*}M
\end{equation}
of the Lagrangian~$\frac{1}{2}F^{2}$ between the tangent and cotangent bundles of~$M$; see~\cite{Be78}. 
Since $F$ is quadratically convex, the Legendre transform~$\mathcal{L}$ is a diffeomorphism between $TM \setminus\{0\}$ and $T^{*}M\setminus\{0\}$. 
By the homogeneity of~$F$, it preserves the norm on each fiber of the bundle vectors~$TM$ and~$T^{*}M$. 
In addition, it induces a diffeomorphism between the unit tangent and cotangent bundles~$UM$ and~$U^{*}M$ of~$M$. 
Geometrically, this diffeomorphism is defined as follows: For every vector $v\in U_{x}M$, the image~$\mathcal{L}(v)$ is the unique co-vector of~$U_{x}^{*}M$ such that $\mathcal{L}(v)(v)=1$.

The quadratically convex condition (as opposed to a mere convex condition) also allows us to define the \emph{geodesic and cogeodesic flows} on~$UM$ and~$U^*M$; see~\cite{Be78}.
Note that both flows are conjugate by the Legendre transform.
They will be denoted by $\phi_t:UM \to UM$ and $\phi_t:U^*M \to U^*M$.
\forgotten


We now recall the Hamiltonian description of the geodesic flow associated with a Finsler metric; see~\cite{Pat}.

\begin{definition} \label{def:cogeodesic}
Let $M$ be a manifold with a Finsler metric.
The \emph{tautological one-form}~$\alpha=\alpha_M$ on the cotangent bundle~$T^*M$ of~$M$ is defined as
\[  
(\alpha_M)_\xi(V) = \xi (d\pi_\xi(V))
\]
for every $\xi \in T^*M$ and $V \in T_\xi T^*M$, where $\pi:T^*M \to M$ is the canonical projection.
Similarly, the \emph{standard symplectic form}~$\omega = \omega_M$ on~$T^*M$ is given by
\[  
\omega_M = d \alpha_M.
\]
Note that neither~$\alpha_M$ nor~$\omega_M$ depends on the Finsler metric on~$M$.

The restriction~$\alpha_F$ of~$\alpha_M$ to the unit cotangent bundle~$U^*M$ is a contact form.
Its Reeb vector field~$Z$, referred to as the \term{geodesic vector field}, is characterized by
\begin{align*}
\iota_Z(d\alpha_F) &= 0\\
\iota_Z(\alpha_F) &= 1
\end{align*}
where $\iota_Z$ denotes the contraction with the vector field~$Z$.
The \emph{geodesic flow} of~$M$, denoted by $\phi_t:U^*M \to U^*M$, is the flow generated by~$Z$.
Its orbits project to the geodesics of~$M$.
It follows from the Cartan formula that the forms~$\alpha_F$ and~$d\alpha_F$ are invariant under the geodesic flow on~$U^*M$. 

As usual, the unit tangent bundle~$UM$ identifies with the unit cotangent bundle~$U^*M$ under the Legendre transform $\mathcal{L}:UM \to U^*M$ defined as the diffeomorphism sending every vector $v \in U_xM$ to the unique co-vector~$\mathcal{L}(v) \in U_{x}^{*}M$ such that $\mathcal{L}(v)(v)=1$.
\end{definition}

We will consider the following notion of volume.

\begin{definition}
The \emph{Holmes--Thompson volume} of a Finsler $n$-manifold~$M$ is defined as the symplectic volume of its unit co-ball bundle~$B^*M\subseteq T^*M$, divided by the volume~$\epsilon_n$ of the Euclidean unit ball in~$\R^n$. 
That is,
\[
\vol(M)=\frac 1{\epsilon_n}\int_{B^*M}\textstyle{\frac 1{n!}} \, \omega_M^n 
\]
where $\omega_M$ is the standard symplectic form on~$T^*M$ and $\frac 1{n!} \, \omega_M^n=\frac 1{n!} \, \omega_M \wedge \dots \wedge \omega_M$ is the corresponding volume form.
The factor~$\frac 1{\epsilon_n}$ ensures that for Riemannian metrics, the Holmes--Thompson definition of volume agrees with the conventional Riemannian definition.
\end{definition}

Let us recall the notion of Anosov geodesic flow.

\begin{definition} \label{def:Anosov}
Let $M$ be a closed manifold with a Finsler metric.
The geodesic flow $\phi_t:U^*M \to U^*M$ is \emph{Anosov} if there is a splitting of the tangent bundle~$TU^*M$ of~$U^*M$ into three flow-invariant subbundles
\[
TU^*M = E_s \oplus E_u \oplus E_0
\]
such that $d{\phi_t}_{\mid E_s}$ is exponentially contracting, $d{\phi_t}_{\mid E_u}$ is exponentially expanding and $E_0$ is generated by the vector field of the geodesic flow; see~\cite[\S2.4.2]{Pat} for further details.
\end{definition}

\begin{example}
\mbox{ }
\begin{enumerate}
\item The geodesic flow of every closed Riemannian manifold with negative curvature is Anosov; see~\cite{anosov}.
\item There exist embedded closed surfaces in~$\R^3$ with an Anosov geodesic flow; see~\cite{DP}.
\end{enumerate}
\end{example}

We recall the following classical properties of Finsler metrics with Anosov geodesic flow; see~\cite{PP}, \cite[Theorem~29]{CIPP}, \cite{ghys} and~\cite{anosov}.

\begin{theorem} \label{theo:Anosov}
Let $M$ be an orientable closed surface with a Finsler metric~$F$ whose geodesic flow is Anosov.
Then,
\begin{enumerate}
\item the metric~$F$ has no conjugate points; \label{Anosov1}
\item every nontrivial unoriented free homotopy class of~$M$ contains a unique closed geodesic, up to reparametrization; \label{Anosov2}
\item the geodesic flow of~$F$ is topologically equivalent to the geodesic flow of any hyperbolic metric on~$M$; \label{Anosov3}
\item every sufficiently small $C^1$-perturbation of the geodesic flow of~$F$ is Anosov. \label{Anosov4}
\end{enumerate}
\end{theorem}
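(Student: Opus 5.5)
Theorem~\ref{theo:Anosov} collects classical facts, and the plan is to derive each item from standard dynamical results rather than reprove everything from scratch.

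\emph{Item~\eqref{Anosov1}.} I would run Klingenberg's argument, adapted to Finsler metrics as in~\cite{PP}. Suppose two points on a geodesic~$\gamma$ are conjugate. Then some nonzero Jacobi field vanishes at both points. Its lift to $TU^*M$ is a vertical vector that is carried by $d\phi_t$ back to the vertical subbundle. On a surface, the weak stable and unstable subbundles $E_s\oplus E_0$ and $E_u\oplus E_0$ are transverse to the vertical subbundle, and the vertical line, pushed forward by the flow, converges to the weak unstable bundle. The main tool is the Riccati equation for the flag curvature along~$\gamma$. It shows that $d\phi_t(V)$ rotates monotonically in one direction in the projectivized contact plane. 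Hence a vertical vector that returns to vertical must have crossed $E_s$ or $E_u$. That forces it to lie in one of them, which contradicts transversality.

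\emph{Item~\eqref{Anosov2}.} Existence of a closed geodesic in each nontrivial class follows from compactness and minimization, as noted after~\eqref{eq:MLS}. For uniqueness, lift to the universal cover~$\tilde M$. By~\eqref{Anosov1} there are no conjugate points, so the exponential map is a diffeomorphism and geodesics in~$\tilde M$ are minimizing. Two distinct closed geodesics in the same class lift to two distinct geodesics invariant under the same deck transformation, hence at bounded distance from each other. The Anosov condition rules this out, by expansivity or by the hyperbolicity of the Jacobi fields joining them, so they bound no flat strip. Orientability keeps the unoriented classes well behaved.

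\emph{Item~\eqref{Anosov3}.} This is Ghys' theorem~\cite{ghys}: an Anosov flow on the unit tangent bundle of a surface whose orbits are geodesics is orbit equivalent to a hyperbolic geodesic flow. Alternatively, one can use the Gromov-type argument~\cite{Gro00}. By~\eqref{Anosov1}, the universal cover $(\tilde M,\tilde F)$ is quasi-isometric to~$\mathbb H^2$. The Anosov property then yields the Morse lemma and the fact that geodesics are determined by their endpoints. Gromov's construction, which sends an oriented geodesic to its endpoints on $\partial\mathbb H^2$ and reparametrizes equivariantly, gives an orbit equivalence of the two flows.

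\emph{Item~\eqref{Anosov4}.} This is structural stability of Anosov flows~\cite{anosov}. Hyperbolic splittings persist under $C^1$-small perturbations of the vector field, by the cone-field criterion. I expect the genuinely delicate point to be~\eqref{Anosov1}, namely making the Klingenberg monotonicity argument rigorous for Finsler metrics. For that step I would simply cite~\cite[Theorem~29]{CIPP} and~\cite{PP}.
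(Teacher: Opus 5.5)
\textbf{Main gap: item (3).} This is the only item where the paper argues rather than cites. The result of \cite{ghys} that covers an arbitrary Anosov flow on a circle bundle over a surface, and hence a Finsler geodesic flow, is a classification \emph{up to finite covers}. It gives a finite cover $p:N_0'\to N_0=U^*_{F_0}M$ of the unit cotangent bundle of a hyperbolic metric $F_0$, with $N_0'$ diffeomorphic to $N=U^*_FM$, such that the $F$-geodesic flow is topologically equivalent to the lift of the $F_0$-geodesic flow. The direct statement you quote is made in \cite{ghys} only for Riemannian metrics, and the paper does not rely on its extension to Finsler metrics. You therefore still have to show $\deg p=1$, and your proposal never addresses this.

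The paper does it with Seifert invariants. The fibration lifts, and $\deg p=d_hd_v$, where $d_h$ is the degree of the base cover $M'\to M$ and $d_v$ the degree on fibers. Moreover $\chi(M')=d_h\,\chi(M)$ and $e(N_0')=\frac{d_h}{d_v}\,e(N_0)$. Since $N_0'\cong N\cong N_0$, uniqueness of the Seifert fibration gives equal base genus and $|e(N_0')|=|e(N_0)|$. Because $e(N_0)=\chi(M)\neq 0$, this forces $d_h=d_v=1$.

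Your Gromov-type alternative is a legitimate different route in principle, but its key input is asserted rather than proved. That input is that every pair of distinct points of $\partial\mathbb H^2$ is joined by exactly one $\tilde F$-geodesic, depending continuously on the pair. Quasi-isometry to $\mathbb H^2$ and the Morse lemma hold for any metric. Together with item~(1), they show that every $\tilde F$-geodesic has two distinct endpoints and that any two distinct points at infinity are joined by some $\tilde F$-geodesic. Uniqueness is the real content: it is essentially the universal-cover version of item~(2), which you have not established. You would also still need an equivariant reparametrization to obtain the conjugating homeomorphism.

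\textbf{Items (1), (2), (4).} The paper simply cites \cite{PP}, \cite[Theorem~29]{CIPP} and \cite{anosov} for these, but your sketches of (1) and (2) do not work as written.

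In~(1), the Riccati equation gives only the twist property: a line always crosses the vertical in the same direction. It does not make $d\phi_t(V)$ rotate monotonically, which fails wherever the flag curvature is negative. Also, transversality of $E_s$ and $E_u$ to the vertical is part of the conclusion of Klingenberg's theorem, not an available input.

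In~(2), expansivity concerns orbits that stay within the expansivity constant of each other, not merely at bounded distance. There is also no one-parameter family of geodesics between the two lifts whose variation field would be a Jacobi field; that would amount to a flat strip theorem you do not have. Either cite the references as the paper does, or derive~(2) from~(3). A topological equivalence maps closed orbits bijectively onto closed orbits. Since the fiber class generates the center of $\pi_1(U^*M)$ when $\chi(M)<0$, it induces an automorphism of $\pi_1(M)$. Uniqueness of closed geodesics in free homotopy classes then transfers from the hyperbolic metric.

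Item~(4) matches the paper.
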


\begin{proof}
The first two assertions and the last one hold in all dimensions and follow from~\cite{PP}, \cite[Theorem~29]{CIPP} and~\cite{anosov}.

The third assertion is stated in every dimension for Riemannian metrics in~\cite{ghys} and it seems that the proof also works with Finsler metrics.
The assertion can also be derived from the classification~\cite{ghys} of general Anosov flows on circle bundles over surfaces, up to finite covers.
Indeed, let $F_0$ be a hyperbolic metric on~$M$.
Denote by~$N=U^*_F M$ and $N_0=U^*_{F_0} M$ the corresponding unit cotangent bundles.
By~\cite{ghys}, there exists a finite cover $p: N_0' \to N_0$ such that $N'_0$ is diffeomorphic to~$N$ and the $F$-geodesic flow on~$N$ is topologically equivalent to the lift to~$N_0'$ of the $F_0$-geodesic flow on~$N_0$.
We claim that $p$ has degree one.

The finite cover~$p$ induces a commutative diagram
\[
\begin{tikzcd}
N_0' \ar[d] \ar[r,"p"] & N_0 \ar[d] \\
M' \ar[r] & M
\end{tikzcd}
\]
where the vertical maps are circle bundles and the bottom horizontal map is a finite cover.
By~\cite[\S10.3.7]{martelli}, the degree~$d$ of~$p$ decomposes as~$d = d_h d_v$, where $d_h$ is the degree of the cover~$M' \to M$ and $d_v$ is the degree with which a fiber of~$N_0'$ covers a fiber of~$N_0$.
Moreover, by~\cite[Proposition~10.3.22]{martelli}, we have the relations $\chi(M') = d_h \, \chi(M)$ and $e(N'_0) = \frac{d_h}{d_v} \, e(N_0)$ between Euler characteristics and Euler numbers.
Observe that $N_0'$ and $N_0$ are diffeomorphic since they are both diffeomorphic to~$N=U^*_F M$.
By~\cite[Corollary~10.2.10]{martelli}, their base surfaces have the same genus and $|e(N'_0)| = |e(N_0)|$.
Since $e(N_0) = \chi(M) \neq 0$, it follows that $d_h=1$ and $d_v=1$.
Thus, the cover $p:N_0' \to N_0$ is of degree~$1$, and hence is a diffeomorphism.
Therefore, the geodesic flows of~$F$ and~$F_0$ are topologically equivalent.
\end{proof}

We conclude this section by recalling the following classical result for reference; see~\cite[Lemma~4.2]{gro99}.

\begin{proposition} \label{prop:classical}
Let $M$ be an orientable closed surface endowed with an intrinsic (pseudo)-distance~$d$.
Let $\gamma$ be a closed curve of~$M$ of minimal length in its free homotopy class.
Then every iterate~$\gamma^k$ of~$\gamma$ is also length-minimizing in its free homotopy class.

In particular,
\[
\MLS_d(\alpha^k) = k \, \MLS_d(\alpha)
\]
for every unoriented free homotopy class~$\alpha$ of~$M$.
\end{proposition}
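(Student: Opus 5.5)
The plan is to reduce the statement to a fact about lifts of $\gamma$ to the universal cover (or to the cyclic cover associated with $\gamma$), where one compares a closed curve $c$ freely homotopic to $\gamma^k$ with $k$ closed curves freely homotopic to $\gamma$. Write $L=\length_d(\gamma)=\MLS_d(\alpha)$ where $\alpha$ is the class of $\gamma$. The inequality $\MLS_d(\alpha^k)\le kL$ is trivial, since $\gamma^k$ represents $\alpha^k$. For the reverse inequality, fix a closed curve $c$ freely homotopic to $\gamma^k$. The goal is to show $\length_d(c)\ge kL$. If $\alpha$ is trivial there is nothing to prove, so assume $\gamma$ is nontrivial. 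Since $M$ is an orientable surface, the element $[\gamma]\in\pi_1(M)$ lies in a maximal cyclic subgroup. Passing to a finite power if needed, I will work in the annular cover $\hat M=\tilde M/\langle g\rangle$, where $g$ is the deck transformation corresponding to $[\gamma]$. Here $\gamma$ lifts to a closed curve $\hat\gamma$ generating $\pi_1(\hat M)\cong\Z$, and $c$ lifts to a closed curve $\hat c$ homotopic to $\hat\gamma^k$. The pseudo-distance lifts to an intrinsic one, and lengths are preserved.

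The key step is a cut-and-paste argument in $\hat M$. First I parametrize $\hat c:[0,1]\to\hat M$, lift it to $\tilde c:\R\to\tilde M$ with $\tilde c(t+1)=g^k\tilde c(t)$, and consider its translates $g^j\tilde c$. The curves $\tilde c$ and $g\tilde c$ are both quasi-invariant "lines" under $g$. Since they are $g^k$-periodic and homotopic to the same axis, a planarity/intersection argument shows that $\tilde c$ and $g\tilde c$ must intersect. Concretely, in the annulus $\hat M$ the closed curve $\hat c$, of degree $k\geq 2$, has a self-intersection point. I would argue this by contradiction: a simple closed curve in an annulus has degree $0$ or $\pm1$. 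Cutting $\hat c$ at such a self-intersection point splits it into two closed curves $c_1,c_2$ of degrees $k_1+k_2=k$ with total length $\length(c)$. The degrees here are the classes in $\pi_1(\hat M)\cong\Z$.

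Then I proceed by induction on $|k|$. The surgery should be arranged so that both pieces have degrees of the same sign as $k$. If one piece has degree $0$ or of opposite sign, I discard it or iterate the surgery to eliminate it. Each piece of degree $j$ then has length at least $jL$ by induction, with the base case $j=1$ given by the minimality of $\gamma$: a curve in $\hat M$ of degree $1$ projects to a curve freely homotopic to $\gamma$, so it has length at least $L$. Summing gives $\length(c)\ge kL$. The main obstacle is guaranteeing the sign condition. A self-intersection of a degree-$k$ curve could a priori split it into pieces of degrees $k+m$ and $-m$. To control this, I would choose the self-intersection cleverly. The approach I would try first is to take the lift $\tilde c$ and the first translate $g^j\tilde c$ it meets, and cut at a point where $\tilde c(t)=g\tilde c(s)$ with $t-s$ minimal. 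This gives a subarc projecting to a loop of degree exactly $1$, and the remaining loop has degree $k-1$.

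Finally, the identity $\MLS_d(\alpha^k)=k\,\MLS_d(\alpha)$ follows from the two inequalities. For unoriented classes, $\alpha^k$ is well defined up to orientation, and orientation reversal does not affect lengths. The case where $\MLS_d(\alpha)$ is not attained causes no difficulty: I apply the argument to nearly minimizing curves, or note that the inequality $\length(c')\ge L$ for degree-one curves is all that is used.
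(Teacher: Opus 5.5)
Your overall route is sound in principle: lift to the annulus $\hat M=\tilde M/\langle g\rangle$, cut $\hat c$ at a self-intersection, induct on $k$, and take the base case from the minimality of $\gamma$. (The paper itself gives no argument to compare with; it only cites a lemma of Gromov.) But the step you flag as the main obstacle is not resolved, and your proposed resolution does not work as stated.

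First, you never show that $\tilde c$ meets $g\tilde c$. A self-intersection of $\hat c$ only says that $\tilde c$ meets $g^j\tilde c$ for some uncontrolled $j$. The usual ``sides'' argument for $\tilde c\cap g\tilde c\neq\emptyset$ needs $\tilde c$ to be an embedded line, which you do not have.

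Second, even granting $\tilde c(t)=g\tilde c(s)$, write $t-s=m+r$ with $m\in\Z$ and $0\le r<1$. From $\tilde c(u+1)=g^k\tilde c(u)$ you get $\tilde c(s+r)=g^{1-km}\tilde c(s)$. Hence $r\neq 0$, and cutting $\hat c$ at $\hat c(s)=\hat c(s+r)$ produces loops of degrees $1-km$ and $k-1+km$. These are $1$ and $k-1$ only when $0<t-s<1$. For $m\neq 0$ they have opposite signs, which is exactly the bad case. Minimizing $t-s$ (over positive values) lands in $(0,1)$ only if such a pair is already known to exist, so the argument is circular at the crucial point. Discarding or ``iterating away'' a piece of the wrong sign raises the degree of what remains, which an induction on $|k|$ cannot absorb.

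What is missing is a topological lemma: a closed curve of degree $k\ge 2$ in an annulus has a self-intersection $\hat c(s)=\hat c(t)$, $s<t<s+1$, whose sub-loop $\hat c|_{[s,t]}$ has degree $j\in\{1,\dots,k-1\}$. It is true but needs proof. One way, for curves with finitely many double points, is to induct on the number of double points. Fix a base point $p$ and look for a double point whose sub-loop avoiding $p$ has degree in $[1,k-1]$.

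\begin{itemize}
\item If that sub-loop has degree $d\ge k$, apply the induction hypothesis inside it and repeat. The degrees strictly decrease and stay $\ge 1$, so this stops in $[1,k-1]$.
\item If $d\le 0$, apply the hypothesis to the complementary loop through $p$, which has degree $k-d\ge k$. Its good sub-loop either avoids the junction point, and is then a sub-loop of $\hat c$ handled as above. Or it contains the junction, and then yields a double point of $\hat c$ with strictly larger parameter interval and sub-loop degree at most $k-1$. If that degree is $\ge 1$ you are done; otherwise repeat, and finiteness of the double points ends the process.
\end{itemize}

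For an arbitrary continuous curve, approximate uniformly by such curves and pass to the limit in $\tilde c_n(t_n)=g^{j}\tilde c_n(s_n)$. The limit cannot have $t=s$ or $t=s+1$, because $g^j$ and $g^{j-k}$ act freely. Since length is additive under cutting, this closes your induction on $k$ with no regularity assumption on $d$.

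Alternatively, for curves with finitely many double points you can drop sign control entirely by inducting on the number of double points rather than on $k$, since $\length(c)=\length(c_1)+\length(c_2)\ge(|k_1|+|k_2|)L\ge|k|L$. But then you must approximate arbitrary curves by such curves with small length increase. That is easy for Finsler metrics and wall systems, but not for a general intrinsic pseudo-distance.
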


\section{Geodesic currents and intersection form} \label{sec:current}

In this section, we recall the definition of geodesic currents as holonomy-invariant transverse measures for the geodesic foliation and introduce their intersection form. 
Working directly with transverse measures allows us to deal with arbitrary Finsler metrics, without any curvature or conjugate-points assumption. 
We then establish the basic properties of the Liouville geodesic current, including its intersection with closed geodesics and with itself. 
Finally, we introduce the notion of filling geodesic currents and prove the compactness of a normalized space of currents, from which the compactness of the projectivized space of geodesic currents follows.
All these classical results are due to Bonahon for hyperbolic metrics; see~\cite{Bon86} and~\cite{Bon88}.
The proofs we present apply to general Finsler metrics.

\medskip

Let us recall the notion of geodesic currents introduced by Bonahon~\cite{Bon86}.
See also~\cite{Bon88}, \cite{AL} and~\cite{EU} for further details.
In the following definition, we work on the projective cotangent bundle~$PT^*M$ rather than on the projective tangent bundle~$PTM$ as in the original definition.
The two formulations are equivalent via the Legendre transform.

\begin{definition}
Let $M$ be a closed surface endowed with a Finsler metric.
Let $\mathcal{F}$ be the geodesic foliation on the projective cotangent bundle~$PT^*M$.
A \emph{geodesic current} of~$M$ is a holonomy-invariant transverse measure for the geodesic foliation~$\mathcal{F}$ of~$PT^*M$.
More precisely, a geodesic current~$\mu$ assigns to every immersed surface $q:\Sigma \to PT^*M$ transverse to~$\mathcal{F}$ a Radon measure~$q^*\mu$ on~$\Sigma$ (also denoted by~$\mu_\Sigma$ when $q$ is the inclusion map), and this assignment is invariant by holonomy, that is, $q'^* \mu=q^* \mu$ for any other immersion $q':\Sigma \to PT^*M$ homotopic to~$q$ by a homotopy along the leaves of~$\mathcal{F}$.


The space of geodesic currents of~$M$, denoted by~$\Curr(M)$, is endowed with the vague topology.
It is the coarsest topology for which the map
\[
\mu \mapsto \int_\Sigma f \, d\mu_\Sigma
\]
is continuous for every compactly supported continuous function~$f$ on every transverse surface~$\Sigma \subseteq PT^*M$.
Furthermore, the space of geodesic currents is metrizable; see Proposition~\ref{prop:metrizable}.
\end{definition}

\begin{remark}
Alternatively, a geodesic current can be viewed as an invariant transverse measure to the geodesic foliation of~$U^*M$ which is invariant under the antipodal map $\xi \mapsto -\xi$ of~$U^*M$.
This point of view will be useful in the definition of the Liouville geodesic current; see Definition~\ref{def:Liouville_current}.
\end{remark}

We have the following fundamental example of geodesic currents.

\begin{example}
Every closed geodesic~$\gamma$ induces a geodesic current~$\mu_\gamma$ such that for every surface~$\Sigma \subseteq PT^*M$ transverse to~$\mathcal{F}$, the measure~$\mu_{\gamma,\Sigma}$ induced by~$\mu_\gamma$ on~$\Sigma$ is the sum of the Dirac measures supported at the cotangent lines associated with~$\gamma$ under the Legendre transform, counted with multiplicity.
More precisely, the current~$\mu_\gamma$ is defined by
\[
q^*\mu_\gamma(A) = \card \{ (\sigma, t) \in A \times S^1 \mid q(\sigma) = \hat{\gamma}(t) \},
\]
for any immersed surface $q:\Sigma \to PT^*M$ tranverse to the geodesic flow and any Borel set $A \subseteq \Sigma$.
Here, the curve~$\hat{\gamma}$ denotes the lift of~$\gamma$ to~$U^*M$.
In practice, we will also denote~$\mu_\gamma$ by~$\gamma$.
\end{example}

We will also need the notion of Liouville geodesic current.
We present here its definition in terms of invariant transverse measures.
Alternative definitions have been introduced in~\cite{Bon88} in terms of measures on the space of geodesic lines in the universal cover (often identified through their endpoints at infinity).
However, these descriptions are not suitable in the presence of conjugate points when the space of geodesic lines is no longer independent of the metric.

\begin{definition} \label{def:Liouville_current}
Let $M$ be a closed surface endowed with a Finsler metric~$F$.
The \emph{Liouville geodesic current}~$\lambda$ of~$M$ is defined so that on every surface~$\Sigma \subseteq PT^*M$ transverse to the geodesic foliation
\[
\lambda_{\Sigma} = \tfrac{1}{2} \left| \omega|_{\bar{\Sigma}} \right|
\]
where $\omega$ is the restriction to~$U^*M$ of the canonical symplectic form of~$T^*M$ and $\bar{\Sigma}$ is either lift of~$\Sigma$ to~$U^*M$.
Since $\omega$ is invariant under the geodesic flow, the transverse measure~$\lambda$ is invariant by holonomy.
Moreover, since the antipodal map of~$U^*M$ sends~$\omega$ to~$-\omega$, it is also invariant under the antipodal map.
\end{definition}

Let us mention the following topological result. See~\cite[Proposition~4.1]{Bon86}.

\begin{proposition} \label{prop:metrizable}
The space~$\Curr(M)$ of geodesic currents is a Polish space (\ie, a separable, completely metrizable topological space).
\end{proposition}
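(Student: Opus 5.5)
The plan is to identify $\Curr(M)$ with a closed subset of a space of Radon measures on a fixed second-countable locally compact space. From there, separability and complete metrizability follow from the standard facts about vague topologies.

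First, I will choose a countable family of small closed transverse discs $\Sigma_1,\Sigma_2,\dots\subseteq PT^*M$ whose interiors meet every leaf of $\mathcal{F}$. Compactness of $PT^*M$ lets me take even a finite family $\Sigma_1,\dots,\Sigma_N$ of flow boxes covering $PT^*M$. Using holonomy invariance, I will show that a geodesic current is determined by its restrictions $\mu_{\Sigma_i}$. Indeed, any transverse surface $q:\Sigma\to PT^*M$ can be cut, by a locally finite partition, into pieces $A$ each of which is holonomy-homotopic into some $\Sigma_i$. Then $q^*\mu(A)=\mu_{\Sigma_i}(h(A))$ for the holonomy map $h$. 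Conversely, a family $(\nu_i)$ of Radon measures on the $\Sigma_i$ defines a current exactly when it is compatible with the holonomy pseudogroup generated by the overlaps of the flow boxes. This gives a map
\[
\Phi:\Curr(M)\to \prod_{i=1}^N \mathcal{M}(\Sigma_i), \qquad \mu\mapsto(\mu_{\Sigma_i})_i,
\]
which is injective. Its image is cut out by the compatibility conditions $\nu_j|_{U}=h_*(\nu_i|_{U'})$ for the holonomy maps $h:U'\to U$ between open subsets of the discs.

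Second, I will check that $\Phi$ is a homeomorphism onto its image and that the image is closed. The topology on $\Curr(M)$ is defined by testing against compactly supported continuous functions on arbitrary transverse surfaces. For such an $f$ on $\Sigma$, a partition of unity subordinate to the decomposition above writes $\int f\,d\mu_\Sigma$ as a finite sum of integrals $\int g_k\,d\mu_{\Sigma_{i_k}}$, with $g_k$ compactly supported continuous on the open discs. So the vague topology coincides with the topology pulled back by $\Phi$. Closedness holds because each compatibility condition is an equality of integrals $\int g\,d\nu_j=\int g\circ h\,d\nu_i$ for $g\in C_c(U)$, and each such condition is vaguely closed.

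Finally, for a second-countable locally compact Hausdorff space $X$ (here the open discs), the space of positive Radon measures with the vague topology is Polish. This is classical: metrize it via a countable dense family in $C_c(X)$, with completeness coming from the Riesz representation theorem. Finite products and closed subsets of Polish spaces are Polish, so $\Curr(M)$ is Polish.

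The main obstacle I expect is the first step. One must arrange the cutting so that the pieces $A$, and their holonomy images, are handled by functions that remain continuous and compactly supported inside the interiors of the discs, since boundary effects spoil vague continuity. I would handle this by taking each $\Sigma_i$ slightly larger than a core disc $\Sigma_i'$, with the cores still meeting every leaf. Every holonomy translate used then lands in the interior of some $\Sigma_i$, and the partition-of-unity functions can be chosen with support there.
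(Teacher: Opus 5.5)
Your proposal is correct and follows the same route as the paper: you take finitely many flow boxes, embed $\Curr(M)$ as a closed subset of a finite product of vague-topology spaces of Radon measures, cut out by the holonomy compatibility conditions, and then invoke the classical fact that these measure spaces are Polish. Your use of open, slightly enlarged transverse discs and your partition-of-unity argument for the homeomorphism onto the image supply boundary details that the paper leaves implicit, since it works with compact $\Sigma_i$.
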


\begin{proof}
Since $PT^*M$ is compact, there exist finitely many flow boxes $B_i \simeq \Sigma_i \times [-1,1]$ whose interiors cover~$PT^*M$, where each~$\Sigma_i$ is a compact surface transverse to~$\mathcal{F}$.
A geodesic current~$\mu$ is determined by the Radon measures ~$\mu_i = \mu_{\Sigma_i}$ it induces on the transverse surfaces~$\Sigma_i$, which satisfy the holonomy compatibility conditions.
These compatibility conditions are closed.
Thus, $\Curr(M)$ can be identified homeomorphically with a closed subset of the product~$\prod_i \mathcal{M}^+(\Sigma_i)$, where $\mathcal{M}^+(\Sigma_i)$ denotes the space of Radon measures on~$\Sigma_i$ (which is a closed cone in the vector space $\mathcal{M}(\Sigma_i)$ of signed Radon measures endowed with the vague topology).
By~\cite[15.7.7]{kallenberg}, the space~$\mathcal{M}^+(\Sigma_i)$ of Radon measures on~$\Sigma_i$ is Polish for the vague topology.
The same holds for every closed subset of the product space~$\prod_i \mathcal{M}^+(\Sigma_i)$, and in particular for the space of geodesic currents~$\Curr(M)$.
\end{proof}

Let us now define Bonahon's intersection form between geodesic currents; see~\cite{Bon86}.

\begin{definition}
Let $M$ be a closed surface endowed with a Finsler metric.
Let $PT^*M \times_M PT^*M$ be the Whitney sum of the projective tangent bundle $PT^*M \to M$, which is the closed $4$-manifold consisting of triples~$(x,\xi_1,\xi_2)$ where $x \in M$ and $\xi_1, \xi_2$ are cotangent lines to~$M$ at~$x$.
Denote by $p_i:PT^*M \times_M PT^*M \to PT^*M$ the projection to the $i$-th factor defined by
\[
p_i(x,\xi_1,\xi_2) = (x,\xi_i).
\]
Let $\mathcal{F}_i=p_i^*(\mathcal{F})$ be the codimension-two foliation of~$PT^*M \times_M PT^*M$ whose leaves are the preimages under~$p_i$ of the leaves of~$\mathcal{F}$.
The foliations~$\mathcal{F}_1$ and~$\mathcal{F}_2$ are transverse outside the diagonal $\Delta = \{ (x,\xi,\xi) \mid (x,\xi) \in PT^*M \}$ of $PT^*M \times_M PT^*M$.

For every $\mu \in \Curr(M)$, define the measure~$p_i^*(\mu)$ transverse to~$\mathcal{F}_i$ by the formula
\begin{equation} \label{eq:qhat}
\check{q}^* \left( p_i^*(\mu) \right) = (p_i \circ \check{q})^* \mu
\end{equation}
for every immersed surface $\check{q}:\Sigma \to PT^*M \times_M PT^*M$ transverse to~$\mathcal{F}_i$.

Let $\mu_1, \mu_2 \in \Curr(M)$ be two geodesic currents.
Since the foliations~$\mathcal{F}_1$ and~$\mathcal{F}_2$ are transverse away from~$\Delta$, the transverse measures~$p_1^*(\mu_1)$ and~$p_2^*(\mu_2)$ combine to define a product measure~$p_1^*(\mu_1) \times p_2^*(\mu_2)$ on $PT^*M \times_M PT^*M \setminus \Delta$.
More precisely, by transversality, every sufficiently small neighborhood~$U$ in~$PT^*M \times_M PT^*M \setminus \Delta$ admits a bifoliated chart $\Sigma_1 \times \Sigma_2 \to U$, where $\Sigma_1$ lies in a leaf of~$\mathcal{F}_2$ and $\Sigma_2$ lies in a leaf of~$\mathcal{F}_1$, so that the horizontal surfaces~$\Sigma_1 \times \{ \sigma_2 \}$ are sent into leaves of~$\mathcal{F}_2$ and the vertical surfaces~$\{ \sigma_1 \} \times \Sigma_2$ are sent into leaves of~$\mathcal{F}_1$.
In such a chart, the measure~$p_1^*(\mu_1) \times p_2^*(\mu_2)$ is defined as the pushforward of the ordinary product measure
\begin{equation} \label{eq:product}
p_1^*(\mu_1)_{\Sigma_1} \times p_2^*(\mu_2)_{\Sigma_2}.
\end{equation}
This definition does not depend on the choice of the bifoliated chart.
Indeed, on the overlap of two such charts, the corresponding horizontal surfaces are related by holonomy along the leaves of~$\mathcal{F}_1$, while the corresponding vertical surfaces are related by holonomy along the leaves of~$\mathcal{F}_2$.
Since $p_1^*(\mu_1)$ and~$p_2^*(\mu_2)$ are invariant under the corresponding holonomy maps, the product measures agree on the overlap.
The measure~$p_1^*(\mu_1) \times p_2^*(\mu_2)$ thus-defined on $PT^*M \times_M PT^*M \setminus \Delta$ has finite mass; see Proposition~\ref{prop:continuity}.

The intersection form
\[
i:\Curr(M) \times \Curr(M) \to \R_+
\]
is defined as
\[
i(\mu_1,\mu_2) = \mass(p_1^*(\mu_1) \times p_2^*(\mu_2)).
\]
It is a symmetric bilinear form.
(Technically, it is bilinear with respect to nonnegative linear combinations since the space of geodesic currents is only a cone, not a vector space.)
\end{definition}

\begin{example} \label{ex:intersection}
For every pair of closed geodesics~$\gamma_1$ and~$\gamma_2$, the intersection~$i(\gamma_1,\gamma_2)$ between the geodesic currents they induce is equal to the number of transverse intersections between~$\gamma_1$ and~$\gamma_2$, counted with multiplicity.
Indeed, by the local definition of the product measure, see~\eqref{eq:product}, the total mass of~$p_1^*(\gamma_1) \times p_2^*(\gamma_2)$ is equal to the number of triples~$(x,\xi_1,\xi_2) \in PT^*M \times_M PT^*M$, where $x$ is an intersection point of~$\gamma_1$ and~$\gamma_2$, and $\xi_1$ and~$ \xi_2$ are distinct lines tangent to~$\gamma_1$ and~$\gamma_2$ at~$x$.
This is exactly the number of intersections between~$\gamma_1$ and~$\gamma_2$ counted with multiplicity.
See~\cite[Proposition~4.5]{Bon86}.
\end{example}

The following fundamental properties of the intersection form were first established in~\cite{Bon86} and~\cite{Bon88} for hyperbolic metrics; see also~\cite{Otal} for negatively curved Riemannian metrics and~\cite{Fathi} for Riemannian metrics without conjugate points.
We extend these results to arbitrarily Finsler metrics, without any assumption on conjugate points.
To this end, we work directly with the original definition of geodesic currents as invariant transverse measures, rather than with their alternative description as measures on the space of geodesic lines in the universal cover.

\medskip

The first property concerns the continuity of the intersection form, which was first proved in~\cite[Proposition~4.5]{Bon86} for hyperbolic metrics.
For arbitrary Finsler or Riemannian metrics, however, the intersection form need not be continuous everywhere.
For example, on the round sphere, if $\gamma_n$ is a sequence of great circles converging to a great circle~$\gamma$ with $\gamma_n \neq \gamma$, then $i(\gamma_n,\gamma)=2$ while $i(\gamma,\gamma)=0$.
We show, however, that the intersection form is continuous at every pair of geodesic currents with no atoms in common, which is enough for our purposes.

\begin{proposition} \label{prop:continuity}
Let $M$ be a closed surface endowed with a Finsler metric~$F$.
Then the intersection form 
$
i:\Curr(M) \times \Curr(M) \to \R_+
$
is finite-valued, and continuous at each pair of geodesic currents~$(\mu_1,\mu_2)$ that have no atom in common.
\end{proposition}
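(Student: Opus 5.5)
The plan is to split the mass of $p_1^*(\mu_1)\times p_2^*(\mu_2)$ on $PT^*M\times_M PT^*M\setminus\Delta$ into a part away from the diagonal and a part near it, and to control each separately. Away from $\Delta$ everything is compact and bifoliated, so finiteness and continuity are straightforward there. Near $\Delta$ one needs a geometric argument: two geodesics that are nearly tangent can intersect only once inside a short geodesic segment.

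First, fix $\delta>0$ and a closed neighborhood $\Delta_\delta$ of $\Delta$ consisting of triples $(x,\xi_1,\xi_2)$ whose angle, for a fixed auxiliary Riemannian metric, is less than $\delta$. The complement $K_\delta$ is compact. Cover it by finitely many bifoliated charts $\Sigma_1\times\Sigma_2$ with relatively compact $\Sigma_i$, and take a partition of unity subordinate to this cover. In each chart the mass is bounded by $\mu_1(\Sigma_1)\mu_2(\Sigma_2)<\infty$, since Radon measures are finite on compact transversals. This gives finiteness on $K_\delta$. For continuity on $K_\delta$, note that in each chart the integral of a continuous compactly supported function against a product measure depends continuously on the pair under vague convergence of the factors. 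This is a standard fact: approximate by sums of products $f_1(\sigma_1)f_2(\sigma_2)$ via Stone--Weierstrass. The boundary of $K_\delta$ could carry mass, so I would use the partition of unity with smooth cutoffs in the angle variable, which handles it.

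Second, estimate the mass near the diagonal. Cover $PT^*M$ by finitely many small flow boxes $B\simeq\Sigma\times[-\epsilon,\epsilon]$, where each plaque is a geodesic segment of length $\epsilon$ smaller than the injectivity and convexity scale. Choose $\epsilon$ so that two distinct geodesic segments of length at most $2\epsilon$ whose directions are $\delta$-close intersect at most once; this holds for $\delta$ small by a local uniqueness argument for ODEs. A point of $\Delta_\delta$ lies over a pair of nearby leaves through the same box. Hence the mass of $\Delta_\delta$ is bounded by a sum over boxes of
\[
(\mu_1\times\mu_2)\bigl(\{(\sigma_1,\sigma_2)\in\Sigma\times\Sigma \mid 0<d(\sigma_1,\sigma_2)<C\delta\}\bigr),
\]
a neighborhood of the diagonal in $\Sigma\times\Sigma$ with the diagonal itself removed. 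This bound is finite, which gives finiteness of $i$. Moreover, as $\delta\to0$ it tends to $\sum_{B}(\mu_1\times\mu_2)(\mathrm{diag}\setminus\mathrm{diag})=0$ by $\sigma$-additivity, so the contribution of $\Delta_\delta$ vanishes for fixed currents.

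Third, and this is the main obstacle, the smallness near the diagonal must be made uniform along sequences $\mu_1^n\to\mu_1$ and $\mu_2^n\to\mu_2$. The $\mu_1^n\times\mu_2^n$-mass of a closed $C\delta$-neighborhood of the full diagonal of $\Sigma\times\Sigma$, diagonal included, satisfies a limsup bound by the portmanteau theorem. It is at most the $\mu_1\times\mu_2$-mass of that closed neighborhood, and this tends as $\delta\to0$ to $(\mu_1\times\mu_2)(\mathrm{diag})=\sum_{a}\mu_1(\{a\})\mu_2(\{a\})$. That sum is zero precisely because $\mu_1$ and $\mu_2$ have no atom in common, which is where the hypothesis enters. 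Combining this uniform estimate with the continuity on $K_\delta$ from the first step gives $i(\mu_1^n,\mu_2^n)\to i(\mu_1,\mu_2)$. Metrizability from Proposition~\ref{prop:metrizable} justifies working with sequences.
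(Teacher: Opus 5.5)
Your proof is correct. Its architecture matches the paper's: split off a neighborhood of $\Delta$, use the fact that distinct plaques of a small flow box meet at most once, and apply portmanteau together with vague continuity of the product away from $\Delta$. The uniformity step near the diagonal, however, is done differently.

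\textbf{The paper's route.} The paper chooses the flow boxes $B_j$ depending on $(\mu_1,\mu_2)$, so that on each $\Sigma_j$ at least one of the two currents has mass $<\delta$. This is possible pointwise by the no-common-atom hypothesis, and then globally by compactness. The crude bound $\mass(\nu_1|_{\Sigma_j})\,\mass(\nu_2|_{\Sigma_j})$ is then combined with upper semicontinuity of $\nu\mapsto\nu(\Sigma_j)$, applied to each factor separately. This makes the mass of $U\setminus\Delta$, where $U=\Int(\bigcup_j B_j\times_M B_j)$, uniformly small near $(\mu_1,\mu_2)$.

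\textbf{Your route.} You fix a geometric cover once and for all and shrink an angle neighborhood $\Delta_\delta$. You bound its mass by the product mass of a $C\delta$-neighborhood of the diagonal of $\Sigma\times\Sigma$, and apply portmanteau to the product measures themselves. The hypothesis then enters through the identity $(\mu_1\times\mu_2)(\mathrm{diag})=\sum_a\mu_1(\{a\})\mu_2(\{a\})$.

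\textbf{What each buys.} Your version makes the role of the hypothesis completely transparent and keeps the cover independent of the currents. The cost is that you need vague convergence of products on $\Sigma\times\Sigma$ near the diagonal; you justify this via Stone--Weierstrass, whereas the paper only asserts the analogous continuity away from $\Delta$. You also need Lebesgue-number and Lipschitz-holonomy bookkeeping to compare $\Delta_\delta$ with neighborhoods of the diagonal in $\Sigma\times\Sigma$. The paper's version needs only single-factor semicontinuity near $\Delta$, but requires a measure-dependent cover.

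\textbf{Two small fixes.}
\begin{enumerate}
\item The at-most-once intersection property holds for any two distinct sufficiently short geodesic segments, by uniqueness of short geodesics in a convex ball, with no angle condition. So fix the cover first and then take $\delta$ below its Lebesgue number, rather than choosing $\epsilon$ in terms of $\delta$.
\item Take $\Delta_\delta$ open (angle $<\delta$), so that its complement $K_\delta$ is closed and hence compact.
\end{enumerate}
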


\begin{proof}
We follow the argument of~\cite[Proposition~4.5]{Bon86}.
Let $\nu_1, \nu_2 \in \Curr(M)$.
By definition,
\[
i(\nu_1,\nu_2) = p_1^*(\nu_1) \times p_2^*(\nu_2) (PT^*M \times_M PT^*M \setminus \Delta)
\]
where $\Delta$ is the diagonal of~$PT^*M \times_M PT^*M$.

We first show that the intersection form is finite.
Choose finitely many flow boxes $B_j \simeq \Sigma_j \times [-1,1]$ whose interiors cover~$PT^*M$, where $\Sigma_j$ is a compact surface transverse to~$\mathcal{F}$.
Taking the boxes sufficiently small, we may assume that any pair of distinct portions of leaves lying in flow boxes project to simple geodesic arcs in~$M$ intersecting at most once.
For $B_j \times_M B_k := p_1^{-1}(B_j) \cap p_2^{-1}(B_k)$, it follows from the definition of the product measure that
\begin{equation} \label{eq:mass}
p_1^*(\nu_1) \times p_2^*(\nu_2)(B_j \times_M B_k \setminus \Delta) \leq \mass({\nu_1}|_{\Sigma_j}) \, \mass({\nu_2}|_{\Sigma_k}).
\end{equation}
Since finitely many~$B_j \times_M B_k$ cover~$PT^*M \times_M PT^*M$, this proves that $i(\nu_1,\nu_2)$ is finite.

We now prove continuity at $(\mu_1,\mu_2)$ with no atom in common.
Fix~$\varepsilon >0$.
We first claim that there exist an open neighborhood~$U$ of~$\Delta$ in~$PT^*M \times_M PT^*M$ 
and a neighborhood~$\mathcal{U}$ of~$(\mu_1,\mu_2)$ in~$\Curr(M) \times \Curr(M)$ such that
\begin{equation} \label{eq:U-D}
p_1^*(\nu_1) \times p_2^*(\nu_2)(U \setminus \Delta) < \varepsilon
\end{equation}
for every $(\nu_1,\nu_2) \in \mathcal{U}$.

To prove the claim, fix~$\delta>0$.
We may choose the above flow boxes~$B_j$ such that 
\[
\mass({\mu_1}|_{\Sigma_j}) < \delta \qquad \mbox{ or } \qquad \mass({\mu_2}|_{\Sigma_j}) < \delta.
\]
This is possible because $\mu_1$ and~$\mu_2$ have no atom in common, and every massless point has neighborhoods of arbitrarily small mass.

Recall that the space of geodesic currents in metrizable; see Proposition~\ref{prop:metrizable}.
By the portmanteau theorem~\cite[Theorem~2.1.(iii)]{billingsley}, there exists a neighborhood~$\mathcal{U}$ of~$(\mu_1,\mu_2)$ such that for every $(\nu_1,\nu_2) \in \mathcal{U}$, 
\begin{equation} \label{eq:or}
\mass({\nu_1}|_{\Sigma_j}) < \delta \qquad \mbox{ or } \qquad \mass({\nu_2}|_{\Sigma_j}) < \delta.
\end{equation}
Shrinking $\mathcal{U}$ if necessary, we may also assume that
\[
\mass({\nu_i}|_{\Sigma_j}) \leq C
\]
for some~$C>0$ depending on~$(\mu_1,\mu_2)$, but not on~$(\nu_1,\nu_2)$.

The set $U=\Int(\cup_j B_j \times_M B_j)$ is an open neighborhood of~$\Delta$.
By~\eqref{eq:mass} and~\eqref{eq:or}, we have
\[
p_1^*(\nu_1) \times p_2^*(\nu_2)(B_j \times_M B_j \setminus \Delta) < C \delta.
\]
Since there are finitely many flow boxes~$B_j$, it follows that
\[
p_1^*(\nu_1) \times p_2^*(\nu_2)(U \setminus \Delta) < C'  \delta
\]
for some~$C'>0$ depending on~$(\mu_1,\mu_2)$, but not on~$(\nu_1,\nu_2)$.
Choosing $\delta$ sufficiently small gives~\eqref{eq:U-D}.

Define the compact set $K=PT^*M \times_M PT^*M \setminus U$.
Slightly deforming~$K$ (by enlarging it away from~$\Delta$), we may assume
\[
p_1^*(\mu_1) \times p_2^*(\mu_2)(\partial K) = 0.
\]
Now, the map $(\nu_1,\nu_2) \mapsto p_1^*(\nu_1) \times p_2^*(\nu_2)$ to the space of Radon measures on~$PT^*M \times_M PT^*M \setminus \Delta$  is continuous for the vague topology.
Hence, by the portmanteau theorem, there exists a neighborhood~$\mathcal{U}'$ of~$(\mu_1,\mu_2)$ such that
\begin{equation} \label{eq:K}
\left| p_1^*(\nu_1) \times p_2^*(\nu_2)(K) - p_1^*(\mu_1) \times p_2^*(\mu_2)(K) \right| < \varepsilon.
\end{equation}
for every $(\nu_1,\nu_2) \in \mathcal{U}'$.

Since $U$ and~$K$ cover~$PT^*M \times_M PT^*M$, the continuity of the intersection form at~$(\mu_1,\mu_2)$ follows from~\eqref{eq:U-D} and~\eqref{eq:K}.
\end{proof}

The second property is a geometric interpretation of the intersection form between a geodesic current induced by a closed geodesic and another geodesic current.
See~\cite[Proposition~14]{Bon88}.

\begin{proposition} \label{prop:transverse}
Let $M$ be a closed surface endowed with a Finsler metric~$F$.
Let $\gamma$ be a closed geodesic whose lift to~$U^*M$ is denoted by~$\hat{\gamma}$.
Let~$\xi_\gamma(t)$ be the cotangent line generated by~$\hat{\gamma}(t)$.
Define
\[
\Sigma_\gamma = \{ (t,\xi) \mid t \in S^1, \xi \in PT^*_{\gamma(t)}M, \xi \neq \xi_\gamma(t) \}.
\]
The map 
$
q_\gamma: \Sigma_\gamma \to PT^*M
$
defined by $q_\gamma(t,\xi)=(\gamma(t),\xi)$ is an immersion transverse to the geodesic foliation~$\mathcal{F}$.
Thus, every geodesic current~$\mu$ induces a measure~$q_\gamma^* \mu$ on~$\Sigma_\gamma$.
Then,
\[
i(\gamma,\mu) = \mass(q_\gamma^* \mu).
\]
\end{proposition}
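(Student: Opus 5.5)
The plan has two parts: first the transversality of $q_\gamma$, then the mass identity, obtained by computing $p_1^*(\gamma)\times p_2^*(\mu)$ in a convenient bifoliated chart near the lift of~$\gamma$.

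\textbf{Transversality.} Since $\Sigma_\gamma$ is $2$-dimensional and $PT^*M$ is $3$-dimensional, it suffices to check that the geodesic vector field at $(\gamma(t),\xi)$ is not tangent to $q_\gamma(\Sigma_\gamma)$. The image of $dq_\gamma$ is spanned by two vectors: the vertical direction (varying $\xi$ in the fiber) and a vector projecting to $\gamma'(t)$. The generator of $\mathcal F$ at $(\gamma(t),\xi)$ projects to the tangent line of the geodesic with covector~$\xi$, that is, to $\mathcal L^{-1}(\xi)$. Since $\xi\neq\xi_\gamma(t)$, this line is not $\mathbb R\gamma'(t)$. Hence the projection to $TM$ of the image of $dq_\gamma$, which is the line $\mathbb R\gamma'(t)$, does not contain the geodesic direction, and $q_\gamma$ is an immersion transverse to~$\mathcal F$.

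\textbf{Step 1: the support of the product measure.} Because $p_1^*(\gamma)$ is carried by the lift $\hat\gamma$, the product measure $p_1^*(\gamma)\times p_2^*(\mu)$ on $PT^*M\times_M PT^*M\setminus\Delta$ is supported on
\[
S=\{(\gamma(t),\xi_\gamma(t),\xi)\mid t\in S^1,\ \xi\neq \xi_\gamma(t)\}.
\]
The map $\check q:\Sigma_\gamma\to S$, $(t,\xi)\mapsto (\gamma(t),\xi_\gamma(t),\xi)$, is an immersion satisfying $p_2\circ\check q=q_\gamma$. Its image lies in a single leaf of~$\mathcal F_1$, namely the preimage under $p_1$ of the leaf~$\hat\gamma$, and it is transverse to~$\mathcal F_2$ by the transversality just proved. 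So $\check q(\Sigma_\gamma)$ is a legitimate vertical surface for a bifoliated chart.

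\textbf{Step 2: local computation and summation.} Near a point of $S$, take a bifoliated chart $\Sigma_1\times\Sigma_2$ in which $\Sigma_2$ is a small piece of $\check q(\Sigma_\gamma)$ and $\Sigma_1$ is a small transversal to $\mathcal F_1$ inside an $\mathcal F_2$-leaf. Then $p_1^*(\gamma)_{\Sigma_1}$ is a sum of Dirac masses, one for each strand of~$\hat\gamma$ crossing the chart. When $\gamma$ is primitive and the chart is small, this is a single Dirac mass whose point has the plaque of $\check q(\Sigma_\gamma)$ as its vertical fiber. By \eqref{eq:qhat}, $p_2^*(\mu)_{\Sigma_2}=(p_2\circ\check q)^*\mu=q_\gamma^*\mu$. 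The product measure \eqref{eq:product} in this chart is therefore the pushforward of $q_\gamma^*\mu$ restricted to the plaque. Covering $\Sigma_\gamma$ by countably many such plaques, refining them to a Borel partition, and summing the local masses gives $i(\gamma,\mu)=\mass(q_\gamma^*\mu)$.

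\textbf{Main obstacle.} The difficulty is the bookkeeping of multiplicities at the self-intersections of~$\gamma$, and for nonprimitive $\gamma=\gamma_0^k$. For self-intersections, the right object is the immersed surface $\check q(\Sigma_\gamma)$ parametrized by $t\in S^1$, rather than its image set. At a double point of~$\gamma$ the two strands of $\hat\gamma$ are distinct points of $PT^*M$, so they give distinct sheets $\{\xi_\gamma(t_1)\}$ and $\{\xi_\gamma(t_2)\}$ of $S$, and each strand is counted once. At a self-intersection the value $\xi=\xi_\gamma(t')$ with $t'\neq t$ is allowed in $\Sigma_\gamma$; this correctly records an atom of $\mu$ along~$\gamma$ (for instance $i(\gamma,\gamma)$ counts self-intersections twice, consistent with Example~\ref{ex:intersection}). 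For a nonprimitive $\gamma=\gamma_0^k$, the $k$ strands coincide in $PT^*M$, giving a Dirac mass of weight $k$ on a single sheet. Correspondingly, $\Sigma_\gamma$ covers $\Sigma_{\gamma_0}$ $k$ times, so both sides scale by~$k$ and agree. Apart from this, the only remaining points are that the diagonal is excluded on both sides (by the condition $\xi\neq\xi_\gamma(t)$) and that the chart-independence of the product measure, established in the definition, makes the plaque-by-plaque gluing legitimate.
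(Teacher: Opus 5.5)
Your proposal is correct and follows essentially the same route as the paper. Both arguments observe that $p_1^*(\gamma)\times p_2^*(\mu)$ is carried by the $\mathcal{F}_1$-leaf $p_1^{-1}(\hat\gamma)$, parametrize that leaf off $\Delta$ by $\check q$ with $p_2\circ\check q=q_\gamma$, and combine \eqref{eq:qhat} with the local product formula \eqref{eq:product} to get $p_1^*(\gamma)\times p_2^*(\mu)=\check q_*(q_\gamma^*\mu)$, treating a nonprimitive $\gamma$ through the $k$-fold covering exactly as the paper does; the differences are only that you also verify the transversality of $q_\gamma$ and spell out the countable-chart gluing, both of which the paper leaves implicit.
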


\begin{proof}
Let us compute the product measure~$p_1^*(\gamma) \times p_2^*(\mu)$.
The geodesic~$\gamma$ determines a $k$-fold cover of a closed leaf of~$\mathcal{F}$, where $k$ is the multiplicity of~$\gamma$.
The preimage of this leaf under~$p_1$ is a leaf of~$\mathcal{F}_1$ whose intersection with the complement of~$\Delta$ is covered $k$ times by
\[
\check{q}_\gamma: \Sigma_\gamma \to PT^*M \times_M PT^*M \setminus \Delta
\]
where $\check{q}_\gamma(t,\xi) = (t,\xi_\gamma(t),\xi)$.
Note that $p_2 \circ \check{q}_\gamma = q_\gamma$.

Since $\mathcal{F}_1$ and~$\mathcal{F}_2$ are transverse away from~$\Delta$, the measure~$p_2^*(\mu)$ transverse to~$\mathcal{F}_2$ induces a measure~$\check{q}_\gamma^* \left( p_2^*(\mu) \right) = (p_2 \circ \check{q}_\gamma)^*(\mu) = q_\gamma^* \mu$ on this leaf, more precisely on~$\Sigma_\gamma$; see~\eqref{eq:qhat}.

On the other hand, 
the measure~$p_1^*(\gamma)$ transverse to~$\mathcal{F}_1$ is supported on the leaf parametrized by~$\check{q}_\gamma$.
More precisely, on every surface~$\Sigma$ transverse to~$\mathcal{F}_1$, the induced measure~$p_1^*(\gamma)_\Sigma$ is $k$ times the sum of the Dirac measures supported at the intersection points of~$\Sigma$ with this leaf.

By the local definition of the product measure, see~\eqref{eq:product}, we have
\[
p_1^*(\gamma) \times p_2^*(\mu) = (\check{q}_\gamma)_*(q_\gamma^* \mu).
\]
Hence,
\begin{equation*} \label{eq:gamma-mu}
i(\gamma,\mu) = \mass((\check{q}_\gamma)_*(q_\gamma^* \mu)) = \mass(q_\gamma^* \mu). \qedhere
\end{equation*}
%
%
%
\end{proof}

\forget
The second property is a geometric interpretation of the intersection form between geodesic currents induced by closed geodesics.
See~\cite[Proposition~4.5]{Bon86}.

\begin{proposition} \label{prop:transverse}
Let $M$ be a closed surface endowed with a Finsler metric~$F$.
Then, for every pair of closed geodesics~$\gamma_1$ and~$\gamma_2$, the intersection~$i(\gamma_1,\gamma_2)$ between the geodesic currents they induce is equal to the number of transverse intersections between~$\gamma_1$ and~$\gamma_2$, counted with multiplicity.
\end{proposition}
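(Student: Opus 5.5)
The plan is to derive the statement from Proposition~\ref{prop:transverse}, applied with $\gamma=\gamma_1$ and $\mu=\mu_{\gamma_2}$. That proposition gives
\[
i(\gamma_1,\gamma_2)=\mass\bigl(q_{\gamma_1}^*\mu_{\gamma_2}\bigr),
\]
so the whole task reduces to identifying the measure $q_{\gamma_1}^*\mu_{\gamma_2}$ on $\Sigma_{\gamma_1}$ and computing its mass. By the definition of the current induced by a closed geodesic, for every Borel set $A\subseteq\Sigma_{\gamma_1}$ we have
\[
q_{\gamma_1}^*\mu_{\gamma_2}(A)=\card\{((s,\xi),t)\in A\times S^1 \mid (\gamma_1(s),\xi)=\hat\gamma_2(t)\},
\]
where $\hat\gamma_2(t)$ is read in $PT^*M$ as the cotangent line $\xi_{\gamma_2}(t)$. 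Taking $A=\Sigma_{\gamma_1}$, the mass is therefore the number of pairs $(s,t)\in S^1\times S^1$ such that $\gamma_1(s)=\gamma_2(t)$ and $\xi_{\gamma_2}(t)\neq\xi_{\gamma_1}(s)$. The point $\xi$ is then forced to equal $\xi_{\gamma_2}(t)$, so it contributes nothing extra.

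Next I would check that these pairs are exactly the transverse intersections counted with multiplicity. The condition $\xi_{\gamma_2}(t)\neq\xi_{\gamma_1}(s)$ says that the tangent lines of $\gamma_1$ at $s$ and of $\gamma_2$ at $t$ differ, since the Legendre transform is a fiberwise bijection compatible with projectivization. This is precisely the condition for a transverse crossing at the parameter pair $(s,t)$. Counting parameter pairs rather than points of $M$ automatically accounts for multiplicity, including self-crossings and iterated geodesics: a $k$-fold iterate is parametrized by $S^1$ going $k$ times around. Conversely, a non-transverse contact has $\xi_{\gamma_1}(s)=\xi_{\gamma_2}(t)$. By uniqueness of geodesics with given initial cotangent line (up to orientation, which reversibility allows us to ignore), $\gamma_1$ and $\gamma_2$ then trace the same geodesic near that point. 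Such contacts are excluded from $\Sigma_{\gamma_1}$, so tangential overlaps, for instance when $\gamma_2$ is an iterate of $\gamma_1$, contribute zero, which is consistent with the convention.

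Finally, I would verify that the count is finite, so the formula is meaningful and agrees with Proposition~\ref{prop:continuity}. The set of transverse parameter pairs is discrete in $S^1\times S^1$ by transversality. It also cannot accumulate: at a limit point the tangent lines would have to coincide, which would put us on a common geodesic, and that is incompatible with nearby transverse crossings. Compactness of $S^1\times S^1$ then gives finiteness.

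The main obstacle I expect is bookkeeping rather than analysis. One must make sure that the counting convention in the definition of $\mu_\gamma$, namely pairs $(\sigma,t)$ with $t\in S^1$ and the $k$-fold parametrization, matches the meaning of ``counted with multiplicity'' on both factors. In particular, the multiplicity on the $\gamma_1$ side is carried by the parametrization $\Sigma_{\gamma_1}$ via $t\in S^1$, and not by the leaf that $\gamma_1$ covers. As a cross-check, I would also recompute directly in a bifoliated chart, as in Example~\ref{ex:intersection}. There the product of the two Dirac-type transverse measures is a Dirac mass at each triple $(x,\xi_1,\xi_2)$ with $\xi_1\neq\xi_2$, and the two counts should agree.
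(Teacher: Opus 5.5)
Your proposal is correct and follows essentially the paper's own argument: the paper also shows that $p_1^*(\gamma)\times p_2^*(\mu)$ is the pushforward of the measure induced by $\mu$ on $\Sigma_{\gamma}$, so that $i(\gamma,\mu)$ is its mass, and then takes $\mu=\gamma_2$ and counts the triples $(x,\xi_1,\xi_2)$ with $\xi_1\neq\xi_2$. The differences are minor. The paper derives that mass formula inline and first reduces to primitive geodesics by bilinearity, whereas you quote $i(\gamma,\mu)=\mass(q_\gamma^*\mu)$ for arbitrary closed geodesics and let the $S^1$-parametrization carry the multiplicity; your separate finiteness check is also unnecessary, since the intersection form is already shown to be finite-valued.
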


\begin{proof}
By bilinearity of the intersection form, we may assume that $\gamma_1$ and~$\gamma_2$ are primitive.

Let $\gamma$ be a primitive geodesic and $\mu$ be a geodesic current.
In order to determine~$i(\gamma,\mu)$, let us compute $p_1^*(\gamma) \times p_2^*(\mu)$.
The geodesic~$\gamma$ determines a leaf of~$\mathcal{F}$.
The preimage of this leaf under~$p_1$ is a leaf of~$\mathcal{F}_1$ whose restriction outside~$\Delta$ is given by
\[
\Sigma_\gamma = \{ (\gamma(t),\xi_\gamma(t),\xi) \in PT^*M \times_M PT^*M \mid t \in S^1, \xi \neq \xi_\gamma(t)  \}
\]
where $\xi_\gamma(t)$ is the tangent line generated by~$\gamma'(t)$.
Denote by
\[
\iota_\gamma : \Sigma_\gamma \hookrightarrow PT^*M \times_M PT^*M \setminus \Delta
\]
the inclusion map.
The geodesic current~$\mu$ induces a transverse measure~$p_2^*(\mu)$ to the foliation~$\mathcal{F}_2$.
Since $\mathcal{F}_1$ is transverse to~$\mathcal{F}_2$ away from~$\Delta$ and $\Sigma_\gamma$ lies in a leaf of~$\mathcal{F}_1$, the transverse measure~$p_2^*(\mu)$ determines a measure
\[
\mu_\gamma = p_2^*(\mu)_{\Sigma_\gamma}
\]
on~$\Sigma_\gamma$.

Now, since $\gamma$ is primitive, the transverse measure~$p_1^*(\gamma)$ induces  a measure~$p_1^*(\gamma)_\Sigma$ on every surface~$\Sigma$ transverse to~$\mathcal{F}_1$ defined as the sum of the Dirac measures supported at the intersection points of the transverse surfaces~$\Sigma_\gamma$ and~$\Sigma$.
The definition~\eqref{eq:product} of~$p_1^*(\gamma) \times p_2^*(\mu)$ in a bifoliated chart~$\Sigma \times \Sigma_\gamma$ yields the following formula
\[
p_1^*(\gamma) \times p_2^*(\mu) = (\iota_\gamma)_*(\mu_\gamma).
\]
Hence,
\begin{equation} \label{eq:gamma-mu}
i(\gamma,\mu) = \mass(\mu_\gamma) = \mass(\mu_{p_2(\Sigma_\gamma)})
\end{equation}
where by construction,
\[
p_2(\Sigma_\gamma) = \{ (\gamma(t),\xi) \mid t \in S^1, \xi \neq \xi_\gamma(t) \}.
\]

By the formula~\eqref{eq:gamma-mu} applied to~$\gamma=\gamma_1$ and~$\mu=\gamma_2$, the intersection~$i(\gamma_1,\gamma_2)$ is equal to the number of triples~$(x,\xi_1,\xi_2) \in PT^*M \times_M PT^*M$, where $x$ is an intersection point of~$\gamma_1$ and~$\gamma_2$, and $\xi_1$ and~$ \xi_2$ are distinct lines tangent to~$\gamma_1$ and~$\gamma_2$ at~$x$.
This is exactly the number of transverse intersections between~$\gamma_1$ and~$\gamma_2$ counted with multiplicity. 
\end{proof}
\forgotten

The third property expresses the length of a closed geodesic in terms of the intersection between the geodesic current it induces and the Liouville geodesic current.
See~\cite[Proposition~14]{Bon88}.

\begin{proposition} \label{prop:bonahon_length}
Let $M$ be a closed surface endowed with a Finsler metric~$F$.
Let $\lambda$ be the Liouville geodesic current of~$M$.
Then, for every closed geodesic~$\gamma$,
\[
i(\gamma,\lambda) = \length(\gamma).
\]
\end{proposition}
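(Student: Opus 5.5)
By Proposition~\ref{prop:transverse}, $i(\gamma,\lambda)=\mass(q_\gamma^*\lambda)$. By Definition~\ref{def:Liouville_current}, $q_\gamma^*\lambda$ is $\tfrac12|\omega|$ restricted to a lift of the immersed surface $q_\gamma(\Sigma_\gamma)$ to $U^*M$. So the whole proof reduces to computing this symplectic area. By additivity we may assume $\gamma$ primitive: for $\gamma^k$ the map $q_{\gamma^k}$ covers $q_\gamma$ $k$ times, so both sides scale by~$k$.

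I would lift $\Sigma_\gamma$ to $U^*M$. The set of lines $\xi\neq\xi_\gamma(t)$ at $\gamma(t)$ lifts to an open half-circle of unit covectors $\{\xi\in U^*_{\gamma(t)}M : \xi(\dot\gamma(t))\in(-1,1)\}$ lying on one side. Equivalently, take all of $U^*_{\gamma(t)}M$ minus the two covectors $\pm\mathcal{L}(\dot\gamma(t))$, and then divide by~$2$. This is where the factor $\tfrac12$ enters. So set $S=\{(t,\xi): t\in S^1,\ \xi\in U^*_{\gamma(t)}M\}$, a torus mapped into $U^*M$ by $\bar q(t,\xi)=\xi$. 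Away from the two curves $\xi=\pm\mathcal{L}(\dot\gamma)$ this map is transverse to the flow. We want
\[
\mass(q_\gamma^*\lambda)=\tfrac12\cdot\tfrac12\int_S|\bar q^*\omega| .
\]
The plan is to show that $\bar q^*\omega$ has constant sign on each of the two halves of $S$, with total absolute value $4\length(\gamma)$.

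For the computation, use a parametrization by arclength and coordinates $(t,\theta)$, where $\theta\mapsto\xi_\theta$ parametrizes $U^*_{\gamma(t)}M$. Then $\bar q^*\alpha=\xi_\theta(\dot\gamma(t))\,dt$, since the fiber direction is vertical and killed by~$\alpha$. Hence
\[
\bar q^*\omega=d(\bar q^*\alpha)=\partial_\theta\bigl(\xi_\theta(\dot\gamma(t))\bigr)\,d\theta\wedge dt .
\]
For fixed $t$, the function $h_t(\theta)=\xi_\theta(\dot\gamma(t))$ on the unit co-circle is strictly monotone between its maximum $1$ and its minimum $-1$. These extremes are attained exactly at $\pm\mathcal{L}(\dot\gamma)$, by the definition of the Legendre transform and strict convexity of the unit co-ball. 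Therefore on each half of the circle,
\[
\int|\partial_\theta h_t|\,d\theta = 2 .
\]
Integrating over $t$, each half contributes $\tfrac12\cdot 2\cdot\length(\gamma)=\length(\gamma)$, which gives $i(\gamma,\lambda)=\length(\gamma)$. This is consistent with the normalization: a single lift (one half-circle) carries exactly $\tfrac12|\omega|$ of mass $\length(\gamma)$.

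The main obstacle is bookkeeping rather than analysis. First, $\Sigma_\gamma$ itself is immersed transversally in $PT^*M$, and its chosen lift to $U^*M$ is a half-open region; the transverse measure is the absolute value of $\omega$, so the sign changes at the excluded curves must be handled correctly. Second, the strict monotonicity of $h_t$ on each arc needs justification. It follows from the quadratic convexity in Definition~\ref{def:finsler}: the co-circle is strictly convex, so a linear functional restricted to it has exactly two critical points. The removed curves have measure zero and cause no trouble. Finally, there is the point that "either lift" in Definition~\ref{def:Liouville_current} gives the same answer, which holds by antipodal invariance.
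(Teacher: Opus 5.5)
Your proposal is correct and is essentially the paper's alternative (coordinate) proof. It uses the same computation $\bar q^*\alpha=\xi_\theta(\dot\gamma(t))\,dt$, the same total-variation argument giving $\int_{S^1}|\partial_\theta h_t|\,d\theta=4$ from $\max h_t=F(\dot\gamma)=1$, $\min h_t=-1$ and monotonicity between them, and the same antipodal symmetry together with the factor $\tfrac12$ to obtain $\length(\gamma)$ per lift. The paper's primary proof reaches the same value more conceptually: it identifies the lift of $\Sigma_\gamma$ (covectors crossing $\gamma$ positively) symplectically with the open unit co-ball bundle of $\gamma$ via the restriction map, following \'Alvarez Paiva--Berck.
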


\begin{proof}
By bilinearity of the intersection form, we may assume that $\gamma$ is primitive.
By Proposition~\ref{prop:transverse}, 
\[
i(\gamma,\lambda) = \mass(\lambda_{\Sigma_\gamma})
\]
where 
\[
\Sigma_\gamma = \{ (t,\xi) \mid t \in S^1, \xi \in PT^*_{\gamma(t)}M, \xi \neq \xi_\gamma(t) \}.
\]
Fix a co-orientation of~$\gamma$.
Define
\[
C^*\gamma = \{ (t,\eta) \mid t \in S^1, \eta \in U^*_{\gamma(t)} M \textrm{ crosses } \gamma \textrm{ positively at } t \}
\]
where a covector~$\eta$ crosses~$\gamma$ positively if the unit vector corresponding to~$\eta$ under the Legendre transform is transverse to~$\gamma$ and points in the positive direction with respect to the chosen co-orientation; see~\cite[Definition~3.8]{CS}.
The natural projection $U^*M \to PT^*M$ induces a diffeomorphism $C^*\gamma \to \Sigma_\gamma$.
By definition of the Liouville geodesic current, under this identification, we have
\[
\lambda_{\Sigma_\gamma} = \tfrac{1}{2} \, | \omega|_{C^*\gamma} |.
\]
Therefore, 
\[
i(\gamma,\lambda) = \frac{1}{2} \int_{C^*\gamma} |\omega|.
\]

Consider the restriction map $\rho:C^*\gamma \to \textrm{Int}(B^* \gamma)$ defined by $\rho(t,\eta) = (t,\eta|_{T_{\gamma(t)} \gamma})$, where $B^*\gamma \subseteq T^*\gamma$ is the unit cotangent ball bundle of the Finsler metric induced on~$\gamma$.
By~\cite[Lemma~5.4]{APB} (see also~\cite[Lemma~3.9]{CS} for details), the map~$\rho$ is a diffeomorphism and satisfies
\[
\rho^* \omega_\gamma = \omega|_{C^*\gamma}
\]
where $\omega_\gamma$ is the canonical symplectic form on~$T^*\gamma$.
Hence
\[
i(\gamma,\lambda) = \frac{1}{2} \int_{B^*\gamma} \omega_\gamma = \length(\gamma). \qedhere
\]
\end{proof}

Although less conceptual, the following alternative proof, using computations in local coordinates, is more direct.
It was suggested by an AI tool.

\begin{proof}[Alternative proof]
As above, we may assume that $\gamma$ is primitive.
By Proposition~\ref{prop:transverse}, 
\[
i(\gamma,\lambda) = \mass(\lambda_{\Sigma_\gamma}).
\]

Parametrize~$\gamma$ by arclength with $s \in [0,\length(\gamma)]$.
Consider the double cover of the surface~$\Sigma_\gamma$ obtained by lifting the projective cotangent directions to~$U^*M$.
Up to the two sections corresponding to the cotangent directions, which do not affect the measure, this double cover can be parametrized by $(s,\xi(s,\theta))$, with $\xi(s,\theta) \in U^*_{\gamma(s)} M$, where $(s,\theta) \in [0,\length(\gamma)] \times S^1$.
In these coordinates, the tautological one-form may be written
\[
\alpha|_{(s,\theta)} = \xi(s,\theta)(\gamma'(s)) \, ds.
\]
Set $f(s,\theta) = \xi(s,\theta)(\gamma'(s))$.
Then
\[
\omega = \frac{\partial f}{\partial \theta} \, d\theta \wedge ds.
\]

For every fixed~$s$, the function~$f_s=f(s,\cdot)$ has a unique maximum and a unique minimum on~$S^1$.
Moreover, $f_s$ is monotone on each of the two arcs joining its minimum and maximum.
By duality between the Finsler norm and its dual norm,
\[
\max_{\theta \in S^1} f(s,\theta) = \max_{\xi \in U^*_{\gamma(s)}M} \xi(\gamma'(s)) = F(\gamma'(s)) = 1.
\]
Since the Finsler metric is reversible, $\min f_s = - \max f_s = -1$.
It follows that
\[
\int_{S^1} \left| \frac{\partial f}{\partial \theta}  \right| \, d\theta = 2 \, (\max f_s - \min f_s) = 4 \max f_s = 4.
\]
Hence, the total $|\omega|$-mass of the double cover is
\[
\int_0^{\length(\gamma)} \int_{S^1}\left| \frac{\partial f}{\partial \theta}  \right| \, d\theta \, ds = 4 \length(\gamma).
\]

The two lifts of~$\Sigma_\gamma$ to~$U^*M$ are exchanged by the antipodal map, which preserves~$|\omega|$.
Thus, the $|\omega|$-mass of each lift is equal to $2 \length(\gamma)$.

By definition of the Liouville geodesic current, 
\[
i(\gamma,\lambda) = \mass(\lambda_{\Sigma_\gamma}) = \length(\gamma). \qedhere
\]
\end{proof}

The fourth property gives a formula for the area of a Finsler surface in terms of the intersection between the Liouville geodesic current and itself.
See~\cite[Proposition~15]{Bon88}.

\begin{proposition} \label{prop:bonahon_area}
Let $M$ be a closed surface endowed with a Finsler metric~$F$.
Let $\lambda$ be the Liouville geodesic current of~$M$.
Then,
\[
i(\lambda,\lambda) = \frac{\pi}{2}  \area(M).
\]
\end{proposition}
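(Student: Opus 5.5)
We prove the formula by computing the mass of $p_1^*(\lambda)\times p_2^*(\lambda)$ on $PT^*M\times_M PT^*M\setminus\Delta$ fiberwise over~$M$.
The fiber over $x\in M$ is the torus $PT^*_xM\times PT^*_xM$ minus its diagonal.
We localize with a partition of unity on~$M$, or with small convex coordinate charts $U\subseteq M$, and use additivity of the product measure.
This reduces the problem to showing that the product measure has a density with respect to the area form of~$M$, and then integrating that density.

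\textbf{Step 1: a bifoliated chart.}
Over a small disk $U\subseteq M$, write a point of $PT^*M\times_M PT^*M\setminus\Delta$ as $(x,\xi_1,\xi_2)$ with $\xi_1\neq\xi_2$.
The leaf of~$\mathcal{F}_1$ through this point is obtained by fixing the $F$-geodesic~$c_1$ tangent to~$\xi_1$ and letting $(x,\xi_2)$ vary over $x\in c_1$, with $\xi_2$ arbitrary.
As a transversal to~$\mathcal{F}_1$ we take the set $\Sigma_1=\{(y,\eta)\}\subseteq PT^*U$ of geodesic germs, which parametrizes $c_1$ up to holonomy; similarly for~$\mathcal{F}_2$.
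A pair of distinct geodesic germs $(c_1,c_2)$ meeting at~$x$ determines $(x,\xi_1,\xi_2)$.
In a small chart, distinct geodesic arcs meet at most once, as in the proof of Proposition~\ref{prop:continuity}.
Hence the map $(\text{germ }c_1,\text{germ }c_2)\mapsto(c_1\cap c_2,\xi_1,\xi_2)$ is a local diffeomorphism, and the product measure is the pushforward of $\lambda\times\lambda$ on pairs of intersecting geodesics.
Computing $i(\lambda,\lambda)$ therefore amounts to measuring pairs of geodesics weighted by intersection points, that is, integrating over each geodesic~$c_1$ the number of its intersections with $\lambda$-generic geodesics.

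\textbf{Step 2: use Proposition~\ref{prop:transverse}-type computations.}
Fix $c_1$ as a local geodesic arc~$\sigma$.
The argument of Proposition~\ref{prop:transverse} and Proposition~\ref{prop:bonahon_length} is local along~$\sigma$: the $\lambda$-mass of the surface $\{(x,\xi): x\in\sigma,\ \xi\neq\xi_\sigma(x)\}$ equals $\length(\sigma)$.
This uses only the local computation $\tfrac12\int_{C^*\sigma}|\omega|=\tfrac12\int_{B^*\sigma}\omega_\sigma$.
Disintegrating, we get $i(\lambda,\lambda)=\int \length(c_1\cap U)\,d\lambda(c_1)$, summed over charts with the partition of unity.
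Now $\lambda$ restricted to a transversal is $\tfrac12|\omega|$, and flowing along geodesics turns $\tfrac12|\omega|\,dt$ into $\tfrac12|\omega\wedge\alpha|$ on~$U^*M$, by invariance and $\alpha(Z)=1$.
Lifting from $PT^*M$ to $U^*M$ is two-to-one, so
\[
i(\lambda,\lambda)=\tfrac12\cdot\tfrac12\int_{U^*M}|\alpha\wedge d\alpha|=\tfrac14\vol_{\alpha\wedge d\alpha}(U^*M).
\]
The factor bookkeeping between $PT^*$ and $U^*$ is exactly as in the alternative proof of Proposition~\ref{prop:bonahon_length}, and should be checked there.

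\textbf{Step 3: the contact volume.}
Since $B^*M$ is the cone over $U^*M$ with radial coordinate $r\in[0,1]$, and $\omega=d(r\alpha)$ on the cone, we have $\omega^2=2r\,dr\wedge\alpha\wedge d\alpha$.
Hence
\[
\int_{B^*M}\tfrac12\omega^2=\tfrac12\int_{U^*M}\alpha\wedge d\alpha.
\]
The Holmes--Thompson definition with $\epsilon_2=\pi$ then gives $\int_{U^*M}\alpha\wedge d\alpha=2\pi\area(M)$, the Santaló formula.
Combining with Step 2 yields $i(\lambda,\lambda)=\tfrac14\cdot2\pi\area(M)=\tfrac{\pi}{2}\area(M)$.
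As a sanity check, for a flat or hyperbolic metric $\vol(U^*M)=2\pi\area$, consistent with Bonahon.

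\textbf{Main obstacle.}
The delicate step is the disintegration in Step 2 for a general Finsler metric, possibly with conjugate points, where geodesics are not global lines.
We must check that the product measure in bifoliated charts really disintegrates as "$\lambda$-measure of the transversal $\times$ local length".
Concretely, in a flow box $B\simeq\Sigma\times[-1,1]$ of~$\mathcal{F}$ (the $\mathcal{F}_1$-direction), we apply the local version of Proposition~\ref{prop:bonahon_length} on each plaque.
We then integrate over~$\Sigma$ against $\lambda_\Sigma$ using Fubini, which is legitimate by finiteness from Proposition~\ref{prop:continuity}.
The diagonal~$\Delta$ has product-measure zero since $\lambda$ has no atoms.
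Finally, we track the constants $\tfrac12$ (Liouville normalization), $2$ (the double cover $U^*M\to PT^*M$) and $\pi$ (from~$\epsilon_2$).
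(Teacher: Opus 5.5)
Your proof is correct, and it takes a genuinely different route from the paper's.

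\textbf{The paper's route.} The paper decomposes $M$ into small convex polygonal disks $D$ and passes to the space $\Gamma_D$ of oriented geodesic arcs, on which $\omega$ descends to a symplectic form $\Omega$. It writes $p_1^*\omega\wedge p_2^*\omega=\Pi^*(q_1^*\Omega\wedge q_2^*\Omega)$. It then uses the coarea formula together with the Santal\'o--Blaschke formula $\int\!\int\#(\gamma_1\cap\gamma_2\cap D)\,d\mu\,d\mu=8\pi\area(D)$. In other words, $i(\lambda,\lambda)$ is realized directly as a Crofton count of intersecting pairs of geodesics, which mirrors the discrete area of wall systems used later. The paper's alternative proof instead integrates $\tfrac14|p_1^*\omega\wedge p_2^*\omega|$ over each fiber torus using Green's formula.

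\textbf{Your route.} You disintegrate $p_1^*(\mu)\times p_2^*(\lambda)$ on $p_1^{-1}(B)\setminus\Delta$, for a flow box $B\simeq\Sigma\times[-1,1]$, along the $\mathcal{F}_1$-plaques. These plaques are exactly the surfaces $\check q_\sigma(\Sigma_\sigma)$ of Proposition~\ref{prop:transverse}. The bifoliated-chart definition therefore gives the product measure as $\int_\Sigma(\text{plaque measure})\,d\mu_\Sigma(\sigma)$. The local computation in Proposition~\ref{prop:bonahon_length} then shows that each plaque measure pushes forward to arclength along the plaque. This yields $i(\mu,\lambda)=\mass(\mu\otimes dt)$ for every current $\mu$, which is the length formula extended from closed geodesics to all currents. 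For $\mu=\lambda$, you get $\tfrac12\cdot\tfrac12\int_{U^*M}\alpha\wedge d\alpha$. The cone computation $\omega=d(r\alpha)$, $\omega^2=2r\,dr\wedge\alpha\wedge d\alpha$, gives $\int_{U^*M}\alpha\wedge d\alpha=2\pi\area(M)$, so all constants check out to $\tfrac{\pi}{2}\area(M)$.

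\textbf{What each buys.} Your route needs only the one-curve computation plus the formula for $\vol(U^*M)$. It requires no convex-disk decomposition and no condition that geodesic arcs meet at most once. It also produces the more general identity $i(\mu,\lambda)=\mass(\mu\otimes dt)$. The paper's route, in exchange, ties the continuous area formula directly to intersection counting, matching the discrete picture.

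\textbf{Minor points.} Your Step~1, the local diffeomorphism between pairs of germs and intersection triples, is heuristic and not actually used; the proof rests entirely on the disintegration in Step~2 and on Step~3. Your remark that $\Delta$ has measure zero is moot, since the product measure is only defined off $\Delta$.
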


\begin{proof}
Recall that
\[
i(\lambda,\lambda) = \mass(p_1^*(\lambda) \times p_2^*(\lambda)).
\]
By definition, the transverse measure~$p_i^*(\lambda)$ induces a measure on every surface~$\Sigma$ transverse to~$\mathcal{F}_i$ given by
\[
p_i^*(\lambda) = \tfrac{1}{2} \left| p_i^* \omega|_{\bar{\Sigma}} \right|
\]
where $\bar{\Sigma}$ is a lift of~$\Sigma$ to~$U^*M$.
Thus,
\[
p_1^*(\lambda) \times p_2^*(\lambda) = \tfrac{1}{4} \, |p_1^* \omega \wedge p_2^* \omega|.
\]

Decompose $M$ into finitely many sufficiently small geodesically convex polygonal disks $D_1,\dots,D_n$ with minimizing interior geodesics and pairwise disjoint interiors.
We have 
\[
U^*M \times_M U^*M = \cup_{j=1}^m U^*D_j \times_{D_j} U^*D_j.
\]

Fix one of these disks~$D$.
Let~$\Gamma_D$ be the space of oriented geodesics of~$D$, constructed as the quotient of~$U^*D$ by the geodesic flow.
Let \mbox{$\pi:U^*D \to \Gamma_D$} be the corresponding projection map, which sends each unit covector $(x,\xi) \in U^*D$ to the geodesic arc~$\gamma_\xi$ is generates.
Note that~$\Gamma_D$ is a cylinder.
Since the $2$-form~$\omega$ on~$U^*D$ is invariant by the geodesic flow and vanishes in the direction of the flow, see Definition~\ref{def:cogeodesic}, it passes to the quotient.
That is, there is a unique symplectic form~$\Omega$ on~$\Gamma_D$ such that
\[
\pi^* \Omega = \omega.
\]
See~\cite[\S4.3]{AM78} for details.

We have the commutative diagram
\[
\begin{tikzcd}
U^*D \ar[d, "\pi"'] & \ar[l, "p_1"'] U^*D \times_D U^*D \ar[r, "p_2"] \ar[d, "\Pi"'] & U^*D \ar[d, "\pi"] \\
\Gamma_D & \Gamma_D \times \Gamma_D \ar[l, "q_1"] \ar[r, "q_2"'] & \Gamma_D 
\end{tikzcd}
\]
where $p_i(x,\xi_1,\xi_2)=(x,\xi_i)$, $q_i(\gamma_1,\gamma_2)=\gamma_i$, $\pi(x,\xi)=\gamma_\xi$ and $\Pi(x,\xi_1,\xi_2) = (\gamma_{\xi_1},\gamma_{\xi_2})$.
Thus, $p_i^* \omega = p_i^* \pi^* \Omega = \Pi^* q_i^* \Omega$ and
\[
p_1^* \omega \wedge p_2^* \omega = \Pi^*(q_1^* \Omega \wedge q_2^* \Omega).
\]

Let $\mu=|\Omega|$ be the measure induced by~$\Omega$ on~$\Gamma_D$.
By the coarea formula~\cite[Theorem~3.2.3]{Fed} and the Santal\'o+Blaschke formula~\cite[Corollary~3.5]{CS},
\begin{align*}
\int_{U^*D \times_D U^*D} \tfrac{1}{4} \, |p_1^* \omega \wedge p_2^* \omega | & = \frac{1}{4} \, \int_{U^*D \times_D U^*D} |\Pi^*(q_1^* \Omega \wedge q_2^* \Omega)| \\
 & = \frac{1}{4} \, \int_{(\gamma_1,\gamma_2) \in \Gamma_D \times \Gamma_D} \# \Pi^{-1}(\gamma_1,\gamma_2) \, | q_1^* \Omega \wedge q_2^* \Omega | \\
 & = \frac{1}{4} \, \int_{(\gamma_1,\gamma_2) \in \Gamma_D \times \Gamma_D} \#(\gamma_1 \cap \gamma_2 \cap D) \, d\mu(\gamma_1) d\mu(\gamma_2) \\
 & = 2\pi \, \area(D).
\end{align*}

Taking the sum over all disks~$D_j$ and dividing the result by~$4$ because the computations were performed in the four-fold cover~$U^*M \times_M U^*M$, we obtain
\[
i(\lambda,\lambda) = \mass \left( \tfrac{1}{4} \,  | p_1^* \omega \wedge p_2^* \omega | \right) = \frac{\pi}{2} \area(M). \qedhere
\]
\end{proof}

Although less conceptual, the following alternative proof, using computations in local coordinates, is more direct.
It was suggested by an AI tool.

\begin{proof}[Alternative proof]
As in the previous proof, we perform computation on the four-fold cover~$U^*M \times_M U^*M$ of~$PT^*M \times_M PT^*M$.
Take local coordinates $x=(x_1,x_2)$ on~$M$.
Choose an orientation-preserving parametrization~$\zeta(x,\theta)$ of~$U_x^*M$ with~$\theta \in S^1$, equivariant under the antipodal map.
This gives a parametrization $(\xi,\eta)= (\zeta(x,\theta),\zeta(x,\varphi))$ of the Whitney sum $U^*M \times_M U^*M$ with $x \in M$ and $\theta,\varphi \in S^1$.
Note that $\xi(x,\theta)=\eta(x,\varphi)$ if $\theta=\varphi$.
In this coordinate system, the pullbacks of~$\omega$ under~$p_1$ and~$p_2$ are given by
\begin{align*}
p_1^* \omega & = \left( \frac{\partial \xi_2}{\partial x_1} - \frac{\partial \xi_1}{\partial x_2} \right) \, dx_1 \wedge dx_2 + \frac{\partial \xi_1}{\partial \theta} \, d\theta \wedge dx_1 + \frac{\partial \xi_2}{\partial \theta} \, d\theta \wedge dx_2 \\
p_2^* \omega & = \left( \frac{\partial \eta_2}{\partial x_1} - \frac{\partial \eta_1}{\partial x_2} \right) \, dx_1 \wedge dx_2 + \frac{\partial \eta_1}{\partial \varphi} \, d\varphi \wedge dx_1 + \frac{\partial \eta_2}{\partial \varphi} \, d\varphi \wedge dx_2.
\end{align*}
Thus,
\[
p_1^* \omega \wedge p_2^* \omega = - \left( \frac{\partial \xi_1}{\partial \theta} \frac{\partial \eta_2}{\partial \varphi} - \frac{\partial \xi_2}{\partial \theta} \frac{\partial \eta_1}{\partial \varphi} \right) \, dx_1 \wedge dx_2 \wedge d\theta \wedge d\varphi.
\]
That is,
\[
| p_1^* \omega \wedge p_2^* \omega | = | \det(\partial_\theta \xi, \partial_\varphi \eta) | \,  dx_1 dx_2 d\theta d\varphi.
\]

Let us compute 
\[
J(x) = \int_{S^1} \int_{S^1} | \det(\partial_\theta \xi, \partial_\varphi \eta) | \, d\theta d\varphi.
\]
Fix $x \in M$ and $\varphi \in S^1$.
Define the odd function $f(\theta) = \det(\xi(x,\theta),\partial_\varphi \eta(x,\varphi))$ on~$S^1$.
Its derivative $f' = \det(\partial_\theta \xi,\partial_\varphi \eta)$ is positive on one of the two arcs of~$S^1$ with endpoints~$\pm \varphi$ where $\partial_\theta \xi$ is parallel to~$\partial_\varphi \eta$, and negative on the other.
Thus,
\[
\int_{S^1} | f' | \, d\theta = 2 (\max f - \min f) = 4 \max f.
\]
Since $f'$ changes sign at~$\pm \varphi$, we have $\max f = \max \{ f(\varphi),f(-\varphi) \}$.
Actually, since $\eta$ is orientation preserving, $\max f = f(\varphi) = \det(\eta,\partial_\varphi \eta)$.
Therefore,
\[
\int_{S^1} | f' | \, d\theta = 4 \det(\eta,\partial_\varphi \eta).
\]
Consequently,
\[
J(x) = 4\int_{S^1} \det(\eta,\partial_\varphi \eta) \, d\varphi = 8 \area(B_x^*M)
\]
where the last equality follows from Green's formula; see~\cite[pp.150-151]{APT}.
Hence, in $U^*M \times_M U^*M$,
\begin{align*}
\int_{(\theta,\varphi) \in S^1 \times S^1} \tfrac{1}{4} \, | p_1^* \omega \wedge p_2^* \omega | & = \tfrac{1}{4} J(x) \, dx_1 dx_2 \\
 & = 2 \area(B_x^*M) \, dx_1 dx_2 \\
 & = 2 \pi \, dA.
\end{align*}
Since the computations were performed in the four-fold cover~$U^*M \times_M U^*M$, we need to divide this result by~$4$ to have its expression in~$PT^*M \times_M PT^*M$.

Finally, integrating over~$M$, we obtain
\[
i(\lambda,\lambda) = \mass \left( \tfrac{1}{4} \,  | p_1^* \omega \wedge p_2^* \omega | \right) = \frac{\pi}{2} \area(M). \qedhere
\]
\end{proof}

We finish this section with some results regarding filling currents and compactness.

\begin{definition}
A geodesic current~$\mu_0$ is \emph{filling} if $i(\mu_0,\mu) >0$ for every nonzero geodesic current~$\mu$.
\end{definition}

The Liouville geodesic current provides a fundamental example.

\begin{lemma}
Let $M$ be a closed surface endowed with a Finsler metric.
The Liouville geodesic current~$\lambda$ on~$M$ is filling.
\end{lemma}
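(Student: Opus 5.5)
To show that $\lambda$ is filling, I will fix a nonzero geodesic current $\mu$ and prove $i(\lambda,\mu)>0$. I will not go through the universal cover. Instead I will compute $i(\lambda,\mu)$ locally in a bifoliated chart, where the product measure $p_1^*(\lambda)\times p_2^*(\mu)$ is a genuine product, and use two facts: $\lambda$ has full support on transverse surfaces, and $\mu$ charges some transverse surface.

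First, since $\mu\neq 0$, I will find a point $(x_0,\xi_0)\in PT^*M$ in the support of $\mu$. Near it I choose a small transverse surface $\Sigma_2\subseteq PT^*M$ with $\mass(\mu_{\Sigma_2})>0$. Such a surface exists because a nonzero transverse measure must charge some flow box transversal. Concretely, I can take $\Sigma_2$ of the form $\{(x,\xi) : x\in c,\ \xi \text{ near } \xi_0\}$, where $c$ is a short arc through $x_0$ transverse to the geodesic direction $\xi_0$. This is the same kind of surface as $\Sigma_\gamma$ in Proposition~\ref{prop:transverse}.

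Next, I will lift to $PT^*M\times_M PT^*M\setminus\Delta$. I pick a cotangent line $\xi_1\neq\xi_0$ at $x_0$ and consider a neighborhood of $(x_0,\xi_1,\xi_0)$, which lies off the diagonal. There I build a bifoliated chart $\Sigma_1\times\Sigma_2'\to U$ as in the definition of the product measure. Here $\Sigma_2'$ lies in a leaf of $\mathcal F_1$, namely the triples $(x,\xi_1(x),\xi)$ with $(x,\xi)\in\Sigma_2$ and $\xi_1(x)$ the line of the $\xi_1$-geodesic through $x$. It projects under $p_2$ onto $\Sigma_2$ and carries the measure $p_2^*(\mu)_{\Sigma_2'}=\mu_{\Sigma_2}$. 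The factor $\Sigma_1$ lies in a leaf of $\mathcal F_2$ and projects under $p_1$ to a small transverse surface through $(x_0,\xi_1)$. The measure it carries is $\tfrac12|\omega|$ on a lift, and this is locally comparable to a smooth positive density, because $\omega$ restricted to any surface in $U^*M$ transverse to the Reeb field $Z$ is nondegenerate ($\ker d\alpha_F=\R Z$). So $p_1^*(\lambda)_{\Sigma_1}$ gives positive mass to every open set.

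Finally, I get $i(\lambda,\mu)\geq p_1^*(\lambda)_{\Sigma_1}(\Sigma_1)\cdot\mu_{\Sigma_2}(\Sigma_2)>0$, provided the chart covers all of $\Sigma_1\times\Sigma_2$ after shrinking. I therefore shrink $\Sigma_2$ while keeping positive $\mu$-mass, which is possible since $(x_0,\xi_0)$ is in the support. The main obstacle is making the bifoliated chart rigorous: the $\mathcal F_1$- and $\mathcal F_2$-plaques must intersect transversally in a product structure containing a set of positive product mass. I would handle this via the transversality of $\mathcal F_1,\mathcal F_2$ off $\Delta$, already asserted in the definition of the intersection form, and the local product description of the measure there.
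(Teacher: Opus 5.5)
Your argument is correct and is essentially the paper's proof. Both combine three facts: $\mu$ charges some transversal near a point of its support; $\lambda$ has full support on transversals, being a smooth positive density since $\ker d\alpha_F=\R Z$; and $p_1^*(\lambda)\times p_2^*(\mu)$ is a genuine product in a bifoliated chart off $\Delta$. Your choice of distinct lines $\xi_1\neq\xi_0$ at a common point $x_0$ makes explicit why the relevant geodesics cross, which the paper's ``disjoint open sets $U,V\subseteq\Sigma$'' leaves implicit.

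One small point to tidy. Choose $\xi_1$ first, and then take $c$ to be the geodesic arc through $x_0$ tangent to $\xi_1$, so that $\Sigma_2'$ really lies in the single leaf $p_1^{-1}(\hat c)$ of $\mathcal F_1$. This is legitimate because the support of $\mu$ is holonomy invariant, so every transversal through $(x_0,\xi_0)$ is charged.
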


\begin{proof}
Let $\mu$ be a nonzero geodesic current.
Then there exist a transverse surface~$\Sigma \subseteq PT^*M$ and an arbitrarily small open set~$U \subseteq \Sigma$ such that $\mu_\Sigma(U) >0$.
 Choose another open set $V \subseteq \Sigma$ disjoint from~$U$.
 Since the Liouville transverse measure is given locally by a smooth positive density, it satisfies $\lambda_\Sigma(V) >0$.
Therefore, the mass of the product measure $p_1^*(\lambda) \times p_2^*(\mu)$ is positive on the open set of $PT^*M \times_M PT^*M \setminus \Delta$ corresponding to the product~$U \times V$.
Hence,
 \[
 i(\lambda,\mu) = \mass \left( p_1^*(\lambda) \times p_2^*(\mu) \right) >0.
 \]
 Thus, $\lambda$ is filling.
\end{proof}


Our next proposition is a compactness result for normalized geodesic currents.
See~\cite[Proposition~4]{Bon88}.

\begin{proposition} \label{prop:compact}
Let $M$ be a closed surface endowed with a Finsler metric.
Let $\mu_0$ be a filling geodesic current without atoms (\eg, the Liouville geodesic current~$\lambda$).
Then the space
\[
\mathcal{K} = \{ \mu \in \Curr(M) \mid i(\mu_0,\mu) = 1\}
\]
is compact and metrizable.
\end{proposition}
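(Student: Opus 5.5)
Metrizability is immediate: $\mathcal{K}$ is a subset of $\Curr(M)$, which is Polish by Proposition~\ref{prop:metrizable}. For compactness we show that $\mathcal{K}$ is closed and relatively compact. Closedness is where the assumption that $\mu_0$ has no atoms enters. Since $\mu_0$ has no atoms, the pair $(\mu_0,\mu)$ has no atom in common for every $\mu$. By Proposition~\ref{prop:continuity}, the map $\mu \mapsto i(\mu_0,\mu)$ is therefore continuous on all of $\Curr(M)$, and $\mathcal{K}$, the preimage of $\{1\}$, is closed.

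For relative compactness, use the embedding from the proof of Proposition~\ref{prop:metrizable}. It identifies $\Curr(M)$ with a closed subset of $\prod_j \mathcal{M}^+(\Sigma_j)$ for finitely many compact transverse surfaces $\Sigma_j$, which we may take to be the bases of flow boxes $B_j \simeq \Sigma_j \times [-1,1]$. A family of Radon measures on a compact space is relatively compact in the vague topology if and only if the total masses are bounded. So it suffices to find $C>0$ with
\[
\mass(\mu_{\Sigma_j}) \leq C \, i(\mu_0,\mu)
\]
for every $\mu \in \Curr(M)$ and every $j$. Then $\mathcal{K}$ sits inside a compact set of the product, and being closed, it is compact.

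The main step is the mass bound, and the argument runs as follows. Fix $j$. Since $\mu_0$ is filling and nonzero (a filling current is nonzero because $i(\mu_0,\lambda)>0$), we first show that
\[
c_j := \inf \{ i(\mu_0,\mu) \mid \mass(\mu_{\Sigma_j}) = 1 \} > 0.
\]
Suppose instead that $\mu_n$ satisfy $\mass({\mu_n}_{\Sigma_j})=1$ and $i(\mu_0,\mu_n)\to 0$. We need a priori control on the masses of $\mu_n$ on the other boxes $\Sigma_k$ in order to extract a convergent subsequence. This comes from a local estimate proved in the spirit of Proposition~\ref{prop:transverse}. Since $\mu_0$ has positive mass in some transverse region, and geodesic arcs through each box $B_k$ must cross a fixed set of $\mu_0$-charged arcs, we get
\[
i(\mu_0,\mu) \geq \varepsilon_k \, \mass(\mu_{\Sigma'_k})
\]
for suitable slightly smaller boxes $\Sigma'_k$, with $\varepsilon_k>0$ depending only on $\mu_0$.

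I expect this estimate to be the real obstacle. Proving it directly uses only that $\mu_0$ is filling, and it may require a compactness argument on short geodesic segments. If it is established, it directly gives the bound $\mass(\mu_{\Sigma_k}) \leq C\, i(\mu_0,\mu)$ after covering $PT^*M$ by the interiors of the $\Sigma'_k$-boxes, and the contradiction argument becomes unnecessary.

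As a fallback, run the contradiction argument with a normalization by the maximum mass over all boxes. Pass to a vague limit $\mu_\infty$, which is a geodesic current with unit total box mass, hence nonzero; holonomy invariance lets us avoid mass escaping to the boundaries of the $\Sigma_k$ by using slightly enlarged boxes. Continuity from Proposition~\ref{prop:continuity}, again valid because $\mu_0$ is atomless, then gives $i(\mu_0,\mu_\infty)=\lim i(\mu_0,\mu_n)=0$. This contradicts the filling property of $\mu_0$, so the mass bound holds with $C = \max_j 1/c_j$, which concludes the proof.
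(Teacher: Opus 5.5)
Your proof is correct via the fallback, and that fallback is essentially the paper's argument. The paper also bounds the box masses on $\mathcal{K}$ by normalizing a hypothetical unbounded sequence and extracting a nonzero limit $\nu$. It gets $i(\mu_0,\nu)=0$ from continuity of $i(\mu_0,\cdot)$ at the atomless $\mu_0$, contradicts filling, and obtains closedness of $\mathcal{K}$ from the same continuity. The only difference is that your ``slightly enlarged boxes'' appear there as the continuous normalizing functional $h(\mu)=\sum_i\int_{\Sigma_i^+}\psi_i\,d\mu_{\Sigma_i^+}$ with $\psi_i\equiv 1$ on $\Sigma_i$, used in place of the maximum box mass.

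You can drop the local estimate $i(\mu_0,\mu)\geq\varepsilon_k\,\mass(\mu_{\Sigma'_k})$. It follows from the compactness rather than feeding into it, and its suggested local justification fails in general. An atomless filling current need not have full support, for instance a sum of two jointly filling measured laminations on a hyperbolic surface. So only long geodesic segments, not short arcs through a box, are guaranteed to cross $\mu_0$-mass.
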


\begin{proof}
%
We begin as in the proof of Proposition~\ref{prop:metrizable}.
Since $PT^*M$ is compact, there exist finitely many flow boxes $B_i \simeq \Sigma_i \times [-1,1]$ whose interiors cover~$PT^*M$, where each~$\Sigma_i$ is a compact surface transverse to~$\mathcal{F}$.
Every geodesic current~$\mu$ is determined by the transverse measures~$\mu_{\Sigma_i}$, which satisfy closed holonomy compatibility conditions.
Thus, $\Curr(M)$ can be identified homeomorphically with a closed subset of the product~$\prod_i \mathcal{M}^+(\Sigma_i)$.

For every $C\geq 0$, the space~$\mathcal{M}_{\leq C}^+(\Sigma_i)$ of Radon measures on~$\Sigma_i$ with mass bounded by~$C$ is compact and metrizable for the vague topology.
Indeed, by the Riesz representation theorem~\cite[Corollary~7.18]{folland}, the space $\mathcal{M}(\Sigma_i)$ of signed Radon measures on~$\Sigma_i$, endowed with the total variation norm, is isometric to the dual of the Banach space~$C(\Sigma_i)$ of continuous functions on~$\Sigma_i$, where $C(\Sigma_i)$ is endowed with the uniform norm.
Since $\Sigma_i$ is compact and metrizable, the Banach space~$C(\Sigma_i)$ is separable.
Hence, by the Banach--Alaoglu theorem and~\cite[Theorem~3.28]{brezis}, the dual ball~$\mathcal{M}_{\leq C}(\Sigma_i)$ is compact and metrizable for the weak$^*$ topology, which coincides with the vague topology since $\Sigma_i$ is compact.
Since the space~$\mathcal{M}_{\leq C}^+(\Sigma_i)$ of nonnegative Radon measures is a closed subset of this ball, it is therefore compact and metrizable.

Define an auxiliary continuous functional $h:\Curr(M) \to \R_+$ by
\[
h(\mu) = \sum_i \int_{\Sigma_i^+} \psi_i \, d\mu_{\Sigma_i^+},
\]
where~$\Sigma_i^+$ is an open transverse surface extending the compact surface~$\Sigma_i$, and $\psi_i \in C_c(\Sigma_i^+)$ is a nonnegative function such that $\psi_i \equiv 1$ on~$\Sigma_i$.

We claim that $h(\mu)$ is positive for every nonzero geodesic current~$\mu$.
Indeed, for every $\mu \neq 0$, there exist a transverse surface~$\Sigma \subseteq PT^*M$ and a point~$\xi$ in the support of~$\mu_\Sigma$.
Thus, $\mu_\Sigma(U) >0$ for every neighborhood~$U \subseteq \Sigma$ of~$\xi$.
Since the interiors of the flow boxes~$B_i$ cover~$PT^*M$, the point~$\xi$ lies in the interior of some~$B_i$.
Shrinking~$U$ if necessary and following the leaves of~$\mathcal{F}$, we construct a holonomy diffeomorphism $h:U \to V \subseteq \Sigma_i$.
By holonomy invariance, we have $\mu_{\Sigma_i}(V) = \mu_\Sigma(U) >0$.
Since $\psi_i \geq 0$ and $\psi_i \equiv 1$ on~$\Sigma_i$, it follows that
\[
h(\mu) \geq \int_{\Sigma_i^+} \psi_i \, d\mu_{\Sigma_i^+} \geq \mu_{\Sigma_i}(V) >0.
\]

We first show that $\mathcal{K}_h(C) = \{ \mu \in \Curr(M) \mid h(\mu) \leq C\}$ is compact for every $C \geq 0$.
Indeed, as previously, since $\psi_i \geq 0$ and $\psi_i \equiv 1$ on~$\Sigma_i$, we have
\[
\mu_{\Sigma_i}(\Sigma_i) \leq \int_{\Sigma_i^+} \psi_i \, d\mu_{\Sigma_i^+} \leq h(\mu) \leq  C
\]
for every $\mu \in \mathcal{K}_h(C)$.
Thus, the mass of each measure~$\mu_{\Sigma_i}$ is bounded by~$C$.
Therefore, under the above embedding of~$\Curr(M)$, the set~$\mathcal{K}_h(C)$ is a closed subset of~$\prod_i \mathcal{M}_{\leq C}^+(\Sigma_i)$.
Hence it is compact and metrizable.

We claim that $\mathcal{K} \subseteq \mathcal{K}_h(C_0)$ for some $C_0\geq 0$.
Otherwise, there exists a sequence of geodesic currents~$\mu_n \in \mathcal{K}$ such that $h(\mu_n) \to \infty$.
By positive homogeneity of both~$h$ and the intersection form, the geodesic currents $\nu_n=\frac{\mu_n}{h(\mu_n)}$ satisfy \mbox{$h(\nu_n)=1$} and \mbox{$i(\nu_n,\mu_0) \to 0$}.
Since $\mathcal{K}_h(1)$ is compact, a subsequence of~$(\nu_n)$ converges to a geodesic current~$\nu$.
By continuity of~$h$ and the intersection form at~$(\nu,\mu_0)$, see Proposition~\ref{prop:continuity}, we obtain $h(\nu)=1$ and $i(\nu,\mu_0)=0$.
The first equality implies that $\nu$ is nonzero, while the second contradicts the assumption that $\mu_0$ is filling.

Finally, it follows from Proposition~\ref{prop:continuity} that $\mathcal{K}$ is a closed subset of the compact metrizable space~$\mathcal{K}_h(C_0)$.
Therefore, $\mathcal{K}$ is compact and metrizable.
\end{proof}

\begin{remark}
The fact that $\mathcal{K}$ is metrizable also follows from Proposition~\ref{prop:metrizable}.
The proof above provides a slightly different argument.
\end{remark}

\forget

\begin{remark} \label{rem:metrizable}
The space~$\Curr(M)$ of geodesic currents is a Polish space (\ie, a separable, completely metrizable topological space).
Indeed, as observed in the proof of Proposition~\ref{prop:compact}, the space~$\Curr(M)$ can be identified homeomorphically with a closed subset of~$\prod_i \mathcal{M}^+(\Sigma_i)$, where the $\Sigma_i$ are compact surfaces transverse to~$\mathcal{F}$.
By~\cite[15.7.7]{kallenberg}, the space~$\mathcal{M}^+(\Sigma_i)$ of Radon measures on~$\Sigma_i$ is Polish for the vague topology.
The same holds for every closed subset of the product space~$\prod_i \mathcal{M}^+(\Sigma_i)$.
\end{remark}

\forgotten

\forget

\begin{proposition} \label{prop:compact}
Let $M$ be a closed surface endowed with a Finsler metric.
Let $\mu_0$ be a filling geodesic current (\ie, $\mu_0$ is the Liouville geodesic current~$\lambda$).
Then the (metrizable) space
\[
\mathcal{K} = \{ \mu \in \Curr(M) \mid i(\mu_0,\mu) = 1\}
\]
is both compact and sequentially compact.
\end{proposition}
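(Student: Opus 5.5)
The plan is to reduce to the compactness argument already carried out in Proposition~\ref{prop:compact} and then upgrade compactness to sequential compactness using metrizability. Embed $\Curr(M)$ as a closed subset of $\prod_i \mathcal{M}^+(\Sigma_i)$ via finitely many flow boxes $B_i \simeq \Sigma_i \times [-1,1]$ covering $PT^*M$. Then introduce the auxiliary continuous functional $h(\mu)=\sum_i \int_{\Sigma_i^+}\psi_i\,d\mu_{\Sigma_i^+}$. This $h$ is positive on nonzero currents, and its sublevel sets $\mathcal{K}_h(C)$ are closed subsets of the compact metrizable products $\prod_i \mathcal{M}^+_{\leq C}(\Sigma_i)$; the latter are compact and metrizable by Riesz representation, Banach--Alaoglu and separability of $C(\Sigma_i)$.

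The key step is to show $\mathcal{K}\subseteq \mathcal{K}_h(C_0)$ for some $C_0$. Argue by contradiction. If $\mu_n\in\mathcal{K}$ satisfy $h(\mu_n)\to\infty$, normalize to $\nu_n=\mu_n/h(\mu_n)\in\mathcal{K}_h(1)$, so that $i(\mu_0,\nu_n)=1/h(\mu_n)\to 0$. Extract a convergent subsequence $\nu_n\to\nu$ in the compact metrizable set $\mathcal{K}_h(1)$. By continuity of $h$, we get $h(\nu)=1$, hence $\nu\neq 0$. Since $\mu_0$ has no atoms, the pair $(\mu_0,\nu)$ has no atom in common, so Proposition~\ref{prop:continuity} gives continuity of $i$ at $(\mu_0,\nu)$. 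Hence $i(\mu_0,\nu)=0$, which contradicts the fact that $\mu_0$ is filling. This is where the atomless hypothesis on $\mu_0$ is indispensable, since the intersection form can fail to be continuous in general (as the round sphere example shows).

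Next, show that $\mathcal{K}$ is closed in $\mathcal{K}_h(C_0)$. Again, because $\mu_0$ has no atoms, the map $\mu\mapsto i(\mu_0,\mu)$ is continuous at every $\mu$ by Proposition~\ref{prop:continuity}. So $\mathcal{K}$ is the preimage of $\{1\}$ under a continuous map restricted to $\mathcal{K}_h(C_0)$, hence closed there. Therefore $\mathcal{K}$ is compact, and metrizable as a subspace of a metrizable space; alternatively, metrizability follows from Proposition~\ref{prop:metrizable}.

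Finally, in a metrizable space compactness and sequential compactness coincide, which gives both assertions. I expect the main obstacle to be the uniform bound $\mathcal{K}\subseteq\mathcal{K}_h(C_0)$, specifically justifying continuity of the intersection form along the limiting sequence. The route is to invoke Proposition~\ref{prop:continuity} at the limit pair. One should also check that metrizability of $\mathcal{K}_h(1)$ is really available, since it is what legitimizes extracting a convergent \emph{subsequence} rather than a subnet. The rest is routine.
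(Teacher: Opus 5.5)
Your proposal is correct and follows essentially the same route as the paper. The paper proves that the sublevel sets of the auxiliary functional are compact via Riesz and Banach--Alaoglu, then uses a normalize-and-extract contradiction argument with the filling property to confine $\mathcal{K}$ to a single sublevel set. It then gets closedness of $\mathcal{K}$ from continuity of $i(\mu_0,\cdot)$, and finally uses metrizability to pass from compactness to sequential compactness.

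You also make explicit two points the paper's argument uses only implicitly: continuity of $i$ at $(\mu_0,\nu)$ rests on $\mu_0=\lambda$ being atomless, and metrizability of $\mathcal{K}_h(1)$ is what justifies extracting a subsequence. One small fix: positivity of $h$ on nonzero currents should be established by the short holonomy argument (a support point lies in the interior of some flow box and a positive-mass neighborhood is transported into $\Sigma_i$) rather than cited, since it is part of what is being proved here.
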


\begin{proof}
Since $PT^*M$ is compact, there exist finitely many flow boxes $B_i \simeq \Sigma_i \times [-1,1]$ whose interiors cover~$PT^*M$, where each~$\Sigma_i$ is a compact surface transverse to~$\mathcal{F}$.
Extend~$\Sigma_i$ to an open transverse surface~$\Sigma_i^+$ and choose a nonnegative function $\psi_i \in C_c(\Sigma_i^+)$ such that $\psi_i \equiv 1$ on~$\Sigma_i$.
The functional $m:\Curr(M) \to \R_+$ defined by
\[
m(\mu) = \sum_i \int_{\Sigma_i^+} \psi_i \, d\mu_{\Sigma_i^+}
\]
is continuous and positively homogeneous.

Let $\mu$ be a nonzero geodesic current.
We claim that $m(\mu) >0$.
Indeed, there exist a transverse surface~$\Sigma \subseteq PT^*M$ and a point~$\xi$ in the support of~$\mu_\Sigma$.
In particular, $\mu_\Sigma(U) >0$ for every neighborhood~$U \subseteq \Sigma$ of~$\xi$.
Since the flow boxes cover~$PT^*M$, the point~$\xi$ lies in some~$B_i$.
Shrinking~$U$ if necessary and following the leaves of~$\mathcal{F}$, we construct a holonomy diffeomorphism $h:U \to V \subseteq \Sigma_i^+$ with $h(\xi) \in \Sigma_i$.
Shrinking~$U$ further if necessary, we may assume that $\psi_i \geq \frac{1}{2}$ on~$V$.
By holonomy invariance, $\mu_{\Sigma_i^+}(V) = \mu_\Sigma(U) >0$.
Thus,
\[
m(\mu) \geq \int_{\Sigma_i^+} \psi_i \, d\mu_{\Sigma_i^+} \geq \tfrac{1}{2} \mu_{\Sigma_i^+}(V) >0.
\]

Now, a geodesic current~$\mu$ is uniquely determined by the family of (nonnegative) Radon measures~$\mu_i = \mu_{\Sigma_i}$ it induces on the transverse surfaces~$\Sigma_i$ along with the holonomy compatibility conditions.
These compatibility conditions are closed.
Thus, $\Curr(M)$ can be identified homeomorphically with a closed subset of the product~$\prod_i \mathcal{M}(\Sigma_i)$, where $\mathcal{M}(\Sigma_i)$ denotes the space of Radon measures on~$\Sigma_i$.

Let us show that $\mathcal{K}_m(C) = \{ \mu \in \Curr(M) \mid m(\mu) \leq C\}$ is compact for every $C>0$.
Since $\psi_i \equiv 1$ on~$\Sigma_i$, we have
\[
\mu_{\Sigma_i}(\Sigma_i) \leq \int_{\Sigma_i^+} \psi_i \, d\mu_{\Sigma_i^+} \leq m(\mu) \leq  C
\]
for every geodesic current~$\mu$ with $m(\mu) \leq C$.
Thus, the mass of each measure~$\mu_{\Sigma_i}$ is bounded by~$C$.
Since $\Sigma_i$ is compact, the space~$\mathcal{M}_{\leq C}(\Sigma_i)$ of Radon measures on~$\Sigma_i$ with mass bounded by~$C$ is compact for the weak$^*$ topology.
Indeed, by the Riesz representation theorem~\cite[Corollary~7.18]{folland}, the space of Radon measures on~$\Sigma_i$ isometrically identifies with the positive cone of the dual space~$C(\Sigma_i)^*$.
The compactness of~$\mathcal{M}_{\leq C}(\Sigma_i)$ for the weak$^*$ topology follows from the Banach--Alaoglu theorem.
Since $\Sigma_i$ is compact, the weak$^*$ and vague topologies agree on~$\mathcal{M}_{\leq C}(\Sigma_i)$.
It follows that $\mathcal{K}_m(C)$ is a closed subset of the compact set~$\prod_i \mathcal{M}_{\leq C}(\Sigma_i)$.
It is therefore compact.

Finally, we claim that there exists $C_0>0$ such that $m(\mu) \leq C_0$ for every $\mu \in \mathcal{K}$.
Otherwise, there exists a sequence of geodesic currents~$\mu_n \in \mathcal{K}$ such that $m(\mu_n) \to \infty$.
By positive homogeneity of both~$m$ and the intersection form, the geodesic currents $\nu_n=\frac{\mu_n}{m(\mu_n)}$ satisfy $m(\nu_n)=1$ and $i(\nu_n,\mu_0) \to 0$.
By compactness of~$\mathcal{K}_m(1)$, a subsequence of~$(\nu_n)$ converges to a geodesic current~$\nu$.
By continuity of~$m$ and the intersection form, $m(\nu)=1$ and $i(\nu,\mu_0)=0$.
in particular, $\nu$ is nonzero, contradicting the assumption that $\mu_0$ is filling.

Consequently, $\mathcal{K}$ is a closed subset of the compact set $\mathcal{K}_m(C_0)$.
It is therefore compact.
Since it is metrizable, it is also sequentially compact.
\end{proof}

\forgotten

We immediately obtain the following compactness result, 
although it will not be needed in this article.
See~\cite[Corollary~5]{Bon88}.

\begin{corollary}
Let $M$ be a closed surface endowed with a Finsler metric. 
Then the projectivized space of geodesic currents
\[
\mathbb{P} \Curr(M) = \left( \Curr(M) \setminus \{0\} \right)  / \, \R_{>0}
\]
is compact.
\end{corollary}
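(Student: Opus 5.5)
The plan is to realize $\mathbb{P}\Curr(M)$ as a continuous image of the compact space $\mathcal{K}$ from Proposition~\ref{prop:compact}, taking $\mu_0=\lambda$, the Liouville geodesic current. The preceding lemma shows that $\lambda$ is filling. It has no atoms because on every transverse surface it is given by the smooth density $\tfrac12|\omega|_{\bar\Sigma}|$. Proposition~\ref{prop:compact} therefore applies, and $\mathcal{K}=\{\mu \mid i(\lambda,\mu)=1\}$ is compact and metrizable.

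Next, consider the restriction to $\mathcal{K}$ of the quotient map $\Curr(M)\setminus\{0\}\to\mathbb{P}\Curr(M)$. This restriction is continuous, as any restriction of a quotient map is. The claim is that it is surjective. Take a nonzero current $\mu$. Since $\lambda$ is filling, $i(\lambda,\mu)>0$. By positive homogeneity of the intersection form, the current $\mu/i(\lambda,\mu)$ lies in $\mathcal{K}$ and has the same class as $\mu$. Also note that $0\notin\mathcal{K}$, since $i(\lambda,0)=0$, so the map is indeed defined on all of $\mathcal{K}$. A continuous image of a compact space is compact, hence $\mathbb{P}\Curr(M)$ is compact.

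The only point needing care is confirming the hypotheses of Proposition~\ref{prop:compact}, namely that $\lambda$ is filling and atomless, and both were checked above. If one also wants Hausdorffness, it can be obtained by showing the map $\mathcal{K}\to\mathbb{P}\Curr(M)$ is injective and then identifying $\mathbb{P}\Curr(M)$ with $\mathcal{K}$. Injectivity holds because $\mu=t\nu$ with $\mu,\nu\in\mathcal{K}$ forces $t=1$ by homogeneity. For the identification, $\mu\mapsto \mu/i(\lambda,\mu)$ gives a continuous inverse. Its continuity follows from the continuity of $\mu\mapsto i(\lambda,\mu)$ (Proposition~\ref{prop:continuity}, since $\lambda$ has no atoms) and the continuity of scalar multiplication on $\Curr(M)$ in the vague topology.
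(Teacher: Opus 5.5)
Your proof is correct and follows the paper's argument. You take $\mu_0=\lambda$, which is filling and atomless, invoke the compactness of $\mathcal{K}$ from Proposition~\ref{prop:compact}, and observe that $\mu\mapsto[\mu]$ is a continuous surjection $\mathcal{K}\to\mathbb{P}\Curr(M)$ since every class contains $\mu/i(\lambda,\mu)$. Your extra remark that this map is in fact a homeomorphism, which gives Hausdorffness, goes slightly beyond what the paper states and is also correct.
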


\begin{proof}
Let $\mu_0$ be a filling geodesic current without atoms (\eg, the Liouville geodesic current).
Consider the compact space~$\mathcal{K}$ of Proposition~\ref{prop:compact}.
Since $\mu_0$ is filling, every nonzero geodesic current~$\mu$ satisfies $i(\mu,\mu_0) > 0$.
Consequently, every projective class $[\mu] \in \mathbb{P} \Curr(M)$ contains a unique representative in~$\mathcal{K}$, namely 
$\frac{\mu}{i(\mu,\mu_0)}$.
By Proposition~\ref{prop:continuity}, the map $\mathcal{K} \rightarrow \mathbb{P} \Curr(M)$ sending~$\mu$ to~$[\mu]$ is continuous and surjective.
Since $\mathcal{K}$ is compact, its image~$\mathbb{P} \Curr(M)$ is also compact.
%
\forget
\\
\[
\begin{aligned}
\mathcal{K} & \rightarrow \mathbb{P} \Curr(M) \\
\mu  & \mapsto [\mu]
\end{aligned}
\qquad 
\textrm{ and }
\qquad 
\begin{aligned}
\mathbb{P} \Curr(M) & \rightarrow \mathcal{K} \\
[\mu]  & \mapsto \hat{\mu} 
\end{aligned}
\]
\mbox{ }

\noindent 
\forgotten
\end{proof}

\section{Minimal intersection number on surfaces} \label{sec:min}

In this section, we introduce the intersection and self-intersection numbers of closed curves on a surface.
We then show that, on a closed surface endowed with a Finsler metric, the closed geodesics of minimal length in their free homotopy classes have the least possible intersection and self-intersection numbers.
This is a simple adaptation of results of Freedman, Hass and Scott~\cite[\S3]{FHS} to our setting.

\begin{definition} \label{def:intersection}
Let $c_1$ and~$c_2$ be two closed curves on a surface~$M$.
The \emph{number of intersections} between $c_1$ and~$c_2$ (always counted with multiplicity) is defined as
\[
\#(c_1 \cap c_2) = \card \{ (s_1,s_2) \in S^1 \times S^1 \mid c_1(s_1) = c_2(s_2) \}
\]
where $\card(S)$ denotes the cardinality of a set~$S$.
By Example~\ref{ex:intersection}, if~$\gamma_1$ and~$\gamma_2$ are transverse closed geodesics for some Finsler metric, then
\begin{equation} \label{eq:itt}
\#(\gamma_1 \cap \gamma_2) = i(\gamma_1,\gamma_2).
\end{equation}

Similarly, let $c$ be a closed curve of~$M$.
The \emph{number of self-intersections} of~$c$ (again always counted with multiplicity) is defined as
\[
\#(c) = \frac{1}{2} \card \{ (s_1,s_2) \in S^1 \times S^1 \mid s_1 \neq s_2, c(s_1) = c(s_2) \}.
\]
By Example~\ref{ex:intersection}, for every primitive closed geodesic~$\gamma$,
\begin{equation} \label{eq:it}
\#(\gamma) = \frac{1}{2} \, i(\gamma,\gamma).
\end{equation}
Indeed, each self-intersection of~$\gamma$ corresponds to two ordered pairs of distinct tangent lines (or cotangent lines under the Legendre transform), which accounts for the factor~$\frac{1}{2}$.

Let $\alpha_1$ and~$\alpha_2$ be two unoriented free homotopy classes of closed curves on~$M$.
Their \emph{intersection number} is 
\[
I(\alpha_1,\alpha_2) = \min \{ \#(c_1 \cap c_2) \mid c_1 \in \alpha_1 \text{ and } c_2 \in \alpha_2 \}
\]
where the minimum is taken over closed curves representing~$\alpha_1$ and~$\alpha_2$.
\end{definition}

It will be convenient to introduce the following notion of a nearby push-off curve.

\begin{definition} \label{def:nearby}
Let $M$ be an orientable closed surface endowed with a Finsler metric.
Let $\gamma$ be a closed geodesic.
Fix a smooth unit vector field~$\nu$ along~$\gamma$ transverse to~$\gamma$.
For $\varepsilon > 0$ small enough, define the closed curve
\[
\gamma^\varepsilon(s) = \pi \circ \phi_\varepsilon(\nu(s))
\]
where $\phi_t:UM \to UM$ is the geodesic flow and $\pi:UM \to M$ is the canonical projection.
We will refer to~$\gamma^\varepsilon$ as an \emph{$\varepsilon$-push-off} of~$\gamma$.
\end{definition}

Observe that the number of transverse intersections between the closed geodesics~$\gamma_1$ and~$\gamma_2$ is equal to the number of intersections between a small perturbation~$\gamma_1^\varepsilon$ of~$\gamma_1$ and~$\gamma_2$.
More precisely, for every sufficiently small push-off~$\gamma_1^\varepsilon$ of~$\gamma_1$,
\begin{equation} \label{eq:i-nearby}
i(\gamma_1,\gamma_2) = \#(\gamma_1^\varepsilon \cap \gamma_2).
\end{equation}

The next two propositions are analogues, in our setting, of results of Freedman, Hass and Scott; see~\cite[\S3]{FHS}.

\medskip

The first result asserts that two shortest closed curves in their free homotopy classes have the least possible intersection number; see~\cite[Theorem~3.3]{FHS}.

\begin{proposition} \label{prop:i}
Let $M$ be an orientable closed surface endowed with a Finsler metric.
Let $\gamma_1$ and~$\gamma_2$ be two noncontractible closed geodesics of minimal length in their unoriented free homotopy classes~$\alpha_1$ and~$\alpha_2$.
Then,
\begin{equation} \label{eq:i}
I(\alpha_1,\alpha_2) =  i(\gamma_1,\gamma_2).
\end{equation}
\forget
Moreover, 
\begin{enumerate}
\item if $\gamma_1$ and~$\gamma_2$ are not iterates of the same primitive closed geodesic, then
\begin{equation} \label{eq:i-sharp}
i(\gamma_1,\gamma_2) = \#(\gamma_1 \cap \gamma_2)
\end{equation}
\item if $\gamma_1=\gamma^{p_1}$ and~$\gamma_2=\gamma^{p_2}$ are iterates of the same primitive closed geodesic~$\gamma$ with $p_1,p_2 \geq 1$, then
\begin{equation}
i(\gamma_1,\gamma_2) = 2 p_1 p_2 \, \#_t(\gamma).
\end{equation}
\end{enumerate}


Moreover, for every $\varepsilon$-push-off~$\gamma_1^\varepsilon$ of~$\gamma_1$ with $\varepsilon>0$ small enough,
\begin{equation} \label{eq:i-nearby}
i(\gamma_1,\gamma_2) = \#(\gamma_1^\varepsilon \cap \gamma_2).
\end{equation}
\forgotten
\end{proposition}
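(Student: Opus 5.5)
The plan is to prove the two inequalities $I(\alpha_1,\alpha_2)\le i(\gamma_1,\gamma_2)$ and $I(\alpha_1,\alpha_2)\ge i(\gamma_1,\gamma_2)$ separately. The upper bound is immediate from~\eqref{eq:i-nearby}. For a sufficiently small push-off $\gamma_1^\varepsilon$, which is freely homotopic to $\gamma_1$ and hence represents $\alpha_1$, we have $\#(\gamma_1^\varepsilon\cap\gamma_2)=i(\gamma_1,\gamma_2)$. By definition of $I$, this gives $I(\alpha_1,\alpha_2)\le i(\gamma_1,\gamma_2)$. The whole difficulty lies in the reverse inequality, and for it I would follow the cut-and-paste argument of Freedman--Hass--Scott~\cite[Theorem~3.3]{FHS}.

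For the lower bound, choose curves $c_1\in\alpha_1$ and $c_2\in\alpha_2$ realizing $I(\alpha_1,\alpha_2)$, in general position. Lift to the universal cover $\tilde M$. Intersection points of $c_1$ and $c_2$ correspond, modulo the deck group, to pairs of lifts $(\tilde c_1,\tilde c_2)$ with $\tilde c_1$ invariant under a deck transformation $g_1$ in the class of $\alpha_1$, and $\tilde c_2$ invariant under some $g_2$. The quantity $I$ is at least the number of such pairs modulo the diagonal deck action whose lifts are ``linked'', meaning that their axes at infinity, or more robustly their ends, separate each other. Indeed, two properly embedded invariant lines that are linked must cross, and this property depends only on the homotopy classes, since each lift stays at bounded distance from a corresponding lift of $\gamma_i$.

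It then remains to show that $i(\gamma_1,\gamma_2)$ equals exactly this linking count for the minimizing geodesics. Equivalently, for the push-off $\gamma_1^\varepsilon$, no two lifts of $\gamma_1^\varepsilon$ and $\gamma_2$ intersect twice, and unlinked pairs do not intersect at all. I would argue by contradiction, as in~\cite{FHS}. Suppose lifts $\tilde\gamma_1$ and $\tilde\gamma_2$ meet at two points $x,y$ (or meet while unlinked, or coincide partially). Consider the arcs $a_1\subseteq\tilde\gamma_1$ and $a_2\subseteq\tilde\gamma_2$ between $x$ and $y$. If, say, $\length(a_1)\le\length(a_2)$, then replacing $a_2$ by $a_1$ in $\tilde\gamma_2$, and doing the same equivariantly after projecting, yields a curve in $\alpha_2$ of length at most $\length(\gamma_2)$ with a corner. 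Rounding the corner strictly shortens it, contradicting minimality. The projection step is the delicate part: one needs the exchange to descend to a closed curve in the same free homotopy class. I would handle this by working in the cylinder $\tilde M/\langle g_2\rangle$, where $\gamma_2$ lifts to a closed curve and the arc of $\tilde\gamma_1$ projects to an arc with both endpoints on it. The exchange there produces a closed curve homotopic to the core, with the same length bound.

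The main obstacle I expect is making this exchange argument rigorous in the Finsler setting without negative curvature. The difficulties are that minimizers in a class may not be unique, that geodesics could be tangent or coincide along arcs (iterates of a common geodesic, or $\gamma_1=\gamma_2$), and that the cylinder cover may be needed with bigons wrapping around. For coinciding or tangent geodesics, $i$ only counts transverse intersections via distinct tangent lines, and the push-off handles the non-transverse contributions. Reversibility of $F$ is used so that lengths of arcs do not depend on orientation. Finally, Proposition~\ref{prop:classical} lets us treat iterates $\gamma^k$ as minimizing, so the case of non-primitive geodesics reduces to the primitive one by bilinearity.
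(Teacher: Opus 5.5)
Your overall structure is the same as the paper's. The bound $I(\alpha_1,\alpha_2)\le i(\gamma_1,\gamma_2)$ comes from the push-off~\eqref{eq:i-nearby}. The reverse bound comes from counting double cosets $\langle g_1\rangle g\langle g_2\rangle$ for which $l_1$ and $g\cdot l_2$ cross transversally. Such lifts cross at most once, hence are linked. So lifts of arbitrary representatives $c_1,c_2$, which have the same ends, must meet.

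The difference is how you justify ``at most once'', and as written that step has a hole. Your equivariant exchange in the annulus $\bar M/\langle g_2\rangle$ yields a closed curve homotopic to the core only when the arc $a_2\subseteq\tilde\gamma_2$ between $x$ and $y$ is no longer than one period of $\gamma_2$. If $a_2$ wraps more than once, the translates $g_2^k a_2$ overlap and the replacement is not well defined. What you actually obtain is a short curve in the class of a higher power of $\gamma_2$. The symmetric case, exchanging in $\tilde\gamma_1$, has the same issue. To conclude, you need that iterates are minimizing, i.e.\ Proposition~\ref{prop:classical}.

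The paper's proof uses this at the outset. It gives $d(x,g_i^k x)=k\,\length(\gamma_i)$ for $x\in l_i$, so each $l_i$ is a globally minimizing line in $\bar M$. If $l_1$ and $g\cdot l_2$ met transversally at $x\neq y$, both arcs between $x$ and $y$ would have length $d(x,y)$. Swapping them would give a minimizing path with a corner. This needs no descent to $M$ or to the annulus.

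Two smaller points. First, reducing to primitive geodesics ``by bilinearity'' is not legitimate. Only $i$ is known to be bilinear; the homogeneity $I(\alpha_1^{p},\alpha_2^{q})=pq\,I(\alpha_1,\alpha_2)$ is a consequence of the proposition, not an input. The reduction is also unnecessary, since the double-coset count works verbatim with $g_1,g_2$ representing possibly non-primitive classes. Second, for the lower bound you only need $i(\gamma_1,\gamma_2)$ to be at most the linking count. That should be proved for the geodesics $\gamma_1,\gamma_2$ themselves, not for the push-off $\gamma_1^\varepsilon$, which is not a geodesic and to which the exchange argument does not apply.
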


\begin{proof}
We may assume that both $\gamma_1$ and~$\gamma_2$ are noncontractible.
Let $l_i$ be the geodesic line in the universal cover~$\bar{M}$ of~$M$ projecting to the image of~$\gamma_i$.
Since $\gamma_i$ is length-minimizing in its homotopy class, the geodesic line~$l_i$ is also length-minimizing (between any pair of its points); see Proposition~\ref{prop:classical}. 
Thus, the geodesic lines~$g \cdot l_2$ and~$l_1$, where $g \in \pi_1(M)$, have at most one transverse intersection point.
Denote by~$g_i \in \pi_1(M)$ the homotopy class representing~$\gamma_i$ acting on~$l_i$.
Fix a semi-open arc~$\delta_i$ of~$l_i$ which is a fundamental domain for the action of~$\langle g_i \rangle$ on~$l_i$.
The intersection form~$i(\gamma_i,\gamma_2)$ between~$\gamma_1$ and~$\gamma_2$ is equal to the number of elements~$g \in \pi_1(M)$ such that the arc~$g \cdot \delta_2$ transversally intersects~$\delta_1$.
Alternatively, the set
\[
\{ g \in \pi_1(M) \mid g \cdot l_2 \mbox{ transversally intersects } l_1 \}
\]
is a union of double cosets $\langle g_1 \rangle \, g \, \langle g_2 \rangle$ for some $g \in \pi_1(M)$.
The intersection form~$i(\gamma_1,\gamma_2)$ is equal to the number of these double cosets; see~\cite[\S3]{FHS}.
That is,
\[
i(\gamma_1,\gamma_2) = \card \{ \langle g_1 \rangle \, g \, \langle g_2 \rangle \mid g \cdot l_2 \mbox{ transversally intersects } l_1 \}.
\]

Let $c_1$ and~$c_2$ be two closed curves representing~$\alpha_1$ and~$\alpha_2$.
Since the curves~$c_i$ and $\gamma_i$ are freely homotopic, there exists a lift of~$c_i$ to~$\bar{M}$ whose image~$l'_i$ is at finite Hausdorff distance from the geodesic line~$l_i$.
In particular, $l_i$ and~$l'_i$ have the same ends.
Thus, if $g \cdot l_2$ transversally intersects~$l_1$, then $g \cdot l'_2$ intersects~$l'_1$ (at least once).
Hence,
\[
i(\gamma_1,\gamma_2) \leq \#(c_1 \cap c_2).
\]
That is,
\[
i(\gamma_1,\gamma_2) \leq I(\alpha_1, \alpha_2).
\]

Conversely, by~\eqref{eq:i-nearby}, we have
\[
i(\gamma_1,\gamma_2) = 
\#(\gamma_1^\varepsilon \cap \gamma_2) \geq I(\alpha_1,\alpha_2).
\]
Therefore, 
\[
I(\alpha_1,\alpha_2) = i(\gamma_1,\gamma_2) = \#(\gamma_1^\varepsilon \cap \gamma_2). \qedhere
\]
\forget
If $\gamma_1$ and~$\gamma_2$ are transverse, their number of transverse intersections is equal to their number of intersections.
That is,
\[
i(\gamma_1,\gamma_2) = 
\#(\gamma_1 \cap \gamma_2).
\]

If $\gamma_1=\gamma^{p_1}$ and $\gamma_2=\gamma^{p_2}$ with $p_1,p_2 \geq 1$, where $\gamma$ is a primitive closed geodesic, then
\[
i(\gamma_1,\gamma_2) = p_1 p_2 \, i(\gamma,\gamma) = 2 p_1 p_2 \, \#_t(\gamma)
\]
where the last equality comes from~\eqref{eq:it}.
\forgotten
\end{proof}

The second result asserts that a shortest closed curve in its free homotopy class has the least possible self-intersection number; see~\cite[Theorem~3.2]{FHS}.

\begin{proposition} \label{prop:ii}
Let $M$ be an orientable closed surface endowed with a Finsler metric.
Let $\gamma$ be a noncontractible closed geodesic of minimal length in its free homotopy class.
Then, for every closed curve~$c$ freely homotopic to~$\gamma$,
\[
i(\gamma,\gamma) \leq 2 \, \#(c).
\]
In particular, if $\gamma$ is primitive, then
\[
\#(\gamma) \leq \#(c).
\]
\end{proposition}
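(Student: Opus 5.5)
The plan is to adapt the universal-cover argument from the proof of Proposition~\ref{prop:i} to self-intersections. Write $\gamma = \gamma_0^k$ with $\gamma_0$ primitive, and let $l$ be a lift of $\gamma_0$ to the universal cover~$\bar{M}$. Let $g_0 \in \pi_1(M)$ be the deck transformation generating the stabilizer of~$l$, so that $g = g_0^k$ represents~$\gamma$. By Proposition~\ref{prop:classical}, $\gamma^m$ is length-minimizing for every~$m$. Hence $l$ is a minimizing geodesic line, and any two distinct translates $h \cdot l$ and $l$ meet transversally in at most one point.

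The first step is to count $i(\gamma,\gamma)$ algebraically. By bilinearity, $i(\gamma,\gamma) = k^2\, i(\gamma_0,\gamma_0)$. Applying the double-coset description from the proof of Proposition~\ref{prop:i} with $\gamma_1=\gamma_2=\gamma_0$ gives
\[
i(\gamma_0,\gamma_0) = \card\{ \langle g_0\rangle h \langle g_0\rangle \mid h \cdot l \text{ transversally intersects } l \}.
\]
Here the count runs over ordered pairs, consistent with~\eqref{eq:it}. Equivalently, $i(\gamma,\gamma)$ is the number of double cosets $\langle g\rangle h\langle g\rangle$, with $g=g_0^k$, such that $h\cdot l$ crosses~$l$. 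Indeed, each $\langle g_0\rangle$-double coset splits into exactly $k^2$ cosets of the smaller group, because $g_0$ commutes with $g$ and $h\notin\langle g_0\rangle$ implies that $h^{-1}\langle g_0\rangle h\cap\langle g_0\rangle$ is trivial. This last fact uses that the ends of distinct crossing lines differ.

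The second step is the lower bound for~$c$. Lift $c$ to a $g$-invariant curve $l'$ at finite Hausdorff distance from~$l$, with fundamental domain $\delta'$. For each double coset $\langle g\rangle h\langle g\rangle$ with $h\cdot l$ crossing~$l$, the ends of $h\cdot l$ and $l$ are linked on the boundary at infinity. This is the step I expect to need care: for a general Finsler metric one cannot use a hyperbolic ideal boundary directly. Instead I would argue, as in Proposition~\ref{prop:i}, that two proper lines in the plane at bounded distance from crossing minimizing lines must themselves intersect, since $l$ separates $\bar M$ and $h\cdot l'$ has its two ends on opposite sides. It follows that $h\cdot l'$ meets~$l'$. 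Each such intersection yields a pair $(s_1,s_2)$ with $s_1 \neq s_2$ and $c(s_1)=c(s_2)$; the condition $s_1\neq s_2$ holds because $h\notin\langle g\rangle$. Distinct double cosets give distinct ordered pairs, and $h\mapsto h^{-1}$ corresponds to swapping the pair. Therefore
\[
i(\gamma,\gamma)\le \card\{(s_1,s_2)\mid s_1\neq s_2,\ c(s_1)=c(s_2)\} = 2\,\#(c).
\]

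The final step is immediate. When $\gamma$ is primitive, \eqref{eq:it} gives $\#(\gamma)=\tfrac12 i(\gamma,\gamma)\le\#(c)$.

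The main obstacle is the bookkeeping in the first step. One must check carefully that double cosets $\langle g\rangle h\langle g\rangle$ with $h\notin\langle g_0\rangle$ are in bijection with ordered pairs of parameters of transverse self-intersections of $\gamma$ counted with multiplicity. The elements $h\in\langle g_0\rangle\setminus\langle g\rangle$ must be excluded from this count: they correspond to non-transverse coincidences of~$\gamma$ with itself, and they contribute nothing to $i(\gamma,\gamma)$.
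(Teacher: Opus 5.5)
Your route is the paper's. You lift to $\bar M$ and count $i(\gamma,\gamma)$ by double cosets of translates $h\cdot l$ crossing $l$. You compare with a $g$-invariant lift $l'$ of $c$ at finite Hausdorff distance, and the factor $2$ comes from ordered pairs. Your bookkeeping is sound, and more careful than the paper's, which states the double-coset count directly for nonprimitive $\gamma$. The $k^2$-splitting holds because $h^{-1}g_0^a h=g_0^b$ with $a\neq 0$ would make $g_0^a$ preserve both $l$ and $h\cdot l$ and move their unique crossing point to a second one. Distinct $\langle g\rangle$-double cosets do give distinct ordered pairs $(s_1,s_2)$ with $s_1\neq s_2$, since deck transformations act freely.

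The gap is the one geometric step, that $h\cdot l'$ meets $l'$. Your justification ($l$ separates $\bar M$ and the ends of $h\cdot l'$ lie on opposite sides of $l$) at best shows that $h\cdot l'$ meets $l$, not $l'$. You cannot simply substitute $l'$ for $l$: it need not be embedded (a contractible kink of $c$ lifts to a self-intersection), so it has no evident ``sides''. Even the claim about the ends of $h\cdot l'$ presupposes that the ends of $h\cdot l$ leave every bounded neighborhood of $l$, which you do not prove.

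What is needed is a statement symmetric in the two curves; the paper supplies it with ``$l$ and $l'$ have the same ends''. Contrary to your remark, a boundary at infinity exists for every Finsler metric. If $M$ has genus at least $2$, then $\bar M$ with the lifted metric is quasi-isometric to $\pi_1(M)$, hence Gromov hyperbolic, and compactifies to a closed disk.
\begin{itemize}
\item The minimizing lines $l$ and $h\cdot l$ are geodesics with four distinct endpoints on the boundary circle. Otherwise $g$ and $hgh^{-1}$ would share a nontrivial power $g^b=hg^ah^{-1}$, and $g^b$ would duplicate the crossing as above.
\item These endpoints are linked, because the embedded arc $l$ is crossed exactly once by $h\cdot l$.
\item The curves $l'$ and $h\cdot l'$ have the same endpoints as $l$ and $h\cdot l$. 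Two paths in a closed disk joining linked pairs of boundary points must meet.
\end{itemize}

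If you prefer to stay in $\bar M$, a pigeonhole argument using the periodicity of $l$ and $h\cdot l$ yields such a common power unless the two lines diverge at both ends. The duplication argument then gives divergence. Bounded homotopies from $l$ to $l'$ and from $h\cdot l$ to $h\cdot l'$ then keep the curves disjoint outside a compact set, so the mod~$2$ intersection number $1$ is preserved.

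On the torus there is nothing to prove. Since $\pi_1(M)$ is abelian, $g$ preserves every translate of $l$, so any crossing with $l$ would be duplicated, and $i(\gamma,\gamma)=0$.
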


\begin{proof}
We argue as in the proof of the previous proposition.
We may assume that $\gamma$ is noncontractible.
Let $l$ be the geodesic line in the universal cover~$\bar{M}$ of~$M$ projecting to the image of~$\gamma$.
Since $\gamma$ is length-minimizing in its homotopy class, the geodesic line~$l$ is also length-minimizing; see~\cite{FHS}.
Thus, the geodesic lines~$g \cdot l$ and~$l$, where $g \in \pi_1(M)$, have at most one transverse intersection point.
Denote by~$h \in \pi_1(M)$ the homotopy class representing~$\gamma$ acting on~$l$.
The set
\[
\{ g \in \pi_1(M) \mid g \cdot l \mbox{ transversally intersects } l \}
\]
is a union of double cosets $\langle h \rangle \, g \, \langle h \rangle$ for some $g \in \pi_1(M)$.
The intersection form~$i(\gamma,\gamma)$ is equal to the number of these double cosets; see~\cite[\S3]{FHS}.
That is,
\[
i(\gamma,\gamma) = \card \{ \langle h \rangle \, g \, \langle h \rangle \mid g \cdot l \mbox{ transversally intersects } l \}.
\]

Let $c$ be a closed curve of~$M$ freely homotopic to~$\gamma$.
As previously, there exists a lift of~$c$ to~$\bar{M}$ whose image~$l'$ is at finite Hausdorff distance from the geodesic line~$l$.
In particular, $l$ and~$l'$ have the same ends.
Thus, if $g \cdot l$ transversally intersects~$l$, then $g \cdot l'$ intersects~$l'$.
Note that if $g \cdot l'$ intersects~$l'$, so does~$g^{-1} \cdot l'$.

Hence,
\[
i(\gamma,\gamma) \leq 2 \, \#(c). \qedhere
\]
\end{proof}

\section{Wall systems} \label{sec:wall}

In this section, we introduce the notion of wall systems, which will allow us to discretize Finsler metrics on surfaces in Section~\ref{sec:discretization}. 
We then show that the discrete marked length spectrum of a geodesic wall system, whose geodesic walls are length-minimizing in their free homotopy classes, can be expressed in terms of the intersection form with minimizing closed geodesics.

\medskip

We first need the notion of wall system introduced in~\cite{Cos} and further exploited in~\cite{CS}.
The following definitions are adapted to our setting.

\begin{definition} \label{def:wall}
A \emph{wall system}~$\W=(c_i,w_i)_{1\leq i \leq n}$ on a closed surface~$M$ is a finite collection of immersed closed curves~$(c_i)_{1 \leq i \leq n}$ with transverse intersections and self-intersections, together with positive \emph{weights}~$(w_i)_{1\leq i \leq n}$.
We will refer to the closed curves~$c_i$ as the \emph{walls} of~$\W$.
An \emph{unweighted wall system} is a wall system for which all the weights are equal to~$1$.

A wall system is \emph{geodesic} with respect to a Finsler metric on~$M$ if all its walls are geodesic.
In particular, since the walls are required to have only transverse intersections and self-intersections, the curves forming the geodesic walls are primitive and pairwise transverse.
A geodesic wall system is \emph{minimal} if each of its geodesic walls is length-minimizing in its unoriented free homotopy class.

The \emph{discrete area} of a wall system~$\W=(c_i,w_i)_{1 \leq i \leq n}$ is the total number of (transverse) intersections and self-intersections of its walls, counted with multiplicity and weights. 
That is,
\begin{align*}
\area(\W) & = \sum_{1 \leq i < j \leq n} w_i w_j \, \#(c_i \cap c_j) + \sum_{i=1}^n w_i^2 \, \#(c_i) \notag \\
 & = \frac{1}{2} \sum_{1 \leq i \neq j \leq n} w_i w_j \, \#(c_i \cap c_j) + \sum_{i=1}^n w_i^2 \, \#(c_i). \label{eq:area-discrete}
\end{align*}

The \emph{discrete length} of a curve~$c$ of~$M$ with respect to~$\W$ is given by
\[
\length_{\W}(c) = \sum_{i=1}^n w_i \, \#(c \cap c_i).
\]
The \emph{discrete distance} induced by~$\W$ is a pseudo-distance on~$M$ defined by
\[
d_\W(x,y) = \min_c \length_\W(c)
\]
where the minimum is taken over all paths~$c$ joining~$x$ to~$y$.

The \emph{discrete marked length spectrum} $\MLS_{\W}:\mathcal{C} \to [0,\infty)$ induced by~$\W$ is defined by
\[
\MLS_{\W}(\alpha) = \min_{c \in \alpha} \length_{\W}(c)
\]
where the minimum is taken over all closed curves~$c$ representing the unoriented free homotopy class~$\alpha$.
Compare with~\eqref{eq:MLS}.
Equivalently, the minimum may be taken over all representatives avoiding the vertices of the graph determined by~$\W$.
This observation is needed to translate the main theorem of~\cite{GS} into our setting; see Theorem~\ref{theo:GS}.

A geodesic wall system~$\W=(\sigma_i,w_i)_{1 \leq i \leq n}$ gives rise to a geodesic current, also denoted by~$\W$, defined as the sum of the geodesic currents induced by the (primitive) closed geodesics~$\sigma_i$ with weight~$w_i$.
Namely,
\[
\W = \sum_{i=1}^n w_i \, \sigma_i 
\]
Conversely, a positive linear combination of geodesic currents induced by primitive closed geodesics defines a geodesic wall system, up to wall parametrizations.
\end{definition}

We can express the discrete length, the discrete area and the discrete marked length spectrum of a geodesic wall system in terms of the intersection form as follows.

\begin{proposition} \label{prop:MLSi}
Let $M$ be an orientable closed surface endowed with a Finsler metric.
Let $\W$ be a geodesic wall system on~$M$.
Let~$\alpha$ be an unoriented free homotopy class.
Then the following properties hold:
\begin{enumerate}
\item For every closed geodesic~$\gamma$ transverse to~$\W$, \label{i1}
\begin{equation*}
\length_{\W}(\gamma) = i(\gamma,\W). \label{eq:r1}
\end{equation*}
\item \label{i2}
\begin{equation*}
\area(\W) = \frac{1}{2} \, i(\W,\W). \label{eq:r2}
\end{equation*}
\item Every closed geodesic~$\gamma$ representing~$\alpha$ satisfies \label{i22}
\[
\MLS_{\W}(\alpha) \leq i(\gamma,\W).
\]
\item There exits a closed geodesic~$\gamma$ representing~$\alpha$ such that \label{i3}
\[
i(\gamma,\W) =\MLS_{\W}(\alpha).
\]
\item Suppose that $\W$ is minimal. \label{i4}
Every closed geodesic~$\gamma$ of minimal length representing~$\alpha$ satisfies
\begin{equation*} \label{eq:MLSi}
i(\gamma,\W) = \MLS_{\W}(\alpha).
\end{equation*}
\end{enumerate}
%
\end{proposition}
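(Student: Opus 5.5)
Items~\eqref{i1} and~\eqref{i2} are bilinearity plus Example~\ref{ex:intersection}. Write $\W=\sum_i w_i\sigma_i$ as a current. For~\eqref{i1}, bilinearity gives $i(\gamma,\W)=\sum_i w_i\, i(\gamma,\sigma_i)$. Since $\gamma$ is transverse to each $\sigma_i$, Example~\ref{ex:intersection} identifies $i(\gamma,\sigma_i)$ with $\#(\gamma\cap\sigma_i)$, and summing gives $\length_\W(\gamma)$. For~\eqref{i2}, expand
\[
i(\W,\W)=\sum_{i\neq j}w_iw_j\, i(\sigma_i,\sigma_j)+\sum_i w_i^2\, i(\sigma_i,\sigma_i).
\]
The walls are pairwise transverse, so~\eqref{eq:itt} gives $i(\sigma_i,\sigma_j)=\#(\sigma_i\cap\sigma_j)$. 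The walls are primitive, so~\eqref{eq:it} gives $i(\sigma_i,\sigma_i)=2\#(\sigma_i)$. Comparing with the second expression for $\area(\W)$ in Definition~\ref{def:wall} yields $i(\W,\W)=2\area(\W)$.

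For~\eqref{i22}, take a closed geodesic $\gamma$ representing $\alpha$ and a small push-off $\gamma^\varepsilon$, which is freely homotopic to $\gamma$. By~\eqref{eq:i-nearby} applied to $\gamma$ and each $\sigma_i$, $\#(\gamma^\varepsilon\cap\sigma_i)=i(\gamma,\sigma_i)$ for $\varepsilon$ small; this covers the case $\gamma=\sigma_i^k$ too, since then both sides count transverse self-crossings. Summing with weights,
\[
\MLS_\W(\alpha)\le\length_\W(\gamma^\varepsilon)=i(\gamma,\W).
\]
One should check that $\gamma^\varepsilon$ is transverse to $\W$, which is true for generic $\varepsilon$ and choice of $\nu$.

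Item~\eqref{i4} follows from Proposition~\ref{prop:i}. Let $\gamma$ be minimizing in $\alpha$ and each $\sigma_i$ minimizing in its class $\beta_i$. Then $i(\gamma,\sigma_i)=I(\alpha,\beta_i)\le\#(c\cap\sigma_i)$ for every $c\in\alpha$. Summing with weights gives $i(\gamma,\W)\le\length_\W(c)$, hence $i(\gamma,\W)\le\MLS_\W(\alpha)$, and~\eqref{i22} gives the reverse inequality. Contractible $\alpha$ is trivial: use the constant curve, with both sides equal to $0$.

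Item~\eqref{i3} is the main obstacle, because $\W$ is not assumed minimal. My plan is to take $\gamma$ to be a closed geodesic of minimal Finsler length in $\alpha$, which exists for nontrivial $\alpha$. I would then try to prove $i(\gamma,\sigma_i)\le\#(c\cap\sigma_i)$ for every $c\in\alpha$ even when $\sigma_i$ is not minimizing, using the universal cover argument of Proposition~\ref{prop:i}. The lift $l$ of $\gamma$ is minimizing, so two lifts of $\gamma$ cross at most once, but a lift of $\sigma_i$ may cross $l$ several times. Each transverse crossing of $l$ with a lift $\tilde\sigma$ would have to be forced on any curve with the same ends as $l$. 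This is clear when $\tilde\sigma$ separates the ends of $l$, but it can fail for non-minimizing $\sigma_i$ that cross $l$ back and forth. If the topological argument fails, the fallback is a homotopy/minimization argument: take a minimizer $c$ of $\length_\W$ in $\alpha$ and, in the spirit of the uncrossing results of Section~\ref{sec:uncrossing} (de Graaf--Schrijver, Theorem~\ref{theo:GS}), deform it to a closed geodesic without increasing its $\W$-intersections. This would be done by straightening $c$ inside the cells of the complement of $\W$ while keeping its crossing pattern with the walls, and then using~\eqref{i22} for the reverse inequality. I expect this straightening step, producing an actual geodesic rather than just a curve, to be the delicate point.
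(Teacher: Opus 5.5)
Your arguments for items \eqref{i1}, \eqref{i2}, \eqref{i22} and \eqref{i4} are correct and match the paper's proof. The genuine gap is item \eqref{i3}: you offer two plans for it but prove neither.

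Your first plan takes $\gamma$ to be the $F$-minimizing geodesic in $\alpha$. It then needs $i(\gamma,\sigma_i)\le\#(c\cap\sigma_i)$ for all $c\in\alpha$, including for non-minimizing walls. The universal-cover argument of Proposition~\ref{prop:i} only forces the crossings of $l$ with a lift $\tilde\sigma$ that are detected by the ends. As you note yourself, a non-minimizing wall may cross the minimizing line $l$ back and forth in ways that a homotopic curve can avoid. So this inequality is unjustified, and when $\W$ is not minimal there is no reason for the $F$-minimizer to realize $\MLS_\W(\alpha)$. The statement only asks for \emph{some} closed geodesic in $\alpha$, and the paper's $\gamma$ is in general not the $F$-minimizer.

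Your fallback goes in the right direction, but as described it does not work, and two ingredients are missing.

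First, you need a shortening process that reaches a geodesic without increasing wall crossings. Shortening $c$ inside the cells of the complement of $\W$ with its crossing sequence fixed only gives a curve that is geodesic in each cell and has corners on the walls, not a closed geodesic. The curve must be allowed to change its crossing pattern, under a process that provably never increases its intersections with the walls. Theorem~\ref{theo:GS} rearranges the walls, not the test curve, so it is not the relevant tool here. The paper applies the Hass--Scott disk flow simultaneously to a $\W$-minimizing curve $c$ and to the walls:
\begin{itemize}
\item the walls are geodesics, so the flow fixes them;
\item replacing a subarc lying in a small convex disk by the minimizing segment between its endpoints does not increase its crossings with the geodesic wall arcs in that disk, since the segment meets each such arc at most once, and only when its endpoints are separated by it;
\item hence every iterate $c_n$ satisfies $\length_\W(c_n)=\MLS_\W(\alpha)$, and a subsequence converges to a closed geodesic $\gamma$ in $\alpha$.
\end{itemize}

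Second, you still need $i(\gamma,\W)\le\MLS_\W(\alpha)$. This does not follow from item \eqref{i1} in the limit, because $\gamma$ need not be transverse to $\W$. It may be an iterate of a wall, in which case $\#(\gamma\cap\sigma_i)$ is infinite while $i(\gamma,\sigma_i)$ counts only transverse self-crossings. The paper handles this as follows:
\begin{itemize}
\item write $\gamma=\gamma_0^k$ with $\gamma_0$ primitive, and take a thin immersed annulus $N$ around $\gamma_0$;
\item in $N$, $\W$ pulls back to arcs $a_j$ crossing $N$, with weights $w(a_j)$, plus possibly copies of the core, so that $i(\gamma,\W)=k\sum_j w(a_j)$;
\item for large $n$, $c_n$ lies in $N$ and winds $k$ times around it, so it crosses each $a_j$ at least $k$ times.
\end{itemize}
This gives $\MLS_\W(\alpha)=\length_\W(c_n)\ge i(\gamma,\W)$, and item \eqref{i22} gives the reverse inequality.
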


\begin{proof}
Write $\W=(\sigma_i,w_i)_{1 \leq i \leq n}$.
As geodesic current, $\W = \sum_{i=1}^n w_i \, \sigma_i$.

\eqref{i1}. Let $\gamma$ be a closed geodesic transverse to~$\W$.
Since the geodesics~$\gamma$ and~$\sigma_i$ are transverse, the relation~\eqref{eq:itt} gives
\[
\#(\gamma \cap \sigma_i) = i(\gamma,\sigma_i).
\]
Hence, by bilinearity of the intersection form,
\[
\length_\W(\gamma) = \sum_{i=1}^n w_i \, \#(\gamma \cap \sigma_i) = \sum_{i=1}^n w_i \, i(\gamma, \sigma_i) = i(\gamma,\W).
\]

\eqref{i2}. Similarly, since the geodesics~$\sigma_i$ are pairwise transverse and primitive, the relations~\eqref{eq:itt} and~\eqref{eq:it} give
\begin{align*}
\#(\sigma_i,\sigma_j) & = i(\sigma_i,\sigma_j) \mbox{ for } i \neq j \\
\#(\sigma_i) & = \frac{1}{2} \, i(\sigma_i,\sigma_i).
\end{align*}
Hence,
\begin{align*}
\area(\W) & = \frac{1}{2} \sum_{1 \leq i \neq j \leq n} w_i w_j \, \#(\sigma_i \cap \sigma_j) + \sum_{i=1}^n w_i^2 \, \#(\sigma_i) \\
 & = \frac{1}{2} \sum_{1 \leq i, j \leq n} w_i w_j \, i(\sigma_i, \sigma_j) \\
 & =\frac{1}{2} \, i(\W,\W).
\end{align*}

\eqref{i22}. Let $\gamma$ be a closed geodesic representing~$\alpha$.
By~\eqref{eq:i-nearby}, every sufficiently small push-off~$\gamma^\varepsilon$ of~$\gamma$ satisfies
\[
\#(\gamma^\varepsilon \cap \sigma_i) = i(\gamma,\sigma_i).
\]
Thus, 
\[
\length_\W(\gamma^\varepsilon) = \sum_{i=1}^n w_i \, \#(\gamma^\varepsilon \cap \sigma_i) = \sum_{i=1}^n w_i \, i(\gamma,\sigma_i) = i(\gamma,\W).
\]
Since $\gamma^\varepsilon$ represents~$\alpha$, it follows that
\begin{equation} \label{eq:22}
\MLS_{\W}(\alpha) \leq \length_\W(\gamma^\varepsilon) = i(\gamma,\W).
\end{equation}

\medskip

\eqref{i3}. Let $c$ be closed curve of minimal discrete length in~$\alpha$, so that
\[
\MLS_{\W}(\alpha) = \length_\W(c).
\]
Apply the disk-flow~\cite{HS} simultaneously to~$c$ and the geodesics~$\sigma_i$ of~$\W$.
Since the curves~$\sigma_i$ are geodesic, the wall system~$\W$ remains fixed under the disk-flow.
On the other hand, the curve~$c$ evolves through a subsequence of piecewise geodesic curves~$c_n$ converging to a closed geodesic~$\gamma$ representing~$\alpha$.

Since the disk-flow does not increase the number of intersections, see~\cite{HS}, and the initial curve~$c$ has minimal discrete length among the curves representing~$\alpha$, we have
\[
\length_\W(c_n) = \length_\W(c) = \MLS_{\W}(\alpha).
\]

Write $\gamma=\gamma_0^k$, where $\gamma_0$ is primitive and $k \geq 1$.
Consider a sufficiently small immersed annular neighborhood~$N$ of~$\gamma_0$.
For $N$ sufficiently small, the pullback of~$\W$ to~$N$ consists of transverse geodesic arcs~$a_j$, with weights~$w(a_j)$, and possibly some multiple of~$\gamma_0$.
Denote by $m=\sum_j w(a_j)$ the total weight of the transverse arcs.
For $n$ sufficiently large, the curve~$c_n$ lies in~$N$ and winds $k$ times around it.
Consequently, this curve intersects each transverse arc~$a_j$ at least $k$ times.
Thus,
\[
\MLS_{\W}(\alpha) = \length_\W(c_n) \geq k m.
\]
On the other hand, by construction, 
\[
i(\gamma,\W) = km.
\]
Combining the two inequalities yields
\[
i(\gamma,\W) \leq \MLS_\W(\alpha).
\]
The reverse inequality follows from~\eqref{eq:22}.
Hence,
\[
i(\gamma,\W) = \MLS_{\W}(\alpha).
\]

\medskip


\eqref{i4}. Suppose that $\W$ is minimal.
Let $\gamma$ be a closed geodesic of minimal length representing~$\alpha$.
Since both~$\gamma$ and~$\sigma_i$ are length-minimizing in their unoriented free homotopy classes, Proposition~\ref{prop:i} implies that for every closed curve~$c$ representing~$\alpha$,
\[
i(\gamma,\sigma_i) = I(\alpha,[\sigma_i]) \leq \#(c \cap \sigma_i).
\]
Therefore, 
\[
i(\gamma,\W) = \sum_{i=1}^n w_i \, i(\gamma,\sigma_i) \leq \sum_{i=1}^n w_i \, \#(c \cap \sigma_i) = \length_\W(c).
\]
Taking the infimum over all closed curves~$c$ representing~$\alpha$, we obtain
\[
i(\gamma,\W) \leq \MLS_{\W}(\alpha).
\]
The reverse inequality follows from~\eqref{eq:22}.
Hence,
\[
i(\gamma,\W) = \MLS_{\W}(\alpha). \qedhere
\]
\end{proof}

\section{Uncrossing and straightening wall systems} \label{sec:uncrossing}

In this section, we show that a wall system can be uncrossed and straightened into a geodesic wall system without changing its discrete marked length spectrum or increasing its discrete area. 

\medskip

Behind the following theorem is a result of de Graaf and Schrijver~\cite{GS} (see also~\cite{schrijver}) who used Reidemeister-type moves to uncross the walls of a wall system.
Here, the notations are adapted to our setting.

\begin{theorem} \label{theo:GS}
Let $\W$ be a wall system with integral (resp. rational) weights on an orientable closed surface~$M$.
Fix a Finsler metric~$F$ on~$M$.
Then there exists a minimal geodesic wall system~$\overline{\W}$ with integral (resp. rational) weights
such that
\begin{align*}
\MLS_{\overline{\W}} & = \MLS_{\W} \\
\area(\overline{\W}) & \leq \area(\W).
\end{align*}
\end{theorem}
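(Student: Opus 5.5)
First reduce to integral weights: for rational weights, multiply by a common denominator $N$. This multiplies $\MLS_\W$ by $N$ and $\area(\W)$ by $N^2$, so the result for integer weights, divided back by $N$, gives the rational case. Next, replace a wall of integer weight $w$ by $w$ parallel push-offs of weight $1$. This changes neither the discrete length of curves in general position nor $\MLS_\W$. It also leaves the area unchanged: two parallel copies of a wall $c$ cross each other $2\#(c)$ times, so $w$ copies contribute $w\,\#(c)+\binom{w}{2}\,2\#(c)=w^2\#(c)$. Non-primitive walls are handled the same way after a small perturbation. We may therefore assume that $\W=(c_i)$ is unweighted, viewed as a $4$-regular graph $G$ on $M$ together with its straight-ahead decomposition into closed curves. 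As noted after Definition~\ref{def:wall}, $\MLS_\W(\alpha)$ is the minimal number of crossings of $G$ by a closed curve in $\alpha$ avoiding vertices.

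The core step applies de Graaf--Schrijver~\cite{GS}. Their theorem says that a $4$-regular graph on a compact surface can be transformed by Reidemeister-type moves of types I, II and III, together with their inverses only when needed, into a graph that is \emph{minimally crossing*}. Concretely, the moves are deleting a monogon, deleting a bigon, and triangle flips. The first two strictly decrease the vertex count, which equals $\area(\W)$, and the third keeps it fixed. Crucially, none of these moves changes the function $\alpha\mapsto\min\#(c\cap G)$; this is exactly the invariant preserved in~\cite{GS}. The end product is a system $\W'$ whose curves each have minimal self-intersection in their class, with pairwise minimal intersection numbers. Contractible walls may also appear; these can be deleted, since doing so lowers the area and does not affect $\MLS$. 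In fact the min-crossing characterization is the content of the theorem of~\cite{GS}, so I would quote it as a black box after translating the notation.

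Then straighten. Replace each wall $c'_i$ of $\W'$ by a closed $F$-geodesic $\sigma_i$ of minimal length in its free homotopy class. Since $\W'$ has only transverse crossings and minimal position, $c'_i$ is either primitive or a proper power. If it is a power $\gamma^k$ of a primitive class, I would record it as the primitive geodesic with weight $k$. This costs no area, since a minimal-position curve in the class of $\gamma^k$ has at least $k^2\#(\gamma)$ self-intersections, which matches the area of weight $k$ on $\gamma$. Parallel copies of the same class are merged by adding weights. By Propositions~\ref{prop:i} and~\ref{prop:ii}, combined with~\eqref{eq:itt} and~\eqref{eq:it} and Proposition~\ref{prop:MLSi}\eqref{i2}, the area of the geodesic system satisfies $\area(\overline\W)=\tfrac12 i(\overline\W,\overline\W)\le\area(\W')\le\area(\W)$. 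For the spectrum, Proposition~\ref{prop:MLSi}\eqref{i4} together with Proposition~\ref{prop:i} gives $\MLS_{\overline\W}(\alpha)=\sum_i w_i I(\alpha,[\sigma_i])$. Meanwhile, $\MLS_{\W'}(\alpha)$ equals the same sum, because $\W'$ is minimally crossing in the sense of~\cite{GS}, which is the very characterization they prove.

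The expected obstacle is the transversality required in Definition~\ref{def:wall}. Distinct minimizing geodesics may be non-transverse, since they could share segments or coincide when different classes have the same minimizer. This is impossible for distinct primitive minimizing classes only when the geodesics are unique. I would first perturb $F$ slightly, or choose the minimizers generically, to get pairwise transverse geodesics with $\#=i$. If that fails, I would argue that tangential coincidences of minimizers force equality of the classes.
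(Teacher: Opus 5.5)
\paragraph{There is a genuine gap in the core step.} Reidemeister moves are homotopies of the individual walls. The minimally crossing system $\W'$ you obtain therefore consists of curves freely homotopic to the original walls $c_i$, and after straightening you get $\MLS_{\overline{\W}}(\alpha)=\sum_i I(\alpha,[c_i])$. This is only a lower bound for $\MLS_\W(\alpha)$. Deleting a bigon does \emph{not} preserve $\alpha\mapsto\min\#(c\cap G)$, and this is not what de Graaf and Schrijver prove.

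Here is a counterexample. Let $a,b,c$ be disjoint simple closed curves cutting a genus-two surface $M$ into two pairs of pants $P,P'$. Let $\W=(a',b)$, where $a'$ is obtained from $a$ by pushing a finger through $P$ across $b$, creating a bigon with vertices $u,v$.
\begin{itemize}
\item At $u$ and $v$, join the two bigon edges to each other and the two outer edges to each other. The graph then decomposes into the contractible boundary of the bigon and the band sum of $a$ and $b$ along the neck of the finger. This band lies in $P$, so the band sum is freely homotopic to $c$.
\item Every edge lies on exactly one of these two curves. Hence any curve in a class $\alpha$ avoiding the vertices crosses $\W$ at least $I(\alpha,[c])$ times.
\item Take $D$ in the four-holed sphere $M\setminus(a\cup b)$ separating the two boundary components coming from $a$ from the two coming from $b$. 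Then $I([D],[a])=I([D],[b])=0$ but $\MLS_\W([D])\geq I([D],[c])=2$.
\item Deleting the bigon and straightening gives $\MLS_{\overline{\W}}([D])=I([D],[a])+I([D],[b])=0$, so your $\overline{\W}$ has the wrong marked length spectrum.
\end{itemize}

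\paragraph{The missing idea is to re-decompose the $4$-regular graph.} You must split it into new closed curves $a_k$, choosing at each vertex whether the curves cross or turn. The free homotopy classes of the $a_k$ may differ from those of the walls; in the example above a band sum appears. This is the de Graaf--Schrijver decomposition theorem, which the paper uses. For a suitable such decomposition, $\MLS_\W(\alpha)=\sum_k I(\alpha,[a_k])$ for every $\alpha$. Each vertex becomes exactly one crossing or tangency among the $a_k$, and tangencies are counted by $\#$, so $\area(\W)=\tfrac12\sum_{k\neq k'}\#(a_k\cap a_{k'})+\sum_k\#(a_k)$.

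The rest of your argument then goes through verbatim, applied to the $a_k$ instead of the walls: the reductions to integral and unit weights, the removal of contractible curves, and the straightening step (grouping powers of a primitive class into weights and applying Propositions~\ref{prop:i}, \ref{prop:ii} and~\ref{prop:MLSi}).

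The transversality obstacle you anticipate is not real. Two closed geodesics tangent at a point coincide, so minimizing geodesics in distinct primitive classes are automatically transverse. No perturbation of $F$ is needed, and none would be allowed, since $F$ is fixed in the statement.
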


\begin{proof}
Since the discrete marked length spectrum and the discrete area of a wall system are homogeneous of degree one and two in the weights, we may assume that the weights of~$\W=(c_i,w_i)_{1 \leq i \leq n}$ are integral.

We may also assume that $\W$ is unweighted.
Otherwise, we replace each wall~$c_i$ by $w_i$ sufficiently close parallel copies.
The resulting unweighted wall system has the same discrete marked length spectrum and the same discrete area as~$\W$.

Now, using Reidemeister-type moves, de Graaf and Schrijver~\cite{GS} (see also~\cite{schrijver}) showed that the graph determined by~$\W$ can be decomposed into closed curves $(a_k)_{1 \leq k \leq q}$ where each edge of the graph is traversed exactly once by exactly one of the curves~$a_k$, such that for every unoriented free homotopy class~$\alpha \in \mathcal{C}$,
\begin{equation} \label{eq:GS}
\MLS_{\W}(\alpha) = \sum_{k=1}^q I(\alpha,\alpha_k),
\end{equation}
where $\alpha_k$ denotes the unoriented free homotopy class of~$a_k$.

Moreover, the discrete area of~$\W$ is equal to the total number of intersections and self-intersections of the curves~$(a_k)_{1 \leq k \leq q}$ .
That is, 
\begin{equation} \label{eq:GS2}
\area(\W) = \frac{1}{2} \sum_{1 \leq k \neq k' \leq q} \#(a_k \cap a_{k'}) + \sum_{k=1}^q \#(a_k).
\end{equation}

We may assume that all the curves~$a_k$ are noncontractible.
Indeed, if $a_k$ is contractible, then $I(\cdot,\alpha_k)=0$.
Thus, by~\eqref{eq:GS}, discarding the contractible curves does not affect the discrete marked length spectrum, while it can only decrease the discrete area.

Let $\beta_k$ be the primitive unoriented free homotopy classes underlying~$\alpha_k$.
After relabelling, we may assume that the distinct primitive classes occurring among $\beta_1,\dots,\beta_q$ are $\beta_1,\dots,\beta_m$.
Thus, for every $k \in \{1,\dots,q\}$, there is a unique $j_k \in \{1,\dots,m\}$ such that $\beta_k = \beta_{j_k}$.
For every~$j \in \{1,\dots,m\}$, choose a length-minimizing primitive closed geodesic~$\sigma_j$ representing~$\beta_j$.
There is a unique integer~$r_k \geq 1$ such that $\alpha_k$ is represented by
\[
\bar{a}_k = \sigma_{j_k}^{r_k}
\]
Since $M$ is an orientable surface, every positive iterate of a length-minimizing primitive closed geodesic is length-minimizing in the corresponding unoriented free homotopy class; see~Proposition~\ref{prop:classical}.
Therefore, $\bar{a}_k$ is length-minimizing in~$\alpha_k$.

For every $j \in \{1,\dots,m\}$, let 
\[
w_j = \sum_{k \mid j_k=j} r_k.
\]
Define the minimal geodesic wall system
\[
\overline{\W}=(\sigma_j,w_j)_{1 \leq j \leq m}
\]
with integral weights.
Equivalently, as geodesic currents,
\begin{equation} \label{eq:WW}
\overline{\W}= \sum_{j=1}^m w_j \, \sigma_j = \sum_{j=1}^m \sum_{k \mid j_k= j} r_k \, \sigma_{j_k} = \sum_{k=1}^q r_k \, \sigma_{j_k} = \sum_{k=1}^q \bar{a}_k.
\end{equation}

Let us show that $\W$ and~$\overline{\W}$ have the same discrete marked length spectrum.
Let $\alpha \in \mathcal{C}$ be a nontrivial unoriented free homotopy class.
Choose a length-minimizing closed geodesic~$\gamma$ representing~$\alpha$.
Since both~$\gamma$ and~$\bar{a}_k$ are length-minimizing in their unoriented free homotopy classes, Proposition~\ref{prop:i} gives
\[
I(\alpha,\alpha_k) = i(\gamma,\bar{a}_k).
\]
By the formula~\eqref{eq:GS} and the expression~\eqref{eq:WW} of~$\overline{\W}$,
\[
\MLS_{\W}(\alpha) = \sum_{k=1}^q I(\alpha,\alpha_k) = \sum_{k=1}^q i(\gamma,\bar{a}_k) = i(\gamma,\overline{\W}). 
\]
Since the walls of~$\overline{\W}$ are length-minimizing in their unoriented free homotopy classes, Proposition~\ref{prop:MLSi}.\eqref{i4} yields
\[
\MLS_{\overline{\W}}(\alpha) = i(\gamma,\overline{\W}).
\]
Hence,
\[
\MLS_{\overline{\W}}(\alpha) =\MLS_{\W}(\alpha).
\]

It remains to compare the discrete areas.
By the discrete area formula in Proposition~\ref{prop:MLSi}.\eqref{i2} and the expression~\eqref{eq:WW} of~$\overline{\W}$,
\[
\area(\overline{\W}) = \frac{1}{2} \, i(\overline{\W},\overline{\W}) = \frac{1}{2} \sum_{1 \leq k,k' \leq q} i(\bar{a}_k,\bar{a}_{k'}).
\]
Since the geodesics~$\bar{a}_k$ are length-minimizing in their unoriented free homotopy classes, Proposition~\ref{prop:i} and Proposition~\ref{prop:ii} yield
\begin{gather*}
i(\bar{a}_k,\bar{a}_{k'}) = I(\alpha_k,\alpha_{k'}) \leq \#(a_k \cap a_{k'})  \mbox{ for } k \neq k' \\
i(\bar{a}_k,\bar{a}_k) \leq 2 \, \#(a_k).
\end{gather*}
It follows that
\[
\area(\overline{\W}) \leq \frac{1}{2} \sum_{1 \leq k \neq k' \leq q} \#(a_k \cap a_{k'}) + \sum_{k=1}^q \#(a_k) = \area(\W)
\]
where the last equality comes from~\eqref{eq:GS2}.
\end{proof}

\forget

\begin{theorem} \label{theo:GS}
Let $\W$ be a wall system on a closed surface~$M$.
Then, the graph of~$M$ (whose vertices have even degree) formed by the images of the curves of~$\W$ decomposes as a union of closed curves $(a_k)_{1 \leq k \leq q}$ so that each edge of the graph is traversed by exactly one~$a_k$ and by that~$a_k$ exactly once, such that for every unoriented free homotopy class~$\alpha \in \mathcal{C}$,
\[
\MLS_{\W}(\alpha) = \sum_{k=1}^q I(\alpha,\alpha_k),
\]
where $\alpha_k$ is the unoriented free homotopy class of~$a_k$.

Furthermore, the discrete area of~$\W$ is equal to the total number of self-intersections of the collection of curves~$(a_k)_{1 \leq k \leq q}$ .
That is, 
\begin{equation} \label{eq:GS2}
\area(\W) = \frac{1}{2} \sum_{1 \leq k \neq k' \leq q} \#(a_k \cap a_{k'}) + \sum_{k=1}^q \#(a_k).
\end{equation}
\end{theorem}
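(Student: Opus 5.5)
The plan is to deduce the statement from the main theorem of de Graaf and Schrijver~\cite{GS} on Eulerian graphs embedded in surfaces. Their theorem is phrased for unweighted curve systems, so the first step is to form the graph $G$ on~$M$ given by the union of the walls. Its vertices are the transverse (self-)intersection points of the walls. Each such point has degree four, provided the walls are in general position with no triple points, which I will assume after an arbitrarily small perturbation preserving the combinatorics. In particular $G$ is Eulerian.

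For such a graph, de Graaf and Schrijver define, for each free homotopy class~$\alpha$, the quantity $\mathrm{mincr}(G,\alpha)$. It is the minimum number of intersections between $G$ and a closed curve in~$\alpha$ that avoids the vertices of~$G$. As noted after Definition~\ref{def:wall}, the minimum defining $\MLS_\W(\alpha)$ can be taken over representatives avoiding the vertices. For an unweighted wall system the two quantities therefore coincide:
\[
\MLS_\W(\alpha)=\mathrm{mincr}(G,\alpha).
\]

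The main input is then the theorem of~\cite{GS}. It states that $G$ admits a decomposition into closed curves $a_1,\dots,a_q$ such that each edge of $G$ is traversed exactly once by exactly one~$a_k$, and such that
\[
\mathrm{mincr}(G,\alpha)=\sum_k I(\alpha,\alpha_k)
\]
for every~$\alpha$. At each vertex, each $a_k$ either goes straight through or turns. The proof in~\cite{GS} obtains this by applying Reidemeister-type moves (types I--III and their surface analogues) that do not increase $\mathrm{mincr}$. The system is thereby reduced to one in minimal position, and the resulting straight-ahead decomposition is pulled back to a splitting of~$G$. I will take this theorem as a black box, but I must check carefully that their hypotheses (a compact surface and an Eulerian graph with vertices of degree four) and their notion of minimal crossing match ours. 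This includes treating unoriented classes and contractible curves consistently. I expect this translation to be the main obstacle, since the combinatorial heart is entirely in~\cite{GS}.

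The area identity~\eqref{eq:GS2} will then follow by a counting argument at the vertices. Each vertex $v$ of~$G$ contributes exactly $1$ to $\area(\W)$, since the wall system is unweighted and $v$ is a single transverse double point. In the decomposition, $v$ is visited exactly twice by the curves~$a_k$, whether they cross or only touch there. It thus contributes exactly one pair $(s_1,s_2)$ to $\#(a_k\cap a_{k'})$ when $k\neq k'$, and one unordered pair of distinct parameters to $\#(a_k)$ when $k=k'$. Summing over the vertices, and noting that the definitions of $\#$ count all coincidences of parameters (not only transverse ones), gives~\eqref{eq:GS2}.

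For walls with integral weights, I would first replace each wall by parallel copies, one per unit of weight. This preserves both $\MLS_\W$ and $\area(\W)$, as in the proof of Theorem~\ref{theo:GS}, and reduces the weighted statement to the unweighted case above.
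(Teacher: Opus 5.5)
Your proposal is correct and follows the paper's route. The paper likewise identifies $\MLS_\W$ with de Graaf--Schrijver's minimal crossing number via vertex-avoiding representatives, uses their decomposition theorem as a black box, and simply asserts the area identity, which your vertex count justifies.

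There are two small corrections.

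\begin{itemize}
\item No perturbation is needed. In fact, splitting a multiple point would change the very graph the statement is about, so it does not ``preserve the combinatorics''. De Graaf--Schrijver's theorem holds for any Eulerian embedded graph, and a vertex where $d$ branches meet contributes $\binom{d}{2}$ to both sides of the area formula, so your count goes through unchanged.
\item The statement is about unweighted wall systems. It fails, for instance, for a single essential simple closed wall of weight~$2$. Your parallel-copies paragraph also changes the graph, so it belongs to the later uncrossing theorem (as in the paper) rather than to this proof.
\end{itemize}
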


The following construction shows how to turn a self-transverse wall system into a geodesic wall system.

\begin{definition} \label{def:new}
Let $M$ be a closed surface with a self-transverse wall system~$\W$.
Take a Finsler metric~$F$ without conjugate points on~$M$.
Consider the closed curves $a_k$ decomposing the wall system~$\W$ given by Theorem~\ref{theo:GS}.
We can assume that these curves are noncontractible (otherwise, we discard the contractible ones, which does not increase the discrete area and does not affect the marked length spectrum; see Theorem~\ref{theo:GS}).
Let~$\bar{a}_k$ be the privileged closed geodesic in the same unoriented free homotopy class as~$a_k$.
By construction, there exists a unique collection of distinct privileged primitive closed geodesics~$(\sigma_j)_{1 \leq j \leq m}$ such that every closed geodesics~$\bar{a}_k$ decomposes as
\[
\bar{a}_k = \sigma_{j_k}^{r_k}
\]
for some unique integers $j_k \in \{1,\dots,m\}$ and positive integers $r_k$.

Define a new (geodesic) wall system
\[
\overline{\W}=(\gamma_j)_{1 \leq j \leq m}
\]
as the collection of the pairwise transverse closed geodesics
\[
\gamma_j = \sigma_j^{p_j}
\]
where $p_j = \sum_{k \mid j_k=j} r_k$.
\end{definition}

By the relations~\eqref{eq:itt} and~\eqref{eq:it}, we have 
\begin{gather*}
\#(\gamma_j \cap \gamma_{j'}) = i(\gamma_j,\gamma_{j'}) \mbox{ for } j \neq j' \\
\#_t(\gamma_j) = \frac{1}{2} \, i(\gamma_j,\gamma_j).
\end{gather*}
Hence the following discrete area formula for~$\overline{\W}$
\begin{equation} \label{eq:AWbar}
\area(\overline{\W}) = \frac{1}{2} \sum_{1 \leq j \neq j' \leq m} \#(\gamma_j \cap \gamma_{j'}) + \sum_{j=1}^m \#_t(\gamma_j) = \frac{1}{2} \sum_{1 \leq j, j' \leq m} i(\gamma_j, \gamma_{j'}).
\end{equation}
Compare with the formula~\eqref{eq:area-discrete}. \\


The following result shows that uncrossing and straightening a wall system as in Definition~\ref{def:new} does not increase its discrete area.

\begin{proposition} \label{prop:new-area}
Let $M$ be an orientable closed surface with a self-transverse wall system~$\W$.
Fix a Finsler metric~$F$ without conjugate points on~$M$.
Let~$\overline{\W}$ be the geodesic wall system derived from~$\W$ defined in Definition~\ref{def:new}.
Then,
\[
\area(\overline{\W}) \leq \area(\W).
\]
\end{proposition}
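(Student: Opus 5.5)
The plan is to reduce the area of $\overline{\W}$ to intersection numbers of the curves $a_k$ and then compare term by term. First, write the area of $\overline{\W}$ as an intersection number. By the relations~\eqref{eq:itt} and~\eqref{eq:it} applied to the pairwise transverse closed geodesics $\gamma_j=\sigma_j^{p_j}$, we have the formula~\eqref{eq:AWbar}, that is, $\area(\overline{\W})=\frac12\, i(\overline{\W},\overline{\W})$, where $\overline{\W}$ is regarded as the geodesic current $\sum_j \gamma_j$. (If one wants to stay within the primitive-wall convention, view $\overline{\W}$ as $(\sigma_j,p_j)$; the formula is the same by bilinearity.)

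Next, regroup the current by the curves $a_k$. Since $\gamma_j=\sigma_j^{p_j}$ with $p_j=\sum_{k\mid j_k=j} r_k$, bilinearity of the intersection form gives
\[
\overline{\W}=\sum_{j=1}^m p_j\,\sigma_j=\sum_{k=1}^q r_k\,\sigma_{j_k}=\sum_{k=1}^q \bar a_k .
\]
Hence
\[
\area(\overline{\W})=\frac12\sum_{1\le k,k'\le q} i(\bar a_k,\bar a_{k'}).
\]

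Then compare each term with the intersection data of the $a_k$. Each $\bar a_k=\sigma_{j_k}^{r_k}$ is a positive iterate of a privileged length-minimizing primitive geodesic, so it is length-minimizing in $\alpha_k$ by Proposition~\ref{prop:classical}. For $k\neq k'$, Proposition~\ref{prop:i} gives $i(\bar a_k,\bar a_{k'})=I(\alpha_k,\alpha_{k'})\le \#(a_k\cap a_{k'})$. For the diagonal terms, Proposition~\ref{prop:ii} gives $i(\bar a_k,\bar a_k)\le 2\,\#(a_k)$. Summing these bounds and invoking the identity~\eqref{eq:GS2} from Theorem~\ref{theo:GS} yields
\[
\area(\overline{\W})\le \frac12\sum_{k\neq k'}\#(a_k\cap a_{k'})+\sum_k\#(a_k)=\area(\W).
\]

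The main point to check is the hypothesis of Propositions~\ref{prop:i} and~\ref{prop:ii}: the relevant geodesics must be minimal in their free homotopy classes. This is why the privileged geodesics are chosen length-minimizing; with no conjugate points, every closed geodesic is in fact minimizing. One should also confirm that discarding the contractible $a_k$ is harmless: it only removes nonnegative terms from the right-hand side of~\eqref{eq:GS2}, so the inequality is unaffected.
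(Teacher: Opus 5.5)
Your proposal is correct and follows the paper's own proof essentially step for step. Like the paper, you write $\area(\overline{\W})$ as $\frac12\sum_{k,k'} i(\bar a_k,\bar a_{k'})$, bound the off-diagonal and diagonal terms using Propositions~\ref{prop:i} and~\ref{prop:ii}, and conclude with~\eqref{eq:GS2}; your added remarks on the minimality of the $\bar a_k$ and on discarding contractible curves correctly make explicit hypotheses the paper uses implicitly.
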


\begin{proof}
Let us keep the notations of Definition~\ref{def:new}.
Denote by~$(c_i)_{1 \leq i \leq n}$ the collection of closed curves of~$\mathcal{W}$ and by~$(\gamma_j)_{1 \leq j \leq m}$ the collection of closed geodesics of~$\overline{\mathcal{W}}$.

From the discrete area formula~\eqref{eq:AWbar} and the relation $\gamma_j = \sigma_j^{p_j}$ where $p_j = \sum_{k \mid j_k=j} r_k$, we deduce that
\begin{align*}
\area(\overline{\W}) & = \frac{1}{2} \sum_{1 \leq j,j' \leq m} i(\sigma_j^{p_j},\sigma_{j'}^{p_{j'}}) \\
 & = \frac{1}{2} \sum_{1 \leq j,j' \leq m} \sum_{k \mid j_k=j} \sum_{k' \mid j_{k'}=j'} \, i(\sigma_{j_k}^{r_k},\sigma_{j_{k'}}^{r_{k'}}) \\
 & = \frac{1}{2} \sum_{1 \leq k,k' \leq q} i(\bar{a}_k,\bar{a}_{k'})
\end{align*}
after rearranging the terms.
Now, by Proposition~\ref{prop:i}, Eq.~\eqref{eq:i}, and Proposition~\ref{prop:ii}, we have
\begin{gather*}
i(\bar{a}_k,\bar{a}_{k'}) \leq \#(a_k \cap a_{k'})  \mbox{ for } k \neq k' \\
i(\bar{a}_k,\bar{a}_k) \leq 2 \, \#_t(a_k).
\end{gather*}
These inequalities along with Theorem~\ref{theo:GS}, Eq.~\eqref{eq:GS2}, yield
\[
\area(\overline{\W}) \leq \frac{1}{2} \sum_{1 \leq k \neq k' \leq q} \#(a_k \cap a_{k'}) + \sum_{k=1}^q \#_t(a_k) = \area(\W).
\]
\end{proof}

The following result shows that uncrossing and straightening a wall system as in Definition~\ref{def:new} does not affect its marked length spectrum.

\begin{proposition} \label{prop:new-mls}
Let $M$ be an orientable closed surface with a self-transverse wall system~$\W$.
Fix a Finsler metric~$F$ without conjugate points on~$M$.
Let~$\overline{\W}$ be the geodesic wall system derived from~$\W$ defined in Definition~\ref{def:new}.
Then,
\[
\MLS_{\W} = \MLS_{\overline{\W}}.
\]
\end{proposition}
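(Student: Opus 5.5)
The plan is to compare both discrete marked length spectra with the same intersection-form expression. I keep the notation of Definition~\ref{def:new}: the decomposing curves $a_k$ given by Theorem~\ref{theo:GS}, discarded when contractible, their classes $\alpha_k$, the privileged geodesics $\bar{a}_k=\sigma_{j_k}^{r_k}$, and $\overline{\W}=(\gamma_j)$ with $\gamma_j=\sigma_j^{p_j}$. Discarding contractible $a_k$ does not change $\sum_k I(\alpha,\alpha_k)$, since $I(\cdot,\alpha_k)=0$ for them. Fix a nontrivial unoriented free homotopy class $\alpha$ and a closed geodesic $\gamma$ of minimal length in $\alpha$. The trivial class gives $0$ on both sides.

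For $\W$, Theorem~\ref{theo:GS} gives $\MLS_\W(\alpha)=\sum_k I(\alpha,\alpha_k)$. The geodesic $\bar a_k$ is an iterate of a privileged, hence minimizing, primitive geodesic, so Proposition~\ref{prop:classical} shows it is minimizing in $\alpha_k$. Proposition~\ref{prop:i} then yields $I(\alpha,\alpha_k)=i(\gamma,\bar a_k)$. By bilinearity and the regrouping $\sum_k \bar a_k=\sum_j p_j\sigma_j$ as geodesic currents, I obtain
\[
\MLS_\W(\alpha)=\sum_j p_j\, i(\gamma,\sigma_j).
\]

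For $\overline{\W}$, one subtlety is that its walls $\gamma_j=\sigma_j^{p_j}$ are iterates rather than weighted primitive curves. By definition, $\length_{\overline\W}(c)=\sum_j\#(c\cap\gamma_j)=\sum_j p_j\,\#(c\cap\sigma_j)$, so $\overline{\W}$ has the same discrete length as the weighted minimal geodesic wall system $(\sigma_j,p_j)$. The $\sigma_j$ are distinct primitive privileged geodesics. In the no-conjugate-points setting these are pairwise transverse, or can be arranged to be after a small perturbation that does not change intersection counts; I would verify this. Proposition~\ref{prop:MLSi}.\eqref{i4} then applies and gives $\MLS_{\overline\W}(\alpha)=\sum_j p_j\, i(\gamma,\sigma_j)$. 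If one prefers not to invoke \eqref{i4}, the argument is direct. For the lower bound, every representative $c$ of $\alpha$ satisfies $\#(c\cap\sigma_j)\ge I(\alpha,[\sigma_j])=i(\gamma,\sigma_j)$ by Proposition~\ref{prop:i}. For the upper bound, a small push-off $\gamma^\varepsilon$ realizes equality by \eqref{eq:i-nearby}.

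Comparing the two displays gives $\MLS_\W(\alpha)=\MLS_{\overline\W}(\alpha)$. The main obstacle is the step that needs minimality of $\gamma$, $\sigma_j$ and $\bar a_k$ simultaneously, namely applying Proposition~\ref{prop:i} in the iterate case $\alpha=\beta_j^m$. There $\gamma$ and $\sigma_j$ share a primitive geodesic, and $i(\gamma,\sigma_j)$ counts only transverse self-intersections. I will check that $I(\alpha,[\sigma_j])$ is still realized by a push-off, which is exactly the content of Proposition~\ref{prop:i} as stated.
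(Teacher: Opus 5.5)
Your proposal is correct and is essentially the paper's argument: write $\MLS_\W(\alpha)=\sum_k I(\alpha,\alpha_k)=\sum_k i(\gamma,\bar a_k)$ using Theorem~\ref{theo:GS} and Proposition~\ref{prop:i}, regroup by bilinearity into $\sum_j i(\gamma,\gamma_j)$, and identify this sum with $\MLS_{\overline{\W}}(\alpha)$ through the formula for minimal geodesic wall systems. Converting the iterated walls $\sigma_j^{p_j}$ into weights $p_j$ only makes explicit what the paper does implicitly, and the transversality you hedge on needs no perturbation: distinct primitive closed geodesics can never be tangent, by uniqueness of geodesics with a given tangent line.
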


\begin{proof}
Let us keep the notations of Definition~\ref{def:new}.
Denote by~$(c_i)_{1 \leq i \leq n}$ the collection of closed curves of~$\mathcal{W}$ and by~$(\gamma_j)_{1 \leq j \leq m}$ the collection of closed geodesics of~$\overline{\mathcal{W}}$.
Let $\alpha \in \mathcal{C}$ be an unoriented free homotopy class of~$M$.
Let $\gamma$ be a closed geodesic representing~$\alpha$.
By Theorem~\ref{theo:GS} and Proposition~\ref{prop:i}, we derive
\[
\MLS_{\W}(\alpha) = \sum_{k=1}^q I(\alpha,\alpha_k) = \sum_{k=1}^q i(\gamma,\sigma_{j_k}^{r_k}) 
\]
after observing that $\bar{a}_k=\sigma_{j_k}^{r_k}$ is a closed geodesic representing~$\alpha_k$.
By rearranging the terms and using the relation $\gamma_j = \sigma_j^{p_j}$ where $p_j = \sum_{k \mid j_k=j} r_k$, we obtain
\[
\MLS_{\W}(\alpha) =\sum_{j=1}^m \sum_{k \mid j_k = j} i(\gamma,\sigma_{j_k}^{r_k}) = \sum_{j=1}^m i(\gamma,\gamma_j) = \MLS_{\overline{\W}}(\alpha)
\]
where the last equality follows from Proposition~\ref{prop:geod}.
\end{proof}

\forgotten

\section{Discrete marked length spectrum and discrete area} \label{sec:discrete}

In this section, we establish a discrete version of the main theorem, where both length structures are induced by wall systems.
Unlike the results involving Anosov metrics, this result also applies to the two-torus.

\begin{theorem} \label{theo:discrete}
Let $M$ be an orientable closed surface endowed with a Finsler metric.
Let $\W_0$ and~$\W$ be two wall systems on~$M$, with $\W_0$ minimal.
If $\MLS_{\W_0} \leq \MLS_{\W}$ then 
\[
\area(\W_0) \leq \area(\W).
\]
\end{theorem}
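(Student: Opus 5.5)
The plan is to run the Croke--Dairbekov symmetry argument of Section~\ref{sec:outline} in the discrete setting, with Theorem~\ref{theo:GS} replacing $\W$ by a minimal geodesic wall system so that both systems can be treated as geodesic currents. Fix the given Finsler metric $F$, with respect to which $\W_0=(\sigma_i,w_i)$ is a minimal geodesic wall system.

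First, reduce to rational weights on $\W$. Given $\varepsilon>0$, choose rational weights $w'_j$ on the walls of $\W$ with $w_j\le w'_j\le (1+\varepsilon)w_j$, and call the result $\W'$. Since $\length_\W(c)$ is a nonnegative combination of the weights, we get $\length_{\W'}(c)\ge \length_\W(c)$ for every curve $c$. Hence $\MLS_{\W'}\ge \MLS_\W\ge\MLS_{\W_0}$. Since the discrete area is quadratic with nonnegative coefficients, $\area(\W')\le(1+\varepsilon)^2\area(\W)$. So it suffices to prove $\area(\W_0)\le\area(\W')$ and then let $\varepsilon\to0$. Applying Theorem~\ref{theo:GS} to $\W'$ with the metric $F$ gives a minimal geodesic wall system $\overline{\W}=(\tau_k,v_k)$ with $\MLS_{\overline{\W}}=\MLS_{\W'}\ge\MLS_{\W_0}$ and $\area(\overline{\W})\le\area(\W')$.

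Next, compare the intersections of the two currents wall by wall. Each wall $\sigma_i$ of $\W_0$ is a closed geodesic of minimal length in its class $[\sigma_i]$. Both $\W_0$ and $\overline{\W}$ are minimal, so Proposition~\ref{prop:MLSi}.\eqref{i4} applies to each of them and gives
\[
i(\sigma_i,\W_0)=\MLS_{\W_0}([\sigma_i])\le \MLS_{\overline{\W}}([\sigma_i])=i(\sigma_i,\overline{\W}).
\]
Multiplying by $w_i$ and summing, using bilinearity, yields $i(\W_0,\W_0)\le i(\W_0,\overline{\W})$. In the same way, each wall $\tau_k$ of $\overline{\W}$ is minimizing in its class, so
\[
i(\tau_k,\W_0)=\MLS_{\W_0}([\tau_k])\le\MLS_{\overline{\W}}([\tau_k])=i(\tau_k,\overline{\W}).
\]
Summing with weights $v_k$ gives $i(\overline{\W},\W_0)\le i(\overline{\W},\overline{\W})$. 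By symmetry of the intersection form, these combine to $i(\W_0,\W_0)\le i(\overline{\W},\overline{\W})$. Proposition~\ref{prop:MLSi}.\eqref{i2} then gives $\area(\W_0)\le\area(\overline{\W})\le\area(\W')\le(1+\varepsilon)^2\area(\W)$, and letting $\varepsilon\to0$ completes the proof.

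The main point needing care is that Proposition~\ref{prop:MLSi}.\eqref{i4} is used with a test geodesic that may itself be a wall of the wall system under consideration. For instance, $\sigma_i$ tested against $\W_0$ is not transverse to it. This is fine, because \eqref{i4} only requires the test geodesic to be length-minimizing in its class, not transverse to the walls. The other place needing care is the passage to rational weights, required because Theorem~\ref{theo:GS} is only stated for rational weights. The monotone approximation from above above handles it, since it moves the marked length spectrum in the favourable direction.
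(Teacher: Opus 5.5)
Your proof is correct and follows the paper's argument. You uncross $\W$ into a minimal geodesic wall system $\overline{\W}$ via Theorem~\ref{theo:GS}, then run the chain $i(\W_0,\W_0)\le i(\W_0,\overline{\W})=i(\overline{\W},\W_0)\le i(\overline{\W},\overline{\W})$, using Proposition~\ref{prop:MLSi}.\eqref{i4} wall by wall. Your preliminary step, replacing the weights of $\W$ by rational ones from above, is a worthwhile addition. The paper applies Theorem~\ref{theo:GS}, which is stated only for rational weights, directly to an arbitrary $\W$ without comment.
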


\begin{proof}
Write $\W_0=(\gamma_i^0,w_i^0)_{1 \leq i \leq n}$.
Denote by $\overline{\W}=(\bar{\gamma}_j,\bar{w}_j)_{1 \leq j \leq m}$ the minimal geodesic wall system obtained from~$\W$ by Theorem~\ref{theo:GS}.
Viewing these geodesic wall systems as geodesic currents, we have
\[
\W_0 = \sum_{i=1}^n w_i^0 \gamma_i^0 \qquad \textrm{ and } \qquad \overline{\W} = \sum_{j=1}^m \bar{w}_j \bar{\gamma}_j.
\]
By Theorem~\ref{theo:GS}, we have
\[
\MLS_{\W_0} \leq  \MLS_{\W} = \MLS_{\overline{\W}}.
\]
By Proposition~\ref{prop:MLSi}.\eqref{i4}, we obtain
\[
i(\gamma,\W_0) \leq i(\gamma,\overline{\W})
\]
for every closed geodesic~$\gamma$ of minimal length in its unoriented free homotopy class.

Now, since the walls~$\gamma_i^0$ of~$\W_0$ are length-minimizing in their unoriented free homotopy classes, the previous inequality and the discrete area formula~\eqref{eq:r2} give
\[
2  \area(\W_0) = \sum_{i=1}^n w_i^0 \, i(\gamma_i^0,\W_0) \leq \sum_{i=1}^n w_i^0 \, i(\gamma_i^0,\overline{\W}) = i(\W_0,\overline{\W}).
\]
Similarly, since the walls~$\bar{\gamma}_j$ of~$\overline{\W}$ are length-minimizing in their unoriented free homotopy classes, we obtain
\[
 i(\overline{\W},\W_0) = \sum_{j=1}^m \bar{w}_j \, i(\bar{\gamma}_j,\W_0) \leq \sum_{j=1}^m \bar{w}_j \, i(\bar{\gamma}_j,\overline{\W}) =  2 \area(\overline{\W}).
\]
By symmetry of the intersection form, 
\[
2  \area(\W_0) \leq i(\W_0,\overline{\W}) =  i(\overline{\W},\W_0) \leq 2 \area(\overline{\W}).
\]
Therefore,
\[
\area(\W_0) \leq \area(\overline{\W}).
\]
Finally, Theorem~\ref{theo:GS} yields
\[
\area(\W_0) \leq \area(\overline{\W}) \leq \area(\W). \qedhere
\]
\end{proof}

\section{Liouville geodesic current approximation for Anosov geodesic flow} \label{sec:approximation_anosov}

We present two approximation results for the Liouville geodesic current of a Finsler metric with Anosov geodesic flow.
This is the only part where the Anosov assumption on the geodesic flow is required.
As an application of the second approximation result, we obtain an extension of the Croke--Dairbekov theorem in which the marked length spectra need only be compared on free homotopy classes represented by null-homologous closed curves.

\medskip

The first result, based on Sigmund's density theorem~\cite[Theorem~1]{Sig} (see also~\cite[Proposition~2]{Bon88}), approximates the Liouville geodesic current by positive multiples of geodesic currents induced by closed geodesics.

\begin{theorem} \label{theo:dense}
Let $M$ be a closed surface endowed with a Finsler metric~$F$ whose geodesic flow is Anosov.
Then the positive multiples of the geodesic currents induced by primitive closed geodesics are dense in the space of geodesic currents of~$M$.

In particular, the Liouville geodesic current of~$F$ is a limit of positive multiples of geodesic currents induced by primitive closed geodesics.
\end{theorem}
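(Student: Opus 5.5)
The plan is to translate the statement into the language of invariant measures of the geodesic flow and then invoke Sigmund's theorem. Write $\phi_t$ for the geodesic flow on $U^*M$. A geodesic current $\mu$, viewed as an antipodally invariant transverse measure to the geodesic foliation, determines a flow-invariant Radon measure $\hat\mu$ on $U^*M$. Locally, in a flow box $B\simeq\Sigma\times[-1,1]$, we set
\[
\hat\mu=\tfrac12\,\tilde\mu_\Sigma\times dt,
\]
where $\tilde\mu_\Sigma$ is the transverse measure on $\Sigma$ lifted to both preimages in $U^*M$. Holonomy invariance of $\mu$ makes these local products agree on overlaps, so $\hat\mu$ is globally defined and invariant.

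Conversely, a $\phi_t$-invariant finite measure on $U^*M$ that is invariant under the antipodal map disintegrates in each flow box as a transverse measure times Lebesgue measure along the flow. This follows from uniqueness of the disintegration of a translation-invariant measure on $\Sigma\times[-1,1]$. We therefore expect a bijection
\[
\Curr(M)\;\longleftrightarrow\;\{\text{finite antipodally invariant }\phi_t\text{-invariant measures on }U^*M\}.
\]
Under this bijection, the current of a primitive closed geodesic $\gamma$ corresponds to the sum of the arclength measures on $\hat\gamma$ and on its antipodal image. The first key step is to check that this bijection is a homeomorphism for the vague topology on currents and the weak$^*$ topology on measures. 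Both topologies are tested by integrating continuous functions supported in flow boxes, and such functions on $\Sigma\times[-1,1]$ can be approximated by products $f(\sigma)\,\psi(t)$. This gives continuity in both directions, working on the compact sets $\mathcal{K}_h(C)$ from the proof of Proposition~\ref{prop:compact}, where masses are uniformly bounded.

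The second step invokes Sigmund's theorem~\cite[Theorem~1]{Sig}. The geodesic flow on $U^*M$ is a transitive Anosov flow: it is topologically equivalent to the geodesic flow of a hyperbolic metric by Theorem~\ref{theo:Anosov}.\eqref{Anosov3}. Sigmund's theorem states that for such a flow the normalized orbit measures of periodic orbits are weak$^*$ dense in the space of invariant probability measures. The geodesic current of a primitive closed geodesic corresponds to the sum of the orbit measures of $\hat\gamma$ and its antipodal image. Given an antipodally invariant probability measure $m$, we approximate it by normalized periodic orbit measures $\delta_{\hat\gamma_n}/\ell_n$, where $\ell_n$ is the period of $\hat\gamma_n$. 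Then $m=\tfrac12(m+a_*m)$, where $a$ is the antipodal map, is approximated by $\tfrac1{2\ell_n}(\delta_{\hat\gamma_n}+a_*\delta_{\hat\gamma_n})$, since $a_*$ is weak$^*$ continuous. This latter measure is a positive multiple of the image of the current $\mu_{\gamma_n}$. Multiplying by the total mass handles arbitrary finite antipodally invariant measures, and the zero current is the limit of $\gamma/n$.

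We may take $\gamma_n$ primitive: if $\hat\gamma_n$ traverses a primitive orbit $k$ times, its normalized measure equals that of the primitive orbit. Finally, we transport the approximation back to $\Curr(M)$ through the homeomorphism from the first step. The "in particular" follows because the Liouville current $\lambda$ is a geodesic current; its associated measure is a multiple of the Liouville measure $\alpha_F\wedge d\alpha_F$.

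I expect the main obstacle to be the first step. One must show carefully that the correspondence between transverse measures and invariant measures is well defined without any curvature assumption, namely that invariant measures locally factor as a transverse measure times $dt$. One must also show it is bicontinuous, which needs uniform mass bounds on both sides to pass between vague and weak$^*$ convergence. Sigmund's theorem itself is used as a black box; one only has to note that topological transitivity of the Anosov flow holds, via the orbit equivalence with the hyperbolic case.
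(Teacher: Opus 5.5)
Your proposal is correct and takes essentially the paper's route: translate currents into antipodally symmetric flow-invariant measures on $U^*M$, apply Sigmund's theorem, and transport the approximation back. Your remark that transitivity follows from Theorem~\ref{theo:Anosov}.\eqref{Anosov3} fills in a point the paper leaves implicit. The one difference is that you set up a full homeomorphism, whereas the argument only needs one direction, namely that weak$^*$ convergence of invariant measures implies convergence of the associated currents; this is exactly the paper's Lemma~\ref{lem:measure-current}, where the symmetrization $\tfrac12(m+A_*m)$ is built into the measure-to-current map rather than applied to the approximating orbit measures.
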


The second, based on an equidistribution theorem of Coles and Sharp~\cite[Theorem~1.4]{coles-sharp}, allows the approximating currents to the Liouville geodesic current to be chosen as positive linear combinations of geodesic currents induced by null-homologous closed geodesics.


\begin{theorem} \label{theo:dense+}
Let $M$ be a closed surface endowed with a Finsler metric~$F$ whose geodesic flow is Anosov.
Then the Liouville geodesic current of~$F$ is a limit of positive linear combinations of geodesic currents induced by primitive null-homologous closed geodesics.
\end{theorem}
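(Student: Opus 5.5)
The plan is to pass from the dynamical statement of Coles and Sharp to geodesic currents, in the same way Sigmund's theorem yields Theorem~\ref{theo:dense}. By Theorem~\ref{theo:Anosov}, the geodesic flow $\phi_t$ on $U^*M$ is a transitive Anosov flow, topologically equivalent to a hyperbolic geodesic flow. Its measure of maximal entropy is the Bowen--Margulis measure, which in general is not the Liouville measure. The Liouville measure $m_L$ on $U^*M$, normalized from $\alpha_F\wedge d\alpha_F$, is a smooth invariant measure. It is therefore an equilibrium state for the Hölder potential given by the negative unstable Jacobian, $\psi=-\frac{d}{dt}\big|_{t=0}\log\det d\phi_t|_{E_u}$, which has pressure zero.

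The equidistribution theorem of Coles and Sharp~\cite[Theorem~1.4]{coles-sharp} concerns null-homologous closed orbits counted with weights $e^{\int_\gamma \psi}$. It asserts that the weighted orbital measures
\[
m_T=\frac{1}{Z_T}\sum_{\gamma \text{ null-homologous},\ T<\ell(\gamma)\le T+1} e^{\int_\gamma\psi}\,\frac{\delta_\gamma}{\ell(\gamma)}
\]
converge weak$^*$ to an equilibrium state. This equilibrium state is that of the potential $\psi+\langle \xi,\cdot\rangle$ for the cohomology class $\xi$ that makes the associated winding cycle vanish. The first step is to identify this limit with $m_L$. Because $F$ is reversible, the flip $\xi\mapsto -\xi$ conjugates $\phi_t$ to $\phi_{-t}$, and $m_L$ is flip-invariant. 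Its winding cycle $\int [\text{tangent}]\,dm_L$ is therefore zero, so $m_L$ is the zero-winding equilibrium state and $\xi=0$. If the theorem is stated only for the maximal entropy measure, I would rerun its proof with the potential $\psi$ inserted. That change should be routine in the thermodynamic formalism. Restricting to primitive orbits costs nothing, since non-primitive ones are exponentially negligible.

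Next, flip-invariant invariant probability measures on $U^*M$ correspond bijectively and continuously to geodesic currents. One disintegrates along the flow direction: $m\mapsto$ the transverse measure obtained by contracting with $\alpha_F$. Under this correspondence, $\delta_\gamma/\ell(\gamma)$ goes to $\gamma/\ell(\gamma)$, and $m_L$ goes to a positive multiple of $\lambda$; compare Definition~\ref{def:Liouville_current} with $\omega=d\alpha_F$. One checks that weak$^*$ convergence of flow-invariant measures implies vague convergence of the transverse measures on every transverse surface. This is done in flow boxes $\Sigma\times[-1,1]$, integrating test functions $f\otimes\chi(t)$. After symmetrizing $m_T$ under the flip, which is harmless since $\gamma$ and its reversal define the same current, we obtain that $c_T\sum_\gamma w_\gamma\,\gamma\to\lambda$ for suitable positive coefficients, with $\gamma$ ranging over primitive null-homologous closed geodesics. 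Each $m_T$ is a finite sum, so every approximant is a positive linear combination of the required currents. This gives Theorem~\ref{theo:dense+}.

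The main obstacle is the first step: verifying that the Coles--Sharp limit measure is exactly Liouville and not the measure of maximal entropy. This involves checking that their hypotheses (a weighted, Hölder potential version on a homologically full transitive Anosov flow) apply to Finsler geodesic flows, and that reversibility forces the zero-winding condition. I would first try to cite their theorem directly with the Liouville potential. Failing that, I would reduce to the Riemannian hyperbolic case via the orbit equivalence of Theorem~\ref{theo:Anosov}, transporting the Hölder potential through the Hölder time change.
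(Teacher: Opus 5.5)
Your proposal is correct and follows essentially the paper's route: apply Coles--Sharp's Theorem~1.4 to the volume-preserving Anosov geodesic flow, check its null-homologous (zero winding cycle) hypothesis, and transfer weak$^*$ convergence of flow-invariant measures to convergence of geodesic currents through flow boxes (the paper's Lemma~\ref{lem:measure-current} together with Example~\ref{ex:measure-current2}). The one difference is how the hypothesis is checked. You use flip symmetry, which also needs the easy fact that the antipodal map acts trivially on $H_1(U^*M;\R)$, for instance because it is isotopic to the identity on an orientable surface; the paper instead reads the hypothesis off directly from the exactness of $\iota_Z(\alpha\wedge d\alpha)=d\alpha$, and since the cited theorem already gives convergence to the volume measure for such flows, your thermodynamic fallbacks are not needed.
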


Since the results of~\cite{Sig} and~\cite{coles-sharp} are formulated in terms of invariant measures for the geodesic flow on~$U^*M$, we first explain how such measures give rise to geodesic currents and how weak$^*$ convergence of invariant measures translates into convergence of geodesic currents. 
We then deduce Theorems~\ref{theo:dense} and~\ref{theo:dense+}. 
This correspondence will also be used in Section~\ref{sec:generic}.


\subsection*{Correspondence between flow-invariant measures and geodesic currents}

Let $M$ be a closed surface endowed with a Finsler metric~$F$.
Let $m$ be a Radon measure on~$U^*M$ invariant under the geodesic flow~$\phi_t$.
We construct a current~$\mu$ as follows.
First, we symmetrize~$m$ by defining
\[
\bar{m} = \tfrac{1}{2} (m+A_* m)
\]
where $A:U^*M \to U^*M$ is the antipodal involution given by~\mbox{$A(\xi)=-\xi$}.
Since $F$ is reversible, we have $A \circ \phi_t = \phi_{-t} \circ A$.
It follows that $\bar{m}$ is invariant under both the geodesic flow and the antipodal involution.

Let $\Sigma$ be a surface transverse to the geodesic foliation of~$PT^*M$.
On this surface~$\Sigma$, the measure~$\bar{m}$ determines a measure~$\mu_\Sigma$ as follows.
Choose a lift~$\bar{\Sigma}$ of~$\Sigma$ in~$U^*M$.
For $\varepsilon >0$ sufficiently small, the map $\Phi:\bar{\Sigma} \times (-\varepsilon,\varepsilon) \to U^*M$, where $\Phi(\xi,t) = \phi_t(\xi)$, is a flow box.
The invariance of~$\bar m$ under the geodesic flow gives a unique product decomposition
\begin{equation} \label{eq:mudt}
\Phi^* \bar{m} = \bar{\mu}_{\bar{\Sigma}} \otimes dt
\end{equation}
where $\bar{\mu}_{\bar{\Sigma}}$ is a Radon measure on~$\bar{\Sigma}$.
The antipodal invariance of~$\bar{m}$ implies that, under the natural identification with~$\Sigma$, this measure is independent of the choice of the lift~$\bar{\Sigma}$.

We define the transverse measure on~$\Sigma$ by
\begin{equation} \label{eq:muu}
\mu_\Sigma = \tfrac{1}{2} \, \bar{\mu}_{\bar{\Sigma}}.
\end{equation}
The factor~$\frac{1}{2}$ accounts for the two lifts of~$\Sigma$ under the double cover $U^*M \to PT^*M$.
It is chosen so that the Liouville measure induces the Liouville geodesic current; see Example~\ref{ex:measure-current2}.
The measures~$\mu_\Sigma$ are invariant under holonomy and therefore define a geodesic current~$\mu$ on~$M$.

Conversely, a geodesic current~$\mu$ on~$M$ determines a Radon measure on~$U^*M$ invariant under both the geodesic flow and the antipodal involution. 
More precisely, in a flow box associated with a lift~$\bar{\Sigma}$ of~$\Sigma$, this measure is given by $2 \, \mu_\Sigma \otimes  dt$.

\medskip

We present two fundamental examples.

\begin{example} \label{ex:measure-current1}
Let~$\gamma$ be a closed geodesic on~$M$.
Its lift to~$U^*M$, obtained via the Legendre transform, is a periodic orbit of the geodesic flow.
Integration along this orbit defines an invariant Radon measure~$m_\gamma$ by
\[
\int_{U^*M} f \, dm_\gamma = \int_0^{\length(\gamma)} f(\phi_t(\xi_\gamma)) \, dt
\]
for every continuous function $f:U^*M \to \R$, where $\xi_\gamma$ is the momentum of~$\gamma$ at~$0$.
After symmetrization with respect to the antipodal involution, the previous construction gives $\frac{1}{2}$ times the geodesic current induced by~$\gamma$.
\end{example}

\begin{example} \label{ex:measure-current2}
The Liouville measure~$m_L$, induced by the volume form \mbox{$\alpha \wedge d\alpha$}, is invariant under the geodesic flow and the antipodal involution.
The geodesic current~$\mu_L$ induced by~$m_L$ through the previous construction is precisely the Liouville geodesic current~$\lambda$.
Indeed, in a flow box, the transverse measure induced by an invariant volume form is induced by the contraction of that volume form with the geodesic vector field~$Z$.
Since $\iota_Z(\alpha \wedge d\alpha) = d\alpha$, the transverse measure induced by~$m_L$ on~$\bar{\Sigma}$ is $| d\alpha|_{\bar{\Sigma}} |$.
It follows from~\eqref{eq:muu} that
\[
\mu_{L,\Sigma} =  \tfrac{1}{2} \, | d\alpha|_{\bar{\Sigma}} |
\]
which is exactly the definition of the Liouville geodesic current~$\lambda$.
\end{example}

We will also need the following convergence result.

\begin{lemma} \label{lem:measure-current}
Let $M$ be a closed surface endowed with a Finsler metric.
Let $m_n$ be a sequence of Radon measures on~$U^*M$ invariant under the geodesic flow converging to~$m$ in the weak$^*$ topology.
Denote by $\mu_n$ and~$\mu$ the geodesic currents associated to~$m_n$ and~$m$.
Then $\mu_n$ converges to~$\mu$ as geodesic currents.
\end{lemma}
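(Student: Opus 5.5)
By definition of the vague topology on $\Curr(M)$, we must show that $\int_\Sigma f\,d\mu_{n,\Sigma}\to\int_\Sigma f\,d\mu_\Sigma$ for every transverse surface $\Sigma\subseteq PT^*M$ and every $f\in C_c(\Sigma)$. The plan is to express this transverse integral as an integral of a continuous function on $U^*M$ against the invariant measure, and then apply weak$^*$ convergence directly.

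First, the symmetrization step is harmless. The antipodal map $A$ is a homeomorphism, so $A_*m_n\to A_*m$ weak$^*$, and therefore $\bar m_n\to\bar m$. We may thus assume all measures are invariant under both the flow and $A$.

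Next, fix $\Sigma$ and $f\in C_c(\Sigma)$. Using a partition of unity on $\operatorname{supp} f$, we reduce to the case where $f$ is supported in a small open set $\Sigma'\subseteq\Sigma$. We choose $\Sigma'$ small enough that, for some $\varepsilon>0$, the map $\Phi:\bar\Sigma'\times(-\varepsilon,\varepsilon)\to U^*M$ is an embedding onto an open flow box $B$ and the antipodal image $A(B)$ is disjoint from $B$. Fix $\chi\in C_c((-\varepsilon,\varepsilon))$ with $\chi\ge0$ and $\int\chi\,dt=1$. Define the function $g=(f\otimes\chi)\circ\Phi^{-1}$ on $B$, extended by zero outside $B$. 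It is continuous on $U^*M$ because its support is compact inside $B$. By the product decomposition \eqref{eq:mudt} and \eqref{eq:muu},
\[
\int_{U^*M} g\,d\bar m_n=\int_{\bar\Sigma'} f\,d\bar\mu_{n,\bar\Sigma'}=2\int_{\Sigma} f\,d\mu_{n,\Sigma},
\]
and the same identity holds for $m$. Weak$^*$ convergence $\bar m_n\to\bar m$ applied to $g$ then gives the claim.

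The main point needing care is the use of \eqref{eq:mudt}. We must check that the product decomposition holds for each $\bar m_n$ on the same flow box, so that the test function $g$ is independent of $n$. Invariance of a Radon measure under the flow forces its pullback under $\Phi$ to be translation-invariant in $t$ on $\bar\Sigma'\times(-\varepsilon,\varepsilon)$. This yields the product form with a Radon measure on $\bar\Sigma'$, by the standard disintegration or uniqueness argument for Haar measure on an interval. Everything else is routine.
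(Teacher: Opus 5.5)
Your proposal is correct and follows essentially the same route as the paper: both build the test function $f(\xi)\chi(t)$ on a flow box over a lift of $\Sigma$, use the product decomposition~\eqref{eq:mudt} and the factor $\tfrac12$ in~\eqref{eq:muu} to identify its $\bar m_n$-integral with $2\int_\Sigma f\,d\mu_{n,\Sigma}$, and conclude from the weak$^*$ convergence $\bar m_n\to\bar m$. Your localization by a partition of unity, which guarantees an injective flow box and a well-defined lift, is a harmless refinement of a point the paper leaves implicit; the disjointness of $A(B)$ and $B$ is not actually needed.
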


\begin{proof}
%
Fix a surface~$\Sigma$ transverse to the geodesic foliation of~$PT^*M$, and choose a lift~$\bar\Sigma$ to~$U^*M$.
Let $h \in C_c(\Sigma)$ and, by a slight abuse of notation, denote its lift to~$\bar\Sigma$ also by~$h$.
Choose $\chi \in C_c((-\varepsilon,\varepsilon))$ such that $\chi \geq 0$ and $\int_{-\varepsilon}^\varepsilon \chi(t) \, dt =1$.
Define the compactly supported continuous function~$H$ on the flow box by $H(\Phi(\xi,t)) = h(\xi) \chi(t)$, and extend it by zero outside the flow box.
The product decomposition~\eqref{eq:mudt} and the formula~\eqref{eq:muu} give
\begin{equation} \label{eq:Hh}
\frac{1}{2} \int_{U^*M} H \, d\bar{m}  = \frac{1}{2} \int_{\bar{\Sigma} \times (-\varepsilon,\varepsilon)} h(\xi) \chi(t) \, d\bar{\mu}_{\bar{\Sigma}}(\xi) dt  = \int_\Sigma h \, d\mu_\Sigma.
\end{equation}
Similarly with~$m_n$.
Now, since $\bar{m}_n$ converges to~$\bar{m}$ in the weak$^*$ topology, it follows from~\eqref{eq:Hh} that
\[
\int_\Sigma h \, d\mu_{n,\Sigma} \to \int_\Sigma h \, d\mu_{\Sigma}.
\]
Thus, $\mu_{n,\Sigma}$ converges vaguely to~$\mu_{\Sigma}$ on every transverse surface~$\Sigma$, which is precisely the convergence of geodesic
currents.
\end{proof}

Theorem~\ref{theo:dense} follows from theorem~\cite[Theorem~1]{Sig}.
Indeed, after normalization, Sigmund's theorem provides positive multiples of periodic-orbit measures converging weakly to any invariant measure of the geodesic flow.
Applying Lemma~\ref{lem:measure-current} shows that the corresponding positive multiples of geodesic currents induced by closed geodesics converge to the associated geodesic current.

\medskip

Similarly, Theorem~\ref{theo:dense+} follows from~\cite[Theorem~1.4]{coles-sharp}.
Indeed, since $\iota_Z(\alpha\wedge d\alpha)=d\alpha$, the geodesic flow is null-homologous in the sense of~\cite{coles-sharp}.
Their theorem provides positive linear combinations of periodic-orbit measures supported on primitive null-homologous closed geodesics converging weakly to the Liouville measure.
Applying Lemma~\ref{lem:measure-current} (and Example~\ref{ex:measure-current2}) gives the desired convergence to the Liouville geodesic current.

\medskip


Applying Theorem~\ref{theo:dense+} in place of Sigmund's density theorem in the Croke--Dairbekov argument described in Section~\ref{sec:outline}, and using the intersection formulas for geodesic currents on Finsler surfaces (see Propositions~\ref{prop:bonahon_length} and~\ref{prop:bonahon_area}), we obtain the following extension of Theorem~\ref{theo:CD}.

\begin{theorem} \label{theo:extension}
Let $M$ be a closed surface with two Finsler metrics~$F_0$ and~$F$ whose geodesic flows are Anosov.
If $\MLS_{F_0}(\alpha) \leq \MLS_F(\alpha)$ for every unoriented free homotopy class~$\alpha$ represented by a null-homologous closed curve, then 
\[
\area(F_0) \leq \area(F).
\]
\end{theorem}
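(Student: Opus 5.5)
The plan is to run the Croke--Dairbekov argument from Section~\ref{sec:outline} with Theorem~\ref{theo:dense+} in place of Sigmund's theorem. The key extra point is that the geodesic flows of $F_0$ and $F$ must be identified, so that a current of one metric can be paired with the Liouville current of the other. By Theorem~\ref{theo:Anosov}.\eqref{Anosov3}, both geodesic flows are topologically equivalent to the geodesic flow of a fixed hyperbolic metric, and hence to each other. This orbit equivalence identifies the geodesic foliations on $PT^*M$ up to homeomorphism. It therefore identifies $\Curr_{F_0}(M)$ with $\Curr_F(M)$, in the Bonahon picture as measures on the space of geodesics of the universal cover, described by pairs of endpoints on $\partial_\infty \bar M$. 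Under this identification, the current of a closed $F_0$-geodesic corresponds to the current of the unique closed $F$-geodesic in the same free homotopy class, by Theorem~\ref{theo:Anosov}.\eqref{Anosov2}. I will also use that the intersection form is intrinsic, namely that it counts transversally linked pairs of endpoint pairs. This holds because for Anosov metrics there are no conjugate points, so two distinct geodesic lines in $\bar M$ meet at most once, and transversally exactly when their endpoints are linked.

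Given this, let $\gamma$ be a primitive null-homologous closed geodesic in class $\alpha$. By Theorem~\ref{theo:Anosov}.\eqref{Anosov2} its $F_0$- and $F$-representatives are unique and minimizing, so $\MLS_{F_0}(\alpha)=\length_{F_0}(\gamma)$ and $\MLS_F(\alpha)=\length_F(\gamma)$. Proposition~\ref{prop:bonahon_length} then turns the hypothesis into
\[
i(\gamma,\lambda_{F_0}) \leq i(\gamma,\lambda_F).
\]
By bilinearity this extends to all positive linear combinations of such currents. Applying Theorem~\ref{theo:dense+} to $F_0$ gives combinations $\mu_n\to\lambda_{F_0}$. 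The Liouville currents have no atoms, so Proposition~\ref{prop:continuity} lets me pass to the limit, which yields $i(\lambda_{F_0},\lambda_{F_0})\leq i(\lambda_{F_0},\lambda_F)$. Applying Theorem~\ref{theo:dense+} to $F$ gives $\nu_n\to\lambda_F$, and the same passage to the limit yields $i(\lambda_F,\lambda_{F_0})\leq i(\lambda_F,\lambda_F)$. Symmetry of $i$ and Proposition~\ref{prop:bonahon_area} then give
\[
\tfrac{\pi}{2}\area(F_0)\leq i(\lambda_{F_0},\lambda_F)= i(\lambda_F,\lambda_{F_0})\leq\tfrac{\pi}{2}\area(F).
\]

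The main obstacle is justifying the transport. I need to check that the orbit equivalence induces a homeomorphism of current spaces that preserves the intersection form. I also need convergence in one metric's current space to imply convergence in the other's, so that Proposition~\ref{prop:continuity} applies after transport. I would do this through the boundary-at-infinity description: the orbit equivalence is isotopic to one respecting endpoints, and vague convergence is checked on flow boxes, which correspond to products of boundary intervals. The most delicate point is that the formulas in Propositions~\ref{prop:bonahon_length} and~\ref{prop:bonahon_area} are stated for the metric's own foliation. So I must verify that $i(\gamma,\lambda_F)$, computed with $\gamma$ viewed as an $F_0$-current, agrees with the value computed using the $F$-geodesic representative. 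This follows from intersection being a linking count at infinity.
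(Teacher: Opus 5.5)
Your proposal is correct and follows the paper's own argument. The paper runs the Croke--Dairbekov scheme of Section~\ref{sec:outline}: the geodesic flows are identified via Theorem~\ref{theo:Anosov}.\eqref{Anosov3}, and Theorem~\ref{theo:dense+} is applied to the Liouville currents in place of Sigmund's theorem. It then concludes with continuity at atomless currents, symmetry of $i$, and Propositions~\ref{prop:bonahon_length} and~\ref{prop:bonahon_area}. The paper compresses all this into one sentence, so your explicit transport of currents and of the intersection form through endpoints at infinity only fills in what it takes for granted. That transport also guarantees the identification respects free homotopy classes, which Theorem~\ref{theo:Anosov}.\eqref{Anosov3} by itself does not state.
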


\section{Liouville geodesic current approximation for generic Finsler metrics} \label{sec:generic}

We now turn to the approximation of the Liouville geodesic current for Finsler metrics whose geodesic flow is not necessarily Anosov.
The main result of this section follows from a generic approximation theorem of Irie~\cite{irie} for Reeb flows (more precisely, a refined version of Liu and Liu~\cite{liuliu}).
Unlike the approximation results of the previous section, Irie's theorem is formulated in terms of $1$-dimensional currents on~$U^*M$.
We therefore explain how its conclusion gives rise to an approximation by geodesic currents.

\begin{theorem} \label{theo:irie+}
Let $M$ be a closed surface endowed with a generic reversible Finsler metric~$F$.
Then the Liouville geodesic current of~$F$ is a limit of positive linear combinations of geodesic currents induced by primitive closed geodesics.
\end{theorem}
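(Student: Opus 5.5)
We deduce the statement from the equidistribution theorem of Irie~\cite{irie}, in the refined form of Liu and Liu~\cite{liuliu}. For a $C^\infty$-generic contact form on a closed contact $3$-manifold, it provides finitely many periodic Reeb orbits $\gamma_{n,1},\dots,\gamma_{n,k_n}$ and positive weights $c_{n,j}$. The associated $1$-currents $\sum_j c_{n,j}\,[\gamma_{n,j}]$ converge weakly, as $n\to\infty$, to the current $\beta \mapsto \int \beta\wedge d\alpha / \int \alpha\wedge d\alpha$, where $\alpha$ is the contact form. We apply this with $\alpha=\alpha_F$ on $U^*M$, whose Reeb flow is the geodesic flow of~$F$ (Definition~\ref{def:cogeodesic}).

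\textbf{Genericity.} The first point is that genericity among contact forms on $U^*M$ must be realized by reversible Finsler metrics. Every contact form on $U^*M$ defining the same contact structure can be written $e^{f}\alpha_F$. Such a form is again a Finsler unit-cotangent form when composed with fiberwise rescaling. Reversibility amounts to $f$ being invariant under the antipodal map~$A$. We therefore invoke the Liu--Liu version, which allows genericity within perturbations respecting the symmetry $A$, or more generally within a class of contact forms dense enough to contain the reversible Finsler ones. Concretely, this is the statement that a residual set of $A$-invariant $f$ works. I expect this to be the main obstacle. One must check that the perturbations used in the proof, which are localized near orbits and built via the ECH/Weyl-law argument, can be chosen antipodally symmetric. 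Alternatively one must justify that $A$-symmetric contact forms form a class to which the refined theorem applies.

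\textbf{From $1$-currents to invariant measures.} A periodic orbit current $[\gamma]$ evaluated on a $1$-form $\beta$ equals $\int_0^{\ell}\beta(Z)\,dt$, where $\ell$ is the period and $Z$ the Reeb field. Take $\beta = g\,\alpha$ with $g$ a smooth function; since $\alpha(Z)=1$, this gives $\int g\, dm_\gamma$, the periodic-orbit measure of Example~\ref{ex:measure-current1}. On the limit side, $g\alpha\wedge d\alpha$ integrates $g$ against the Liouville measure~$m_L$. Hence $\sum_j c_{n,j} m_{\gamma_{n,j}}$ converges to $m_L/\int\alpha\wedge d\alpha$ when tested on smooth functions. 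These measures have uniformly bounded total mass, obtained by testing against $g\equiv 1$. By density of smooth functions in $C(U^*M)$, we get weak$^*$ convergence of measures.

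\textbf{Conclusion.} Lemma~\ref{lem:measure-current} converts this into convergence of geodesic currents. By Examples~\ref{ex:measure-current1} and~\ref{ex:measure-current2}, the current of $m_\gamma$ is $\tfrac12\gamma$ and that of $m_L$ is~$\lambda$. Rescaling the weights by the constant $2\int\alpha\wedge d\alpha$ then shows that $\lambda$ is a limit of positive combinations of currents of closed geodesics. Finally, replace each closed geodesic by its underlying primitive geodesic with multiplicity absorbed into the weight. This uses that $\gamma_0^k$ induces the current $k\gamma_0$. Merging an orbit with its antipodal image, which induces the same unoriented current, also keeps the combination positive.
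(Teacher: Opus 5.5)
Your proposal follows the paper's proof essentially step by step. The paper also takes the reversible-generic version of Irie's theorem as a black box from Liu--Liu (Theorem~\ref{theo:irie}), so the genericity issue you flag as the main obstacle is cited rather than proved there as well. It then converts positive Reeb currents into flow-invariant measures via $\int f\,dm_C=C(f\alpha)$, upgrades convergence on smooth functions to weak$^*$ convergence using the mass bound from $f\equiv 1$, and concludes with Lemma~\ref{lem:measure-current} and Example~\ref{ex:measure-current2}, exactly as you do.

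Your extra remarks on primitivity and antipodal orbits are correct. With your own conventions (normalized limit $m_L/\int\alpha\wedge d\alpha$ and the factor $\tfrac12$ from Example~\ref{ex:measure-current1}), the rescaling constant should be $\tfrac12\int\alpha\wedge d\alpha$ rather than $2\int\alpha\wedge d\alpha$. This slip is harmless, since only positivity of the weights matters.
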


Here, the genericity assumption means that the conclusion of the theorem holds for a residual subset of the space of reversible Finsler metrics endowed with the $C^\infty$-topology, that is, for a subset containing a countable intersection of open dense sets.

\medskip

We first recall some definitions concerning currents in the sense of geometric measure theory.

\begin{definition}
A \emph{$k$-dimensional current}~$C$ (or simply a \emph{$k$-current}) on an $n$-manifold~$N$ is a continuous linear form on the space~$\Omega_c^k(N)$ of smooth $k$-forms with compact support on~$N$.
For instance, an oriented smooth closed curve of~$N$ induces a $1$-current by integration along this curve.
The space of $k$-currents on~$N$ is endowed with the weak$^*$ topology.

\end{definition}

We will also need the notion of positive Reeb currents and Liouville \mbox{$1$-currents} in contact dynamics, adapted here to our setting.

\begin{definition}
Let $M$ be a closed surface endowed with a Finsler metric.
Every oriented closed geodesic~$\gamma$ on~$M$ lifts, via the Legendre transform, to a periodic orbit~$\hat{\gamma}$ of the geodesic flow on~$U^*M$.
Integration of $1$-forms along this orbit defines a $1$-current~$C_\gamma$ on~$U^*M$.
That is,
\begin{equation} \label{eq:reeb}
C_\gamma(\eta) = \int_{\hat{\gamma}} \eta
\end{equation}
for every smooth $1$-form~$\eta$ on~$U^*M$.
For simplicity, this $1$-current is also denoted by~$\gamma$.


A \emph{positive Reeb current} is a $1$-current~$C$ on~$U^*M$ which decomposes as a positive linear combination of $1$-currents induced by distinct closed geodesics~$\gamma_i$.
That is,
\[
C(\eta) = \sum_i a_i \int_{\hat{\gamma}_i} \eta
\]
for every smooth $1$-form~$\eta$ on~$U^*M$, where $a_i>0$.

The \emph{Liouville $1$-current} is the $1$-current~$C_L$ on~$U^*M$ induced by the \mbox{$2$-form}~$d\alpha$, where $\alpha$ is the natural contact form on~$U^*M$.
That is,
\[
C_L(\eta) = \int_{U^*M} \eta \wedge d\alpha
\]
for every smooth one-form~$\eta$ on~$U^*M$.
\end{definition}

\begin{remark}
For a Reeb current~$C_\gamma$, the integral formula~\eqref{eq:reeb} applies to continuous $1$-forms~$\eta$ on~$U^*M$.
In particular, $C_\gamma$ defines a continuous linear form on the space of continuous $1$-forms on~$U^*M$, with
\[
|C_\gamma(\eta)| \leq \length(\gamma) \, ||\eta||_\infty.
\]
The same holds for positive Reeb currents.
\end{remark}

The following approximation result is due to Irie~\cite{irie} for generic Finsler metrics, not necessarily reversible.
The argument has been adapted to generic reversible Finsler metrics by Liu and Liu~\cite{liuliu}; see in particular Theorem~1.1 and the comments following Theorem~1.2 in~\cite{liuliu}.

\begin{theorem} \label{theo:irie}
Let $M$ be a closed surface.
For a generic reversible Finsler metric~$F$ on~$M$, there exists a sequence of positive Reeb currents~$C_n$ on~$U^*M$ converging weakly to the Liouville $1$-current~$C_L$.
That is,
\[
\lim_{n \to \infty} C_n(\eta) = \int_{U^*M} \eta \wedge d\alpha
\]
for every smooth $1$-form~$\eta$ on~$U^*M$.
\end{theorem}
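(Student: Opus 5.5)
The plan is to follow Irie's strategy, which rests on the volume property of embedded contact homology (ECH) spectral invariants, and to reduce the statement about $1$-currents to an equidistribution statement about flow-invariant measures on $U^*M$. The reduction comes first. Since $\iota_Z d\alpha=0$ and $\iota_Z\alpha=1$, on the $3$-manifold $U^*M$ we have $\eta\wedge d\alpha=\eta(Z)\,\alpha\wedge d\alpha$ for every $1$-form $\eta$. Similarly, $C_\gamma(\eta)=\int_0^{\length(\gamma)}\eta(Z)(\phi_t(\xi_\gamma))\,dt$. So it suffices to find positive combinations $\sum_i a_i m_{\gamma_i}$ of periodic-orbit measures (Example~\ref{ex:measure-current1}) converging weak$^*$ to the Liouville measure $m_L$. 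Testing against the continuous functions $\eta(Z)$ then gives $C_n(\eta)\to C_L(\eta)$.

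To produce these measures, I would use the following correspondence. A positive function $f$ on $U^*M$ gives the contact form $f\alpha$, which is the restriction of the tautological form to the rescaled hypersurface $\{\xi/f(\xi)\}\subseteq T^*M$. This hypersurface is the unit co-sphere bundle of a new Finsler metric, and that metric is reversible exactly when $f\circ A=f$. Let $c_k$ be the ECH spectral invariants of $(U^*M,f\alpha)$. By the volume property of Cristofaro-Gardiner--Hutchings--Ramos,
\[
c_k(f\alpha)^2/k\to 2\vol(f\alpha),\qquad \vol(f\alpha)=\int f^2\,\alpha\wedge d\alpha .
\]
For a nondegenerate form, each $c_k$ is the action of an orbit set. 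Along the family $(1+tg)f\alpha$, the function $t\mapsto c_k$ is monotone and Lipschitz. At generic $t$, its derivative is $\sum_i m_i\int_{\gamma_i} g\,f\alpha$ for the realizing orbit set $\{(\gamma_i,m_i)\}$. Combining this with the $t$-derivative of $\sqrt{2k\vol}$, Irie's averaging argument gives the following: for each fixed $g$ and $\varepsilon>0$, the set of $f$ having an orbit set with
\[
\Bigl|\textstyle\sum_i m_i\int_{\gamma_i}g\,f\alpha \big/ \sum_i m_i\int_{\gamma_i}f\alpha-\int g\,f^2\alpha\wedge d\alpha\big/\int f^2\alpha\wedge d\alpha\Bigr|<\varepsilon
\]
is open and dense. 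Openness holds because nondegenerate orbits persist under small perturbations. Density comes from the derivative argument along the path $(1+tg)f$.

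The remaining steps are a Baire argument and symmetrization. Take a countable dense family of test functions $g_j$ in $C(U^*M)$ and intersect the corresponding open dense sets over all $j$ and rational $\varepsilon$, with finitely many $g_j$ handled at a time. This yields a residual set of $f$ for which normalized orbit-set measures converge weak$^*$ to the normalized Liouville measure. Residual sets of contact forms pull back to residual sets of Finsler metrics, by the correspondence above and the $C^\infty$-continuity of $f\mapsto$ metric. Rescaling the weights then gives convergence to $m_L$ itself, which by the first paragraph gives Theorem~\ref{theo:irie}.

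I expect the main obstacle to be reversibility. Only antipodally invariant $f$ are allowed, which restricts the perturbation directions $g$ to even functions. The derivative argument then controls only integrals of even test functions, and nondegeneracy must also be attained within the symmetric class. I would first deal with the test functions: for even $g$, $\int_{\gamma}g = \int_{A\gamma}g$, and both the Liouville measure and its pairing with $\eta(Z)$ are compatible with the antipodal symmetry. So I would replace each orbit set by its symmetrization $\frac12(m+A_*m)$, as in the construction preceding Lemma~\ref{lem:measure-current}, and test only against even functions. It would then remain to check that the $C_n$ obtained this way still converge to $C_L$ on all $1$-forms $\eta$, including those with $\eta(Z)$ odd. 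For the nondegeneracy within the symmetric class, I would invoke the equivariant transversality established by Liu and Liu rather than redo it.
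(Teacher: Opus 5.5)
The paper does not prove this statement. It quotes it from Irie (for arbitrary contact forms) and from Liu--Liu (for the reversible class), so your proposal amounts to a reconstruction of those references. Its architecture is the right one: the identity $\eta\wedge d\alpha=\eta(Z)\,\alpha\wedge d\alpha$, the ECH volume property, the derivative of $c_k$ along conformal perturbations, a Baire argument, and symmetrization. Your reduction to invariant measures is correct. The check you leave open for odd $\eta(Z)$ is immediate: symmetrized orbit measures and $m_L$ are both $A$-invariant, so both sides vanish. There are two minor slips, neither of which matters. First, $\xi\mapsto\xi/f(\xi)$ pulls the tautological form back to $\alpha/f$, not $f\alpha$. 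Second, $t\mapsto c_k$ is monotone only when $g$ has a sign.

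\textbf{The gap is the density step.} Along $\lambda_t=(1+tg)f\alpha$, the volume property only controls increments of $\log c_k(\lambda_t)$. Since $\frac{d}{dt}\log c_k(\lambda_t)$ is the ratio $r_k(t)$ of the realizing orbit set, you learn that $r_k$ has the correct \emph{average} over each subinterval, not that it is close to the target at any single $t$. A uniformly Lipschitz staircase illustrates this: take slopes $0$ and $2$ alternating on a $1/k$-scale. It converges uniformly to $t\mapsto t$, yet its derivative stays at distance $1$ from the limit's derivative, on both sides. What the argument actually gives is one-sided information. In every interval there is a $t$ where some orbit set has ratio $>\text{target}-\varepsilon$. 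Using $-g$, there is a possibly different $t'$ where another orbit set has ratio $<\text{target}+\varepsilon$. So your two-sided single-$g$ statement does not follow from ``the derivative argument along $(1+tg)f$''. Moreover, intersecting single-$g$ open dense sets over $j$ produces a different witness for each $g_j$, not one object that is good for $g_1,\dots,g_N$ simultaneously. The phrase ``finitely many $g_j$ handled at a time'' hides exactly the missing idea.

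\textbf{The missing idea is convexity.} Fix $g_1,\dots,g_N$, $\varepsilon>0$, and a finite net of unit vectors $w\in S^{N-1}$. For an orbit set $\Gamma$, let $v_\Gamma=\bigl(\Gamma(g_i\lambda)/\Gamma(\lambda)\bigr)_i$, and let $a$ be the vector of Liouville averages of the $g_i$. For each $w$, consider the set of reversible metrics having an orbit set $\Gamma$ of nondegenerate orbits with $\langle w,v_\Gamma-a\rangle>-\varepsilon$.
\begin{itemize}
\item This set is open, by persistence of nondegenerate orbits.
\item It is dense, by the one-sided derivative argument applied to the single test function $\sum_i w_ig_i$, followed by an equivariant nondegenerate perturbation.
\end{itemize}
Intersect these finitely many open dense sets. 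For a metric in the intersection, the separation theorem shows that $a$ lies within $O(\varepsilon)$ of the convex hull of the vectors $v_{\Gamma_w}$. Hence a suitable positive linear combination of these orbit sets has normalized integrals within $O(\varepsilon)$ of the Liouville averages for all $g_i$ at once. This is precisely why the statement is phrased with positive Reeb currents rather than with single orbit sets realizing spectral invariants. Your Baire argument then goes through with these open dense sets indexed by $(N,\varepsilon)$.
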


We can now proceed to the proof of Theorem~\ref{theo:irie+}.

\begin{proof}[Proof of Theorem~\ref{theo:irie+}]
By the Riesz representation theorem~\cite[Theorem~7.2]
{folland}, every positive Reeb current~$C$ determines a unique Radon measure~$m_C$ on~$U^*M$ characterized by
\[
\int_{U^*M} f \, dm_C = C(f \alpha)
\]
for every continuous function~$f$ on~$U^*M$.
More concretely, in the case of a Reeb current $C=C_\gamma$, the corresponding measure~$m_{C_\gamma}$ is characterized by
\[
\int_{U^*M} f \, dm_{C_\gamma} = \int_{\hat{\gamma}} f \alpha = \int_0^{\length(\gamma)} f(\phi_t(\xi_\gamma)) \, dt
\]
where $\xi_\gamma$ is the momentum of~$\gamma$ at~$0$.
Thus, $m_{C_\gamma}$ coincides with the invariant measure~$m_\gamma$ defined in Example~\ref{ex:measure-current1}.
More generally, if $C= \sum_i a_i \, \gamma_i$ is a positive Reeb current, then $m_C = \sum_i a_i \, m_{\gamma_i}$.
In particular, $m_C$ is invariant under the geodesic flow.

Given a reversible Finsler metric~$F$ in the generic set provided by Theorem~\ref{theo:irie}, there exists a sequence of positive Reeb currents~$C_n$ on~$U^*M$ converging weakly to the Liouville $1$-current~$C_L$.
Denote $m_n = m_{C_n}$.
For every smooth function~$f$ on~$U^*M$,
\[
\int_{U^*M} f\, dm_n = C_n(f \alpha) \xrightarrow[n \to \infty]{} C_L(f \alpha) =  \int_{U^*M} f \alpha \wedge d\alpha = \int_{U^*M} f \, dm_L
\]
where $m_L$ is the Liouville measure induced by~$\alpha \wedge d\alpha$.
Taking $f=1$, we see that the total masses of~$m_n$ converge to the total mass of~$m_L$, and are therefore uniformly bounded.
Since smooth functions are dense among continuous functions, it follows that the invariant measure~$m_n$ converges to~$m_L$ in the weak$^*$ topology.

By Lemma~\ref{lem:measure-current} (and Example~\ref{ex:measure-current2}), the geodesic current~$\mu_n$ associated with~$m_n$ converges to the Liouville geodesic current.
\end{proof}

\forget

\begin{proof}[Proof of Theorem~\ref{theo:irie+}]
Let $F$ be a reversible Finsler metrics in the generic set provided by Theorem~\ref{theo:irie}.
There exists a sequence of positive Reeb currents~$C_n$ on~$U^*M$ converging weakly to the Liouville $1$-current~$C_L$.

Let $C$ be a $1$-current on~$U^*M$.
Define
\[
\bar{C} = \tfrac{1}{2} ( C - A_* C).
\]
where $A:U^*M \to U^*M$ is the antipodal involution.

Since $F$ is reversible, we have
$A^* \alpha = - \alpha$ and $A \circ \phi_t = \phi_{-t} \circ A$.
Thus, if $\gamma$ is the positive Reeb current associated with an oriented closed geodesic, then $-A_* \gamma$ is the positive Reeb current associated with the same closed geodesic with the opposite orientation.
It follows that $\bar{C}_n$ is a positive Reeb current satisfying \mbox{$A_* \bar{C}_n = - \bar{C}_n$}.

The Liouville $1$-current has the same anti-invariance property.
Indeed, since $A^*d\alpha=-d\alpha$, we have
\[
A_* C_L(\eta) =  \int_{U^*M} A^* \eta \wedge d\alpha = - \int_{U^*M} A^*(\eta \wedge d\alpha) = - \int_{U^*M} \eta \wedge d\alpha = - C_L(\eta)
\]
for every smooth $1$-form~$\eta$.
Thus, $A_* C_L = - C_L$ and therefore $\bar{C}_L = C_L$.

Since pushforward by~$A$ is continuous for the weak$^*$ topology of currents, the positive Reeb currents~$\bar{C}_n$ also converge to~$C_L$.


We now associate with each positive Reeb current an invariant measure for the geodesic flow.
By the Riesz representation theorem~\cite[Corollary~7.18]{folland}, every positive Reeb current~$C$ determines a unique Radon measure~$m_C$ on~$U^*M$ characterized by
\[
\int_{U^*M} f \, dm_C = C(f \alpha)
\]
for every continuous function~$f$ on~$U^*M$.

If $C$ is induced by a closed geodesic~$\gamma$, then
\[
\int_{U^*M} f \, dm_\gamma = \int_\gamma f \alpha = \int_0^{\length(\gamma)} f(\phi_t(\gamma'(0))) \, dt.
\]
Thus, $m_\gamma$ is the arclength measure along the periodic Reeb orbit corresponding to~$\gamma$.
More generally, if $C= \sum_j a_j \, \gamma_j$, then $m_C = \sum_j a_j \, m_{\gamma_j}$.
In particular, $m_C$ is invariant under the geodesic flow.

If $C$ is anti-invariant under the antipodal involution, \ie, $A_* C = -C$, then $m_C$ is invariant under the antipodal involution.
Indeed, since $A^* \alpha = - \alpha$,
\[
C((f \circ A) \alpha) = - C(A^*(f \alpha)) = - (A_* C)(f \alpha) = C(f \alpha).
\]
That is,
\[
\int_{U^*M} f \circ A \, dm_C =  \int_{U^*M} f \, dm_C.
\]
Hence,
\[
A_* m_C = m_C.
\]

Let $m_n = m_{\bar{C}_n}$.
Recall that $\bar{C}_n \to C_L$.
For every smooth function~$f$ on~$U^*M$,
\[
\bar{C}_n(f \alpha) = \int_{U^*M} f\, dm_n \xrightarrow[n \to \infty]{} \int_{U^*M} f \alpha \wedge d\alpha.
\]
Moreover, the masses of~$m_n$ are uniformly bounded since $m_n(U^*M)=\bar{C}_n(\alpha) \to m_L(U^*M)$.
Since smooth functions are dense in~$C(U^*M)$, it follows that $m_n$ converges to the Liouville volume measure $m_L = \alpha \wedge d\alpha$ in the weak$^*$ topology.
That is,
\[
m_n \to m_L.
\]

Each~$m_n$ is invariant under both the geodesic flow and the antipodal involution.
Let~$\mu_n$ be the geodesic current associated with~$m_n$ by the construction of the previous section.
Since~$\bar C_n$ is a positive Reeb current, $\mu_n$ is a positive linear combination of geodesic currents induced by closed geodesics.
By Example~\ref{ex:measure-current2}, the measure~$m_L$ corresponds to the Liouville geodesic current~$\lambda$.
Therefore, Lemma~\ref{lem:measure-current} gives that $\mu_n$ converges to~$\lambda$ as geodesic currents.
\end{proof}

\forgotten

\section{Metric discretization by wall systems} \label{sec:discretization}

The purpose of this section is to show how the approximation of the Liouville geodesic current by geodesic wall systems leads to a
discretization of both the marked length spectrum and the area of a Finsler metric.

\medskip

It will be convenient to introduce the following notations.

\begin{definition}
Let $a,b,c \geq 0$ and $\varepsilon >0$.
We write $a \simeq_\varepsilon b$ if 
\begin{equation} \label{eq:mult}
(1+\varepsilon)^{-1} b \leq a \leq (1+\varepsilon) b.
\end{equation}
Similarly, we write $a \gtrsim_\varepsilon b$ or $a \lesssim_\varepsilon b$ if only the first or the second inequality holds.
We also write $a \simeq b \pm c$ if
\begin{equation} \label{eq:add}
b-c \leq a \leq b+c.
\end{equation}
\end{definition}

We can now state the main discretization result of this section.
It applies whenever the Liouville geodesic current can be approximated by geodesic wall systems.

\begin{theorem} \label{theo:MLSA}
Let $M$ be a closed surface endowed with a Finsler metric~$F$.
Suppose that the Liouville geodesic current~$\lambda$ of~$F$ can be approximated, in the space of geodesic currents, by geodesic wall systems (as in Theorem~\ref{theo:irie+}).
Then, for every~$\varepsilon >0$, there exists a geodesic wall system~$\W$ such that 
\begin{align*}
\MLS_{\W} & \simeq_\varepsilon \MLS_F \\
\area(\W) & \simeq_\varepsilon \frac{\pi}{4} \, \area(F).
\end{align*}
\end{theorem}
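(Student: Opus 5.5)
The plan is to take a sequence of geodesic wall systems $\W_n$ converging to $\lambda$ in $\Curr(M)$ and show that, for $n$ large, $\W=\W_n$ satisfies both estimates. Two preliminary facts are needed. First, a positive linear combination of geodesic currents induced by closed geodesics can be rewritten, by grouping iterates of the same primitive geodesic and adding their weights, as a positive combination $\sum_i w_i\sigma_i$ of \emph{distinct primitive} closed geodesics. Distinct primitive geodesics are pairwise transverse, and each is self-transverse, so this is a genuine geodesic wall system in the sense of Definition~\ref{def:wall}. Second, the Liouville current $\lambda$ has no atoms, since it is given locally by a smooth density. Hence, by Proposition~\ref{prop:continuity}, the intersection form is continuous at every pair $(\mu,\lambda)$ and at $(\lambda,\lambda)$.

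\emph{Area.} By Proposition~\ref{prop:MLSi}.\eqref{i2}, $\area(\W_n)=\frac12 i(\W_n,\W_n)$. By continuity at $(\lambda,\lambda)$ and Proposition~\ref{prop:bonahon_area},
\[
\area(\W_n)\to \tfrac12\, i(\lambda,\lambda)=\tfrac{\pi}{4}\area(F).
\]
Since $\area(F)>0$, the multiplicative estimate $\area(\W_n)\simeq_\varepsilon\frac{\pi}{4}\area(F)$ holds for $n$ large.

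\emph{Uniform approximation on the compact set.} This is the step I expect to be the main obstacle, because the estimate must hold simultaneously for all free homotopy classes. Consider the compact set $\mathcal K=\{\mu\mid i(\mu,\lambda)=1\}$ of Proposition~\ref{prop:compact}. I claim that
\[
\sup_{\mu\in\mathcal K}|i(\mu,\W_n)-1|\to0 .
\]
I would argue by contradiction. Otherwise there are $\delta>0$, a subsequence $n_k$ and $\mu_k\in\mathcal K$ with $|i(\mu_k,\W_{n_k})-1|\ge\delta$. By compactness and metrizability of $\mathcal K$, we may pass to a further subsequence with $\mu_k\to\mu\in\mathcal K$. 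Joint continuity of $i$ at $(\mu,\lambda)$ then gives $i(\mu_k,\W_{n_k})\to i(\mu,\lambda)=1$, a contradiction.

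\emph{Marked length spectrum.} For every closed geodesic $\gamma$, the current $\gamma/\length(\gamma)$ lies in $\mathcal K$ by Proposition~\ref{prop:bonahon_length}. The uniform estimate therefore gives $i(\gamma,\W_n)\simeq_\varepsilon \length(\gamma)$ for all closed geodesics $\gamma$ simultaneously, once $n$ is large. Now fix a nontrivial class $\alpha$. Items~\eqref{i22} and~\eqref{i3} of Proposition~\ref{prop:MLSi} show that $\MLS_{\W_n}(\alpha)$ is the minimum of $i(\gamma,\W_n)$ over closed geodesics $\gamma$ representing $\alpha$. On the other hand, $\MLS_F(\alpha)$ is the minimum of $\length(\gamma)$ over the same set, and this minimum is attained. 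Taking minima on both sides of the two-sided multiplicative inequality yields $\MLS_{\W_n}(\alpha)\simeq_\varepsilon\MLS_F(\alpha)$. For the trivial class both quantities vanish. Choosing $n$ large enough for both the area and the length estimates completes the proof.
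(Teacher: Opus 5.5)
Your proof is correct and follows essentially the paper's route. The uniform estimate on the compact set $\mathcal K$, obtained by contradiction from the continuity of $i$ at $(\nu,\lambda)$, is exactly the paper's Proposition~\ref{prop:i_WL}. Your marked length spectrum comparison via items~\eqref{i22} and~\eqref{i3} of Proposition~\ref{prop:MLSi}, together with the length formula, matches the paper's argument.

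The one minor difference is the area step. You use joint continuity of $i$ at $(\lambda,\lambda)$ directly. The paper instead applies the uniform estimate twice, to $\mu=\W$ and to $\mu=\lambda$, and uses the symmetry $i(\W,\lambda)=i(\lambda,\W)$. Both are valid because $\lambda$ has no atoms.
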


We will need the following uniform convergence result.

\begin{proposition} \label{prop:i_WL}
Under the assumptions of Theorem~\ref{theo:MLSA}, for every~\mbox{$\varepsilon >0$}, there exists a geodesic wall system~$\W$ such that for every geodesic current~$\mu$,
\[
i(\mu,\W) \simeq_\varepsilon i(\mu,\lambda).
\]
\end{proposition}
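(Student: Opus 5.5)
We argue by contradiction, combining the compactness of Proposition~\ref{prop:compact} with the continuity of the intersection form from Proposition~\ref{prop:continuity}. By bilinearity, both sides of $i(\mu,\W) \simeq_\varepsilon i(\mu,\lambda)$ are positively homogeneous of degree one in~$\mu$. The case $\mu=0$ is trivial. Since $\lambda$ is filling, every nonzero current satisfies $i(\mu,\lambda)>0$, so it suffices to prove the relation for $\mu$ in the normalized space
\[
\mathcal{K} = \{ \mu \in \Curr(M) \mid i(\lambda,\mu) = 1 \},
\]
which is compact and metrizable by Proposition~\ref{prop:compact}, because $\lambda$ is filling and has no atoms. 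On~$\mathcal{K}$ the statement becomes a uniform convergence claim: there is a wall system~$\W$ with $(1+\varepsilon)^{-1} \leq i(\mu,\W) \leq 1+\varepsilon$ for all $\mu \in \mathcal{K}$.

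By hypothesis, there is a sequence of geodesic wall systems~$\W_n$, viewed as geodesic currents, converging to~$\lambda$. Suppose that no~$\W_n$ satisfies the uniform bound on~$\mathcal{K}$. Then for each~$n$ we can pick $\mu_n \in \mathcal{K}$ with $i(\mu_n,\W_n) \notin [(1+\varepsilon)^{-1},1+\varepsilon]$. By sequential compactness of~$\mathcal{K}$ (it is compact and metrizable), after passing to a subsequence we have $\mu_n \to \mu \in \mathcal{K}$.

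The key point is that $(\mu_n,\W_n) \to (\mu,\lambda)$ in $\Curr(M)\times\Curr(M)$, and $\lambda$ has no atoms, so the pair $(\mu,\lambda)$ has no atom in common. Proposition~\ref{prop:continuity} then gives
\[
i(\mu_n,\W_n) \to i(\mu,\lambda) = 1.
\]
This contradicts the choice of~$\mu_n$. Hence some~$\W_n$ works.

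The main obstacle is exactly this continuity step. The intersection form is not continuous everywhere, and here both arguments vary simultaneously. What saves the argument is that continuity in Proposition~\ref{prop:continuity} is \emph{joint} at pairs without common atoms, and the limit~$\lambda$ is atomless. So I must make sure to invoke the joint version rather than continuity in one variable with the other fixed. The only other thing to check is that the product topology on $\Curr(M)\times\Curr(M)$ is metrizable (Proposition~\ref{prop:metrizable}), so that the sequential argument is legitimate.
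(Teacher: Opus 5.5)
Your proposal is correct and follows essentially the same argument as the paper. You reduce by homogeneity to the compact metrizable set $\mathcal{K}$ of Proposition~\ref{prop:compact}, pick violating currents $\mu_n\in\mathcal{K}$, extract a convergent subsequence, and conclude with the joint continuity of Proposition~\ref{prop:continuity} at $(\mu,\lambda)$, which applies because $\lambda$ is atomless.
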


\begin{proof}
By assumption, there exists a sequence of geodesic wall systems~$\W_n$ converging to~$\lambda$ as geodesic currents.


Since~$\lambda$ is filling and without atoms, the space $\mathcal K = \{ \nu \in \Curr(M) \mid i(\nu,\lambda) = 1 \}$ is compact by Proposition~\ref{prop:compact}.

Fix $\varepsilon \in (0,1)$.
We claim that, for all sufficiently large~$n$,
\begin{equation} \label{eq:uniform_WL}
i(\mu,\W_n) \simeq_\varepsilon i(\mu,\lambda)
\end{equation}
for every geodesic current~$\mu$.
By homogeneity of the intersection form, it is enough to prove that for all sufficiently large~$n$,
\[ 
(1+\varepsilon)^{-1} \leq i(\mathcal W_n,\nu) \leq 1+\varepsilon.
\] 
for every $\nu \in \mathcal{K}$.

Suppose otherwise.
Then there exist a subsequence~$(\W_{n_k})$ and geodesic currents~$\nu_k \in \mathcal{K}$ such that
\begin{equation} \label{eq:contradiction0}
i(\nu_k,\W_{n_k}) \notin [(1+\varepsilon)^{-1},1+\varepsilon].
\end{equation}
for every~$k$.
Since $\mathcal{K}$ is compact and metrizable, after passing to a subsequence, we may assume that $\nu_k$ converges to~$\nu \in \mathcal{K}$.
Recall also that $\W_{n_k}$ converges to~$\lambda$.
By continuity of the intersection form at~$(\nu,\lambda)$, see Proposition~\ref{prop:continuity}, we obtain 
\[
i(\nu_k,\W_{n_k}) \to i(\nu,\lambda)=1,
\]
which contradicts~\eqref{eq:contradiction0}.
Thus~\eqref{eq:uniform_WL} holds for all sufficiently large~$n$.
\end{proof}

We can now prove Theorem~\ref{theo:MLSA}.

\begin{proof}[Proof of Theorem~\ref{theo:MLSA}]
Fix~$\varepsilon \in (0,1)$.
Choose $\delta>0$ such that $(1+\delta)^2 \leq 1+\varepsilon$.
Note that $\delta \leq \varepsilon$.
By Proposition~\ref{prop:i_WL}, there exists a geodesic wall system such that, for every geodesic current~$\mu$,
\begin{equation} \label{eq:i_WL}
i(\mu,\W) \simeq_\delta i(\mu,\lambda).
\end{equation}

Let $\alpha$ be an unoriented free homotopy class.
By Proposition~\ref{prop:MLSi}.\eqref{i3}, there exists a closed geodesic~$\gamma$ representing~$\alpha$ such that
\[
\MLS_{\W}(\alpha) = i(\gamma,\W).
\]
By the relation~\eqref{eq:i_WL} applied to~$\mu=\gamma$ and the length formula of Proposition~\ref{prop:bonahon_length}, we obtain
\[
\MLS_{\W}(\alpha) = i(\gamma,\W) \simeq_\delta i(\gamma,\lambda)  = \length_F(\gamma) \geq \MLS_F(\alpha).
\]

For the reverse inequality, let $\bar{\gamma}$ be a closed geodesic of minimal length representing~$\alpha$.
By Proposition~\ref{prop:MLSi}.\eqref{i22}, the relation~\eqref{eq:i_WL} applied to~$\mu=\bar{\gamma}$ and the length formula of Proposition~\ref{prop:bonahon_length}, we obtain
\[
\MLS_{\W}(\alpha) \leq i(\bar{\gamma},\W) \simeq_\delta i(\bar{\gamma},\lambda) = \length_F(\bar{\gamma}) = \MLS_F(\alpha).
\]

Therefore,
\[
\MLS_{\W}(\alpha) \simeq_\varepsilon \MLS_F(\alpha).
\]

It remains to compare the areas.
By the relation~\eqref{eq:i_WL} applied first to~$\mu=\W$ then to $\mu=\lambda$, and the symmetry of the intersection form, we obtain
\[
i(\W,\W) \simeq_\delta i(\W,\lambda) = i(\lambda,\W) \simeq_\delta i(\lambda,\lambda).
\]
By the choice of~$\delta$, it follows that
\[
i(\W,\W) \simeq_\varepsilon i(\lambda,\lambda).
\]
By the discrete area formula of Proposition~\ref{prop:MLSi}.\eqref{i2} and the area formula of Proposition~\ref{prop:bonahon_area}, we obtain
\[
\area(\W) = \frac{1}{2} \, i(\W,\W) \simeq_{\varepsilon} \frac{1}{2} \, i(\lambda,\lambda) = \frac{\pi}{4} \, \area(F).
\]
This proves the result.
\end{proof}

\forget

\begin{proof}[Proof of Theorem~\ref{theo:MLSA}]
Fix~$\varepsilon \in (0,1)$.
By Proposition~\ref{prop:i_WL}, there exists a geodesic wall system such that for every geodesic current~$\mu$,
\begin{equation} \label{eq:i_WL}
i(\mu,\W) \simeq_\varepsilon i(\mu,\lambda).
\end{equation}
Applying this relation first to~$\mu=\W$ then to $\mu=\lambda$, and using the symmetry of the intersection form, we obtain
\[
i(\W,\W) \simeq_\varepsilon i(\W,\lambda) = i(\lambda,\W) \simeq_\varepsilon i(\lambda,\lambda).
\]
By the discrete area formula of Proposition~\ref{prop:MLSi}.\eqref{i2} and the area formula of Proposition~\ref{prop:bonahon_area}, this yields
\[
\area(\W) \simeq_{3 \varepsilon} \frac{\pi}{4} \, \area(F).
\]

Write~$\W=(\sigma_i,w_i)_{1 \leq i \leq n}$.
We want to show that 
\[
\MLS_{\W}(\alpha) \simeq_\varepsilon \MLS_F(\alpha)
\]
for every unoriented free homotopy class~$\alpha$.

Let $a=[\alpha] \in H_1(M;\Z)$ be the homology class associated to~$\alpha$, up to sign.
Let also $s_i=[\sigma_i] \in H_1(M;\Z)$. 
We will consider two cases.

\medskip

\textit{Case 1.}
Suppose first that $a$ does not lie in any line~$\R s_i$ in~$H_1(M;\R)$, where the lines~$\R s_i$ may be reduced to~$\{0\}$.
In particular, every closed geodesic in~$\alpha$ is transverse to~$\W$.

Let $\gamma$ be a closed geodesic of minimal length in~$\alpha$.
By the length formula of Proposition~\ref{prop:bonahon_length} and the relation~\eqref{eq:i_WL} applied to~$\mu=\gamma$, we have
\[
\MLS_F(\alpha) = \length_F(\gamma) = i(\gamma,\lambda) \simeq_\varepsilon i(\gamma,\W).
\]
Now, since $\gamma$ is transverse to~$\W$, the discrete length formula of Proposition~\ref{prop:MLSi}.\eqref{i1} gives
\[
i(\gamma,\W) = \length_\W(\gamma) \geq \MLS_{\W}(\alpha).
\]
Hence, $\MLS_F(\alpha) \gtrsim_\varepsilon \MLS_{\W}(\alpha)$.

Let $c$ be a closed curve of minimal discrete length in~$\alpha$.
Then
\begin{equation} \label{eq:alpha_W}
\MLS_{\W}(\alpha) = \length_\W(c) = \sum_{i=1}^n w_i \, \#(c \cap \sigma_i).
\end{equation}
Apply the disk-flow~\cite{HS} simultaneously to the curves~$c$ and~$\sigma_i$.
(Technically, the disk-flow has been defined only for Riemannian metrics, but the definition carries over to reversible Finsler metrics.)
Since the curves~$\sigma_i$ are geodesic, they remain unchanged under the disk-flow.
The curve~$c$ sub-converges to a closed geodesic~$\bar{c}$ in the unoriented free homotopy class~$\alpha$ under the disk-flow.
By assumption, the geodesic~$\bar{c}$ is transverse to~$\sigma_i$.
Recall also that the disk-flow does not increase the number of intersections; see~\cite{HS}.
Thus,
\[
\#(c \cap \sigma_i) \geq \#(\bar{c} \cap \sigma_i) = i(\bar{c},\sigma_i)
\]
where the last equality comes from~\eqref{eq:itt}.
Combined with~\eqref{eq:alpha_W}, this yields
\[
\MLS_{\W}(\alpha) \geq i(\bar{c},\W).
\]
Applying~\eqref{eq:i_WL} to~$\mu = \bar{c}$, and using the length formula of Proposition~\ref{prop:bonahon_length}, we obtain
\[
i(\bar{c},\W) \simeq_\varepsilon i(\bar{c},\lambda) = \length_F(\bar{c}) \geq \MLS_F(\alpha).
\]
Hence, $\MLS_{\W}(\alpha) \gtrsim_\varepsilon \MLS_F(\alpha)$.

Together with the reverse inequality obtained above, this gives
\[
\MLS_{\W}(\alpha) \simeq_\varepsilon \MLS_F(\alpha).
\]

\textit{Case 2.}
Assume now that $a$ lies in some line~$\R s_j$.
Choose $b \in H_1(M;\Z)$ which does not lie in any line~$\R s_i$.
The line $b + ta$ with $t \in \R$, possibly reduced to~$b$, does not pass through the origin of~$H_1(M;\R)$ and intersects each line~$\R s_i$ in at most one point at~$t=t_i$.
Thus, for every integer~$k$ sufficiently large, $b+ka$ does not lie in any line~$\R s_i$.

Let $\gamma$ be a closed geodesic of minimal length in~$\alpha$.
Let $\gamma_0$ be a closed curve representing~$b$ with the same basepoint as~$\gamma$.
Denote by~$\alpha_k$ the unoriented free homotopy class induced by the concatenation~$\gamma_0 \gamma^k$.
By construction, the homology class~$[\alpha_k] = b + ka$, defined up to sign, does not lie in any line~$\R s_i$.
Thus, by Case~1, we have,
\begin{equation} \label{eq:MLS_WF}
\MLS_{\W}(\alpha_k) \simeq_\varepsilon \MLS_F(\alpha_k).
\end{equation}

To derive a similar relation for~$\alpha$, we need the following elementary asymptotic property of the marked length spectrum.

\begin{lemma} \label{lem:sim}
If $d$ is a Finsler metric or the pseudo-distance induced by a wall system, then
\[
\MLS_d(\alpha_k) \sim k \, \MLS_d(\alpha)
\]
as $k \to \infty$.
\end{lemma}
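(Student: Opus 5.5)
We prove the two inequalities $\limsup_k \MLS_d(\alpha_k)/k \leq \MLS_d(\alpha)$ and $\liminf_k \MLS_d(\alpha_k)/k \geq \MLS_d(\alpha)$ separately, and we treat both kinds of $d$ at once. Write $L = \MLS_d(\alpha)$. Both a Finsler metric and the pseudo-distance of a wall system are intrinsic (pseudo)-distances, so Proposition~\ref{prop:classical} applies. In particular $\MLS_d(\alpha^k) = kL$.

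\emph{Upper bound.} The concatenation $\gamma_0\gamma^k$ represents $\alpha_k$. Its length is at most $\length_d(\gamma_0) + k\,\length_d(\gamma)$, where $\gamma$ is minimizing in $\alpha$ for $d$. For a wall system, replace $\gamma$ by a curve of minimal discrete length in $\alpha$ based at the same point, or shift the basepoint along an arc. Either way the additive error is a constant independent of $k$. Dividing by $k$ gives the upper bound.

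\emph{Lower bound.} This is the main obstacle, since no linear lower bound follows from concatenation alone. We use the translation structure on the universal cover $\bar M$. Let $g, h \in \pi_1(M)$ be the elements represented by $\gamma$ and $\gamma_0$ at the common basepoint, so that $\alpha_k$ is the conjugacy class of $hg^k$. Then $\MLS_d(\alpha_k) = \inf_{x} d(x, hg^k x)$, where $d$ is lifted to $\bar M$ (a pseudo-distance in the wall case). Fix a minimizer $x_k$ and use the triangle inequality on powers: the curve realizing $d(x_k, hg^k x_k)$, iterated $N$ times, represents $(hg^k)^N$. We compare $(hg^k)^N$ with $g^{kN}$ up to a bounded number of insertions of $h$. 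Concretely, we plan to show
\[
d(y, g^{m} y) \leq d(y,hg^k y)\cdot \tfrac{m}{k} + C\,\tfrac{m}{k}
\]
by writing $g^{kN}$ as a product of $N$ copies of $hg^k$ interleaved with $h^{-1}$-type corrections, each costing a bounded amount $C = C(h)$. This would give $k L \leq \MLS_d(\alpha_k) + C$. Since the corrections are not literally of this form, the cleaner route we would try first is the quasi-isometry argument. The lift of $d$ is $\pi_1(M)$-invariant and quasi-isometric to a word metric. This holds for wall systems since $\W$ is filling up to passing to the classes it detects, and otherwise lengths may vanish and we argue with the stable length $\lim_n \ell(\beta^n)/n$. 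Then $\MLS_d(\beta) \geq \operatorname{stl}_d(\beta) := \lim_n d(x,\beta^n x)/n$, and stable length is Lipschitz-close on $hg^k$ versus $g^k$: $\operatorname{stl}_d(hg^k) \geq \operatorname{stl}_d(g^k) - c$. Finally $\operatorname{stl}_d(g^k) = k\,\operatorname{stl}_d(g) = kL$, using $\MLS_d(\alpha^n) = nL$ from Proposition~\ref{prop:classical}.

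The delicate point is this inequality $\operatorname{stl}(hg^k) \geq k\operatorname{stl}(g) - c$. If it resists, the fallback for a Finsler metric without extra hypotheses is the following. A minimizer in $\alpha_k$ lifts to a $\langle hg^k\rangle$-invariant quasi-line whose projection winds around the annular cover of $\gamma$ at least $k - O(1)$ times, and each winding costs at least $L$. This is the content of Proposition~\ref{prop:classical}, applied to the annulus cover. For wall systems, the same winding count is made by counting intersections with transverse walls in the annulus, as in the proof of Proposition~\ref{prop:MLSi}.\eqref{i3}.
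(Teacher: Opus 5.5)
\textbf{There is a genuine gap in the lower bound.} Your upper bound is correct and coincides with the paper's. It is the triangle inequality $d(x,hg^kx)\leq d(x,g^kx)+d(x,hx)$ at one fixed lift $x$ of a point of a minimizer of $\alpha$ (after moving the basepoint along an arc in the wall-system case). The lower bound, however, is not proved. You propose three strategies, none is carried through, and each stops at the decisive step.
\begin{itemize}
\item The interleaving estimate, which you abandon, fails because deleting an $h$ from $(hg^k)^N$ costs the displacement of $h$ at a point of the form $g^{kj}y$, which is unbounded in $j$.
\item In the stable-length route, $\operatorname{stl}_d(hg^k)\geq\operatorname{stl}_d(g^k)-c$ is not a general Lipschitz property of stable length. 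In a free group, $\operatorname{stl}(t^nst^{-n})=1$ although $t^nst^{-n}=s\cdot(s^{-1}t^nst^{-n})$ and $\operatorname{stl}(s^{-1}t^nst^{-n})=2n+2$. For the specific elements $hg^k$, since $\MLS_d$ equals stable length by Proposition~\ref{prop:classical}, the inequality is a restatement of the lower bound you want.
\item The quasi-isometry input does not help. It is false for a non-filling wall system, e.g.\ a single simple closed wall, whose lifted pseudo-distance vanishes on unbounded regions. Even when it holds, it loses multiplicative constants, so it gives at best $\MLS_d(\alpha_k)\geq ck-C$ with some $c<\MLS_d(\alpha)$.
\item In the winding fallback, the claim that the lifted minimizer winds $k-O(1)$ times around the annular cover, with $O(1)$ independent of $k$, is precisely the fellow-travelling statement that must be proved. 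Proposition~\ref{prop:classical} bounds closed curves, not arcs of the annulus with uncontrolled endpoints. Finally, the argument of Proposition~\ref{prop:MLSi}.\eqref{i3} uses that the curve lies in a thin annular neighborhood of the geodesic, which a minimizer in $\alpha_k$ does not.
\end{itemize}

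\textbf{What is missing is a localization statement.} For $k$ large, the infimum of $x\mapsto d(x,hg^kx)$ over $\bar M$ should be attained at a point $x_k$ of a compact set $K\subseteq\bar M$ independent of $k$. Granting this, Proposition~\ref{prop:classical} gives $\MLS_d(\alpha_k)=d(h^{-1}x_k,g^kx_k)\geq d(x_k,g^kx_k)-d(x_k,h^{-1}x_k)\geq k\,\MLS_d(\alpha)-\max_{y\in K}d(y,h^{-1}y)$. This is the triangle inequality the paper uses. The paper's two-line proof needs this localization tacitly too, since $\MLS_d(\alpha_k)$ is an infimum over all of $\bar M$ while $x\mapsto d(x,hx)$ is unbounded there. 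Your remark that concatenation alone gives no lower bound is therefore on target.

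For a Finsler metric, localization comes from hyperbolic geometry.
\begin{itemize}
\item If $h$ commutes with $g$, then $hg^k$ is a power of the primitive root of $g$, and Proposition~\ref{prop:classical} suffices.
\item Otherwise, a minimizing representative of $\alpha_k$ lifts to a minimizing line, because $d(x,u^nx)=n\,\MLS_d(u)$ along it by Proposition~\ref{prop:classical}.
\item Since the lifted Finsler and hyperbolic distances are bi-Lipschitz, the Morse lemma keeps this line uniformly close to the hyperbolic axis of $hg^k$.
\item These axes converge to the geodesic from the repelling fixed point $g^{-}$ to $h(g^{+})$. These are distinct points since $\pi_1(M)$ is discrete and torsion-free, so the axes pass at bounded distance from a lift $x_0$ of the basepoint.
\end{itemize}
For a wall system, whose pseudo-distance may be degenerate, it is simpler to use Theorem~\ref{theo:GS}: $\MLS_{\W}=\sum_j I(\cdot,\alpha_j)$ (first for rational weights, then by monotonicity in the weights). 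This reduces the claim to $I(\alpha_k,\beta)/k\to I(\alpha,\beta)$ for fixed $\beta$. That follows from $\tfrac1k\,\delta_k\to\delta$ for the currents of the hyperbolic geodesics $\delta_k\in\alpha_k$ and $\delta\in\alpha$, together with continuity of the hyperbolic intersection form.
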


\begin{proof}
Identify~$\gamma$ and~$\gamma_0$ with their homotopy classes in~$\pi_1(M)$.
By definition,
\begin{align*}
\MLS_d(\alpha) & = \inf_{x \in \bar{M}} d(x,\gamma . x) \\
\MLS_d(\alpha_k) & = \inf_{x \in \bar{M}} d(x, \gamma_0 \gamma^k . x) 
\end{align*}
where $d$ also denotes the pseudo-distance on the universal cover~$\bar{M}$ of~$M$.
Now, by the triangle inequality,
\begin{align*}
d(x,\gamma_0 \gamma^k.x) & \simeq d(\gamma_0.x, \gamma_0 \gamma^k.x) \pm d(x,\gamma_0.x) \\
 & \simeq d(x, \gamma^k .x) \pm d(x,\gamma_0.x)
\end{align*}
It follows that 
\begin{align*}
\MLS_d(\alpha_k) & \simeq \MLS_d([\gamma^k]) \pm C(\gamma_0 \\
 & \simeq k \, \MLS_d(\alpha) \pm C(\gamma_0)
 \end{align*}
 for some constant~$C(\gamma_0) >0$ independent of~$k$,  where the relation $\MLS_d([\gamma^k]) = k \, \MLS_d([\gamma])$ comes from Proposition~\ref{prop:classical}.
 Thus,
 \[
\MLS_d(\alpha_k) \sim k \, \MLS_d(\alpha)
\]
when $k \to \infty$, as claimed.
\end{proof}

We can now complete the proof of the second case.
Dividing~\eqref{eq:MLS_WF} by~$k$, applying Lemma~\ref{lem:sim} to $d=d_\W$ and $d=d_F$, and letting $k \to \infty$, we obtain
\[
\MLS_{\W}(\alpha) \simeq_\varepsilon \MLS_F(\alpha)
\]
in the second case too.
\end{proof}

\forgotten

We present two consequences of Theorem~\ref{theo:MLSA}.

\begin{corollary} \label{coro:W0}
Let $M$ be a closed surface endowed with a Finsler metric~$F$ whose geodesic flow is Anosov.
Then, for every~$\varepsilon >0$, there exists a geodesic wall system~$\W$ such that
\begin{align*}
\MLS_{\W} & \simeq_\varepsilon \MLS_F \\
\area(\W) & \simeq_\varepsilon \frac{\pi}{4} \, \area(F).
\end{align*}
\end{corollary}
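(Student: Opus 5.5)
The plan is to check the hypothesis of Theorem~\ref{theo:MLSA}, namely that the Liouville geodesic current~$\lambda$ of~$F$ is a limit of geodesic wall systems, and then apply that theorem directly. Since the geodesic flow of~$F$ is Anosov, Theorem~\ref{theo:dense} gives a sequence of primitive closed geodesics~$\gamma_n$ and positive reals~$t_n$ with $t_n \gamma_n \to \lambda$ in~$\Curr(M)$. (Theorem~\ref{theo:dense+} would also work, and gives positive linear combinations of primitive null-homologous closed geodesics instead.)

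The next step is to read each approximant as a geodesic wall system. By Definition~\ref{def:wall}, a wall system needs primitive walls with only transverse intersections and self-intersections, and the walls must be pairwise transverse. A single primitive closed geodesic~$\gamma_n$ has this property: two distinct geodesic branches through a point cannot be tangent, since tangent branches would coincide by uniqueness of geodesics. So the one-wall system $(\gamma_n,t_n)$ is a geodesic wall system, and its associated current is $t_n\gamma_n$.

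If we use instead positive combinations $\sum_i a_i \gamma_i$, we first merge any repeated geodesics by adding their weights. Distinct primitive closed geodesics are then pairwise transverse by the same uniqueness argument. Hence any positive linear combination of distinct primitive closed geodesics is a geodesic wall system, exactly as noted at the end of Definition~\ref{def:wall}.

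With the hypothesis checked, Theorem~\ref{theo:MLSA} yields, for each $\varepsilon>0$, a geodesic wall system~$\W$ satisfying $\MLS_\W \simeq_\varepsilon \MLS_F$ and $\area(\W)\simeq_\varepsilon \frac{\pi}{4}\area(F)$, which is the corollary. The only step needing care is the transversality check. It is elementary here, because the Anosov density theorem already supplies primitive closed geodesics, so no perturbation is required. We also note that, by Theorem~\ref{theo:Anosov}, these walls are length-minimizing, so $\W$ is in fact minimal; this will matter later but is not needed for the statement.
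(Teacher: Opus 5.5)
Your proposal is correct and follows the paper's proof. Theorem~\ref{theo:dense} (Sigmund's density theorem) shows that the Liouville current~$\lambda$ is a limit of weighted primitive closed geodesics, that is, of one-wall geodesic wall systems, and Theorem~\ref{theo:MLSA} then gives the conclusion; your transversality check and your remark on minimality just make explicit what the paper takes for granted at the end of Definition~\ref{def:wall} and uses in Remark~\ref{rem:alternative}.
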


\begin{proof}
Since the geodesic flow of~$F$ is Anosov, Theorem~\ref{theo:dense} implies that its Liouville geodesic current~$\lambda$ can be approximated by geodesic wall systems.
The conclusion then follows from Theorem~\ref{theo:MLSA}.
\end{proof}

The discrete approximations of the marked length spectrum and area in the following corollary were previously obtained by the first author using different techniques~\cite{Cos}.
The version below provides the additional conclusion that the wall system can be chosen almost geodesic.

\begin{corollary} \label{coro:W}
Let $M$ be a closed surface endowed with a Finsler metric~$F$.
Then, for every~$\varepsilon >0$, there exists an almost geodesic wall system~$\W$ such that
\begin{align*}
\MLS_{\W} & \simeq_\varepsilon \MLS_F \\
\area(\W) & \simeq_\varepsilon \frac{\pi}{4} \, \area(F).
\end{align*}
\end{corollary}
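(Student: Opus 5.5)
The plan is to perturb $F$ to a generic reversible Finsler metric, discretize the perturbed metric using Theorem~\ref{theo:MLSA}, and then compare the perturbed metric back to $F$. ``Almost geodesic'' is taken to mean geodesic with respect to a reversible Finsler metric $F'$ arbitrarily $C^\infty$-close to $F$.

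First, fix $\varepsilon>0$ and choose $\delta>0$ with $(1+\delta)^2\leq 1+\varepsilon$. The generic set of Theorem~\ref{theo:irie+} is residual in the $C^\infty$-topology, hence dense. So there is a reversible Finsler metric $F'$ in this set, arbitrarily $C^\infty$-close to $F$. In particular we can take it $C^0$-close, so that
\[
(1+\delta)^{-1}F\leq F'\leq (1+\delta)F
\]
pointwise on $TM$.

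Second, compare $F'$ with $F$. The pointwise bound gives $\length_{F'}(c)\simeq_\delta\length_F(c)$ for every curve $c$. Taking infima over representatives of each class yields $\MLS_{F'}\simeq_\delta\MLS_F$. For the Holmes--Thompson area, the pointwise bound implies that the dual unit balls satisfy
\[
(1+\delta)^{-1}B^*_{x,F}\subseteq B^*_{x,F'}\subseteq(1+\delta)B^*_{x,F}.
\]
Their symplectic (fiberwise Lebesgue) areas therefore compare up to the factor $(1+\delta)^{2}$. Shrinking $\delta$ at the outset if necessary, we get $\area(F')\simeq_\delta\area(F)$.

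Third, discretize. Theorem~\ref{theo:irie+} shows that the Liouville geodesic current of $F'$ is a limit of positive linear combinations of currents of primitive closed $F'$-geodesics. We need these to be geodesic wall systems, that is, with pairwise distinct, transverse walls. Distinct primitive closed geodesics are automatically transverse, since geodesics that are tangent at a point coincide, and combining repeated terms handles coincidences. Theorem~\ref{theo:MLSA} applied to $F'$ then gives an $F'$-geodesic wall system $\W$ with
\[
\MLS_\W\simeq_\delta\MLS_{F'} \qquad\textrm{and}\qquad \area(\W)\simeq_\delta\frac{\pi}{4}\area(F').
\]
Composing the two $\simeq_\delta$ relations gives $\simeq_\varepsilon$ for both the marked length spectrum and the area, which proves the corollary.

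The main obstacle is the interpretation of ``almost geodesic.'' If it means only that the walls are geodesic for a nearby metric, the argument above is complete. If it means that the walls are $C^1$-close to $F$-geodesics, one needs an extra continuity statement: closed $F'$-geodesics of bounded length are uniformly close to $F$-geodesic segments as $F'\to F$ in $C^\infty$. This follows from continuous dependence of the geodesic flow on the metric over bounded time intervals. The uniform boundedness of lengths is not automatic, however, since it depends on $\varepsilon$ through the wall system chosen; it is this uniformity, not the area or length comparison, that I expect to need care.
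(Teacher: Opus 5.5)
Your proposal is correct and follows the paper's proof. Both perturb $F$ to a reversible metric $F'$ in the generic set of Theorem~\ref{theo:irie+}, apply Theorem~\ref{theo:MLSA} to $F'$, and absorb the small multiplicative change in lengths and Holmes--Thompson area; ``almost geodesic'' means geodesic for the nearby metric $F'$, exactly as in the paper, so your concern about $C^1$-closeness to $F$-geodesics does not arise. The only slip is bookkeeping: to get $\area(F')\simeq_\delta\area(F)$, impose the pointwise bound with some $\delta'$ satisfying $(1+\delta')^2\leq 1+\delta$, rather than ``shrinking $\delta$''.
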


\begin{proof}
After a small perturbation of~$F$, which changes its length and area functionals by arbitrarily small multiplicative factors, we may assume that the Liouville geodesic current of the perturbed metric can be approximated by geodesic wall systems; see Theorem~\ref{theo:irie+}.
The conclusion then follows from Theorem~\ref{theo:MLSA}, after choosing the perturbation sufficiently small.
The resulting wall system is geodesic with respect to the perturbed Finsler metric, and hence almost geodesic with respect to the original metric~$F$.
\end{proof}

\section{Proof of the main theorem} \label{sec:proof}

In this section, we prove the main theorem, namely Theorem~\ref{theo:main}, using its discrete counterpart, Theorem~\ref{theo:discrete}, together with the approximation of Finsler metrics by wall systems established in the previous section.
We also establish two mixed comparison theorems between Finsler metrics and wall systems.

\begin{theorem}  \label{theo:mainn}
Let $M$ be a closed surface endowed with a Finsler metric~$F_0$ whose geodesic flow is Anosov.
Let $F$ be another Finsler metric on~$M$.
If $\MLS_{F_0} \leq \MLS_F$ then 
\[
\area(F_0) \leq \area(F).
\]
\end{theorem}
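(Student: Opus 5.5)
The proof combines the discrete comparison theorem (Theorem~\ref{theo:discrete}) with the discretization results of Section~\ref{sec:discretization}. The one delicate point is that the reference wall system must be \emph{minimal} with respect to a single Finsler metric used to run the discrete argument.

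Fix $\varepsilon>0$. Use the Anosov assumption on $F_0$ directly, with no perturbation of $F_0$. By Corollary~\ref{coro:W0}, there is an $F_0$-geodesic wall system $\W_0$ with $\MLS_{\W_0}\simeq_\varepsilon\MLS_{F_0}$ and $\area(\W_0)\simeq_\varepsilon\frac{\pi}{4}\area(F_0)$. Its walls are closed $F_0$-geodesics. By Theorem~\ref{theo:Anosov}, the metric $F_0$ has no conjugate points, and each nontrivial class contains a unique closed geodesic, which is therefore length-minimizing. Hence $\W_0$ is a minimal geodesic wall system for $F_0$.

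For the competitor $F$, apply Corollary~\ref{coro:W}: there is a wall system $\W$, geodesic only for a perturbed metric $F'$ and in general not minimal, with $\MLS_{\W}\simeq_\varepsilon\MLS_F$ and $\area(\W)\simeq_\varepsilon\frac{\pi}{4}\area(F)$. In Theorem~\ref{theo:discrete}, $\W$ is an arbitrary wall system: it is uncrossed via Theorem~\ref{theo:GS} into an $F_0$-minimal geodesic system. Only $\W_0$ needs to be minimal, and minimality is taken with respect to the fixed metric $F_0$. So the perturbation of $F$ causes no difficulty.

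Scaling and comparison. Replace $\W_0$ by $\W_0'$ obtained by multiplying its weights by $(1+\varepsilon)^{-2}$. This scales the discrete marked length spectrum by $(1+\varepsilon)^{-2}$ and the area by $(1+\varepsilon)^{-4}$. Then
\[
\MLS_{\W_0'}\le (1+\varepsilon)^{-1}\MLS_{F_0}\le(1+\varepsilon)^{-1}\MLS_F\le \MLS_{\W},
\]
and $\W_0'$ is still $F_0$-minimal. Theorem~\ref{theo:discrete}, applied with the metric $F_0$, gives $\area(\W_0')\le\area(\W)$. Therefore
\[
(1+\varepsilon)^{-5}\tfrac{\pi}{4}\area(F_0)\le(1+\varepsilon)^{-4}\area(\W_0)\le \area(\W)\le(1+\varepsilon)\tfrac{\pi}{4}\area(F).
\]
Letting $\varepsilon\to0$ yields $\area(F_0)\le\area(F)$.

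The main point needing care is the bookkeeping of which metric each object is geodesic or minimal for. Theorem~\ref{theo:discrete} requires a single Finsler metric for which $\W_0$ is minimal, and $F_0$ plays that role. The other delicate point is that the reversal of the multiplicative approximation on the marked length spectrum must hold uniformly over all classes $\alpha$. This is exactly what Theorem~\ref{theo:MLSA} provides.
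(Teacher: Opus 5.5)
Your argument is correct. It is the variant the paper mentions in Remark~\ref{rem:alternative}, not the one it writes out.

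The paper perturbs both metrics using Theorem~\ref{theo:irie+}, which produces wall systems that are geodesic for generic metrics $F_0'$ and $F'$. It then invokes structural stability (Theorem~\ref{theo:Anosov}.\eqref{Anosov4}) to keep $F_0'$ Anosov. This makes $\W_0^\varepsilon$ minimal for $F_0'$, so Theorem~\ref{theo:discrete} can be run with respect to $F_0'$.

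You instead discretize $F_0$ itself via Sigmund's density theorem (Corollary~\ref{coro:W0}). Thus $\W_0$ is minimal for $F_0$ directly, with no perturbation. You perturb only the competitor $F$, and Theorem~\ref{theo:discrete} requires no minimality on that side.

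What each route buys:
\begin{itemize}
\item Your route is shorter on the $F_0$-side. It needs no structural stability, and it uses a single approximation instead of the chain $\MLS_{\W_0^\varepsilon}\simeq_\varepsilon\MLS_{F_0'}\simeq_\varepsilon\MLS_{F_0}$. It is essentially the Croke--Dairbekov approach.
\item The paper's route treats $F_0$ and $F$ by one mechanism. It confines the Anosov hypothesis to guaranteeing that closed geodesics stay minimizing under perturbation, which is what motivates the question in Section~\ref{sec:open}. Your route additionally relies on the hyperbolic dynamics of $F_0$ itself through Sigmund's theorem.
\end{itemize}

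Your constant bookkeeping is right: rescaling $\W_0$ by $(1+\varepsilon)^{-2}$ yields $(1+\varepsilon)^{-5}\tfrac{\pi}{4}\area(F_0)\le(1+\varepsilon)\tfrac{\pi}{4}\area(F)$.
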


\begin{proof}
Let $\varepsilon>0$.
By Theorem~\ref{theo:irie+}, there exist reversible Finsler metrics~$F'_0$ and~$F'$, arbitrarily close to~$F_0$ and~$F$, whose Liouville geodesic currents can be approximated by $F'_0$-geodesic and $F'$-geodesic wall systems~$\W_0^\varepsilon$ and~$\W^\varepsilon$.
Choosing the perturbations and the approximating wall systems sufficiently close, Theorem~\ref{theo:MLSA} gives
\[
\begin{aligned}
\MLS_{\W_0^\varepsilon} & \simeq_\varepsilon \MLS_{F'_0} \simeq_\varepsilon \MLS_{F_0} \\
\area(\W_0^\varepsilon) & \simeq_\varepsilon \frac{\pi}{4} \,\area(F'_0) \simeq_\varepsilon \frac{\pi}{4} \, \area(F_0).
\end{aligned}
\]
and
\[
\begin{aligned}
\MLS_{\W^\varepsilon} & \simeq_\varepsilon \MLS_{F'} \simeq_\varepsilon \MLS_{F} \\
\area(\W^\varepsilon) & \simeq_\varepsilon \frac{\pi}{4} \,\area(F') \simeq_\varepsilon \frac{\pi}{4} \, \area(F).
\end{aligned}
\]

By assumption, we have
\[
(1+\varepsilon)^{-2} \, \MLS_{\W_0^\varepsilon} \leq \MLS_{F_0} \leq \MLS_F \leq (1+\varepsilon)^2 \, \MLS_{\W^\varepsilon}.
\]
That is,
\[
\MLS_{(1+\varepsilon)^{-2} \W_0^\varepsilon} \leq \MLS_{(1+\varepsilon)^2 \W^\varepsilon}.
\]

By the structural stability of Anosov flows, see Theorem~\ref{theo:Anosov}.\eqref{Anosov4}, we may choose $F'_0$ sufficiently close to~$F_0$ so that its geodesic flow is still Anosov.
It follows that the $F'_0$-geodesic wall system~$\W_0^\varepsilon$ is minimal, and so is~$(1+\varepsilon)^{-2} \W_0^\varepsilon$.

We may therefore apply Theorem~\ref{theo:discrete} to~$(1+\varepsilon)^{-2} \W_0^\varepsilon$ and~$(1+\varepsilon)^2 \W^\varepsilon$.
Since the area of a wall system is homogeneous of degree two in its weights, we obtain
\[
(1+\varepsilon)^{-4} \, \area(\W_0^\varepsilon) \leq (1+\varepsilon)^4 \, \area(\W^\varepsilon).
\]
Letting~$\varepsilon \to 0$, we derive that
\[
\area(F_0) \leq \area(F). \qedhere
\]
\end{proof}

\begin{remark} \label{rem:alternative}
There is a more direct way to construct the wall systems on the $F_0$-side.
Instead of perturbing~$F_0$ to a generic metric~$F'_0$ and then using structural stability to ensure that the geodesic flow of~$F'_0$ remains Anosov, one may apply Sigmund's density theorem directly to~$F_0$.
Through Corollary~\ref{coro:W0}, this immediately yields approximating minimal $F_0$-geodesic wall systems.
This is the approach used in~\cite{CD}.
Our approach provides a unified treatment of the two metrics~$F_0$ and~$F$.
The Anosov hypothesis is then used only to guarantee, by structural stability, the minimality of the wall system for the perturbed metric~$F'_0$.
\end{remark}

\forget

\begin{proof}
Let $\varepsilon>0$.
By Corollary~\ref{coro:W0}, since the geodesic flow of~$F_0$ is Anosov, there exists a geodesic wall system~$\W_0^\varepsilon$ such that
\begin{align*}
\MLS_{\W_0^\varepsilon} & \simeq_\varepsilon \MLS_{F_0} \\
\area(\W_0^\varepsilon) & \simeq_\varepsilon \frac{\pi}{4} \, \area(F_0).
\end{align*}
By Theorem~\ref{theo:Anosov}.\eqref{Anosov1}, the closed $F_0$-geodesics are length-minimizing in their unoriented free homotopy classes.
It follows that the geodesic wall system~$\W_0^\varepsilon$ is minimal.

Similarly, by Corollary~\ref{coro:W}, there exists a wall system~$\W^\varepsilon$ such that
\begin{align*}
\MLS_{\W^\varepsilon} & \simeq_\varepsilon \MLS_F \\
\area(\W^\varepsilon) & \simeq_\varepsilon \frac{\pi}{4} \, \area(F).
\end{align*}

By assumption, we have
\[
(1+\varepsilon)^{-1} \, \MLS_{\W_0^\varepsilon} \leq \MLS_{F_0} \leq \MLS_F \leq (1+\varepsilon) \, \MLS_{\W^\varepsilon}.
\]
That is,
\[
\MLS_{(1+\varepsilon)^{-1} \W_0^\varepsilon} \leq \MLS_{(1+\varepsilon) \W^\varepsilon}.
\]
Applying Theorem~\ref{theo:discrete} to~$(1+\varepsilon)^{-1} \W_0^\varepsilon$ and~$(1+\varepsilon) \W^\varepsilon$, we obtain
\[
(1+\varepsilon)^{-2} \, \area(\W_0^\varepsilon) \leq (1+\varepsilon)^2 \, \area(\W^\varepsilon).
\]
Letting~$\varepsilon \to 0$, we derive that
\[
\area(F_0) \leq \area(F).
\]
\end{proof}

\begin{remark} \label{rem:alternative}
We do not need to apply Sigmund's density theorem through Corollary~\ref{coro:W0} to prove Theorem~\ref{theo:mainn}.
Instead, we can rely directly on the approximation of Liouville geodesic currents for generic Finsler metrics given by Theorem~\ref{theo:irie+}.
Indeed, by Theorem~\ref{theo:irie+}, there exists a reversible Finsler metric~$F'_0$, arbitrarily close to~$F_0$, whose Liouville geodesic current can be approximated by $F'_0$-geodesic wall systems~$\W_0^\varepsilon$.
By the structural stability of Anosov flows, see Theorem~\ref{theo:Anosov}.\eqref{Anosov4}, we may choose $F'_0$ sufficiently close to~$F_0$ so that its geodesic flow is still Anosov.
It follows that the $F'_0$-geodesic wall system~$\W_0^\varepsilon$ is minimal.
By Theorem~\ref{theo:MLSA}, we have
\[
\begin{aligned}
\MLS_{\W_0^\varepsilon} & \simeq_\varepsilon \MLS_{F'_0} \simeq_\varepsilon \MLS_{F_0} \\
\area(\W_0^\varepsilon) & \simeq_\varepsilon \frac{\pi}{4} \,\area(F'_0) \simeq_\varepsilon \frac{\pi}{4} \, \area(F_0).
\end{aligned}
\]
We can then conclude as in the proof of Theorem~\ref{theo:mainn}: approximate the metric~$F$ by a wall system~$\W^\varepsilon$ using Corollary~\ref{coro:W}, and apply Theorem~\ref{theo:discrete}.
\end{remark}

\forgotten

In the following version, the initial length structure comes from a Finsler metric with Anosov geodesic flow, while the competing length structure is induced by a wall system.

\begin{theorem} \label{theo:F0W}
Let $M$ be an orientable closed surface endowed with a Finsler metric~$F_0$ whose geodesic flow is Anosov.
Let $\W$ be a wall system on~$M$.
If $\MLS_{F_0} \leq \MLS_{\W}$, then 
\[
\area(F_0) \leq \frac{4}{\pi} \, \area(\W).
\]
\end{theorem}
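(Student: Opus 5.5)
The plan is to follow the proof of Theorem~\ref{theo:mainn}, but to discretize only the $F_0$-side. The competing length structure is already the wall system~$\W$, so no perturbation of a second metric is needed.

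Fix $\varepsilon>0$. By Theorem~\ref{theo:irie+}, pick a reversible Finsler metric~$F'_0$, arbitrarily close to~$F_0$, whose Liouville geodesic current is a limit of $F'_0$-geodesic wall systems. By Theorem~\ref{theo:Anosov}.\eqref{Anosov4}, taking $F'_0$ close enough keeps its geodesic flow Anosov. Then every closed $F'_0$-geodesic is the unique geodesic in its class, by Theorem~\ref{theo:Anosov}.\eqref{Anosov2}, and is therefore length-minimizing. Hence every $F'_0$-geodesic wall system is minimal. Alternatively, following Remark~\ref{rem:alternative}, one could apply Corollary~\ref{coro:W0} directly to~$F_0$ and get a minimal $F_0$-geodesic wall system.

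Either way, Theorem~\ref{theo:MLSA}, together with the closeness of $F'_0$ to~$F_0$ (multiplicative control of lengths and areas), gives a minimal geodesic wall system~$\W_0^\varepsilon$ satisfying
\[
\MLS_{\W_0^\varepsilon} \simeq_\varepsilon \MLS_{F_0}
\qquad\textrm{and}\qquad
\area(\W_0^\varepsilon) \simeq_\varepsilon \tfrac{\pi}{4}\,\area(F_0),
\]
possibly with $(1+\varepsilon)^2$ in place of $1+\varepsilon$.

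Combining this with the hypothesis gives
\[
(1+\varepsilon)^{-2}\MLS_{\W_0^\varepsilon} \leq \MLS_{F_0} \leq \MLS_\W .
\]
Rescaling weights, this reads $\MLS_{(1+\varepsilon)^{-2}\W_0^\varepsilon}\leq \MLS_{\W}$, and the rescaled system is still minimal. Theorem~\ref{theo:discrete} applies because $\W$ may be an arbitrary wall system; it is uncrossed internally via Theorem~\ref{theo:GS}. It yields
\[
(1+\varepsilon)^{-4}\area(\W_0^\varepsilon)\leq \area(\W),
\]
and hence
\[
\tfrac{\pi}{4}\area(F_0) \leq (1+\varepsilon)^{6}\area(\W).
\]
Letting $\varepsilon\to0$ gives $\area(F_0)\leq \frac{4}{\pi}\area(\W)$.

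The one point needing care is Theorem~\ref{theo:GS}, which requires rational weights on~$\W$ (Theorem~\ref{theo:discrete} goes through it). If $\W$ has real weights, I would approximate them by rationals from below within a factor $1+\varepsilon$. This changes $\MLS_\W$ and $\area(\W)$ by controlled multiplicative factors, so the same limiting argument absorbs it. Apart from this, the steps are direct reuses of established results; the only real input is the minimality of the approximating system on the Anosov side.
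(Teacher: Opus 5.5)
Your proposal is correct and follows the paper's proof. Approximate $F_0$ by a minimal geodesic wall system $\W_0^\varepsilon$ (via Theorem~\ref{theo:MLSA} and the Anosov property), apply Theorem~\ref{theo:discrete} to the rescaled system and $\W$, and let $\varepsilon\to0$. Your extra step of approximating the weights of $\W$ from below by rationals, so that Theorem~\ref{theo:GS} genuinely applies inside Theorem~\ref{theo:discrete}, is a valid precaution that the paper leaves implicit.
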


\begin{proof}
As in the proof of Theorem~\ref{theo:mainn}, for every $\varepsilon>0$, we approximate the marked length spectrum and the area of~$F_0$ by those of a minimal geodesic wall system~$\W_0^\varepsilon$.
By assumption, 
\[
(1+\varepsilon)^{-1} \, \MLS_{\W_0^\varepsilon} \leq \MLS_{F_0} \leq \MLS_{\W}.
\]
Applying Theorem~\ref{theo:discrete} to~$(1+\varepsilon)^{-1} \W_0^\varepsilon$ and~$\W$, we obtain
\[
\area(F_0) \leq \frac{4}{\pi} \, \area(\W).
\]
after letting~$\varepsilon \to 0$.
\end{proof}

\forget

\begin{proof}
Since the geodesic flow of~$F_0$ is Anosov, the metric~$F_0$ has no conjugate points; see Theorem~\ref{theo:Anosov}.\eqref{Anosov1}.
Let~$\overline{\W}$ be the minimal geodesic wall system obtained from~$\W$ by Theorem~\ref{theo:GS}, also viewed as a geodesic current.
By Theorem~\ref{theo:GS}, we have 
\[
\MLS_{F_0} \leq \MLS_{\W} = \MLS_{\overline{\W}}.
\]
By Proposition~\ref{prop:bonahon_length}, applied to~$F_0$, and Proposition~\ref{prop:MLSi}.\eqref{i4}, applied to~$\overline{\W}$, it follows that
\[
i(\gamma,\lambda) \leq i(\gamma,\overline{\W})
\]
for every closed $F_0$-geodesic~$\gamma$ (that is length-minimizing in its unoriented free homotopy class), where $\lambda$ denotes the Liouville geodesic current of~$F_0$.

By the density of the positive multiples of the geodesic currents induced by closed geodesics (see Theorem~\ref{theo:dense}) and the continuity of the intersection form, we deduce that
\[
i(\mu,\lambda) \leq i(\mu,\overline{\W})
\]
for every geodesic current~$\mu$ of~$M$.
Applying this inequality first to $\mu=\lambda$ then to $\mu=\overline{\W}$, and using the symmetry of the intersection form, we obtain
\[
i(\lambda,\lambda) \leq i(\lambda,\overline{\W}) = i(\overline{\W},\lambda) \leq i(\overline{\W},\overline{\W}).
\]
By the discrete area formula of Proposition~\ref{prop:MLSi}.\eqref{i2} and the area inequality of Theorem~\ref{theo:GS},
\[
i(\overline{\W},\overline{\W}) = 2 \area(\overline{\W}) \leq 2 \area(\W).
\]
Combining these inequalities yields
\[
i(\lambda,\lambda) \leq 2 \area(\W).
\]
Now, Proposition~\ref{prop:bonahon_area} gives
\[
i(\lambda,\lambda) = \frac{\pi}{2} \, \area(F_0).
\]
Hence the desired inequality
\[
\area(F_0) \leq \frac{4}{\pi} \, \area(\W).
\]
\end{proof}

\forgotten

In the last version, the initial length structure comes from a minimal geodesic wall system, while the competing length structure is Finsler.

\begin{theorem}
Let $M$ be an orientable closed surface.
Let $\W_0$ be a minimal geodesic wall system on~$M$ with respect to some Finsler metric.
Let $F$ be another Finsler metric on~$M$.
If $\MLS_{\W_0} \leq \MLS_F$ then 
\[
\area(\W_0) \leq \frac{\pi}{4} \, \area(F).
\]
\end{theorem}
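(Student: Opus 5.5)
The plan mirrors the proof of Theorem~\ref{theo:F0W}, with the roles of the two length structures exchanged. The initial structure is already the minimal geodesic wall system~$\W_0$, so only the competitor~$F$ needs discretizing. Fix $\varepsilon>0$. By Corollary~\ref{coro:W}, obtained from Theorem~\ref{theo:irie+} and Theorem~\ref{theo:MLSA} after an arbitrarily small perturbation~$F'$ of~$F$, there is a wall system~$\W^\varepsilon$, geodesic for~$F'$, such that
\[
\MLS_{\W^\varepsilon} \simeq_\varepsilon \MLS_F \qquad\textrm{and}\qquad \area(\W^\varepsilon) \simeq_\varepsilon \tfrac{\pi}{4}\,\area(F).
\]
Theorem~\ref{theo:discrete} takes arbitrary wall systems as competitors, so no minimality or Anosov property is needed on this side.

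Combining with the hypothesis gives
\[
\MLS_{\W_0} \leq \MLS_F \leq (1+\varepsilon)\,\MLS_{\W^\varepsilon} = \MLS_{(1+\varepsilon)\W^\varepsilon}.
\]
The last equality holds because discrete length, and hence the discrete marked length spectrum, is homogeneous of degree one in the weights. Now apply Theorem~\ref{theo:discrete} to the minimal wall system~$\W_0$ and the competitor~$(1+\varepsilon)\W^\varepsilon$. Since discrete area is homogeneous of degree two, this yields
\[
\area(\W_0) \leq (1+\varepsilon)^2\,\area(\W^\varepsilon) \leq (1+\varepsilon)^3\,\tfrac{\pi}{4}\,\area(F).
\]
Letting $\varepsilon\to 0$ gives $\area(\W_0)\leq \frac{\pi}{4}\area(F)$.

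The main point to check is that Theorem~\ref{theo:discrete} applies even though $\W_0$ is geodesic and minimal for one Finsler metric, call it~$F_1$, while $\W^\varepsilon$ is geodesic for~$F'$. This is not an obstacle. In Theorem~\ref{theo:discrete} the second wall system is arbitrary and is uncrossed and straightened via Theorem~\ref{theo:GS} with respect to the fixed metric~$F_1$, for which $\W_0$ is minimal. Two technical conditions remain. First, $\W^\varepsilon$ should be a genuine wall system, with transverse intersections and self-intersections; otherwise a tiny perturbation fixes this without changing its discrete marked length spectrum or area. Second, the weights should be rational, as Theorem~\ref{theo:GS} requires; this follows by rounding them, at the cost of another multiplicative factor close to~$1$. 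Orientability is assumed in the statement, so the results of Section~\ref{sec:uncrossing} apply directly.
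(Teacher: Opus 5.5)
Your proposal is correct and is essentially the paper's proof: discretize $F$ by a wall system $\W^\varepsilon$ via Corollary~\ref{coro:W}, observe $\MLS_{\W_0} \leq \MLS_{(1+\varepsilon)\W^\varepsilon}$, apply Theorem~\ref{theo:discrete} to $\W_0$ and $(1+\varepsilon)\W^\varepsilon$, and let $\varepsilon \to 0$. Your additional remarks are sound and make explicit points the paper leaves implicit: the uncrossing in Theorem~\ref{theo:GS} is carried out with respect to the metric for which $\W_0$ is minimal, and irrational weights on $\W^\varepsilon$ can be rounded up to rational ones at the cost of a factor close to~$1$.
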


\begin{proof}
For every $\varepsilon>0$, we approximate the marked length spectrum and the area of~$F$ by those of a wall system~$\W^\varepsilon$ as in Corollary~\ref{coro:W}.
By assumption, 
\[
\MLS_{\W_0} \leq \MLS_F \leq (1+\varepsilon) \, \MLS_{\W^\varepsilon}.
\]
Applying Theorem~\ref{theo:discrete} to~$\W_0$ and~$(1+\varepsilon) \W^\varepsilon$, we obtain
\[
\area(\W_0) \leq \frac{\pi}{4} \, \area(F)
\]
after letting~$\varepsilon \to 0$.
\end{proof}

\forget

\section{Proof of the main theorem} \label{sec:proof}

We can prove the main theorem.

\begin{proof}[Proof of Theorem~\ref{theo:main}]
Let $M$ be an orientable closed surface endowed with a Finsler metric~$F_0$ whose geodesic flow is Anosov.
Let $F$ be another Finsler metric on~$M$ with
\[
\MLS_{F_0} \leq \MLS_F.
\]
Both this inequality and the area inequality $\area(F_0) \leq \area(F)$ we want to prove are invariant under a simultaneous homothety of the metrics~$F_0$ and~$F$ (\ie, when both metrics are multiplied/divided by the same positive constant).

Fix $\varepsilon >0$.
By Theorem~\ref{theo:Cos}, there exist a self-transverse wall system~$\W$ on~$M$ and a constant $\lambda>0$ such that
\begin{alignat*}{2}
(1+\varepsilon)^{-1} \, \MLS_F & \leq \lambda \, \MLS_{\W} && \leq (1+\varepsilon) \, \MLS_F \\
(1+\varepsilon)^{-1} \frac{\pi}{4} \, \area(F) & \leq \, \lambda^2 \, \area(\W) && \leq (1+\varepsilon) \frac{\pi}{4} \, \area(F).
\end{alignat*}
Dividing both $F_0$ and~$F$ by~$(1+\varepsilon) \lambda$, we can assume that
\[
\MLS_F \leq \MLS_{\W}
\]
and
\[
\area(\W) \leq (1+\varepsilon)^3 \, \frac{\pi}{4} \, \area(F).
\]
Now, since $\MLS_{F_0} \leq \MLS_F \leq \MLS_{\W}$, it follows from Theorem~\ref{theo:F0W} that 
\[
\area(F_0) \leq \frac{4}{\pi} \, \area(\W) \leq (1+\varepsilon)^3 \, \area(F).
\]
Letting $\varepsilon \to 0$, we derive that 
\[
\area(F_0) \leq \area(F).
\]
\end{proof}

\forgotten

\section{Marked length spectrum of the two-torus} \label{sec:torus}

We prove a version of the main theorem for the two-torus, namely Theorem~\ref{theo:torus}.
The main tool is the stable norm, which allows us to reduce the comparison of the marked length spectra and areas of Finsler metrics on the two-torus to an area comparison theorem of Burago and Ivanov~\cite{BI}.

\begin{definition} 
Let $\T^2$ be the two-torus with a Finsler metric~$F$.
The \emph{stable norm} of~$F$ is the norm~$||.||$ on~$H_1(\T^2;\R)$ given by
\[
|| h || = \lim_{n \to \infty} \frac{1}{n} \, \MLS_F([nh])
\]
for every $h \in H_1(\T^2;\Z)$, where $[n h]$ represents the unoriented free homotopy class induced by the homology class $nh \in H_1(\T^2;\Z) \simeq \pi_1(\T^2)$; see~\cite[\S4]{gro99}.
By Proposition~\ref{prop:classical}, the stable norm on~$\T^2$ satisfies $|| h || = \MLS(h)$.

The stable norm need not be smooth.
However, it defines a translation-invariant (possibly nonsmooth) Finsler metric on~$H_1(\T^2;\R)$, and thus, on the \emph{Jacobi torus} $J=H_1(\T^2;\R)/H_1(\T^2;\Z) \simeq \T^2$.
The resulting flat Finsler metric on~$J$ is also denoted by~$||.||$.
%
\end{definition}

We can proceed with the proof of Theorem~\ref{theo:torus}, which is an immediate consequence of~\cite[\S5]{BI}.

\begin{proof}[Proof of Theorem~\ref{theo:torus}]
Let $F_0$ be a Finsler metric on~$\T^2$ without conjugate points.
Let $F$ be another Finsler metric on~$\T^2$ with $\MLS_{F_0} \leq \MLS_F$.
This inequality between the marked length spectra of~$F_0$ and~$F$ implies the inequality $||.||_0 \leq ||.||$ between the stable norms of~$F_0$ and~$F$.
Therefore, 
\[
\area(\T^2,||.||_0) \leq \area(\T^2,||.||).
\]
Now, Burago and Ivanov~\cite[\S5]{BI} showed that 
\[
\area(\T^2,||.||) \leq \area(\T^2,F)
\]
with equality if $F$ has no conjugate points.
Hence,
\[
\area(\T^2,F_0) = \area(\T^2,||.||_0) \leq \area(\T^2,||.||) \leq \area(\T^2,F). \qedhere
\]
\end{proof}

\section{Finsler deformations preserving the marked length spectrum} \label{sec:deformation}

In this section, we recall a classical construction for deforming a reversible Finsler metric while preserving its marked length spectrum and Holmes--Thompson volume. 
The construction relies on small exact symplectic deformations of its unit cotangent sphere bundle.
We then show how to ensure that deformed metrics are not isometric to the original one.

\begin{theorem}
Let $M$ be a closed $n$-manifold with $n \geq 2$.
Every reversible Finsler metric on~$M$ admits arbitrarily small nonisometric reversible Finsler deformations preserving its marked length spectrum and volume.
\end{theorem}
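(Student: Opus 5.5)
The plan is to deform the unit co-sphere bundle $U^*M\subseteq T^*M$ by a compactly supported exact symplectic isotopy. Concretely, choose a smooth function $H:T^*M\to\R$, supported in a small neighborhood of a point of $U^*M$ away from the zero section, and even under the antipodal map $\xi\mapsto-\xi$. Let $\psi_s$ be its Hamiltonian flow; it is an exact symplectomorphism since $\psi_s^*\alpha_M-\alpha_M$ is exact. For $s$ small, $\psi_s(U^*M)$ is still a hypersurface meeting each fiber $T_x^*M$ in a smooth strictly (quadratically) convex hypersurface enclosing the origin: this is an open $C^2$ condition and $\psi_s\to\mathrm{id}$ in $C^\infty$. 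Extend by homogeneity to obtain a dual Finsler metric $F_s^*$, hence a Finsler metric $F_s$. It is reversible because $H$ is even, so $\psi_s$ commutes with the antipodal map.

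First I would check that volume is preserved. Since $\psi_s$ is a symplectomorphism of $T^*M$ that is the identity near the zero section and outside a compact set, it maps $B^*_{F}M$ onto $B^*_{F_s}M$. It preserves $\omega^n/n!$, so the Holmes--Thompson volume is unchanged.

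Next, the marked length spectrum. The restriction $\psi_s:U^*_FM\to U^*_{F_s}M$ satisfies $\psi_s^*\alpha_{F_s}=\alpha_F+df_s$. Because $\psi_s$ maps the characteristic foliation of one hypersurface onto that of the other, it sends closed Reeb orbits to closed Reeb orbits. Their $\alpha$-periods agree, since $\int df_s=0$ on a closed loop. Free homotopy classes are preserved because $\psi_s$ is isotopic to the identity through maps covering homotopies of $M$.

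This only matches the set of lengths of closed geodesics in each class, and $\MLS$ is the infimum over that set. Since the correspondence is a bijection, class by class, between closed geodesics of $F$ and of $F_s$, the infima agree. The subtlety is that $\psi_s$ is a map of hypersurfaces rather than a Reeb flow conjugacy. The Reeb vector fields differ, but on a hypersurface the Reeb line field is the kernel of $\omega|$, which is symplectically invariant. So orbits correspond as unparametrized curves, and periods are computed by $\int\alpha$. Hence $\MLS_{F_s}=\MLS_F$, and small $s$ gives arbitrarily small deformations.

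The main obstacle is nonisometry. The isometry group of a Finsler metric on a closed manifold is a compact Lie group, so the isometry class of $F$ is finite-dimensional modulo diffeomorphisms. My first attempt is a local invariant. Take $H$ supported near a single covector $\xi_0$ over a point $x_0$, so that $F_s$ agrees with $F$ outside a small neighborhood $V$ of $x_0$ but differs at $x_0$. Then any isometry $\Phi:(M,F)\to(M,F_s)$ carries the set where $F_s\neq F$ in a controlled way. Choosing $H$ so that some pointwise invariant changes, for instance the Holmes--Thompson area density of $B^*_{x}$ or a curvature-type quantity of the unit sphere $U^*_{x_0}$, gives a quantity whose range over $M$ differs from that of $F$. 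Simpler still: pick $H$ whose flow distorts the fiber at $x_0$ so that $\max_x \vol(B^*_xM)/\vol_{g}$ fails to be a diffeomorphism invariant. A cleaner route is a countable argument: the family $\{F_s\}$ depends on infinitely many independent choices of $H$ in any neighborhood, while metrics isometric to $F$ form the orbit of $F$ under $\mathrm{Diff}(M)$ intersected with a neighborhood. Varying the support point along an interval, at most countably many $s$-values can be isometric to $F$ by the pointwise invariant, giving nonisometric deformations arbitrarily close to $F$.
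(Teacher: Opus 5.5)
Your overall strategy is the paper's, and your volume and marked length spectrum arguments match its Proposition~\ref{prop:deformation}. The reversibility step, however, is wrong. The antipodal map $A$ is anti-symplectic: $A^*\alpha_M=-\alpha_M$, so $A^*\omega=-\omega$. Hence $\iota_{A^*X_H}\,\omega=-A^*(\iota_{X_H}\omega)=d(H\circ A)$. For even $H$ this equals $dH=-\iota_{X_H}\omega$, so $A^*X_H=-X_H$, that is, $A\circ\psi_s=\psi_{-s}\circ A$. Therefore $A(\psi_s(U^*_FM))=\psi_{-s}(U^*_FM)$, which in general is not $\psi_s(U^*_FM)$. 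Equivalently, the first-order variation of the dual norm along $U^*_FM$ is $\pm\{F^*,H\}$, and $\{f\circ A,g\circ A\}=-\{f,g\}\circ A$ makes this bracket odd when $F^*$ and $H$ are both even. So to first order the new co-spheres are not centrally symmetric unless the bracket vanishes on $U^*_FM$. You need $H$ odd, $H\circ A=-H$, which gives $A^*X_H=X_H$, as in the paper. Note also that a nonzero even or odd $H$ cannot be supported near $\xi_0$ alone; its support must be $A$-invariant.

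The more serious gap is nonisometry: your last paragraph lists candidates but proves nothing. Its shape is right (a pointwise invariant whose set of values over $M$ changes), but none of your quantities is an isometry invariant. The quantity $\vol(B^*_xM)$ is a density, not a function, and normalizing it by a background $\vol_g$ destroys diffeomorphism invariance. Your countable argument simply asserts its conclusion. You also never control the new co-sphere $\psi_s(U^*_FM)\cap T^*_{x_0}M$: the flow moves covectors between fibers, and a nonzero $H$ supported near $(x_0,\xi_0)$ may not change $U^*_FM$ at all. For instance, if $H$ vanishes on $U^*_FM$, then $X_H$ is tangent to it.

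Two ingredients are missing, and they are how the paper concludes.

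\emph{A genuine invariant.} If $\Phi$ is an isometry, $F_s\circ d\Phi=F$, then $d\Phi_x$ is a linear isometry of normed spaces. So the class of the co-sphere at $x$ modulo $GL(n)$ is preserved. For $F$ these classes form a family parametrized by the $n$-manifold $M$, whereas centrally symmetric deformations of one co-sphere modulo infinitesimal linear maps form an infinite-dimensional space.

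\emph{Control of the fiber over $x_0$.} In canonical coordinates centered at $x_0$, the odd Hamiltonian $H(x,\xi)=-\chi(x)V(\xi)(x)$ satisfies $X_H(x_0,\xi)=V(\xi)$ for a prescribed odd vertical field $V$ (Lemma~\ref{lem:extension}). So to first order the flow moves $U^*_{F,x_0}M$ by $V$. Choosing $V$ so that the deformed co-sphere over $x_0$ is not linearly equivalent to any $U^*_{F,x}M$ with $x\in M$ rules out every isometry between $F_t$ and $F$.
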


\begin{proof}
Let $F$ be a reversible Finsler metric on~$M$.
Denote by $U_F^*M$ its unit cotangent sphere bundle and by~$\alpha$ the tautological one-form on~$T^*M$.

\medskip

We start with the following construction.

\begin{proposition} \label{prop:deformation}
Let $\Phi:T^*M \to T^*M$ be an exact symplectomorphism  isotopic to the identity and sufficiently $C^\infty$-close to the identity.
Then, $\Phi(U_F^*M)$ is the unit cotangent sphere bundle of a Finsler metric~$F'$ on~$M$ with the same oriented marked length spectrum and volume as~$F$.

Moreover, if $F$ is reversible and $\Phi$ commutes with the antipodal involution $A:T^*M\to T^*M$, then $F'$ is reversible as well.
\end{proposition}

\begin{proof}
For $\Phi$ sufficiently close to the identity, the hypersurface~$\Phi(U_F^*M)$ intersects each cotangent fiber in a smooth, strictly convex hypersurface enclosing the origin.
It is therefore the unit cotangent sphere bundle of a Finsler metric~$F'$.
If $F$ is reversible and $\Phi$ commutes with~$A$, then $\Phi(U_F^*M)$ is invariant under~$A$, so $F'$ is reversible as well.

Let us prove that the metrics~$F$ and~$F'$ have the same oriented marked length spectrum.
Since $\Phi$ is symplectic, it sends the characteristic foliation of~$U_F^*M$ to that of~$U_{F'}^*M$.
Consequently, it induces a bijection between the periodic geodesic orbits of~$F$ and those of~$F'$.
For every closed $F$-geodesic~$\gamma$, denote by~$\hat{\gamma}$ its lift to~$U_F^*M$. 
The periodic characteristic~$\Phi(\hat{\gamma})$ projects to a closed $F'$-geodesic, which we denote by~$\phi(\gamma)$. 
Since $\Phi$ is isotopic to the identity, the curves~$\gamma$ and~$\phi(\gamma)$ are freely homotopic.
Moreover, since $\Phi$ is exact, there exists a smooth function $G:T^*M \to \R$ such that $\Phi^* \alpha = \alpha + dG$.
Therefore,
\[
\length_{F'}(\phi(\gamma)) = \int_{\Phi(\hat{\gamma})} \alpha = \int_{\hat{\gamma}} \Phi^* \alpha = \int_{\hat{\gamma}} \alpha = \length_F(\gamma).
\]
Thus, the map~$\Phi$ induces a bijection between the closed geodesics of~$F$ and~$F'$ which preserves both their lengths and their free homotopy classes.
That is,
\[
\MLS_{F'} = \MLS_F.
\]

Finally, the volumes of~$F$ and~$F'$ are equal because the unit co-ball bundles~$B_F^*M$ and~$B_{F'}^*M$ are symplectomorphic by~$\Phi$.
\end{proof}

We now explain how to construct symplectomorphisms satisfying the hypotheses of Proposition~\ref{prop:deformation}.

\medskip

Let  $H:T^*M \to \R$ be a smooth, compactly supported odd Hamiltonian, that is, such that $H \circ A = -H$, where $A$ is the antipodal involution on~$T^*M$.
Denote by~$X=X_H$ its Hamiltonian vector field, defined by 
\begin{equation}
\label{eq:hamiltonian_vector_field}
\iota_{X_H} \omega = -dH.
\end{equation}

We claim that $X$ is $A$-invariant.
Indeed, note first that $A$ is antisymplectic, that is, $A^* \omega = - \omega$ because $A^* \alpha = -\alpha$.
Applying the pullback operator~$A^*$ to each side of~\eqref{eq:hamiltonian_vector_field}, we obtain, on one side,
\[ A^*(\iota_X\omega) = \iota_{A^*X}(A^*\omega) = -\iota_{A^*X}\omega, \]
and on the other side,
\[ A^*(-dH) = d(-H\circ A) = dH. \]
Equating these two expressions and using~\eqref{eq:hamiltonian_vector_field} yields
\[
\iota_{A^*X}\,\omega=-dH=\iota_X\omega.
\]
Since $\omega$ is nondegenerate, it follows that $A^*X=X$, as claimed.

Therefore, the Hamiltonian flow of~$H$ satisfies
\[
A \circ \Phi_H^t = \Phi_H^t \circ A.
\]
Moreover, every Hamiltonian diffeomorphism is exact symplectic.
Hence, for sufficiently small~$t$, the map~$\Phi=\Phi_H^t$ has all the required properties.

\medskip

We finally explain how the Hamiltonian can be chosen so that the resulting Finsler metric is not isometric to~$F$.

\medskip
\
We will need the following extension property.

\begin{lemma} \label{lem:extension}
Let $x_0 \in M$.
Every smooth, compactly supported vector field~$V$ on~$T^*_{x_0}M$ can be extended to a compactly supported Hamiltonian vector field~$X_H$ on~$T^*M$.
Moreover, if $V$ is odd, the corresponding Hamiltonian~$H$ can be chosen odd as well.
\end{lemma}

\begin{proof}
Consider canonical coordinates~$(x,\xi)$ on~$T^*M$ induced from a coordinate system on an open set~$U \subseteq M$ centered at~$x_0$.
We identify each coordinate vector~$x$ with an element of~$T_{x_0}M$.
Choose a smooth cutoff function $\chi:M \to [0,1]$ with support in~$U$ and equal to~$1$ near~$x_0$.
Using the canonical identification~$T_\xi(T_{x_0}^*M) \simeq T_{x_0}^*M$, define
\[
H(x,\xi) = - \chi(x) V(\xi)(x)
\]
on~$T^*U$, and extend it by zero to the rest of~$T^*M$.
Note that if $V$ is odd, the Hamiltonian~$H$ is also odd.

In the canonical coordinates, the Hamiltonian vector field is given by
\[
X_H = \frac{\partial H}{\partial \xi} \frac{\partial}{\partial x} - \frac{\partial H}{\partial x} \frac{\partial}{\partial \xi}.
\]
Since the coordinates are centered at~$x_0$ and $\chi=1$ near~$x_0$, we have
\[
\frac{\partial H}{\partial \xi}(x_0,\xi) = 0 \qquad \textrm{ and  } \qquad \frac{\partial H}{\partial x}(x_0,\xi) = -V(\xi)
\]
for every~$\xi \in T^*_{x_0}M$.
Hence,
\[
X_H(x_0,\xi) = V(\xi). \qedhere
\]
\end{proof}

To first order, the Hamiltonian flow induced by~$X_H$ deforms the unit co-sphere~$S_{x_0} = U_{F,x_0}^*M$ in the prescribed vertical direction~$V$.

\medskip

\forget

Fix~$x_0 \in M$.
Denote by~$S_{x_0}=U_{F,x_0}^*M$ the unit co-sphere in~$T_{x_0}^*M$.
Let~$V$ be a smooth vertical vector field along~$S_{x_0}$, that is,
\[
V(\xi) \in T_\xi(T_{x_0}^*M) \simeq T_{x_0}^*M
\]
for every~$\xi \in S_{x_0}$.
Assume also that $V(-\xi)=-V(\xi)$.
Extend~$V$ smoothly to an odd vertical vector field with compact support on a neighborhood of~$S_{x_0}$.
Choose local coordinates centered at~$x_0$, so that $x_0$ corresponds to~$x=0$, and use them to identify each coordinate vector~$x$ with an element of~$T_{x_0}M$.
Using the canonical identification~$T_\xi(T_{x_0}^*M) \simeq T_{x_0}^*M$, define locally 
\[
H(x,\xi) = - \chi(x,\xi) V(\xi)(x)
\]
where $\chi$ is an even cutoff function equal to~$1$ in a neighborhood of~$\{ x_0 \} \times S_{x_0}$.
Since $V$ is odd and~$\chi$ is even, the Hamiltonian~$H$ is odd.
In the induced canonical coordinates~$(x,\xi)$ on~$T^*M$, the Hamiltonian vector field is given by
\[
X_H = \frac{\partial H}{\partial \xi} \frac{\partial}{\partial x} - \frac{\partial H}{\partial x} \frac{\partial}{\partial \xi}.
\]
Since the coordinates are centered at~$x_0$ and $\chi=1$ near~$\{ x_0 \} \times S_{x_0}$, for every~$\xi \in S_{x_0}$, we have
\[
\frac{\partial H}{\partial \xi}(x_0,\xi) = 0 \qquad \textrm{ and  } \quad \frac{\partial H}{\partial x}(x_0,\xi) = -V(\xi)
\]
for every~$\xi \in S_{x_0}$.
Hence,
\[
X_H(x_0,\xi) = V(\xi) \in T_\xi(T_{x_0}^*M).
\]

Thus, to first order, the Hamiltonian flow deforms the unit co-sphere~$S_{x_0}$ in the prescribed vertical direction~$V$.

\forgotten

It remains to choose the vector field~$V$ appropriately.
The family of linear equivalence classes of the unit co-spheres~$S_x = U^*_{F,x}M$, with $x \in M$, is finite-dimensional.
By contrast, the space of infinitesimal deformations of~$S_{x_0}$, preserving its central symmetry, modulo infinitesimal linear transformations, is infinite-dimensional.
We may therefore choose an odd vector field~$V$ on~$T^*_{x_0}M$ so that the deformed unit co-spheres~$\Phi_t(U_{F,x_0}^*M)$, corresponding to the Hamiltonian flow~$\Phi_t$ induced by the Hamiltonian given by Lemma~\ref{lem:extension}, are not linearly equivalent to any~$S_x$, for $t$ small enough.
Therefore, the Finsler metrics~$F_t'$ with unit cotangent sphere bundle~$\Phi_t(U_{F}^*M)$, given by Proposition~\ref{prop:deformation}, are not isometric to~$F$.
\end{proof}


\forget
We finally explain how the Hamiltonian can be chosen so that the resulting Finsler metric is not isometric to~$F$.
Fix~$x_0 \in M$.
Denote by~$S_{x_0}=U_{F,x_0}^*M$ the unit co-sphere in~$T_{x_0}^*M$.
Let~$V$ be a smooth vertical vector field along~$S_{x_0}$, that is,
\[
V(\xi) \in T_\xi(T_{x_0}^*M) \simeq T_{x_0}^*M
\]
for every~$\xi \in S_{x_0}$.
Assume also that $V(-\xi)=-V(\xi)$.
Extend~$V$ smoothly to an odd vertical vector field with compact support on a neighborhood of~$S_{x_0}$.
Choose local coordinates centered at~$x_0$, so that $x_0$ corresponds to~$x=0$, and use them to identify each coordinate vector~$x$ with an element of~$T_{x_0}M$.
Using the canonical identification~$T_\xi(T_{x_0}^*M) \simeq T_{x_0}^*M$, define locally 
\[
H(x,\xi) = - \chi(x,\xi) V(\xi)(x)
\]
where $\chi$ is an even cutoff function equal to~$1$ in a neighborhood of~$\{ x_0 \} \times S_{x_0}$.
Since $V$ is odd and~$\chi$ is even, the Hamiltonian~$H$ is odd.
In the induced canonical coordinates~$(x,\xi)$ on~$T^*M$, the Hamiltonian vector field is given by
\[
X_H = \frac{\partial H}{\partial \xi} \frac{\partial}{\partial x} - \frac{\partial H}{\partial x} \frac{\partial}{\partial \xi}.
\]
Since the coordinates are centered at~$x_0$ and $\chi=1$ near~$\{ x_0 \} \times S_{x_0}$, for every~$\xi \in S_{x_0}$, we have
\[
\frac{\partial H}{\partial \xi}(x_0,\xi) = 0 \qquad \textrm{ and  } \quad \frac{\partial H}{\partial x}(x_0,\xi) = -V(\xi)
\]
for every~$\xi \in S_{x_0}$.
Hence,
\[
X_H(x_0,\xi) = V(\xi) \in T_\xi(T_{x_0}^*M).
\]
Thus, to first order, the Hamiltonian flow deforms the unit co-sphere~$S_{x_0}$ in the prescribed vertical direction~$V$.

It remains to choose~$V$ appropriately.
The family of linear equivalence classes of the unit co-spheres~$S_x = U_{F,x}^*M$, with $x \in M$, is finite-dimensional.
By contrast, the space of infinitesimal deformations of~$S_{x_0}$, preserving its central symmetry, modulo infinitesimal linear transformations, is infinite-dimensional.
We may therefore choose~$V$ so that the deformed unit co-sphere~$S'_{x_0}=U_{F',x_0}^*M$, corresponding to the symplectomorphism~$\Phi_t$, is not linearly equivalent to any~$S_x$, for $t$ small enough.
Therefore, the Finsler metrics~$F$ and~$F'$ are not isometric.
\forgotten

\begin{remark}
An alternative approach, based on the notion of enveloping functions introduced in~\cite{Iva13} and~\cite{BI16}, was used in~\cite{chen} to establish the flexibility of Finsler tori without conjugate points and in~\cite[Appendix]{Sab19} for Zoll Finsler metrics.
A similar approach can be used in the present setting to construct Finsler deformations preserving the marked length spectrum and the Holmes--Thompson volume.
\end{remark}

\section{Open question} \label{sec:open}

We conclude with an open question related to this paper.

\medskip

In Theorem~\ref{theo:main}, the assumption that the Finsler metric~$F_0$ has Anosov geodesic flow is used in two ways:
\begin{itemize}
\item First, to guarantee that there are no conjugate points, or equivalently, that every closed geodesic minimizes length in its homotopy class.
This ensures that every geodesic wall system is minimal, allowing us to apply Theorem~\ref{theo:discrete}.
\item Second, to ensure that the absence of conjugate points persists under small perturbations of the metric.
Such a perturbation is needed to approximate the Liouville current by geodesic wall systems using the theorem of Irie and Liu--Liu.
\end{itemize}

However, as far as we know, the perturbation step may not be necessary. 
More precisely, it is conceivable that, on every Finsler surface without conjugate points, the Liouville current can be approximated by geodesic wall systems.
If so, the Anosov assumption on~$F_0$ in our main theorem could be relaxed to the property of having no conjugate points.

\medskip

This leads to the following question.

\begin{question} \label{q:1}
Let $M$ be a closed surface (more generally, a closed manifold) endowed with a Finsler metric~$F_0$ without conjugate points.
Let $F$ be another Finsler metric on~$M$ such that $\MLS_{F_0} \leq \MLS_F$.
Does the following inequality hold:
\[
\area(F_0) \leq \area(F)?
\]
\end{question}

The answer is affirmative on the two-torus by Theorem~\ref{theo:torus}. 

\medskip

As a final remark, we recall that the $C^2$-interior of the space of Finsler metrics without conjugate points coincides with the space of Finsler metrics with Anosov geodesic flow.
See~\cite{rug}, \cite{CISM}, and~\cite[Proposition~6.1.2]{schulz} for results in the Riemannian, Hamiltonian, and Finsler settings.

\forget

In Theorem~\ref{theo:main}, the Anosov assumption on the geodesic flow of the Finsler metric~$F_0$ is used only through its structural stability along with the minimality of the closed geodesics, or alternatively, through the density theorem for closed-geodesic currents; see Remark~\ref{rem:alternative}.
This allows us to extend an inequality initially known for closed geodesics to all geodesic currents and, in particular, to the Liouville current of~$F_0$.

Recall that every Anosov Finsler metric has no conjugate points; see Theorem~\ref{theo:Anosov}.\eqref{Anosov1}. 
It is therefore natural to ask whether the Anosov assumption can be replaced by the weaker assumption that the metric has no conjugate points.

\begin{question} \label{q:1}
Let $M$ be a closed surface (more generally, a closed manifold) endowed with a Finsler metric~$F_0$ without conjugate points.
Let $F$ be another Finsler metric on~$M$ such that $\MLS_{F_0} \leq \MLS_F$.
Does the following inequality hold:
\[
\area(F_0) \leq \area(F)?
\]
\end{question}

The answer is affirmative on the two-torus by Theorem~\ref{theo:torus}.
For surfaces of negative Euler characteristic, two possible approaches are the following:

\begin{enumerate}
\item Show that a metric without conjugate points can be approximated by an Anosov metric (and apply the main theorem to conclude).
A result in this direction identifies the $C^2$-interior of the space of Riemannian metrics without conjugate points with the space of Anosov Riemannian metrics; see~\cite{rug}.
\item Show that the Liouville current of a metric without conjugate points can be approximated by a geodesic wall system.
In contrast to the case of Anosov metrics, it may not be possible to approximate the Liouville current by geodesic wall systems composed of a single closed geodesic (after suitable normalization) as in Theorem~\ref{theo:dense}, but it may be possible to approximate it by  more general geodesic wall systems.
With such an approximation result, an affirmative answer to Question~\ref{q:1} would follow from Theorem~\ref{theo:discrete} (the discrete analogue of the main theorem), since in the absence of conjugate points, every geodesic wall system is minimal.
\end{enumerate}

\forgotten

\forget
\subsection{Marked length spectrum of possibly irreversible metrics.}
A second natural question is whether the reversible assumption on the Finsler metric can be removed (\ie, allowing Finsler metrics where $F(-v) \neq F(v)$).
Of course, in this case, one needs to consider the oriented marked length spectrum.

\begin{question} \label{q:2}
Does Theorem~\ref{theo:main} hold true for irreversible Finsler metrics?
\end{question}

The first author~\cite{Cos} constructed discretizations of possibly irreversible Finsler metrics using \emph{fine complexes}.
However, some questions remain unanswered.
For example, it is not clear yet that:

\begin{enumerate}
\item Every Anosov metric on a surface can be discretized by a fine structure that is pseudostraight (\ie, whose Postnikov strands on the universal cover form neither closed arcs nor directed bigons).
\item Every fine structure on a surface can be tightened, without changing its oriented marked length spectrum and without decreasing its area, until it has no conjugate points.
\item The area functional is monotonic with respect to the oriented marked length spectrum among fine structures without conjugate points.
\end{enumerate}

Proving the last two points would yield an affirmative answer to Question~\ref{q:2}.
\forgotten

\forget

\section{Appendix} \label{sec:appendix}

In this appendix, we give a direct proof of the discrete approximations of the marked length spectrum and area (see~\cite{Cos} and Corollary~\ref{coro:W}) for Riemannian metrics on closed surfaces.

\begin{theorem} \label{theo:appendix}
Let $M$ be an orientable closed surface endowed with a Riemannian metric~$g$.
Then, for every $\varepsilon >0$, there exist a wall system~$\W$ on~$M$ such that 
\begin{align*}
\MLS_{\W} & \simeq_\varepsilon \MLS_g \\
\area(\W) & \simeq_\varepsilon \frac{\pi}{4} \cdot \area(g).
\end{align*}
\end{theorem}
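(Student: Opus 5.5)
The quickest proof is to apply Corollary~\ref{coro:W} to $g$ viewed as a reversible Finsler metric. That gives the statement, with $\W$ almost geodesic for $g$. Since the point of this appendix is a direct proof, I will instead build $\W$ explicitly from the Liouville measure, bypassing Irie and Liu--Liu, and keep that fallback in reserve.

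\emph{Construction.} Fix scales $0<\delta\ll 1$ and $T\gg 1/\delta$. I cover $M$ by finitely many geodesically convex disks $D$ of diameter about $\delta$, small enough that the metric is nearly Euclidean on each. On $U^*M$, I approximate the Liouville measure $\alpha\wedge d\alpha$ in the weak$^*$ sense by a finite weighted sum of orbit segments of length $T$. To do this, partition $U^*M$ into small cells, pick one point per cell, flow it for time $T$, and weight the segment by the cell's Liouville volume divided by $T$. This is Riemann-sum approximation of $\int f\,dm_L$ by time averages: by flow invariance, $\int f\,dm_L=\frac1T\int_0^T\int f\circ\phi_t\,dm_L\,dt$. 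Each segment projects to a geodesic arc $a_j$ of length $T$. I close $a_j$ by a path of length at most $\diam(M)$ joining its endpoints, obtaining a closed curve $c_j$. After a generic small perturbation, the family $(c_j,w_j)$ is a wall system $\W$ with transverse intersections. The closing arcs carry total weight $O(1/T)$ relative to the geodesic part, so they will contribute only an error of relative size $O(\diam(M)/T)$.

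\emph{Marked length spectrum.} The key local estimate is a discrete Crofton formula. For any two points $x,y$ with $d_g(x,y)\le\delta$, the total weight of walls separating $x$ from $y$ should be approximately the Liouville measure of geodesics crossing the segment $[x,y]$, hence approximately $d_g(x,y)$ by Proposition~\ref{prop:bonahon_length}. Here ``approximately'' means relative error $\varepsilon$, uniformly in $x,y$ and in the direction of $[x,y]$. This should follow from the weak$^*$ convergence applied to the indicator-type functions of the cotangent directions crossing $[x,y]$. Those functions are not continuous, so I will sandwich each one between continuous functions and use the fact that the Liouville measure charges no boundary. Given the estimate:
\begin{itemize}
\item For the upper bound, I push a minimizing closed geodesic for $g$ slightly off the walls. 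Its wall length is then at most $(1+\varepsilon)\MLS_g(\alpha)$.
\item For the lower bound, I cut an arbitrary closed curve $c\in\alpha$ into consecutive subarcs of $g$-length about $\delta$. Each subarc must cross every wall separating its endpoints. Summing the local estimates gives $\length_\W(c)\ge(1-\varepsilon)\sum d_g(x_k,x_{k+1})$, and the piecewise geodesic through the $x_k$ is in $\alpha$, so $\length_\W(c)\ge(1-\varepsilon)\MLS_g(\alpha)$.
\end{itemize}
The additive errors per piece must be controlled relative to $\delta$. This is where $T\gg1/\delta$ and the fine cell partition are needed.

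\emph{Area.} The discrete area is half the weighted count of pairs of wall crossings. In each disk $D$, this count approximates $\frac14\int\!\!\int\#(\gamma_1\cap\gamma_2\cap D)\,d\mu\,d\mu$ over pairs of geodesic chords. By the Santal\'o--Blaschke computation in the proof of Proposition~\ref{prop:bonahon_area}, that integral equals $8\pi\area(D)$, so the local contribution is $\frac{\pi}{4}\area(D)$ after the same normalizations. Summing over the disks and adding the $O(1/T)$ contribution of the closing arcs and of points near disk boundaries gives $\area(\W)\simeq_\varepsilon\frac{\pi}{4}\area(g)$.

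\emph{Main obstacle.} The hard step is the uniform local Crofton estimate that feeds the lower bound on the marked length spectrum. It must hold for arbitrary, non-geodesic competitor curves and uniformly over all free homotopy classes. The difficulty is that weak$^*$ convergence gives no uniformity for sets of directions shrinking with $\delta$. I would first try to fix the cell partition after $\delta$ and exploit the equicontinuity of the family of sets ``geodesics crossing $[x,y]$'', which is compact in $(x,y)$. If that fails, I would fall back on Corollary~\ref{coro:W}.
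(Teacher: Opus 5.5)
Your opening reduction is already a complete proof. $g$ is a reversible Finsler metric whose Holmes--Thompson area is its Riemannian area, so Corollary~\ref{coro:W} gives exactly the required (even almost geodesic) wall system.

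It is, however, a genuinely different route from the paper's proof of this statement. Corollary~\ref{coro:W} perturbs $g$ to a generic, in general non-Riemannian, Finsler metric covered by Irie and Liu--Liu (Theorem~\ref{theo:irie+}). It then runs the current machinery behind Theorem~\ref{theo:MLSA}: compactness of $\{\mu \mid i(\mu,\lambda)=1\}$, continuity of $i$ at $(\mu,\lambda)$, Bonahon's length and area formulas, and the disk-flow step of Proposition~\ref{prop:MLSi}. Given the paper it is a one-liner, and it works for every Finsler metric.

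The paper's proof is elementary and self-contained but Riemannian-specific. It replaces $g$ by a nearby piecewise flat metric, embeds it piecewise linearly and isometrically in the unit ball of $\R^3$, and takes as walls the sections of $M$ by $n$ independent random planes. It then combines Santal\'o's formulas with a uniform law of large numbers on each simplex, a lower bound on the discrete systole, and a weak law of large numbers for variables with few dependent pairs. (Santal\'o: length is $4$ times the expected number of intersections with a random plane, and area is $\frac{32}{\pi}$ times the expected number of points of $M\cap P\cap P'$.) Each wall finally gets weight $\frac4n$. A key convenience there is that planes separate $\R^3$ globally, so a straight segment inside a simplex is $\W$-minimizing among all paths in $M$.

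Your direct construction (which, if completed, would give the Finsler case without Irie) is not yet a proof. The concrete failure is in the lower bound for $\MLS_\W$.

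A uniform \emph{relative} local estimate for all $x,y$ with $d_g(x,y)\le\delta$ is impossible: two distinct points in the same complementary region of $\W$ have wall distance $0$. What your equicontinuity idea can deliver for fixed $\delta$ is an \emph{additive} error $\eta\ll\delta$, since the set of covectors crossing $[x,y]$ has boundary neighbourhoods of uniformly small Liouville measure. But then cutting an arbitrary $c\in\alpha$ into arcs of $g$-length $\delta$ produces about $\length_g(c)/\delta$ pieces and a total error of order $\eta\,\length_g(c)/\delta$. This is not controlled by $\MLS_g(\alpha)$, because a curve that is short for $\W$ may be arbitrarily long for $g$.

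The paper meets the same issue by first proving $\frac1n\sys(\W)\gtrsim_\varepsilon\frac14\sys(g)$. It then cuts a $\W$-minimizing curve by $\W$-length into arcs of $\W$-length between $\frac14\sys(\W)$ and $\frac12\sys(\W)$, so the number of pieces is $O(\length_\W(c)/\sys(\W))$.

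In your setting it suffices to cut $c$ at the successive exit times from the $g$-ball of radius $\delta$ centred at the previous cut point. Every piece but the last then has endpoints at distance exactly $\delta$ and stays in a convex ball where geodesic chords separate. The geodesic polygon through the cut points is freely homotopic to $c$, and one gets $\length_\W(c)\ge(1-\eta/\delta)(\MLS_g(\alpha)-\delta)$. This is enough since $\MLS_g(\alpha)\ge\sys(g)$.

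Two smaller corrections:
\begin{enumerate}
\item With weights $\vol(C_j)/T$, each of the two lifts of $[x,y]$ to $U^*M$ has $|d\alpha|$-mass $2d_g(x,y)$. So the weighted crossing count is about $4d_g(x,y)$, and all weights must be divided by $4$; this also makes the area constant come out as $\frac{\pi}{4}$.
\item The closing arcs should be chosen minimizing, so that each crosses a geodesic segment shorter than the injectivity radius at most once. This is what makes $T\gg1/\delta$ sufficient.
\end{enumerate}
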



The proof proceeds as follows.
We first approximate the Riemannian surface by piecewise linear surfaces embedded in~$\R^3$.
We then use integral geometry formulas expressing the length of a curve on the surface in terms of the expected number of its intersections with a random plane of~$\R^3$, and the area of the surface in terms of the expected number of intersection points on the surface between two random planes.
The wall system~$\W$ is obtained by intersecting the surface with finitely many independent random planes.
The desired approximations of the marked length spectrum and area then follow from these integral formulas by applying suitable versions of the law of large numbers.

\medskip

We start by introducing some notations.

\begin{definition}
The Grassmannian of (affine) planes in~$\R^3$, denoted by~$\HH = \Gr(2,3)$, is the set of all (affine) planes in~$\R^3$.
It is a homogeneous space on which the isometry group~$E(3)$ of~$\R^3$ acts transitively.
It identifies with
\[
\HH = (S^2 \times \R) / \pm
\]
where every plane~$P$ is given by the equation
$
\langle u,\cdot \rangle = t
$
with $(u,t) \in S^2 \times \R$.
The measure~$\mu$ on~$\HH$ locally defined by
\[
\mu = du \, dt
\]
is invariant by the action of~$E(3)$.
Denote by~$\HH_B \subseteq \HH$ the subset of planes in~$\R^3$ intersecting the unit ball~$B$ of~$\R^3$.
Note that 
\[
\mu(\HH_B) = 4\pi.
\]
Define the following probability measure~$\Prob$ on~$\HH$
\[
\Prob=\frac{1}{4 \pi} \, \mu \mres \HH_B.
\] 
\end{definition}

We will need the following integral geometry formulas established by Santal\'o~\cite[III.14.5, Example 2]{San} (where the equation~\eqref{eq:S3} below is obtained as a combination of~\eqref{eq:S1} and~\eqref{eq:S2}.)

\begin{theorem} \label{theo:San}
Let $B$ be the unit ball of~$\R^3$.
\begin{enumerate}
\item The length of every piecewise immersed curve~$c$ in~$B$ is
\begin{equation} \label{eq:S1} 
\length(c) = 4 \, \int_{P \in \HH} \#(c \cap P) \, d\Prob. 
\end{equation}
\item The area of every closed surface~$M$ piecewise smoothly embedded in~$\R^3$ is
\begin{align}
\area(M) & = \frac{8}{\pi} \, \int_{P \in \HH} \length(M \cap P) \, d\Prob \label{eq:S2} \\
 & = \frac{32}{\pi} \, \int_{P \in \HH} \int_{P' \in \HH} \#(M \cap P \cap P') \, d\Prob \, d\Prob. \label{eq:S3}
\end{align}
\end{enumerate}
\end{theorem}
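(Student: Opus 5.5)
The statement is Theorem~\ref{theo:San}, Santal\'o's Crofton-type formulas for affine planes in~$\R^3$. I will prove it by reducing everything to one elementary computation and then using linearity and approximation.

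\textbf{Formula~\eqref{eq:S1}.} Both sides are additive under subdivision of the curve and invariant under rigid motions, since $\mu$ is $E(3)$-invariant. I will first treat a straight segment of length~$\ell$ with unit direction~$v$, contained in~$B$. For a fixed normal $u\in S^2$, the planes $\langle u,\cdot\rangle=t$ meeting the segment form a $t$-interval of length $\ell\,|\langle u,v\rangle|$. Such a plane is in~$\HH_B$, and for almost every plane the intersection is a single point. Hence
\[
\int \#(c\cap P)\,d\mu = \tfrac12\int_{S^2}\ell\,|\langle u,v\rangle|\,du = \tfrac12\cdot 2\pi\,\ell = \pi\ell,
\]
where the factor $\tfrac12$ accounts for the quotient by~$\pm$. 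Dividing by~$4\pi$ gives $\int\#(c\cap P)\,d\Prob=\ell/4$, which is~\eqref{eq:S1} for segments. By additivity, \eqref{eq:S1} then holds for polygonal curves. For a piecewise immersed curve I will inscribe polygons~$c_k$ converging to~$c$ in~$C^1$ on each smooth piece. The counts $\#(c_k\cap P)$ converge to $\#(c\cap P)$ for almost every~$P$, namely for planes transverse to~$c$, which is a Sard-type condition. To pass to the limit I will use dominated convergence, or Fatou in one direction together with monotonicity of polygonal lengths in the other. The approximation step is the only delicate point here.

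\textbf{Formula~\eqref{eq:S2}.} I will apply the same scheme one dimension up. For a flat triangle~$T$ of area~$a$ with unit normal~$n$, a fixed direction~$u$ gives slices $T\cap\{\langle u,\cdot\rangle=t\}$ whose lengths integrate in~$t$ to $a\sqrt{1-\langle u,n\rangle^2}$, by the coarea formula on the plane of~$T$. Averaging over~$u$ gives
\[
\tfrac12\int_{S^2}a\,\sin\angle(u,n)\,du=\tfrac12\cdot\pi^2 a,
\]
so that $\int\length(T\cap P)\,d\Prob=\pi a/8$, which is~\eqref{eq:S2} for triangles. Additivity then gives the formula for triangulated polyhedral surfaces. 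For piecewise smooth surfaces I will pass to the limit, either by the coarea formula applied directly on~$M$, or by inscribed triangulations with controlled angles to avoid Schwarz-lantern pathologies. The coarea route seems cleaner: it works pointwise with the tangent plane and avoids the lantern issue entirely.

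\textbf{Formula~\eqref{eq:S3}.} This follows by applying~\eqref{eq:S1} inside~\eqref{eq:S2}. For $\Prob$-almost every~$P$, the slice $M\cap P$ is a piecewise immersed curve in~$B$. Applying~\eqref{eq:S1} to it gives
\[
\length(M\cap P)=4\int\#(M\cap P\cap P')\,d\Prob(P').
\]
Substituting this into~\eqref{eq:S2} and using Fubini yields the constant $\frac{8}{\pi}\cdot4=\frac{32}{\pi}$. The only thing to check is measurability, which holds because these are counting functions of transverse intersections.

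The main obstacle is the regularity bookkeeping: ensuring that generic slices are piecewise immersed, and justifying the limits from polyhedral pieces to smooth ones. The constant computations themselves are routine.
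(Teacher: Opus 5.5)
Your proposal is correct. For the double-plane formula it is exactly the paper's argument: the paper cites Santal\'o for the curve and surface formulas and gets the third one by inserting the first into the second, as you do. The difference is that the paper only cites the first two formulas, whereas you actually derive them. Your constants check out. With $\mu(\HH_B)=4\pi$, a segment gives $\frac12\int_{S^2}\ell\,|\langle u,v\rangle|\,du=\pi\ell$ and a flat piece gives $\frac12\int_{S^2}a\sin\angle(u,n)\,du=\frac{\pi^2}{2}a$. This yields the factors $4$ and $8/\pi$, and then $32/\pi$.

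Two small points should be tightened.

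First, both your segment and triangle computations, and the double-plane step, use that every plane meeting the curve or surface lies in $\HH_B$. So the surface formulas require $M\subseteq B$. As literally stated (``embedded in $\R^3$'') they are false for surfaces outside the ball. In the paper's application $M$ is indeed placed inside $B$, and you should state this hypothesis explicitly.

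Second, in the polygonal approximation for curves, Fatou together with $\length(c_k)\le\length(c)$ gives only the upper bound $\int\#(c\cap P)\,d\Prob\le\frac14\length(c)$. The reverse inequality comes from the pointwise bound $\#(c_k\cap P)\le\#(c\cap P)$ for inscribed polygons: a chord crosses $P$ only if the corresponding arc does. Combined with $\length(c_k)\to\length(c)$, this gives the lower bound; note that inscribed polygons stay in $B$ by convexity.

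Simpler still, you can avoid polygons altogether. The one-dimensional coarea (Banach indicatrix) formula $\int_\R\#\bigl(c\cap\{\langle u,\cdot\rangle=t\}\bigr)\,dt=\int|\langle u,c'(s)\rangle|\,ds$, followed by integration in $u$, handles curves directly. This is just as your preferred coarea route on $M$ handles surfaces, and it removes all approximation steps.
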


Note that for almost every plane~$P \in \HH$, the intersection~$c \cap P$ is formed of finitely many points and the intersection~$M \cap P$ is a finite union of piecewise smoothly embedded closed curves in~$\R^3$. \\

We can now proceed to the proof of the discretization theorem for Riemannian metrics on surfaces.

\begin{proof}[Proof of Theorem~\ref{theo:appendix} for Riemannian metrics]
%
%
Let $g$ be a Riemannian metric on the orientable closed surface~$M$.
By~\cite[Lemma~6]{Bur}, the metric~$g$ can be approximated, with bilipschitz distortion arbitrarily close to~$1$, by a piecewise flat metric with conical singularities.
After rescaling, every orientable closed piecewise flat surface with conical singularities admits a piecewise linear isometric embedding into the unit ball~$B \subseteq \R^3$, after possibly subdividing its triangulation; see~\cite{BZ}.
As a consequence, by a metric approximation, we can assume that $M$ is a piecewise linear embedded surface in~$B \subseteq \R^3$ endowed with the induced piecewise flat metric.
Denote by~$\Theta$ a triangulation of~$M$ compatible with this piecewise linear structure.
After some possible simplicial subdivisions, we may assume that the diameter of every simplex~$\Delta \in \Theta$ satisfies
$\diam(\Delta) < \frac{1}{4} \sys(g)$. 
Here and below, a simplex of~$M$ will always designate a closed $2$-simplex of~$\Theta$. 

\medskip

We argue as in the proof of~\cite[Theorem~4.9]{CS}.
The wall system~$\W$ on~$M$ is formed by the intersections~$M \cap P$ of random planes with respect to the probability measure~$\Prob$.
More precisely, take $n$ independent identically distributed (i.i.d.) random planes $P_1,\dots,P_n$ of~$\HH$ with probability distribution~$\Prob$.
Almost surely, the intersections of these planes with~$M$ are finite unions of simple closed piecewise geodesics, transverse to the $1$-skeleton of~$\Theta$, and form an unweighted wall system~$\W$.
We also denote by~$\W$ the graph in~$M$ given by the union of its walls.

\medskip

The following lemma shows that a segment contained in a simplex of~$M$ is minimizing both for the piecewise flat metric and the discrete metric.

\begin{lemma} \label{lem:segment}
Let $\Delta$ be a simplex of~$M$.
\begin{enumerate}
\item If $x,y \in \Delta$, then
\[
d_g(x,y) = \length_g([x,y]).
\]
Moreover, the segment~$[x,y] \subseteq \Delta$ is the unique $g$-minimizing arc between~$x$ and~$y$. \label{segment1}
\item If $x,y \in \Delta \setminus \W$, then
\[
d_\W(x,y) = \length_\W([x,y]).
\]
\label{segment2}
\end{enumerate}
\end{lemma}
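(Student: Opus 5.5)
For assertion~\eqref{segment1}, I will work within the closed simplex~$\Delta$, which is a flat Euclidean triangle, and use the diameter bound $\diam(\Delta)<\frac14\sys(g)$. Let $c$ be any path from~$x$ to~$y$. If $c$ leaves the open star of~$\Delta$ in a way that produces a noncontractible loop, then concatenating~$c$ with the segment~$[y,x]$ gives a noncontractible closed curve. Its length is at least $\sys(g)$, so $\length(c)\geq \sys(g)-\diam(\Delta) > \length([x,y])$. Otherwise, I lift to the universal cover, or argue locally, and compare with the Euclidean geometry of the development. A piecewise flat surface with conical singularities whose cone angles are at most those of the embedded PL surface may have angles exceeding~$2\pi$, which is fine: such singularities only lengthen curves passing through them, since the surface is locally CAT(0) there. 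Cone points with angle less than~$2\pi$ are the real issue. For those, I would rely on the triangle being convex and small, and on the fact that a path leaving~$\Delta$ must go around a vertex. The cleanest route I would try is to first subdivide so that each simplex lies in the interior of the star of one of its vertices. Then I use the standard fact that in a flat cone, the geodesic between two points lying in a sector of angle less than~$\pi$ is the straight segment. Uniqueness then follows from strict convexity of Euclidean length within the development.

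For assertion~\eqref{segment2}, the idea is that each wall $M\cap P_i$ meets~$\Delta$ in the straight segment $\Delta\cap P_i$. That segment separates~$\Delta$, and more importantly it is part of a closed curve on~$M$. I will show that any path~$c$ from~$x$ to~$y$ crosses every wall $M\cap P_i$ that separates~$x$ from~$y$ in~$\R^3$. A wall separates them exactly when the segment $[x,y]$ meets~$P_i$, because~$[x,y]$ crosses each plane at most once. Indeed, $x$ and $y$ lie on opposite sides of the plane~$P_i$, so the continuous path~$c\subseteq M$ must hit~$P_i$, hence $M\cap P_i$. This gives $\#(c\cap \W)\geq \#([x,y]\cap\W)$, counted over the planes.

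Care is needed because $M\cap P_i$ may consist of several components, and $\length_\W$ counts intersections with the walls. The bound above only counts, for each plane, at least one crossing, while $[x,y]$ crosses each plane at most once. So summing over $i$ yields $\length_\W(c)\geq\length_\W([x,y])$. Equality is attained by $[x,y]$ itself, which proves~\eqref{segment2}. This second part is easy. The main obstacle is~\eqref{segment1}, specifically controlling cone points of angle less than~$2\pi$ adjacent to~$\Delta$. I would handle this by the subdivision and star-convexity argument, using the systole bound only to rule out paths that wind around handles.
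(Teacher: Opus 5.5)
\textbf{Part (2).} Your argument here is the paper's: with $x,y\notin P_i$, the segment $[x,y]$ meets $P_i$ at most once, and does so exactly when $P_i$ separates $x$ from $y$ in $\R^3$. Then every arc of $M$ from $x$ to $y$ must meet $M\cap P_i$, and summing over $i$ gives the inequality.

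\textbf{Part (1).} This has a genuine gap. You argue intrinsically on the piecewise flat surface, but the key tool you cite is false. In a flat cone of total angle $\theta<2\pi$, a straight segment inside a sector of angle $\varphi<\pi$ is \emph{not} minimizing when $\theta-\varphi<\varphi$. For example, take $\theta=\pi/2$ and $\varphi=\pi/3$, with two points at distance $r$ from the apex on the bounding rays. The segment has length $r$, while going around the other side has length $2r\sin(\pi/12)<r$. So on an abstract piecewise flat surface, assertion (1) can fail for a triangle with angle $\pi/3$ at a vertex of cone angle $\pi/2$, however small the triangles are compared with the systole.

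To run your route you would need $\theta_v\ge 2\beta$ for the face angle $\beta$ of $\Delta$ at each vertex $v$. You never establish this, and it only holds because of the embedding in $\R^3$ (spherical triangle inequality at $v$). Even granting it, you leave contractible competitor paths to ``lift to the universal cover, or argue locally, and compare with the development.'' The universal cover still carries cone points of angle $<2\pi$, so no CAT(0) uniqueness is available there. In addition, a closed simplex can never lie in the open star of one of its own vertices, since it contains its other two vertices. So the proposed subdivision does not make sense as stated, and the lemma concerns the simplices of the given triangulation $\Theta$ in any case.

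The missing idea is to use that $g$ is the length metric induced by the PL embedding $M\subseteq\R^3$. For any arc $c\subseteq M$ from $x$ to $y$,
\[
\length_g(c)=\length_{\R^3}(c)\ \ge\ |x-y| = \length_g([x,y]),
\]
because $[x,y]$ is a Euclidean segment lying in $\Delta\subseteq M$. Equality in the Euclidean length--distance inequality forces $c$ to be that segment, which gives uniqueness. This is the paper's proof. It needs no subdivision and no systole bound, and it is the same extrinsic mechanism you already use correctly in part (2).
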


\begin{proof}
Let us first prove the first assertion.
Since $\Delta$ is a Euclidean simplex, the $g$-length of the segment~$[x,y] \subseteq \Delta$ is equal to the Euclidean distance between its endpoints. 
Therefore,
\[
\length_g([x,y]) = d_{\R^3}(x,y).
\]
On the other hand, every arc~$c$ of~$M$ joining~$x$ to~$y$ satisfies $\length_g(c) = \length_{\R^3}(c) \geq d_{\R^3}(x,y)$.
Applying this to~$[x,y]$, it follows that
\[
d_g(x,y)=\length_g([x,y]).
\]
Moreover, equality in the Euclidean length-distance inequality occurs only for the segment~$[x,y]$. 
Hence~$[x,y]$ is the unique $g$-minimizing arc joining~$x$ to~$y$.

For the second assertion, consider one of the planes~$P_i$ defining the wall system~$\W$.
Since $x,y \notin \W$, the plane~$P_i$ intersects the segment~$[x,y]$ if and only if it separates~$x$ and~$y$ in~$\R^3$.
In this case, every arc of~$M$ joining~$x$ to~$y$ intersects~$P_i$, and hence the corresponding wall~$M \cap P_i$.
Thus every such arc crosses at least all the walls crossed by~$[x,y]$.
Therefore,
\[
\length_\W([x,y]) \leq d_\W(x,y).
\]
The reverse inequality is obvious. 
Hence, the equality.
\end{proof}

On every simplex of~$M$, the distance for the piecewise flat metric can be approximated, up to rescaling, by a discrete metric.
More precisely, we have the following approximation result, which relies on the first integral formula~\eqref{eq:S1} of Theorem~\ref{theo:San}.

\begin{lemma} \label{lem:dWdM}
Let~$\varepsilon >0$.
With probability converging to~$1$ as $n \to \infty$, every pair of points $x,y \in \Delta$, where $\Delta$ is a simplex of~$M$, satisfies
\begin{equation} \label{eq:dWdM}
\frac{1}{n} \, d_\W(x,y) \simeq \frac{1}{4} \, d_g(x,y) \pm \varepsilon.
\end{equation}
\end{lemma}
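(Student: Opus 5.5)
The plan is to reduce the uniform estimate to a uniform law of large numbers over a compact family of segments. By Lemma~\ref{lem:segment}, for $x,y$ in a simplex $\Delta$ (and off $\W$) we have $d_g(x,y)=\length_g([x,y])$ and $d_\W(x,y)=\length_\W([x,y])=\sum_{i=1}^n \#([x,y]\cap P_i)$. Almost surely no plane $P_i$ contains a vertex or is tangent to anything, so $\#([x,y]\cap P_i)=\mathbbm{1}\{P_i \text{ separates } x \text{ and } y\}$, up to a null event. The integral formula~\eqref{eq:S1} gives
\[
\E\bigl[\#([x,y]\cap P_i)\bigr] = \tfrac14 \length([x,y]) = \tfrac14 d_g(x,y).
\]
So for each fixed pair the weak law of large numbers gives $\frac1n d_\W(x,y)\to \frac14 d_g(x,y)$ in probability. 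Since there are finitely many simplices, it suffices to make the estimate uniform over $\Delta\times\Delta$ for a single simplex.

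For uniformity I will use a finite net and monotonicity. Fix $\eta>0$ small compared to $\varepsilon$ and choose a finite $\eta$-net $N\subseteq\Delta$. The key observation is the deterministic triangle inequality for the pseudo-distance $d_\W$. If $x'$ and $y'$ are net points with $|x-x'|,|y-y'|<\eta$, then
\[
|d_\W(x,y)-d_\W(x',y')| \le d_\W(x,x')+d_\W(y,y').
\]
Each term $d_\W(x,x')$ is at most the number of planes meeting the $\eta$-ball around $x'$. This follows because a segment inside that convex ball crosses only planes meeting the ball; points on $\W$ can be handled by taking limits, or by using the segment bound directly.

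Let $X_{x'}$ denote the number of planes among $P_1,\dots,P_n$ meeting $B(x',\eta)$. The quantity $\frac1n X_{x'}$ is an average of i.i.d.\ indicators with mean $\Prob(P\cap B(x',\eta)\ne\emptyset)=\eta$, since $\mu$ of the planes meeting a ball of radius $r$ is $4\pi r$. Applying the law of large numbers to the finitely many quantities $\frac1n d_\W(x',y')$ and $\frac1n X_{x'}$, for $x',y'\in N$, we get with probability tending to $1$ that all of them lie within $\eta$ of their means. On that event, for arbitrary $x,y\in\Delta$,
\[
\tfrac1n d_\W(x,y) = \tfrac14 d_g(x,y) \pm (\eta + 4\eta + \tfrac12\eta).
\]
Here the term $\tfrac12\eta$ comes from $|d_g(x,y)-d_g(x',y')|\le 2\eta$. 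Choosing $\eta$ small enough then yields~\eqref{eq:dWdM}.

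The main obstacle is the uniform control of $d_\W$ near a net point, which is a discontinuous random function. Pointwise convergence does not pass to the supremum directly. I would handle it as above, by dominating $d_\W(x,x')$ with the count of planes hitting a small ball, which is itself an i.i.d.\ average with small mean. A second, minor point is points lying on $\W$, where Lemma~\ref{lem:segment}.\eqref{segment2} does not apply. There I would either note that such points have measure zero and work with the extension by the ball bound, or prove the upper bound $d_\W(x,y)\le\length_\W([x,y])$ directly, which holds for all points.
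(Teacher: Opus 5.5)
Your argument is correct, but it obtains uniformity by a different route from the paper.

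The paper applies a general uniform law of large numbers (Tauchen's lemma) to the family $f_{x,y}(P)=\min\{\#([x,y]\cap P),1\}$, indexed by the compact set $\Delta\times\Delta$. This requires checking that $(x,y)\mapsto f_{x,y}(P)$ is continuous for almost every plane $P$, and that the process is separable in the sense of Doob. The paper then identifies $\sum_i f_{x,y}(P_i)$ with $d_\W(x,y)$ through the segment lemma, which is valid only for $x,y\notin\W$, and handles points on $\W$ by a perturbation.

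You instead prove the uniform statement by hand. The law of large numbers is used only at the finitely many pairs of net points. These almost surely avoid all planes, so the segment lemma applies there. The oscillation of $d_\W$ between a point and a nearby net point is controlled deterministically: you combine the triangle inequality for the pseudo-distance $d_\W$ with the bound $d_\W(x,x')\le\length_\W([x,x'])\le X_{x'}$. Here $X_{x'}/n$ is itself an i.i.d.\ average with mean at most $\eta$, since the measure of planes meeting a ball of radius $r$ is $4\pi r$.

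What your route buys:
\begin{itemize}
\item a self-contained, explicitly quantitative proof;
\item no measure-theoretic separability issues;
\item automatic treatment of points lying on $\W$, since $d_\W(x,x')\le\length_\W([x,x'])$ holds for every $x$, so no perturbation step is needed.
\end{itemize}
The paper's route is shorter to write, but relies on an external theorem whose hypotheses take some care to verify.

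Two small points you should make explicit:
\begin{itemize}
\item The probability that $P$ meets $B(x',\eta)$ equals $\eta$ only when $B(x',\eta)\subseteq B$. Otherwise it is at most $\eta$, and that inequality is all you use.
\item The claim of ``at most one crossing per plane'' for $[x,x']$ relies on $x'$ lying on no $P_i$. This holds almost surely because the net is fixed and finite, so the segment is never contained in a plane.
\end{itemize}
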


\begin{proof}
Fix a simplex~$\Delta$.
For every $x,y \in \Delta$, consider the function $f_{x,y}:\HH \to \{0,1\}$ defined by
\[
f_{x,y}(P) = \min \{ \#([x,y] \cap P),1 \}.
\]
This is a measurable bounded function.
Moreover, $f_{x,y}(P) = \#([x,y] \cap P)$ for almost every~$P\in\HH$. 

By the formula~\eqref{eq:S1} of Theorem~\ref{theo:San}, the random variables~$X_i=f_{x,y}(P_i)$ with $1 \leq i \leq n$ are i.i.d. with finite expected value 
\[
\E(X_i) = \int_{P \in \HH} \#([x,y] \cap P) \, d\Prob = \frac{1}{4} \, d_g(x,y).
\]
Note that $\E(|X_i|) = \E(X_i) < \infty$.
By the weak law of large numbers applied to~$\{X_i\}$, 
we derive that
\[
\left| \frac{1}{n} \, \sum_{i=1}^n \#([x,y] \cap P_i) - \frac{1}{4} \, d_g(x,y) \right| < \varepsilon
\]
with probability converging to~$1$ as $n \to \infty$.


We need this convergence to be uniform in~$(x,y) \in \Delta \times \Delta$.
For this, we apply a uniform version of the law of large numbers; see~\cite[Lemma~1]{Tau}. 
We simply need to verify the following conditions:
\begin{enumerate}
\item The parameter space~$\Delta \times \Delta$ is compact and $|f_{x,y}(P)| \leq 1$.
\item For every $(x_0,y_0) \in \Delta \times \Delta$ and almost every~$P \in \HH$, the map $(x,y) \mapsto f_{x,y}(P)$ is continuous at~$(x_0,y_0)$.
Indeed, if $x_0,y_0 \notin P$, then for $(x,y)$ sufficiently close to~$(x_0,y_0)$, the points $x$ and~$y$ remain on the same respective sides of~$P$ as $x_0$ and~$y_0$.
Hence $(x,y)\mapsto f_{x,y}(P)$ is locally constant near~$(x_0,y_0)$.
Moreover, the set of planes containing~$x_0$ or~$y_0$ has zero measure.
\item The function $(P,x,y) \mapsto f_{x,y}(P)$ is separable in the sense of Doob.
Indeed, fix a dense countable subset $\Delta'  \subseteq \Delta$.
Let $U \subseteq \Delta \times \Delta$ be open.
%
If $P \cap [x,y] \neq \emptyset$ for some $(x,y) \in U$, then there are arbitrarily close pairs $(x',y') \in \Delta' \times \Delta'$ for which $P \cap [x',y'] \neq \emptyset$.
Hence such a pair can be chosen in $U \cap (\Delta'\times\Delta')$.
Similarly, if $P \cap [x,y] = \emptyset$ for some $(x,y) \in U$, then $x$ and~$y$ lie strictly on the same side of~$P$, and the same holds for all pairs sufficiently close to~$(x,y)$.
Thus the values of~$f_{x,y}(P)$ on~$U$ are determined by its values on $U \cap (\Delta' \times \Delta')$, which gives the required separability.
\end{enumerate}

It follows that
\[
\sup_{x,y \in \Delta} \, \left| \frac{1}{n} \, \sum_{i=1}^n f_{x,y}(P_i) - \frac{1}{4} \, d_g(x,y) \right| < \varepsilon
\]
with probability converging to~$1$ as $n \to \infty$. 
In particular, for every $x,y \in \Delta \setminus \W$, we obtain
\[
\left| \frac{1}{n} \, \sum_{i=1}^n \#([x,y] \cap P_i) - \frac{1}{4} \, d_g(x,y) \right| < \varepsilon.
\]

On the other hand, by Lemma~\ref{lem:segment}.\eqref{segment2}, we have
\[
d_\W(x,y) = \length_\W([x,y]) = \sum_{i=1}^n \#([x,y] \cap P_i)
\]
whenever $x,y \in \Delta \setminus \W$.

Therefore, the equation~\eqref{eq:dWdM} holds for $x,y \in \Delta \setminus \W$.
The general case follows by slightly perturbing the points.
\end{proof}

\begin{lemma} \label{lem:dWdM3}
Let~$\varepsilon >0$.
With probability converging to~$1$ as $n \to \infty$, the following properties hold:
\begin{enumerate}
\item For every geodesic arc~$\gamma$ of minimal $g$-length in~$M$,
\[
\frac{1}{n} \, \length_\W(\gamma) \leq \frac{1}{4} \, \length_g(\gamma) + \varepsilon.
\]
\item For every arc~$c$ of minimal $\W$-length in~$M$, there exists a piecewise geodesic arc~$\gamma$ with the same endpoints as~$c$, homotopic to~$c$ relative to its endpoints, such that
\[
\frac{1}{n} \, \length_\W(c) \geq \frac{1}{4} \, \length_g(\gamma) - \varepsilon.
\]
\end{enumerate}

In particular, for every pair of points $x,y \in M$,
\begin{equation} \label{eq:dWdM3}
\frac{1}{n} \, d_\W(x,y) \simeq \frac{1}{4} \, d_g(x,y) \pm \varepsilon.
\end{equation}
\end{lemma}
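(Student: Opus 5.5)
The plan is to reduce both statements to the simplex-wise estimate of Lemma~\ref{lem:dWdM} by cutting arcs into short pieces, each lying in a single simplex. Fix $\varepsilon>0$. We work on the event of probability tending to~$1$ on which \eqref{eq:dWdM} holds for all pairs of points in a common simplex, with $\varepsilon$ replaced by $\varepsilon/N$. Here $N$ is an a priori bound on the number of pieces, to be chosen below. The subtlety is that the arcs in question may be long, so the additive error must not accumulate. I would therefore first restrict to arcs of bounded length: minimal $g$-arcs have length at most $\diam(M,g)$, and minimal $\W$-arcs have $\W$-length at most $\max d_\W$. Once the first estimate and the simplex-wise estimate are available, the latter is bounded by $n(\tfrac14\diam_g+\varepsilon)$.

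For the first assertion, let $\gamma$ be a $g$-minimizing geodesic arc. Since $M$ is piecewise flat, $\gamma$ is piecewise linear and crosses the $1$-skeleton of~$\Theta$ at points $x_0,\dots,x_k$, with consecutive points in a common simplex and $[x_{j-1},x_j]$ a segment in it. The key claim is that $k$ is bounded by some $N=N(\Theta)$ independent of~$\gamma$. I would argue this by minimality and compactness: a minimizing segment cannot return to the same simplex after leaving it, since by Lemma~\ref{lem:segment}.\eqref{segment1} the segment inside that simplex would be shorter. Hence $k$ is at most the number of simplices. Then
\[
\tfrac1n\length_\W(\gamma)\le \sum_j \tfrac1n \length_\W([x_{j-1},x_j]).
\]
Choosing the $x_j$ off $\W$ (by slight perturbation) and applying Lemma~\ref{lem:segment}.\eqref{segment2} together with \eqref{eq:dWdM}, each term is at most $\tfrac14\length_g([x_{j-1},x_j])+\varepsilon/N$. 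Summing gives the claim.

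For the second assertion, let $c$ be a $\W$-minimizing arc. I would subdivide $c$ at the points where it passes from one simplex to another, after a homotopy rel endpoints that does not increase $\W$-length. The homotopy makes $c$ transverse to the $1$-skeleton and removes back-and-forth crossings inside a simplex: by Lemma~\ref{lem:segment}.\eqref{segment2}, any subarc in one simplex can be replaced by the straight segment without increasing $\W$-length. This produces a piecewise linear arc $\gamma$, homotopic to $c$ rel endpoints, with $\length_\W(\gamma)=\length_\W(c)$. The main obstacle is bounding the number of pieces of $\gamma$ independently of $n$, since minimality in $d_\W$ does not immediately forbid revisiting simplices. I would handle it as follows. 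If $\gamma$ revisits a simplex $\Delta$ at points $y,y'$, then replacing the intermediate portion by $[y,y']$ does not increase $\W$-length, by Lemma~\ref{lem:segment}.\eqref{segment2}. This changes the homotopy class only by a loop through $\Delta$, which can be absorbed: we only need $\gamma$ homotopic rel endpoints to \emph{some} piecewise geodesic, and for the final application to \eqref{eq:dWdM3} we may drop the homotopy requirement. Failing that, I would use the diameter bound $\diam(\Delta)<\tfrac14\sys(g)$. It guarantees that short loops formed this way are contractible, so the shortcut preserves the homotopy class. Once $\gamma$ has at most $N$ pieces, \eqref{eq:dWdM} gives for each piece $\tfrac1n\length_\W\ge\tfrac14\length_g-\varepsilon/N$, and summing gives the bound.

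Finally, \eqref{eq:dWdM3} follows by combining the two assertions. Apply the first assertion to a $g$-minimizing arc from $x$ to $y$, which gives the upper bound on $\tfrac1n d_\W(x,y)$. Apply the second to a $\W$-minimizing arc from $x$ to $y$, using $\length_g(\gamma)\ge d_g(x,y)$, which gives the lower bound.
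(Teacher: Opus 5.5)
Your first assertion is handled exactly as in the paper. A $g$-minimizing arc cannot leave a simplex and re-enter it, by the uniqueness part of the segment lemma, so it splits into at most $N=|\Theta|$ segments, each controlled by the simplex-wise estimate with error $\eta=\varepsilon/N$. Your derivation of the two-sided distance estimate also matches the paper's: the shortcut $c_{y\to y'}\mapsto[y,y']$ inside a revisited simplex keeps $c$ $\W$-minimizing and eventually leaves at most $N$ pieces.

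The gap is in the second assertion, which requires $\gamma$ to be homotopic to $c$ relative to its endpoints. This is what is used later to bound $\MLS_\W$ from below, since $\gamma$ must stay in the free homotopy class of $c$. Your first option, that a change of homotopy class ``can be absorbed'', does not work: if a shortcut changes the class, the resulting $\gamma$ simply fails the assertion. Your fallback, the bound $\diam(\Delta)<\frac14\sys(g)$, only controls the segment $[y,y']$. It says nothing about the excursion $c_{y\to y'}$, which is only known to be short in $\W$-length, not in $g$-length. So the loop $[y,y']\cup c_{y\to y'}$ is not known to be short, and nothing you say makes it contractible.

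The missing idea is a bootstrap. First prove the global estimate $\frac1n d_\W(x,y)\simeq\frac14 d_g(x,y)\pm N\eta$ for all $x,y\in M$; as you observe, the homotopy class is irrelevant for this. Only then use it to control the excursions. Minimality of $c$ forces $\length_\W(c_{y\to y'})=\length_\W([y,y'])=d_\W(y,y')\le\frac n4 d_g(y,y')+n\eta\le\frac n{16}\sys(g)+n\eta$. Suppose $c_{y\to y'}$ were not homotopic to $[y,y']$ relative to its endpoints. Then it would contain a point $z$ at $g$-distance at least $\frac14\sys(g)$ from both $y$ and $y'$; otherwise the loop would lie in a $g$-ball of radius less than $\frac12\sys(g)$ and be contractible. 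The global estimate would then give $\length_\W(c_{y\to y'})\ge d_\W(y,z)+d_\W(z,y')\ge\frac n8\sys(g)-2nN\eta$, a contradiction once $(2N+1)\eta<\frac1{16}\sys(g)$. Hence every shortcut preserves the homotopy class relative to endpoints, and the second assertion follows with $\eta<\varepsilon/N$.
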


\begin{proof}
Let $N=|\Theta|$ be the number of simplices of~$M$.
Fix~$\eta>0$.
Since the triangulation is finite, we may assume, with probability converging to~$1$ as $n \to \infty$, that the conclusion of
Lemma~\ref{lem:dWdM} holds simultaneously on every simplex with error~$\eta$.

Let us prove the first assertion.
Let $\gamma$ be an $g$-length-minimizing geodesic arc in~$M$.
By Lemma~\ref{lem:segment}.\eqref{segment1}, the arc~$\gamma$ cannot leave a simplex and later return to it.
Thus, it visits every simplex at most once.
Therefore, $\gamma$ decomposes into at most~$N$ geodesic segments
\[
\gamma = [x_0,x_1] \cup \cdots \cup [x_{m-1},x_m]
\]
each contained in a simplex, where $m \leq N$.
By Lemma~\ref{lem:segment} and~\eqref{eq:dWdM},
\[
\frac{1}{n} \length_\W([x_j,x_{j+1}]) = \frac{1}{n} d_\W(x_j,x_{j+1}) \leq \frac{1}{4} d_g(x_j,x_{j+1}) + \eta.
\]
Summing over~$j$, we obtain
\[
\frac{1}{n} \length_\W(\gamma) \leq \frac{1}{4} \length_g(\gamma) + N \eta.
\]
In particular, by taking~$\gamma$ to be an $g$-minimizing geodesic between arbitrary points $x,y \in M$,
\begin{equation} \label{eq:<N}
\frac{1}{n} \, d_\W(x,y) \leq \frac{1}{4} \, d_g(x,y) + N \eta.
\end{equation}

Now, instead of proving directly the second assertion, we establish the reverse inequality for the distances, namely~\eqref{eq:dWdM3}.
Let $c$ be a $\W$-minimizing arc between~$x$ and~$y$.
After a slight perturbation, we may assume that its intersection points with~$\W$ do not lie in the $1$-skeleton of~$\Theta$.

Suppose that $c$ leaves a simplex~$\Delta$ at a point~$x'$ and later returns to it at a point~$y'$.
Replace the corresponding subarc~$c_{x' \to y'}$ of~$c$ by the segment~$[x',y'] \subseteq \Delta$.
By Lemma~\ref{lem:segment}.\eqref{segment2},
\begin{equation} \label{eq:equality}
\length_\W([x',y']) = d_\W(x',y') \leq  \length_\W(c_{x' \to y'}).
\end{equation}
Since $c$ is $\W$-length minimizing, equality must hold.
Thus, the replacement does not change the $\W$-length.

Repeating this operation, we obtain a $\W$-length-minimizing arc~$c'$ which visits each simplex at most once.
At this stage, we do not yet claim that $c'$ is homotopic to~$c$ relative to its endpoints.
By Lemma~\ref{lem:segment}.\eqref{segment2}, straightening each portion of~$c'$ contained in a simplex gives a
piecewise geodesic arc
\[
\gamma = [x_0,x_1] \cup \cdots \cup [x_{m-1},x_m]
\]
with the same $\W$-length and the same endpoints as~$c'$, homotopic to~$c'$, where $m \leq N$.
By~\eqref{eq:dWdM} and Lemma~\ref{lem:segment}.\eqref{segment1},
\[
\frac{1}{n} \, \length_\W([x_j,x_{j+1}]) \geq \frac{1}{4} \, d_g(x_j,x_{j+1}) - \eta = \frac{1}{4} \length_g([x_j,x_{j+1}]) - \eta.
\]
Summing over~$j$, we obtain
\[
\frac{1}{n} \, \length_\W(c) = \frac{1}{n} \, \length_\W(\gamma) \geq \frac{1}{4} \, \length_g(\gamma) -N \eta.
\]
Since $\length_g(\gamma) \geq d_g(x,y)$, it follows that
\begin{equation} \label{eq:>N}
\frac{1}{n} \, d_\W(x,y) \geq \frac{1}{4} \, d_g(x,y) - N \eta.
\end{equation}

Combining~\eqref{eq:<N} and~\eqref{eq:>N}, we obtain
\begin{equation} \label{eq:pmN}
\frac{1}{n} \, d_\W(x,y) \simeq \frac{1}{4} \, d_g(x,y) \pm N \eta.
\end{equation}

We can now prove the second assertion.
It remains to show that~$c'$ is homotopic to~$c$ relative to its endpoints.
Suppose that one of the subarcs~$c_{x' \to y'}$ is not homotopic to the segment~$[x',y']$ relative to its endpoints.
Then a point~$z'$ of~$c_{x' \to y'}$ is at ~$g$-distance at least~$\frac{1}{4} \sys(g)$ from~$x'$ and~$y'$. 
(Otherwise, the loop~$[x',y'] \cup c_{x' \to y'}$ would lie in the disk~$B_g(x',\frac{1}{2} \sys(g))$, which is contractible in~$M$.)
By the triangle inequality and~\eqref{eq:pmN}, we obtain
\begin{align*}
\length_\W(c_{x' \to y'}) & \geq d_W(x',z') + d_\W(z',y') \\
 & \geq \frac{n}{4} \, d_g(x',z') + \frac{n}{4} \, d_g(z',y') - 2nN \eta \\
 & \geq \frac{n}{8} \, \sys(g) - 2nN \eta.
\end{align*}
On the other hand, the $g$-diameter of the simplices is less than~$\frac{1}{4} \sys(g)$.
Thus, $d_g(x',y') \leq \frac{1}{4} \sys(g)$.
By Lemma~\ref{lem:segment}.\eqref{segment2} and~\eqref{eq:dWdM}, we obtain
\[
\length_\W([x',y']) = d_\W(x',y') \leq \frac{n}{4} \, d_g(x',y') + n \eta \leq \frac{n}{16} \, \sys(g) + n \eta.
\]
For $\eta>0$ small enough, we deduce that 
\[
\length_\W([x',y']) <  \length_\W(c_{x' \to y'})
\]
which contradicts the equality in~\eqref{eq:equality}.

The two assertions follow by taking~$\eta < \varepsilon/N$. 
\end{proof}

\begin{lemma} \label{lem:syst}
Let $\varepsilon>0$.
With probability converging to~$1$ as $n \to \infty$, we have
\[
\frac{1}{n} \, \sys(\W) \gtrsim_\varepsilon \frac{1}{4} \, \sys(g).
\]

In particular, for $\varepsilon>0$ sufficiently small, $\frac{1}{n} \, \sys(\W) \geq \frac{1}{8} \, \sys(g)$.
\end{lemma}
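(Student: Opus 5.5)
The plan is to bound the discrete length of every noncontractible closed curve from below by applying Lemma~\ref{lem:dWdM3} to a suitable subdivision of a $\W$-minimizing closed curve. Work on the event, of probability converging to~$1$, on which the conclusions of Lemmas~\ref{lem:dWdM} and~\ref{lem:dWdM3} hold with a small error $\eta>0$, to be fixed in terms of~$\varepsilon$ and~$\sys(g)$. Let $c$ be a noncontractible closed curve of minimal $\W$-length, so that $\length_\W(c)=\sys(\W)$; this minimum exists since discrete lengths are nonnegative integers. After a slight perturbation, we may assume that $c$ avoids the vertices of~$\W$.

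First, subdivide~$c$ into consecutive arcs $c_1,\dots,c_k$ with endpoints $x_0,x_1,\dots,x_k=x_0$. Each arc should be $\W$-minimizing relative to its endpoints. Each arc should also be short, say contained in a small $g$-ball, so that it is homotopic rel endpoints to a short $g$-geodesic. I would rather choose the points~$x_j$ along~$c$ so that the chord distances $d_g(x_j,x_{j+1})$ are comparable to a fixed scale $r\ll\sys(g)$. Subarcs of a minimizing closed curve need not be minimizing as arcs, but by minimality of~$c$ in its free homotopy class, any subarc of $\W$-length at most half of $\sys(\W)$ is $\W$-minimizing among arcs homotopic to it rel endpoints. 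By Lemma~\ref{lem:dWdM3}, the length $\frac1n\length_\W(c_j)$ is at least $\frac14 d_g(x_j,x_{j+1})-\eta$.

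Second, replace each~$c_j$ by a $g$-minimizing segment~$\gamma_j$ from $x_j$ to~$x_{j+1}$, and show that $\gamma_j$ is homotopic to~$c_j$ rel endpoints. For this, I would reuse the argument at the end of Lemma~\ref{lem:dWdM3}. If $c_j$ wandered $g$-far away, at distance at least $\frac14\sys(g)$, then by~\eqref{eq:dWdM3} its $\W$-length would exceed $\frac n8\sys(g)-O(n\eta)$. This would contradict the upper bound $\frac n4 d_g(x_j,x_{j+1})+n\eta$ coming from minimality, since $d_g(x_j,x_{j+1})$ is small. Consequently the concatenation $\gamma=\gamma_1\cdots\gamma_k$ is freely homotopic to~$c$, hence noncontractible, and therefore $\length_g(\gamma)\geq\sys(g)$.

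Summing the inequalities gives $\frac1n\sys(\W)\geq\frac14\sys(g)-k\eta$. The main obstacle is controlling the number~$k$ of pieces independently of~$n$, so that $k\eta$ can be made at most $\frac{\varepsilon}{4(1+\varepsilon)}\sys(g)$. I would first bound $\length_\W(c)$ from above: take a shortest $g$-systolic geodesic and apply part~(1) of Lemma~\ref{lem:dWdM3}. This gives $\frac1n\sys(\W)\leq\frac14\sys(g)+\eta$, so the $g$-length of the straightened curve is a priori bounded by roughly $\sys(g)+8\eta$. Choosing the subdivision points at $g$-spacing about~$r$ then yields $k\lesssim \sys(g)/r$, a constant, and $\eta$ can be chosen afterwards. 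Finally, taking $\varepsilon$ small gives $\frac1n\sys(\W)\geq\frac18\sys(g)$.
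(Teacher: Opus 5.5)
\paragraph{Gap: the homotopy step.}
The gap is in your second step, which is exactly where the noncontractibility of $\gamma$ enters.

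The upper bound $\length_\W(c_j)\le\frac{n}{4}\,d_g(x_j,x_{j+1})+n\eta$ that you invoke requires $\length_\W(c_j)=d_\W(x_j,x_{j+1})$. That is, $c_j$ must be $\W$-minimizing among \emph{all} arcs with its endpoints. What you have established is weaker: minimality among arcs homotopic to $c_j$ rel endpoints, which in fact holds for every subarc of $c$, whatever its length. This lets you compare $c_j$ with $\gamma_j$ only once you know $c_j\simeq\gamma_j$ rel endpoints, which is the very thing to be proved.

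Global minimality does hold for subarcs of $\W$-length at most $\frac{1}{2}\sys(\W)$ of a $\W$-systolic curve. A subdivision prescribed only by chord lengths does not ensure this, and the step can fail:
\begin{itemize}
\item Take $k=2$, with $x_1$ the first point of $c$ at $g$-distance $r$ from $x_0$. Both chords equal $r$.
\item But $c_2$ runs around essentially all of $c$. It is not $\W$-minimizing between its endpoints and is not homotopic to $\gamma_2$ rel endpoints.
\item The straightened loop $\gamma_1\gamma_2$ has length $2r<\sys(g)$. So it is contractible, and $\length_g(\gamma)\ge\sys(g)$ fails.
\end{itemize}

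\paragraph{Circular bound on $k$.}
Your bound on $k$ is also circular as phrased. The a priori bound $\length_g(\gamma)\lesssim\sys(g)+8\eta$ presupposes that the straightening error $k\eta$ is already $O(\eta)$.

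\paragraph{Repair.}
Both points are easily repaired, and your first instinct (arcs contained in small $g$-balls) was the right one. Fix $r<\frac{1}{2}\sys(g)$ and $\eta<r/4$. Let $x_{j+1}$ be the first point after $x_j$ where $c$ leaves the open ball $B_g(x_j,r)$. Then:
\begin{itemize}
\item $c_j$ and $\gamma_j$ both lie in $\bar{B}_g(x_j,r)$. So the loop $c_j\gamma_j^{-1}$ is contractible (cone it off to $x_j$), and no minimality is needed.
\item The bound $\frac{1}{n}\length_\W(c_j)\ge\frac{1}{4}d_g(x_j,x_{j+1})-\eta$ holds for any arc, since $\length_\W(c_j)\ge d_\W(x_j,x_{j+1})$.
\item Each of the $k-1$ nonfinal pieces has chord exactly $r$, hence costs at least $\frac{r}{4}-\eta$. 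So either $k\le 1+\sys(g)/(r-4\eta)$, or already $\frac{1}{n}\length_\W(c)\ge\frac{1}{4}\sys(g)$.
\end{itemize}
This works for any noncontractible $c$, minimizing or not.

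\paragraph{Comparison with the paper.}
The paper's proof is your argument with only two pieces, cut at well-chosen points. For any noncontractible closed curve $c$, there are $x,y\in c$ with $d_g(x,y)\ge\frac{1}{2}\sys(g)$. Otherwise $c$ lies in a ball of radius less than $\frac{1}{2}\sys(g)$ and is contractible; this is the same fact your repair uses. Both arcs of $c$ from $x$ to $y$ have $\W$-length at least $d_\W(x,y)$. The distance estimate of the preceding lemma then gives
$\frac{1}{n}\length_\W(c)\ge\frac{2}{n}d_\W(x,y)\ge\frac{1}{2}d_g(x,y)-2\eta\ge\frac{1}{4}\sys(g)-2\eta$.
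No straightening, homotopy control, counting of pieces, or upper bound on $\sys(\W)$ is required.
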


\begin{proof}
Fix $\eta >0$.
We may assume, with probability converging to~$1$ as $n \to \infty$, that the conclusion of Lemma~\ref{lem:dWdM3} holds with error~$\eta$.

Let $c$ be a noncontractible closed curve of~$M$.
There exist two points $x,y \in c$ such that $d_g(x,y) \geq \frac{1}{2} \sys(g)$.
Otherwise, the curve~$c$ would be contained in a disk of radius less than~$\frac{1}{2} \sys(g)$ and would be contractible in~$M$.
The two subarcs of~$c$ joining~$x$ to~$y$ both have $\W$-length at least~$d_\W(x,y)$.
Therefore, by the last assertion of Lemma~\ref{lem:dWdM3},
\[
\frac{1}{n} \, \length_\W(c) \geq \frac{2}{n} \, d_W(x,y) \geq \frac{1}{2} \, d_g(x,y) - 2 \eta \geq \frac{1}{4} \, \sys(g) - 2 \eta.
\]
Hence, 
$
\frac{1}{n} \, \sys(\W) \geq \frac{1}{4} \, \sys(g) - \varepsilon
$
for $\eta>0$ small enough.
\end{proof}

\begin{proposition} \label{prop:MLS}
Let $\varepsilon>0$.
With probability converging to~$1$ as $n \to \infty$, we have
\[
\frac{1}{n} \, \MLS_{\W} \simeq_\varepsilon \frac{1}{4} \, \MLS_g.
\]
\end{proposition}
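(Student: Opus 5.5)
The plan is to upgrade the additive, arc-by-arc estimates of Lemma~\ref{lem:dWdM3} to a multiplicative estimate that is uniform over all classes. To do this, I cut closed curves into a number of arcs proportional to their length and use the systole bounds to absorb the additive errors. Fix $\eta>0$, to be chosen small in terms of $\varepsilon$ and $\sys(g)$, and set $L=\frac14\sys(g)$. With probability converging to~$1$, the conclusions of Lemma~\ref{lem:dWdM3} (with error~$\eta$) and of Lemma~\ref{lem:syst} hold simultaneously. I work on this event, and the rest of the argument is deterministic. Let $\alpha$ be a nontrivial class.

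\emph{Upper bound.} Let $\gamma$ be a $g$-shortest closed geodesic in~$\alpha$, with length $\ell=\MLS_g(\alpha)\geq\sys(g)$. Subdivide $\gamma$ into $k=\lceil \ell/L\rceil$ arcs $\gamma_j$ of length at most $L<\frac12\sys(g)$. Each $\gamma_j$ is $g$-minimizing. Indeed, if a shorter arc joined its endpoints, the loop it forms with~$\gamma_j$ would have length less than $\sys(g)$, hence be contractible, and replacing $\gamma_j$ by it would give a shorter curve in~$\alpha$. Lemma~\ref{lem:dWdM3}(1) then gives
\[
\tfrac1n\MLS_\W(\alpha)\leq\tfrac1n\length_\W(\gamma)\leq \tfrac14\ell+k\eta\leq \tfrac14\ell+\Bigl(\tfrac{\ell}{L}+1\Bigr)\eta .
\]
Since $\ell\geq\sys(g)$, the last expression is at most $(1+\varepsilon)\frac14\ell$ once $\eta$ is small enough. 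Strictly speaking $\gamma$ should be perturbed off the vertices of~$\W$, which costs nothing.

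\emph{Lower bound.} Let $c$ be a closed curve of minimal $\W$-length $\Lambda=\MLS_\W(\alpha)$ in~$\alpha$. Lemma~\ref{lem:syst} gives $\Lambda/n\geq\frac18\sys(g)$. Cut $c$ into $k\leq \frac{\Lambda}{nL'}+1$ arcs $c_j$, each of $\W$-length at most $nL'$, where $L'=\frac1{16}\sys(g)$. This is possible since each wall crossing contributes weight~$1$, which is negligible compared with $nL'$ for large~$n$.

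Each $c_j$ is $\W$-minimizing relative to its endpoints. If a shorter arc existed, the loop it forms with $c_j$ would have $\W$-length less than $2nL'<\sys(\W)$ (by Lemma~\ref{lem:syst}), hence be contractible. Substituting it would then produce a shorter curve in~$\alpha$, a contradiction. So Lemma~\ref{lem:dWdM3}(2) applies and yields piecewise geodesic arcs $\gamma_j$, homotopic to $c_j$ relative to their endpoints, with $\frac1n\length_\W(c_j)\geq\frac14\length_g(\gamma_j)-\eta$. The concatenation of the $\gamma_j$ is freely homotopic to~$c$, hence lies in~$\alpha$. Summing over~$j$ gives
\[
\tfrac1n\Lambda\geq\tfrac14\MLS_g(\alpha)-\Bigl(\tfrac{\Lambda}{nL'}+1\Bigr)\eta .
\]
Rearranging, and using $\Lambda/n\geq\frac18\sys(g)$ to absorb the constant term, gives $\frac1n\Lambda\geq(1+\varepsilon)^{-1}\frac14\MLS_g(\alpha)$ for $\eta$ small. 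This bound does not depend on~$\alpha$.

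I expect the main obstacle to be the lower bound. Two points need care there: the minimality of the subarcs of $c$ relative to their endpoints, which is what allows Lemma~\ref{lem:dWdM3}(2) to be applied, and the fact that the number of pieces must be controlled by $\Lambda/n$ rather than by the $g$-length. The discrete systole bound of Lemma~\ref{lem:syst} is exactly what handles both points, so I would check that argument first.
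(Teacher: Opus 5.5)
Your proposal is correct and follows essentially the same route as the paper. You cut a $g$-shortest geodesic (resp.\ a $\W$-shortest curve) in~$\alpha$ into a number of arcs proportional to its length, each shorter than half of $\sys(g)$ (resp.\ half of $\sys(\W)$, controlled by the discrete systole lemma), apply the two parts of the arc comparison lemma piece by piece, and absorb the additive errors using $\MLS_g(\alpha)\geq\sys(g)$ and $\frac1n\MLS_\W(\alpha)\geq\frac18\sys(g)$. Your explicit argument that the subarcs are minimizing among all arcs with the same endpoints (a shortcut would close up into a short, hence contractible, loop and shorten the curve within~$\alpha$), and your remark on the integer granularity of $\W$-length, supply details that the paper only asserts.
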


\begin{proof}
Fix~$\eta>0$.
We may assume, with probability converging to~$1$ as $n \to \infty$, that the conclusions of Lemma~\ref{lem:dWdM3} and Lemma~\ref{lem:syst} hold with error~$\eta$.
Let $\alpha$ be a nontrivial unoriented free homotopy class of~$M$.

We first prove the upper bound.
Let $\gamma$ be an $g$-minimizing geodesic representing~$\alpha$.
Decompose $\gamma$ into arcs $\gamma=\gamma_1 \cup \cdots \cup \gamma_m$ such that
\[
\frac{1}{4} \, \sys(g) < \length_g(\gamma_j) < \frac{1}{2} \, \sys(g).
\]
Note that 
\[
m \leq \frac{4}{\sys(g)} \length_g(\gamma).
\]
Each arc~$\gamma_j$ is $g$-minimizing between its endpoints.
Thus, by the first assertion of Lemma~\ref{lem:dWdM3},
\[
\frac{1}{n} \, \length_\W(\gamma_j) \leq \frac{1}{4} \, \length_g(\gamma_j) + \eta.
\]
Summing over $j$, we obtain
\begin{align*}
\frac{1}{n} \, \length_\W(\gamma) & \leq \frac{1}{4} \, \length_g(\gamma) + m \eta \\
 & \leq \left( \frac{1}{4} + \frac{4}{\sys(g)} \eta \right) \, \length_g(\gamma).
\end{align*}
Since $\MLS_{\W}(\alpha) \leq \length_\W(\gamma)$ and $\MLS_g(\alpha) = \length_g(\gamma)$, choosing $\eta>0$ sufficiently small in terms of~$\varepsilon$ gives
\[
\frac{1}{n} \, \MLS_{\W}(\alpha) \lesssim_\varepsilon \frac{1}{4} \, \MLS_g(\alpha).
\]

We now prove the reverse inequality following the same argument.
Let $c$ be a closed curve of minimal $\W$-length representing~$\alpha$.
Decompose~$c$ into arcs $c=c_1 \cup \cdots \cup c_m$ such that
\[
\frac{1}{4} \, \sys(\W) < \length_\W(c_j) < \frac{1}{2} \, \sys(\W).
\]
Note that 
\[
m \leq \frac{4}{\sys(\W)} \, \length_\W(c) \leq \frac{1}{n} \, \frac{32}{\sys(g)} \, \length_\W(c)
\]
by the last part of Lemma~\ref{lem:syst} provided that $\eta$ is sufficiently small.
As above, each arc~$c_j$ is $\W$-minimizing between its endpoints.
Thus, by the second assertion of Lemma~\ref{lem:dWdM3}, there exists a piecewise geodesic arc~$\gamma_j$ with the same endpoints as~$c_j$, homotopic to~$c_j$ relative to its endpoints, such that
\[
\frac{1}{4} \, \length_g(\gamma_j) \leq \frac{1}{n} \, \length_\W(c_j)  + \eta.
\]
Summing over $j$, we obtain
\begin{align*}
\frac{1}{4} \, \length_g(\gamma) & \leq \frac{1}{n} \, \length_\W(c) + m \eta \\
 & \leq \frac{1}{n} \, \left( 1+  \frac{32}{\sys(g)} \eta \right) \, \length_\W(c)
\end{align*}
where $\gamma=\gamma_1 \cup \cdots \cup \gamma_m$.
By construction, $\gamma$ is freely homotopic to~$c$ and represents~$\alpha$.
Since $\MLS_g(\alpha) \leq \length_g(\gamma)$ and $\MLS_{\W}(\alpha) = \length_\W(c)$, choosing $\eta>0$ sufficiently small in terms of~$\varepsilon$ gives
\[
\frac{1}{4} \, \MLS_g(\alpha) \lesssim_\varepsilon \frac{1}{n} \, \MLS_{\W}(\alpha).
\]

Combining the two inequalities proves the proposition.
\end{proof}

The following generalization of the weak law of large numbers, for identically distributed random variables with a small proportion of dependent pairs, will be used in the proof of Proposition~\ref{prop:area}. 
Its proof can be found in~\cite[Theorem~4.13]{CS}.

\begin{theorem} \label{theo:wlln}
Fix a real valued random variable~$X$ with finite expected absolute value~$\E(|X|)<\infty$ and an integer~$n>0$.
Then the average $\overline X = \frac 1n \sum_i X_i$ of~$n$ random variables~$X_i$, each with the same distribution as~$X$, is near the expected value~$\E(X)$ with probability arbitrarily close to~$1$
if the proportion of nonindependent pairs
\[ 
p = \frac{ \# \{(i,j)\mid X_i\text{ and }X_j\text{ are not independent} \} }{n^2} 
\]
is small.
More precisely, for every $\eps,\delta>0$, there exists $p_0=p_0(X,\delta,\eps) >0$ such that if $p \leq p_0$, then
$\Prob \left( \left| \overline X - \E(X) \right| \geq \eps \right) \leq \delta$.
\end{theorem}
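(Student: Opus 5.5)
The plan is to reduce to a Chebyshev-type estimate, using the dependency structure only through the count of dependent pairs. Write $\mu=\E(X)$. Since $X$ is only assumed integrable (not square integrable), I will first truncate. Fix $M>0$ and set $Y_i = X_i \mathbf{1}_{\{|X_i|\leq M\}}$ and $Z_i = X_i - Y_i$. Each $Z_i$ has the law of $Z = X\mathbf{1}_{\{|X|>M\}}$. By dominated convergence, $\E|Z| \to 0$ as $M\to\infty$, so choose $M$ (depending only on $X$, $\eps$, $\delta$) with $\E|Z| \leq \eps\delta/4$. By Markov's inequality,
\[
\Prob\left( \tfrac1n \textstyle\sum_i |Z_i| \geq \eps/2 \right) \leq \tfrac{2}{\eps}\, \E|Z| \leq \delta/2 ,
\]
and also $|\E(Y_i) - \mu| = |\E Z| \leq \eps/4$ (shrinking the bound if needed). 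Markov needs no independence at all, which is why this step is harmless.

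For the bounded part, the variance is
\[
\mathrm{Var}\left(\tfrac1n\textstyle\sum_i Y_i\right) = \tfrac1{n^2}\textstyle\sum_{i,j}\mathrm{Cov}(Y_i,Y_j).
\]
Independence of $X_i$ and $X_j$ implies independence of $Y_i$ and $Y_j$, since each $Y_i$ is a measurable function of $X_i$. Hence every independent pair contributes $0$. Every remaining pair, including the diagonal ones (which are themselves counted among the nonindependent pairs unless $X$ is constant), contributes at most $|\mathrm{Cov}|\leq \E(Y^2)\leq M^2$. So the variance is at most $pM^2$. Chebyshev's inequality then gives
\[
\Prob\left(\left|\tfrac1n\textstyle\sum_i Y_i - \E Y\right|\geq \eps/4\right)\leq 16pM^2/\eps^2,
\]
which is at most $\delta/2$ once $p\leq p_0 := \eps^2\delta/(32M^2)$.

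Combining the three error terms with the triangle inequality gives $|\overline X-\mu| < \eps/2+\eps/4+\eps/4 = \eps$ outside an event of probability at most $\delta$. Since $M$ depends only on $X,\eps,\delta$, so does $p_0$, and $n$ plays no role.

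The only subtle point is the truncation. Without a second moment, Chebyshev cannot be applied to $X_i$ directly. We also must not use independence when controlling the tails, and Markov's inequality on $\sum|Z_i|$ avoids exactly that. I expect no further obstacle. The one thing to check is the diagonal convention: if pairs $(i,i)$ were excluded from the count $p$, we would need $n$ large, namely $1/n\leq p_0$. With the stated definition they are included automatically, since $X_i$ is not independent of itself.
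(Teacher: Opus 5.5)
Your proof is correct and complete. You truncate at a level $M=M(X,\eps,\delta)$ and control the tail part by Markov's inequality, which needs no independence. You control the bounded part by Chebyshev with $\mathrm{Var}\bigl(\frac1n\sum_i Y_i\bigr)\leq pM^2$. This gives the claim with $p_0$ independent of $n$. You are also right that the diagonal pairs must be counted in $p$: otherwise $n=1$, $p=0$ would be a counterexample.

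The paper gives no proof of its own here, only the reference \cite[Theorem~4.13]{CS}, so there is nothing in the paper to compare against. Your truncation--Chebyshev--Markov scheme is the standard way to prove such a weak law when only pairwise dependence information is available.
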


The second estimate needed in the proof of Theorem~\ref{theo:appendix} relates the discrete area of~$\W$ to the area of the piecewise flat metric~$g$.

\begin{proposition} \label{prop:area}
Let $\varepsilon>0$.
With probability converging to~$1$ as~$n \to \infty$, we have
\[
\frac{2}{n^2-n} \, \area(\W) \simeq \frac{\pi}{32} \, \area(g) \pm \varepsilon.
\]
\end{proposition}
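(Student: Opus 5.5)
We write the discrete area as a U-statistic over pairs of random planes and apply the weak law of large numbers with few dependent pairs (Theorem~\ref{theo:wlln}).

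The walls of $\W$ are the curves $M\cap P_i$. Almost surely these are pairwise transverse and self-transverse. Moreover, a single planar section $M\cap P_i$ has no self-intersections, since it is the transverse intersection of an embedded surface with a plane. Hence the discrete area only counts intersections between walls coming from distinct planes:
\[
\area(\W)=\sum_{1\le i<j\le n}\#(M\cap P_i\cap P_j).
\]
Set $Y_{ij}=\#(M\cap P_i\cap P_j)$ for $i<j$. Then
\[
\frac{2}{n^2-n}\,\area(\W)
\]
is the average of the $N=\binom n2$ random variables $Y_{ij}$. All of them have the distribution of $Y=\#(M\cap P\cap P')$ with $P,P'$ independent of law $\Prob$. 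By formula~\eqref{eq:S3} of Theorem~\ref{theo:San},
\[
\E(Y)=\frac{\pi}{32}\,\area(g).
\]

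The next step is to verify $\E(|Y|)<\infty$. This also follows from~\eqref{eq:S3}, since $Y\ge0$ and its expectation is the finite quantity $\frac{\pi}{32}\area(g)$. Concretely, for a.e.\ pair of planes, $P\cap P'$ is a line meeting each of the finitely many flat triangles of $M$ at most once, so $Y$ is in fact bounded by the number of $2$-simplices of $\Theta$.

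Next we count dependent pairs. Since the planes $P_1,\dots,P_n$ are independent, $Y_{ij}$ and $Y_{kl}$ are independent whenever $\{i,j\}\cap\{k,l\}=\emptyset$. So the nonindependent ordered pairs $((i,j),(k,l))$ are at most those sharing an index, which number $O(n^3)$. Their proportion among the $N^2\sim n^4/4$ pairs is therefore $p=O(1/n)\to0$. Theorem~\ref{theo:wlln}, applied with the $N$ variables $Y_{ij}$ in place of $n$, then gives, for every $\varepsilon,\delta>0$ and $n$ large,
\[
\Prob\left(\left|\frac{2}{n^2-n}\area(\W)-\frac{\pi}{32}\area(g)\right|\ge\varepsilon\right)\le\delta.
\]
This is the claim.

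The main point requiring care is the almost sure genericity: that $\W$ is a genuine wall system (transverse intersections, no triple points, planes avoiding the vertices of $\Theta$), and that each wall contributes no self-intersections, so that the combinatorial area equals $\sum_{i<j}Y_{ij}$ exactly. These hold off a $\Prob^{\otimes n}$-null set, since the exceptional configurations are planes through a vertex, or pairs and triples of planes in nongeneric position relative to finitely many triangles, and all of these form lower-dimensional, hence measure-zero, subsets.
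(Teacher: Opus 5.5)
Your proposal is correct and follows the paper's proof essentially step by step. You write $\area(\W)$ as the sum of the $\binom{n}{2}$ identically distributed variables $\#(M\cap P_i\cap P_j)$ and note that variables with disjoint index pairs are independent, so the proportion of dependent pairs is $O(1/n)$. You then conclude with the weak law of large numbers for few dependent pairs, with the expectation $\frac{\pi}{32}\area(g)$ coming from Santal\'o's double-plane formula. Your extra remarks (a single planar section contributes no self-intersections, and $\#(M\cap P\cap P')$ is almost surely bounded by the number of triangles) make explicit what the paper uses implicitly when it invokes the definition of the discrete area and the finiteness of the expectation.
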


\begin{proof}
The intersection counting function $f:\HH \times \HH \to \N \cup \{ \infty \}$ defined by
\[
f(P,P') = \#(M \cap P \cap P')
\]
is a nonnegative measurable function with finite expected value.
The $N_n=n(n-1)/2$ random variables~$X_{i,j}=f(P_i,P_j)$ with $i <j$ are identically distributed but not completely independent.
However, $X_{i,j}$ and~$X_{k,l}$ are independent whenever $\{i,j\} \cap \{k,l\} = \emptyset$.

We claim that the proportion of nonindependent pairs among the variables~$X_{i,j}$ tends to zero as~$n \to \infty$.
Indeed, for each~$X_{i,j}$, there are at most~$2n$ variables~$X_{k,l}$ whose index set intersects~$\{i,j\}$. 
Hence the number of pairs which may fail to be independent is at most $2nN_n=O(n^3)$.
Since the total number of ordered pairs of variables is~$N_n^2=O(n^4)$, the proportion of nonindependent pairs is~$O(n^{-1})$, and therefore tends to zero.

We may thus apply Theorem~\ref{theo:wlln}. 
Using the definition of the discrete area, it follows that the average value of the variables~$X_{i,j}$,
\[
\frac{\sum_{i<j} X_{i,j}}{n(n-1)/2} = \frac{\sum_{i<j} \#(M \cap P_i \cap P_j)}{n(n-1)/2} = \frac{\area(\W)}{n(n-1)/2}
\]
converges in probability to the expected value, which is equal to
\[
\E(X_{i,j}) = \int \! \int_{\HH \times \HH} \#(M \cap P \cap P') \, d\Prob(P) \, d\Prob(P') = \frac{\pi}{32} \, \area(M)
\]
by the equation~\eqref{eq:S3} of Theorem~\ref{theo:San}.
Hence the proposition.
\end{proof}

We conclude by observing that Theorem~\ref{theo:appendix} immediately follows from Proposition~\ref{prop:MLS} and Proposition~\ref{prop:area} by assigning the weight~$\frac{4}{n}$ to each wall of~$\W$.
\end{proof}

\forgotten

\end{document}